\newcommand\note[1]%
\tikzset{%
  >={Latex[width=2mm,length=2mm]},
            base/.style = {rectangle, rounded corners, draw=black,
                           minimum width=4cm, minimum height=1cm,
                           text centered, font=\sffamily},
  activityStarts/.style = {base, fill=blue!30},
       startstop/.style = {base, fill=red!30},
    activityRuns/.style = {base, fill=green!30},
         process/.style = {base, minimum width=2.5cm, fill=orange!15,
                           font=\sffamily},
}
\DeclareMathOperator{\Vol}{Vol}
\DeclareMathOperator{\Hom}{Hom}
\DeclareMathOperator{\im}{Im}
\DeclareMathOperator{\id}{Id}
\DeclareMathOperator{\pr}{pr}
\DeclareMathOperator{\Lie}{Lie}
\DeclareMathOperator{\Ad}{Ad}
\DeclareMathOperator{\gr}{gr}
\DeclareMathOperator{\spec}{Spec}
\newcommand{\n}{^{-1}}
\DeclareMathOperator{\ann}{ann}
\DeclareMathOperator{\cone}{cone}
\DeclareMathOperator{\hull}{hull}
\DeclareMathOperator{\fix}{Fix}
\newcommand\lie{\mathfrak}
\newcommand{\g}{\lie{g}}
\newcommand{\hhh}{\lie{h}}
\newcommand{\nnn}{\lie{n}}
\newcommand{\ttt}{\lie{t}}
\newcommand{\kk}{\lie{k}}
\newcommand\bb{\mathbb}
\newcommand\N{\bb{N}}
\newcommand\Z{\bb{Z}} 
\newcommand\Q{\bb{Q}}
\newcommand\R{\bb{R}} 
\newcommand\C{\bb{C}}
\renewcommand\H{\bb{H}}
\newcommand\T{\bb{T}}
\newcommand\ii{\mathbf{i}}
\newcommand\X{\mathfrak{X}}
\newtheorem{theorem}{Theorem}[section]
\newtheorem{proposition}[theorem]{Proposition}
\newtheorem{lemma}[theorem]{Lemma}
\newtheorem{corollary}[theorem]{Corollary}
\theoremstyle{definition}
\newtheorem{definition}[theorem]{Definition}
\newtheorem{data}[theorem]{Data}
\newtheorem{example}[theorem]{Example}
\theoremstyle{remark}
\newtheorem{remark}[theorem]{Remark}
\newcommand\footnoteref[1]{\protected@xdef\@thefnmark{\ref{#1}}\@footnotemark}
\date{}
\begin{document}

\title{Stratified gradient Hamiltonian vector fields and collective integrable systems} 

\author{Benjamin Hoffman}
\address{Earth Species Project}
\email{benjamin@earthspecies.org, benjaminsshoffman@gmail.com}

\author{Jeremy Lane}
\address{Amazon\footnote{Work completed before joining Amazon}}
\email{planjere@amazon.com}

\subjclass[2020]{Primary 53D20, 14D06, 37J35; Secondary 14M25, 14L30}

\date{\today}


\keywords{gradient Hamiltonian vector fields, stratified spaces, completely integrable system, toric degenerations, canonical bases, coadjoint orbits, base affine space}

\maketitle

\begin{abstract}

We construct completely integrable systems on the dual of the Lie algebra of any compact Lie group $K$ with respect to the standard Lie-Poisson structure. These systems generalize key properties of Gelfand-Zeitlin systems: A) the pullback to any Hamiltonian $K$-manifold defines a Hamiltonian torus action on an open dense subset, B) if the $K$-manifold is multiplicity-free, then the resulting torus action is \textit{completely} integrable, and C) the collective moment map has convexity and fiber connectedness properties. These systems generalize the relationship between Gelfand-Zeitlin systems and Gelfand-Zeitlin canonical bases via geometric quantization by a real polarization. 

To construct these systems, we generalize Harada and Kaveh's construction of integrable systems by toric degeneration on smooth projective varieties to singular quasi-projective varieties. Under certain conditions, we show that the stratified-gradient Hamiltonian vector field of such a degeneration, which is defined piece-wise, has a flow whose limit exists and defines continuous degeneration map.

\end{abstract}


\section{Introduction}

This paper lies at the intersection of two topics in Hamiltonian mechanics: collective integrable systems and geometric quantization. Collective integrable systems arise when constructing commuting Hamiltonian symmetries from non-commuting Hamiltonian symmetries of a symplectic manifold. Given a Hamiltonian action of non-abelian compact Lie group $K$ on a symplectic manifold $(M,\omega)$ with equivariant moment map $\mu$, assume there exists a completely integrable system on $\Lie(K)^*$ with respect to the standard Poisson structure\footnote{We define completely integrable systems on constant rank Poisson manifolds in Definition \ref{def; completely integrable system on constant rank Poisson manifold}. By a completely integrable system on $\Lie(K)^*$ we mean a set of continuous functions that restricts to a completely integrable system on each maximal constant rank Poisson submanifold of $\Lie(K)^*$, i.e., the orbit-type strata of the coadjoint action.}. Pulling this system back by $\mu$ yields a \textit{collective} integrable system\footnote{An \emph{integrable system} on a smooth connected symplectic manifold of dimension $2n$ is a collection of $k \leq n$ continuous functions that are smooth on an open dense subset and are functionally independent and pairwise Poisson commute there (see the discussion preceding Definition \ref{def; integrable system on singular symplectic space} for more detail). If $k=n$ then it is a \emph{completely integrable system}.} on $(M,\omega)$. This is useful because completely integrable systems have tractable local and global structures, as demonstrated by the Liouville-Arnold theorem \cite{arnoldMathematicalMethods} and the Delzant classification of compact symplectic toric manifolds \cite{delzantHamiltoniensPeriodiquesImages1988}.

Geometric quantization is a procedure that maps classical observables (functions on a symplectic manifold) to quantum observables (unitary operators on a Hilbert space). In the version of geometric quantization developed by Kostant and Souriau, classical observables are mapped to operators on the space of sections of a pre-quantum line bundle over the symplectic manifold~\cite{woodhouse1992geometric}. These sections are required to be flat with respect to a choice of polarization, which ensures the geometric quantization recovers a Hilbert space of the expected dimension.  For example, geometric quantization of the coadjoint action of $K$ on one of its integral coadjoint orbits via K\"ahler polarization produces the irreducible $K$-module corresponding to the same highest weight~\cite{guilleminGeometricQuantizationMultiplicities1982}. 

In~\cite{guilleminGelfandCetlinSystemQuantization1983}, Guillemin and Sternberg extended this approach to collective integrable systems and canonical bases by constructing a family of completely integrable systems, called \emph{Gelfand-Zeitlin systems}, on the duals of the Lie algebras of unitary Lie groups\footnote{Their construction also applies to orthogonal Lie groups.}. When a Gelfand-Zeitlin system is restricted to an integral coadjoint orbit of $U(n)$ it defines a completely integrable system whose fibers define a (singular) real polarization of the prequantum line bundle. Modulo the boundary of the polytope, the integer lattice points of the Gelfand-Zeitlin system's moment map image parametrize Bohr-Sommerfeld leaves of the real polarization, which in turn parameterize a basis of the geometric quantization. The same integer lattice points also parametrize a Gelfand-Zeitlin canonical basis of the irreducible $U(n)$-module that arises during the K\"ahler quantization. Guillemin and Sternberg interpreted this an example of the principle of ``independence of polarization'': the geometric quantization of a symplectic manifold should not depend on the choice of polarization.

When a Gelfand-Zeitlin system is pulled back via the moment map of a Hamiltonian $U(n)$-action on a symplectic manifold, one obtains a collective integrable system. These systems can be used to prove independence of polarization results, such as for the cotangent bundle of $U(n)$~\cite{CrooksWeitsman3}. Gelfand-Zeitlin systems have also been applied to symplectic topology \cite{pabiniakGromovWidthNonregular2013, nishinouToricDegenerationsGelfand2010}, abelianization of symplectic quotients  \cite{CrooksWeitsman}, and non-abelian Duistermaat Heckman measures \cite{CrooksWeitsman2}. These applications are possible because Gelfand-Zeitlin systems have several remarkable properties: 

\begin{enumerate}[label=(\Alph*)]

\item The pullback of a Gelfand-Zeitlin system defines a Hamiltonian torus action on a dense open subset of any Hamiltonian $U(n)$-manifold \cite{guilleminGelfandCetlinSystemQuantization1983}; 
\item The resulting torus action is completely integrable if the Hamiltonian $U(n)$-manifold is multiplicity-free \cite{guilleminCollectiveCompleteIntegrability1983}; and 
\item The moment map of a collective Gelfand-Zeitlin system has convexity and fiber connectedness properties if the manifold  is compact\cite{laneConvexityThimmTrick2018}.
\end{enumerate}


Motivated by the applications of Gelfand-Zeitlin systems and the connection to geometric quantization, one may ask if there exist collective integrable systems for other compact Lie groups that share properties (A) -- (C). The construction used by Guillemin and Sternberg is clever and simple but does not produce enough independent commuting functions to form a completely integrable system on the dual of the Lie algebra of other groups. There are general constructions of completely integrable systems on the dual of the Lie algebra of arbitrary compact Lie groups  \cite{MischenkoFomenko,haradaSymplecticGeometryGel2006}, but these have not been shown to to share properties (A) or (C). Property (A) is non-trivial due to nutation effects \cite[Section 4]{guilleminGelfandCetlinSystemQuantization1983}. Another approach using toric degeneration to construct completely integrable systems \cite{nishinouToricDegenerationsGelfand2010, haradaIntegrableSystemsToric2015} has produced new independence of polarization results \cite{HaradaHamiltonKaveh} and been applied in symplectic topology \cite{fangSimplicesNewtonOkounkovBodies2018}, symplectic geometry of projective varieties \cite{kavehToricDegenerationsSymplectic2019}, and symplectic cohomological rigidity \cite{pabiniakSymplecticCohomologicalRigidity2020}. However, the toric degeneration constructions in the literature that precede this work do not produce integrable systems on all of $\Lie(K)^*$, and thus cannot produce collective integrable systems on arbitrary Hamiltonian $K$-manifolds. Yet another approach to this problem uses Poisson-Lie theory and tropicalization to construct completely integrable systems on  $\Lie(K)^*$ as a limit of Ginzburg-Weinstein isomorphism \cite{ alekseevConcentrationSymplecticVolumes2018, alekseevActionangleCoordinatesCoadjoint2020,alekseevGelfandZeitlinSystemTropical2018}. This approach also falls short of constructing collective integrable systems on all of $\Lie(K)^*$ with properties (A) -- (C), with the notable  exception of $U(n)$  where the approach recovers the Gelfand-Zeitlin systems \cite{alekseevGelfandZeitlinSystemTropical2018}.

This paper presents a construction of collective integrable systems for arbitrary compact Lie groups that generalizes properties (A) -- (C) of Gelfand-Zeitlin systems\footnote{We prove slightly more general results. For example, our construction extends to Hamiltonian $K$-spaces and our convexity result extends to proper Hamiltonian $K$-manifolds. See Propositions \ref{proposition; properties of toric contraction I} -- \ref{proposition; properties of toric contraction IV}.}. 

\begin{theorem}\label{intro main theorem}
Let $K$ be a compact connected Lie group. Let $\mathbb{T}$ be a compact torus of dimension $\frac{1}{2}(\dim_\R(K) + \dim_\R(T))$, where $T$ is the maximal torus of $K$. There exists a completely integrable system\footnote{See Definition \ref{def; completely integrable system on constant rank Poisson manifold}.} $F \colon \Lie(K)^* \to \Lie(\T)^*$ such that, for any connected symplectic manifold $M$  equipped with a Hamiltonian action of $K$ with equivariant moment map $\mu \colon M \to \Lie(K)^*$:
    \begin{enumerate}[label=(\Alph*)]
        \item  The composition
\[
    \mu_\T \colon M \xrightarrow{\mu} \Lie(K)^* \xrightarrow{F} \Lie(\T)^*
\]
is a moment map for a Hamiltonian action of $\T$ on a connected, open, dense subset $D \subset M$.
        \item The complexity\footnote{See Equation \eqref{def; complexity part 1} and the following text for definition of complexity of Hamiltonian group actions.} of the $\T$-action on $D$ equals the complexity of the $K$-action on $M$. In particular, the action of $K$ on $M$ is multiplicity-free (complexity 0) if and only if the action of $\T$ on $D$ is completely integrable (complexity 0).
        \item If $M$ is compact, then the fibers of $\mu_\T$ are connected and $\mu_\T(M)$ is a rational convex polytope that projects linearly onto the Kirwan polytope of $\mu$. In this case, $D$ equals the preimage under $\mu_\T$ of the smooth locus of the rational polytope $\mu_\T(M)$.\footnote{For a definition of the smooth locus of a polyhedral set, see Section~\ref{section; convex geometry}.}
    \end{enumerate}
\end{theorem}

Except for Lie types A, B, and D, for which the Gelfand-Zeitlin systems are completely integrable, this is the first general existence theorem for completely integrable systems on $\Lie(K)^*$ with these properties for arbitrary compact Lie group $K$. In the course of our proof, which is constructive, we demonstrate how the moment map images of these systems are related to convex polyhedral cones generated by value semigroups associated with the affine closure of base affine space (Example \ref{main example}, Example \ref{ex; flp valuations}, Remark \ref{rem; other valuations}). One well known example of these is the extended string cones, whose integer lattice points correspond to elements of a Kashiwara-Lusztig crystal basis (Example \ref{main example 1.2}).

Following the same logic as  \cite{guilleminGelfandCetlinSystemQuantization1983}, these collective integrable systems produce (singular) real polarizations of arbitrary, possibly non-compact, multiplicity-free spaces equipped with $K$-invariant prequantum data.  This can produce new ``independence of polarization'' results as we demonstrate in Example~\ref{main example 3}. 

\subsection*{Outline of Proof} Our proof of Theorem \ref{intro main theorem} starts with a general result about the toric degeneration construction of integrable systems. We generalize the known construction of \cite{haradaIntegrableSystemsToric2015}, which applies to smooth projective varieties, to the setting of quasi-projective varieties that are possibly singular\footnote{One must be careful to define what is meant by \emph{completely integrable system} in this more general setting where $X$ is not smooth. Our construction yields completely integrable systems in the sense of our Definition \ref{def; integrable system on singular symplectic space}. In particular, the resulting functions restrict to a completely integrable system on each piece of a decomposition of  $X$ into smooth pieces. See Section \ref{section; decomposed and stratified spaces} for definitions of decomposed spaces and varieties.} . We use a stratified version of the gradient Hamiltonian vector field and introduce several technical assumptions that are sufficient to integrate them to time-1 (Theorem~\ref{thm; toric degenerations main theorem}, Figures~\ref{fig1} and~\ref{fig2}). We then turn our attention to affine varieties, and identify a set of conditions under which we can apply this construction (Theorem~\ref{maintheorem}).

\begin{figure}[h]
\includegraphics[width=15cm]{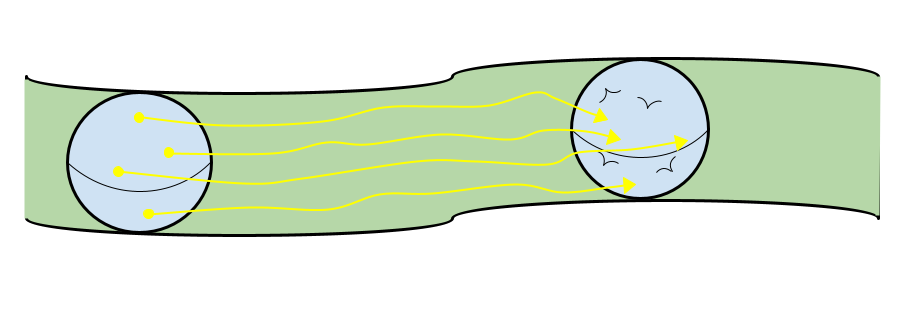}
\caption{In Harada-Kaveh the time-1 limit of the gradient Hamiltonian flow defines a continuous map from the 1-fiber (left), a smooth projective variety, to the possibly singular toric fiber of the degeneration (right).}\label{fig1}
\end{figure}

\begin{figure}[h]\includegraphics[width=15cm]{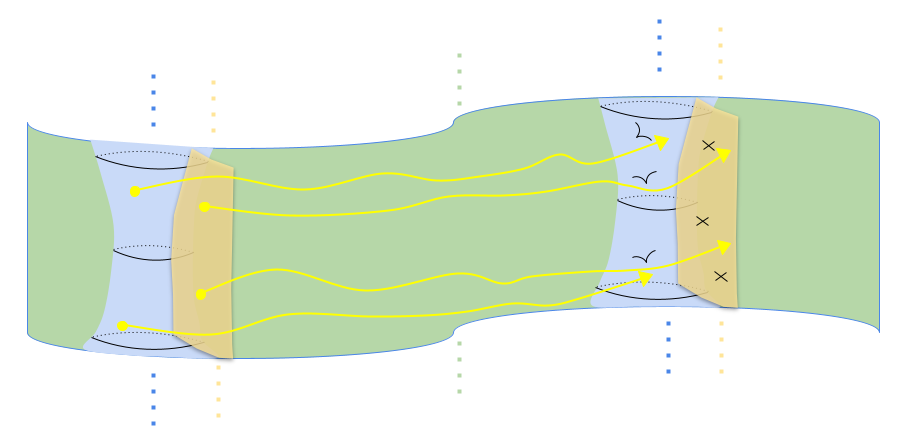}
\caption[]{In our work, the 1-fiber (left) is not necessarily smooth or projective. Instead we suppose it has a decomposition
into smooth pieces (illustrated in blue and orange).
We define a piecewise gradient Hamiltonian vector field which we call the \emph{stratified gradient Hamiltonian vector field}. Under some assumptions the time-1 limit of its flow exists and defines a continuous map to the toric fiber (right).}\label{fig2}
\end{figure}

Next, we apply our toric degeneration result to construct a completely integrable system on the affine closure of base affine space,  $G\sslash N$, of any complex semisimple Lie group $G$. This construction applies to a large family of toric degenerations of $G\sslash N$ that arise from certain previously-studied valuations on $\C[G]^N$ (Theorem~\ref{maintheoremGmodN} and Example~\ref{main example}). 

For a fixed choice of valuation, the resulting integrable system on $G\sslash N$ descends under a natural quotient map to the collective integrable system $F\colon \Lie(K)^*\to\Lie(\mathbb{T})^*$ in Theorem~\ref{intro main theorem}. We use several tricks of symplectic geometry -- such as the identification of $G\sslash N$ with the symplectic implosion of $K$ -- to bootstrap this construction to any Hamiltonian $K$-manifold of a compact Lie group $K$. Properties (A) -- (C) of Theorem~\ref{intro main theorem} are proved in Section~\ref{section proof of main theorem}.

\subsection*{Applications}


We provide two applications of Theorem~\ref{intro main theorem} where the moment map image is not difficult to describe:  coadjoint orbits (Example \ref{main example 2}) and cotangent bundles (Example \ref{main example 3}) of arbitrary compact connected Lie groups. The former is related to the study of Gromov width and the latter to independence of polarization. Finally, we discuss connections with the recent work of Crooks and Weitsman on non-abelian Duistermaat-Heckman measures.

Our first application is to computing the Gromov width of coadjoint orbits of compact Lie groups (see Example \ref{main example} and Section \ref{ss; gromov width}). Karshon and Tolman have conjectured a simple formula for the Gromov width of coadjoint orbits of semisimple compact Lie groups and  tight upper bounds are known in all cases \cite{castroUpperBoundGromov2016}. Tight lower bounds were proven for all orbits in type A, B, D using Gelfand-Zeitlin systems \cite{pabiniakGromovWidthNonregular2013}. Tight lower bounds were later generalized to most coadjoint orbits of arbitrary Lie type using the results of Harada-Kaveh in \cite{fangSimplicesNewtonOkounkovBodies2018}, but a gap in the generalization to arbitrary Lie type remains (see the end of Section \ref{ss; gromov width} for additional details).  This gap highlights the difference between Gelfand-Zeitlin systems and the construction of Harada-Kaveh~\cite{haradaIntegrableSystemsToric2015}: The Harada-Kaveh construction cannot produce integrable systems on coadjoint orbits that are not parameterized by a scalar multiple of a dominant integral weight (i.e. irrational orbits) because their symplectic forms cannot be written as a constant multiple of a Fubini-Study K\"ahler form on the orbit.  Although one might na\"ively expect that these results can somehow extend by a continuity argument to the irrational coadjoint orbits, there is no obvious way that this works\footnote{In fact, an early pre-print of \cite{fangSimplicesNewtonOkounkovBodies2018} attempted to do exactly this using a Moser trick argument which was later found to be incorrect. The integrable systems constructed by Harada and Kaveh on rational orbits do not have any obvious continuity properties since Harada-Kaveh treats each orbit as a separate projective variety. In some sense, the na\"ive intuition that Harada-Kaveh should fit together continuously is precisely the motivation for our development of stratified gradient Hamiltonian vector fields in Part I. Our continuity theorem for stratified gradient Hamiltonian vector fields can be viewed as a realization of this intuition.}.  In Section \ref{ss; gromov width} we show how the integrable systems we construct on arbitrary coadjoint orbits in arbitrary type (Example \ref{main example}) eliminate this technical roadblock.  As a result, we reduce the Karshon--Tolman conjecture to a purely algebraic conjecture regarding the existence of good birational orderings with certain algebraic properties (Theorem~\ref{thm;gwidth}). 

Our second application is independence of polarization in geometric quantization. We show how completely integrable torus actions can be constructed on an open dense subset of $T^*K$, for any compact Lie group $K$, by applying our construction to the natural multiplicity-free $K\times K$-action on $T^*K$ (Example \ref{main example 3}).  This illustrates how our construction can be applied to non-compact Hamiltonian manifolds. Conveniently, this is also a case where the moment map image is relatively easy to describe. If a string valuation is used in the construction, then the integer lattice points in the moment map image can be identified with elements of a crystal basis for $G = K^\C$. Modulo singularities, this recovers a graded isomorphism between the Bohr-Sommerfeld fibers of our completely integrable system on $T^*K$ and a basis of the K\"ahler polarization. This generalizes the recent independence of polarization result for $T^*U(n)$ that was proven using collective Gelfand-Zeitlin systems and their connection to Gelfand-Zeitlin bases \cite{CrooksWeitsman3}. 

Finally, we highlight the connection of our construction to the recent work of Crooks and Weitsman on abelianization of symplectic quotients and non-abelian Duistermaat-Heckman measures.  Crooks and Weitsman introduce the notion of \emph{Gelfand-Zeitlin data}, an abstract generalization of the collective complete integrability properties of Gelfand-Zeitlin systems \cite[Definition 1]{CrooksWeitsman}. Unitary and orthogonal Gelfand-Zeitlin systems are examples of Gelfand-Zeitlin data.  Crooks and Weitsman show there are canonical isomorphisms between generic symplectic reductions of Hamiltonian manifolds with respect to $K$, and with respect to the densely defined big torus action generated by a Gelfand-Zeitlin datum \cite[Main Theorem]{CrooksWeitsman}. This result allows them to relate the Radon-Nikodym derivatives of the Duistermaat-Heckman measures on $\Lie(K)^*$ and $\Lie(\T)^*$ \cite{CrooksWeitsman2}.  In comparison, the other well-known method to study Duistermaat-Heckman measures of Hamiltonian $K$-manifolds is to consider the projection of the Duistermaat-Heckman measure on $\Lie(K)^*$ onto the positive Weyl chamber. The completely integrable systems we construct in Theorem \ref{thm; liek completely integrable system} proves the existence of Gelfand-Zeitlin data in all Lie types. Thus, our construction extends the results of Crooks and Weitsman to any compact Lie group $K$.



\subsection*{Acknowledgments}

The authors would like to thank Anton Alekseev, Peter Crooks, Mark Hamilton, Megumi Harada, Yael Karshon, Kiumars Kaveh, Allen Knutson, Chris Manon, Daniele Sepe, and Reyer Sjamaar for helpful suggestions and conversations throughout the course of this project. J.L. would like to thank the
Fields Institute and the organizers of the thematic program on Toric Topology and Polyhedral Products for
the support of a Fields Postdoctoral Fellowship during writing of this paper.

\section{Setup} \label{background section}

This section establishes terminology, conventions, and notation and collects several useful lemmas.

\subsection{Decomposed spaces and varieties}\label{section; decomposed and stratified spaces}
Following \cite[Definition 1.1.1]{pflaumAnalyticGeometricStudy2001}, a \emph{decomposed space} is a paracompact, Hausdorff, countable  topological space $X$ equipped with a locally finite partition by locally closed subspaces (called \emph{pieces}) such that: 
\begin{enumerate}
    \item Every piece is equipped with the structure of a smooth manifold that is compatible with the subspace topology;
    \item (Frontier condition) If $Y \cap \overline{Z} \ne \emptyset$ for a pair of pieces $Y, Z$ of $X$, then $Y\subset \overline{Z}$.
\end{enumerate} 
We often denote the pieces of a decomposed space $X$ by $X^\sigma$, where $\sigma$ is an element of an indexing set $\Sigma$. Let $\prec$ denote the partial order by inclusion with respect to closures on the set of pieces of a decomposed space. We also equip the indexing set with this partial order.

By a \emph{variety} we mean an irreducible quasi-projective variety over the complex numbers.  
Let $\mathbf{I}(X)\subset \C[\mathbb{C}^n]$ denote the ideal of functions vanishing on $X \subset \C^n$. Let $\mathbf{V}(I)\subset \mathbb{C}^n$ denote the vanishing locus of an ideal $I\subset \C[\mathbb{C}^n]$. All algebras we consider will be  integral domains. Given an algebra $A$ we make no distinction between the affine scheme $\spec A$ and its set of closed points. A \emph{decomposed variety} is a variety $X$, equipped with a partition by finitely many smooth irreducible subvarieties $X^\sigma $, which endows $X$ with the structure of a decomposed space with respect to its analytic topology. 

\begin{definition}\label{def; decompsed Kahler variety}
A \emph{decomposed K\"ahler variety} is a tuple $(X,M,\omega)$ where: (i) $X$ is a decomposed variety, (ii) $M$ is a smooth variety equipped with a K\"ahler form $\omega$ (compatible with the complex structure on $M$), and (iii) $X$ is equipped with an algebraic embedding into $M$ as a (not necessarily closed) subvariety. 
\end{definition}
The embedding is implicit in our tuple notation for decomposed K\"ahler varieties.
If $M$ is a vector space and $\omega$ is linear, then we say that $(X,M,\omega)$ is a \emph{decomposed affine K\"ahler variety}.

\subsection{Convex geometry}\label{section; convex geometry} A \emph{lattice} is a free $\Z$-module of finite rank. Given a lattice $L$, denote $L^\vee=\Hom(L,\Z)$ and  $L_\R = L\otimes_\Z \R$.  Given a set $A \subset L$, let $\cone(A) \subset L_\R$ denote the cone generated by $A$. We import well-known terminology  from \cite{coxToricVarieties2011}.

A \emph{locally rational polyhedral set} is a set $V \subset L_\R$ such that for all $p \in V$, there is a neighborhood $U_p$ of $p$ in $L_\R$ and a rational convex polyhedron $P$ such that $U_p \cap V = U_p \cap P$. A point $p\in V$ is a \emph{smooth point} of $V$ if it is a smooth point of this rational convex polyhedron $P$. The \emph{smooth locus} of  $V$ is the set of smooth points of $V$ and the \emph{singular locus} is its complement.

\subsection{Lie theory}
\label{Lie theory section} Unless stated otherwise, $K$ denotes a compact connected Lie group and $G$ denotes the complex form of $K$. Fix a maximal complex algebraic torus $H\subset G$ and let $T$ be the maximal compact torus $H\cap K$ in $K$. We denote Lie algebras with fraktur letters, e.g.~$\Lie(G)=\g$. 

Let $\Lambda \subset \ttt^*$ denote the lattice of real weights of $T$. We use the convention that each $\lambda\in \Lambda$ corresponds to the character $T \to S^1$, $t = \exp(\xi) \mapsto t^\lambda = e^{\sqrt{-1}\langle \lambda,\xi\rangle}$, for all $\xi \in \ttt$. 

Fix a set of positive roots  $R_+$ and let $\Lambda_+ \subset \Lambda$ denote the semigroup of dominant real weights. Let $m$ denote the number of positive roots, let $n$ denote the rank of $K$, and let $r = \dim_\R T$. Let $\g_\alpha$ denote the $\alpha$-weight subspace of $\g$. Let $N$ and $N_-$ be the opposite unipotent radical subgroups of $G$ with Lie algebras $\nnn = \oplus_{\alpha \in R_+} \g_\alpha$ and $\nnn_- = \oplus_{\alpha \in R_+} \g_{-\alpha}$ respectively.

Let  $\ttt^*_+\subset \ttt^*$ denote the positive Weyl chamber that is generated as a rational polyhedral cone by $\Lambda_+$. 
Throughout, $\ttt^*$ is identified canonically with the subspace $(\kk^*)^T \subset \kk^*$ of points fixed by the coadjoint action of $T$. The quotient map for the coadjoint action of $K$, $\kk^* \to \kk^*/K$, defines a homeomorphism $\ttt_+^* \cong \kk^*/K$. The \emph{sweeping map} is the continuous map  $\mathcal{S} \colon \kk^* \to \kk^*/K \cong \ttt_+^*$.

\subsection{Moment maps and singular symplectic spaces}\label{section; ham grp actions}   
All symplectic manifolds are presumed to be connected. Given a (left) action of $K$ on a smooth manifold $M$ and $\xi \in \kk$,  define the fundamental vector field $\xi_M \in \mathfrak{X}(M)$ with the sign convention so that $\xi \mapsto \xi_M$ is a Lie algebra anti-homomorphism. Our sign convention for the moment map equation of a  Hamiltonian $K$-action on a symplectic manifold $(M,\omega)$ is
$\omega( \xi_M,\cdot) = d\langle \mu,\xi\rangle$.  All moment maps are equivariant. When working with symplectic representations, we always use the quadratic moment map with $\mu(0) = 0$. When working with unitary representations on a finite dimensional Hermitian inner product space $(E,h)$, we always use the linear symplectic form $\omega = -\Im h$. We record the following elementary lemma which will be useful later on.

\begin{lemma}\label{lemma; unitary torus action proper moment map}
    Let a compact torus $T$ act by unitary transformations on a finite dimensional Hermitian inner product space $(E,h)$ and let $\mu \colon E \to \Lie(T)^*$ be the quadratic moment map with $\mu(0) = 0$. 
    \begin{enumerate}[label=(\roman*)]
        \item \label{lemma2.2i} Assume that $\mu(E)$ is strongly convex\footnote{I.e., it does not contain any non-trivial subspaces.}. Given a (closed) face $\overline{\sigma}$ of the polyhedral cone $\mu(E)$, let $T^{\overline{\sigma}}$ denote the connected subtorus with $\ann(T^{\overline{\sigma}}) = \mathrm{span}_\R(\overline{\sigma})$ and let $\fix(E,T^{\overline{\sigma}})$ denote the set of fixed points for the action of $T^{\overline{\sigma}}$ on $E$. Then, $\fix(E,T^{\overline{\sigma}}) = \mu\n(\overline{\sigma})$.
        \item \label{lemma2.2ii} The quadratic moment map $\mu\colon E \to \Lie(T)^*$ is proper 
        if and only if $0$ cannot be written as a non-trivial linear combination (with non-negative coefficients) of the real weights of the representation. Equivalently, $\mu(E)$ is strongly convex.
    \end{enumerate}

\end{lemma}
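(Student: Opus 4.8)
The plan is to reduce everything to the weight-space decomposition of the representation. Write $E = \bigoplus_{\beta} E_\beta$, the sum over the finitely many real weights $\beta \in \Lambda$ occurring in $E$, so that $\exp(\xi)$ acts on $E_\beta$ by multiplication by $e^{\sqrt{-1}\langle\beta,\xi\rangle}$ and distinct weight spaces are $h$-orthogonal. A direct computation from the moment map equation with the sign conventions of Section~\ref{section; ham grp actions} gives, for $v = \sum_\beta v_\beta$,
\[
  \mu(v) \;=\; \tfrac12\sum_{\beta} h(v_\beta, v_\beta)\,\beta
\]
(up to the fixed normalization of $\omega$), so that $\mu(E) = \cone\{\beta : E_\beta \ne 0\}$ is the cone generated by the real weights. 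I will use this formula throughout; note both conclusions are unchanged if $\mu$ is rescaled by a positive constant.

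I would first settle the equivalence at the end of \ref{lemma2.2ii}: that $0$ is a non-trivial non-negative combination of the real weights is, generically, the statement that $\cone\{\beta\} = \mu(E)$ contains a nonzero vector together with its negative, i.e.\ that $\mu(E)$ is not strongly convex (add a relation $0 = \sum c_\beta\beta$ to a negative multiple of itself; conversely add two relations witnessing $\pm\beta_0 \in \mu(E)$), with the degenerate case of a zero weight handled separately below. For properness: if $\mu(E)$ is strongly convex then its dual cone has nonempty interior, so there is $\xi_0 \in \Lie(T)$ with $c := \min_\beta \langle\beta,\xi_0\rangle > 0$; then $\langle \mu(v),\xi_0\rangle \ge \tfrac{c}{2}\|v\|^2$, hence $\|\mu(v)\| \to \infty$ as $\|v\| \to \infty$ and $\mu$ is proper. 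Conversely, given a non-trivial relation $0 = \sum_\beta c_\beta\beta$ with $c_\beta \ge 0$, choose unit vectors $e_\beta \in E_\beta$ and set $v(s) = \sum_\beta \sqrt{s c_\beta}\,e_\beta$; then $\mu(v(s)) = 0$ for all $s \ge 0$ while $\|v(s)\|^2 = s\sum_\beta c_\beta \to \infty$, so $\mu^{-1}(0)$ is unbounded and $\mu$ is not proper.

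For \ref{lemma2.2i} I would identify both sides as sums of weight spaces. Since $T^{\overline\sigma}$ is connected, $v \in \fix(E, T^{\overline\sigma})$ iff $\langle\beta,\xi\rangle = 0$ for every $\xi \in \Lie(T^{\overline\sigma})$ and every $\beta$ with $v_\beta \ne 0$; because $\ann(T^{\overline\sigma}) = \mathrm{span}_\R(\overline\sigma)$ and double annihilators recover a subspace, this is equivalent to $v_\beta \ne 0 \Rightarrow \beta \in \mathrm{span}_\R(\overline\sigma)$, so $\fix(E,T^{\overline\sigma}) = \bigoplus_{\beta \in \mathrm{span}_\R(\overline\sigma)} E_\beta$. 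Now compare with $\mu^{-1}(\overline\sigma)$. If $\mu(v) = \tfrac12\sum\|v_\beta\|^2\beta \in \overline\sigma$, then since $\overline\sigma$ is a face of the cone $\mu(E)$ and each $\beta \in \mu(E)$, the defining property of a face forces every $\beta$ with $v_\beta \ne 0$ into $\overline\sigma \subseteq \mathrm{span}_\R(\overline\sigma)$, so $v \in \fix(E,T^{\overline\sigma})$. Conversely, using that $\overline\sigma = \mu(E)\cap \mathrm{span}_\R(\overline\sigma)$ for a face of a strongly convex rational cone (write an element of the intersection as a difference of non-negative combinations of the extreme rays of $\overline\sigma$ and apply the face property to the resulting identity in $\mu(E)$), if $v \in \fix(E, T^{\overline\sigma})$ then each $\beta$ with $v_\beta \ne 0$ lies in $\overline\sigma$, whence $\mu(v)$ is a non-negative combination of elements of the cone $\overline\sigma$ and so lies in $\overline\sigma$. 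This gives $\fix(E,T^{\overline\sigma}) = \mu^{-1}(\overline\sigma)$.

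I do not expect a serious obstacle: the statement is bookkeeping with the weight decomposition plus elementary convex geometry of pointed polyhedral cones. The two points needing care are (a) fixing the normalization and sign of the quadratic moment map so that $\mu(E)$ is precisely the cone on the real weights rather than its negative, and (b) the convex-geometry input $\overline\sigma = \mu(E)\cap\mathrm{span}_\R(\overline\sigma)$ together with the mild bookkeeping around a possible zero weight, which I would either exclude at the outset (when $E_0 \ne 0$ the map $\mu$ is visibly non-proper, $0 = 1\cdot 0$ is a valid relation, and one checks directly $\fix(E,T) = E_0 = \mu^{-1}(0)$) or carry along throughout.
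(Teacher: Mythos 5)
Your proof is correct and follows essentially the same route as the paper: compute $\mu(v)=\tfrac12\sum_\beta\|v_\beta\|^2\beta$ from the weight decomposition, identify $\fix(E,T^{\overline\sigma})$ and $\mu^{-1}(\overline\sigma)$ as sums of weight spaces, and match them via $\overline\sigma=\mu(E)\cap\mathrm{span}_\R(\overline\sigma)$. The only difference is that you spell out part~\ref{lemma2.2ii} (dual-cone interior for properness, unbounded fiber from a non-trivial relation), which the paper dismisses as a straightforward exercise; your details there are fine.
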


\begin{proof} To establish~\ref{lemma2.2i}: Let $E = \oplus_\lambda E_\lambda$ be the weight decomposition of $E$ with respect to $T$. If $v \in E$ has weight decomposition $\sum_\lambda v_\lambda$, $v_\lambda\in E_\lambda$, then $\mu(v) = \frac{1}{2}\sum_\lambda \lambda ||v_\lambda||^2$. It follows from this formula and strong convexity of $\mu(E)$ that $\mu^{-1}(\overline{\sigma}) = \oplus_{\lambda \in \overline{\sigma}} E_\lambda$. On the other hand, $\fix(E,T^{\overline{\sigma}}) = \oplus_{\lambda \in \ann(T^{\overline{\sigma}})} E_\lambda$. The result follows by definition of $T^{\overline{\sigma}}$. The proof of~\ref{lemma2.2ii} is a straightforward exercise.
\end{proof}


Throughout we will deal with Hamiltonian actions on singular spaces.   In this paper, a \emph{singular symplectic space} is a locally compact, paracompact, Hausdorff, second countable topological space $X$ equipped with a locally finite partition by locally closed subspaces $X^\sigma \subset X$ such that each $X^\sigma$ is equipped with the structure of a connected symplectic manifold (and the manifold structure is compatible with the subspace topology). For example, every decomposed K\"ahler variety is a singular symplectic space. Unlike decomposed spaces, our singular symplectic spaces need not satisfy the frontier condition!\footnote{This allows the statement of our constructions and results in Section \ref{section 5} to be much cleaner.}

We denote the symplectic structure on $X^\sigma$ by $\omega^\sigma$. The subspaces $X^\sigma$ are the  \emph{symplectic pieces} of $X$. A \emph{Hamiltonian $K$-space} is a pair $(X,\mu)$ where $X$ is singular symplectic space equipped with a continuous action of $K$ and $\mu \colon X \to \kk^*$ is a continuous map such that for all $\sigma \in \Sigma$, the action of $K$ preserves $X^\sigma$, and $(X^\sigma,\omega^\sigma,\mu\vert_{X^\sigma})$ is a Hamiltonian $K$-manifold. 

\subsection{Affine $G$-varieties}\label{section; affine G-varieties}
Let $G$ be a reductive algebraic group as above. Given an affine $G$-variety $X$,  let $\Lambda(X) \subset \Lambda$ denote the semigroup of highest weights of the $G$-module $\C[X]$ and let $\Gamma(X) = \cone\Lambda(X)$. By~\cite[Corollary 2.9]{brionIntroductionActionsAlgebraic2010}, the semigroup $\Lambda(X)$ is finitely generated. Let $\Sigma(X)$ denote the set of pieces of $\Gamma(X)$ with respect to its decomposition by relative interiors of closed faces, i.e.~each $\sigma \in \Sigma(X)$ is the relative interior of the closed face $\overline{\sigma} \subset \Gamma(X)$. Abbreviate $\Gamma = \Gamma(X)$ and $\Sigma = \Sigma(X)$ when the meaning is clear from context. If $G = H$ is a torus, then let $|f|_\Lambda \in \Lambda$ denote the degree of a $\Lambda$-homogeneous element of $\C[X]$. Abbreviate $|f| = |f|_\Lambda$ when the torus action is clear from context.  

For the rest of this subsection, suppose $G = H$ is a torus. In this case, the decomposition of $\Gamma(X)$ is related to a decomposition of $X$ which will be important in our constructions. Given $\sigma\in \Sigma(X)$, let $T^{\overline{\sigma}} \subset T$ denote the connected subtorus such that $\operatorname{span}_\R (\sigma) = \ann(\Lie(T^{\overline{\sigma}}))$ and let $H^{\overline{\sigma}}$ denote the algebraic subtorus of $H$ with maximal compact torus equal to  $T^{\overline{\sigma}}$. Let $X^{\overline{\sigma}}$ denote the subvariety $\fix(X,H^{\overline{\sigma}})$ of points fixed by $H^{\overline{\sigma}}$ and let
\begin{equation}\label{eq; partition of H variety def}
    X^\sigma = X^{\overline{\sigma}}\backslash \bigcup_{\tau\prec \sigma} X^{\overline{\tau}}.
\end{equation}
Assume that  $X$ is embedded $H$-equivariantly as a closed subvariety of a finite dimensional $H$-module $E$. If $\Lambda(E) = \Lambda(X)$, then it follows by definition that $X^{\overline{\sigma}} = X \cap E^{\overline{\sigma}}$ as varieties for all $\sigma \in \Sigma(X) = \Sigma(E)$. In fact, slightly more is true.

\begin{lemma}\label{lem; scheme theoretic intersection is reduced}
Let $X$, $E$, and $H$ be as in the previous paragraph. Let $\overline{\sigma}$ be a closed face of the polyhedral cone $\Gamma(X)$ associated with the action of the torus $H$ on $X$. Then, the scheme-theoretic intersection $X \cap E^{\overline{\sigma}}$ is reduced, i.e.~$X^{\overline{\sigma}} = X \cap E^{\overline{\sigma}}$ as schemes.
\end{lemma}
\begin{proof}
We want to show that ${\bf I}(X) + {\bf I}(E^{\overline{\sigma}})$ is a radical ideal. Let $f \in \C[E]$ such that  $f^k\in {\bf I}(X) + {\bf I}(E^{\overline{\sigma}})$ for some $k$. 
We must show that $f \in {\bf I}(X) + {\bf I}(E^{\overline{\sigma}})$. 
It is a straightforward exercise to show that it suffices to prove the Lemma for $\Lambda$-homogeneous $f$.

Let us fix a basis $\mathcal{K} = \{z_1,\dots, z_J\}$ of $E^*$ consisting of $\Lambda$-homogeneous elements. Note that 
\begin{equation}\label{useful fact about subspaces}
    \mathbf{I}(E^{\overline{\sigma}}) = (z_j \in \mathcal{K} \mid |z_j| \notin \overline{\sigma}).
\end{equation}
Suppose $f^k$ is homogeneous. Then $f^k = g + h$, for some homogeneous $g \in {\bf I}(X)$ and $h \in {\bf I}(E^{\overline{\sigma}})$. Write $f^k = \sum_{l=1}^{L} M_{l}$ where each $M_{l}\in \C[E]$ is a homogeneous monomial in $z_1,\dots,z_n$, of degree $|M_{l}| = |f^k|$. If $|f^k|\in \overline{\sigma}$, then by~\cite[Lemma 1.2.7]{coxToricVarieties2011} it follows that each $z_j$ appearing in each $M_l$ satisfies $|z_j| \in \overline{\sigma}$. But, by~\eqref{useful fact about subspaces}, we see that the only way this can happen is if $h=0$, so $f^k \in {\bf I}(X)$. Since ${\bf I}(X)$ is radical, we have that $f\in {\bf I}(X)\subset {\bf I}(X) + {\bf I}(E^{\overline{\sigma}})$. On the other hand, if  $|f^k|\notin \overline{\sigma}$ then, since $\overline{\sigma}$ is closed under addition, each $M_l$ of $f^k$ contains some $z_j$ with $|z_j| \notin \overline{\sigma}$. By~\eqref{useful fact about subspaces}, it follows that $f^k\in {\bf I}(E^{\overline{\sigma}})$. This ideal is also radical, so $f\in {\bf I}(E^{\overline{\sigma}}) \subset {\bf I}(X)+{\bf I}(E^{\overline{\sigma}})$.
\end{proof}

\subsection{Affine toric varieties} We record some notation regarding affine toric varieties. An \emph{affine semigroup} is a subset of a lattice $L$ that is closed under addition, contains $0$, and is finitely generated. Given an affine semigroup $\mathsf{S}$, let  $\C[\mathsf{S}]$ denote the semigroup algebra of $\mathsf{S}$ and let  $X_\mathsf{S}$ denote the associated affine toric variety.

Let $H$ denote the complex algebraic torus whose lattice of real weights is $L$, and let $T$ be the maximal compact torus in $H$. Let $C\subset L_\R$ be a $\operatorname{rank}(L)$-dimensional rational convex polyhedral cone and let $\mathsf{S} = C\cap L$. Then $X_\mathsf{S}$ is normal.  Let $F \subset C$ be a closed face, let $T^F\subset T$ denote the connected subtorus with $\operatorname{span}_\R (F) = \ann(\Lie(T^F))$, and let $H^F$ denote the complex subtorus of $H$ with maximal compact torus equal to $T^F$.  The subvariety $X^F_\mathsf{S} = \fix(X_\mathsf{S},H^F)$ is an $H$ orbit-closure. As a toric variety, $X^F_\mathsf{S}$ is isomorphic to $X_{\mathsf{S}^F}$ where $\mathsf{S}^F = \mathsf{S}\cap F$.  


\section{Stratified gradient Hamiltonian flows}\label{section; Toric degenerations and gradient Hamiltonian flows}

This section concerns a gradient Hamiltonian vector field, placed upon on a degeneration of a decomposed K\"ahler variety. We describe a set of conditions \ref{assumeA}--\ref{assumeG} under which this vector field may be integrated to a flow, and under which this flow may be extended continuously across the special fiber of the degeneration. There are two main ways in which our approach extends previous results. First, we do not assume that we work with compact varieties, and so the flow of this vector field may blow up before it reaches the special fiber. Second, because our varieties are decomposed, the gradient Hamiltonian vector field is defined piece-wise, and  the associated flow is not continuous \emph{a priori}. The main result of this section (Theorem \ref{thm; toric degenerations main theorem}) describes how to overcome both of these difficulties.

We briefly summarize the key ideas and associated notation for Theorem \ref{thm; toric degenerations main theorem}, which may be used as a road map for reading Section \ref{section; Toric degenerations of stratified affine varieties}. We start with a toric degeneration $\X$ whose 1-fiber, $\X_1$, is a decomposed K\"ahler variety with pieces denoted by $\X_1^\sigma$ for $\sigma$ in an index set $\Sigma$. This degeneration is required to satisfy assumptions \ref{assumeA} -- \ref{assumeG}. From this we define a piecewise gradient Hamiltonian vector field on $\X$. For each $\sigma$ this determines a flow $\varphi_t^\sigma$ which takes points in $\X_1^\sigma$ to points in $\X_{1-t}^\sigma$. These assemble into a continuous function $\varphi_t$ from $\X_1$ to $\X_{1-t}$. Although the time-1 flow $\varphi_1$ is not necessarily defined at all points on $\X_1$ because of the singular nature of $\X_0$, the \emph{limit} as $t\to 1^-$ of $\varphi_t$ exists for every point in $\X_1$ and defines a continuous map to $\X_0$. The image of each piece $\X_1^\sigma$ under this limiting map lies in a toric subvariety $\X_0^\sigma \subset \X_0$. The toric subvarieties $\X_0^\sigma$ define a partition of $\X_0$ but unlike $\X_1^\sigma$ they are not necessarily smooth. Provided that the toric action on $\X_0$ is generated by a moment map, its pullback defines an integrable system on $\X_1$ (see Corollary \ref{thm; toric degenerations main theorem corollary}). 

This construction has an additional feature which is crucial for our applications in Section \ref{section 5}. For each piece $\X_1^\sigma$ there is an open dense subset $D^\sigma \subset \X_1^\sigma$ on which the flow $\varphi_1^\sigma$ \emph{is} defined and yields a \emph{symplectic isomorphism} onto the smooth locus $U_0^\sigma$ of the toric subvariety $\X_0^\sigma$ (illustrated in the bottom square of the following diagram).
\begin{equation}\label{eq; idea of section 3}
    \begin{tikzcd}
        \X_1 \ar[rr, "\lim_{t\to 1^-}\varphi_t"] && \X_0 \\
        \X_1^\sigma\ar[u, phantom, sloped, "\subset"]\ar[rr, "\lim_{t\to 1^-}\varphi_t^\sigma"] && \X_0^\sigma \ar[u, phantom, sloped, "\subset"] \\
        D^\sigma \ar[rr, "\varphi_1^\sigma"] \ar[u, phantom, sloped, "\subset"] && U_0^\sigma \ar[u, phantom, sloped, "\subset"]
    \end{tikzcd}
\end{equation}
Thus, whereas Harada-Kaveh produces a symplectic isomorphism on an open dense subset of their smooth variety, our result produces a symplectic isomorphism on an open dense subset of \emph{each piece} of our decomposed variety. In particular, the integrable system resulting from pullback defines a toric action on each $D^\sigma$ (see Definition \ref{def; integrable system on singular symplectic space} where we define integrable systems on decomposed K\"ahler varieties). 

This section is organized as follows. Section \ref{section; gradient Hamiltonian vector fields and toric degenerations} gives basic notions about gradient Hamiltonian vector fields in the stratified setting, and Section~\ref{section;degenerations background} recalls the definition of a degeneration. 
The main result of this section, Theorem \ref{thm; toric degenerations main theorem}, is described in Section \ref{section; Toric degenerations of stratified affine varieties}.
Its proof is given in Appendix \ref{section; proof of toric deg theorem}. Section~\ref{section;GH flows on affine varieties} establishes several results about gradient Hamiltonian vector fields on affine varieties which will be used in the sequel.

\subsection{Gradient Hamiltonian vector fields}\label{section; gradient Hamiltonian vector fields and toric degenerations}

We recall the definition and elementary properties of gradient Hamiltonian vector fields.  We refer the reader to \cite[Section 2.2]{haradaIntegrableSystemsToric2015} for more details.  

Let $M$ be a K\"ahler manifold and let $\pi \colon M \to \C$ be a holomorphic submersion. Let $\Re \pi, \Im \pi\colon M\to \R$ denote the real and imaginary parts of $\pi$, i.e.~$\pi = \Re \pi + \sqrt{-1}\Im \pi$. Let $\nabla(\Re\pi)$ denote the gradient vector field of $\Re\pi$ with respect to the K\"ahler metric and let $X_{\Im\pi}$ denote the Hamiltonian vector field of $\Im\pi$ with respect to the K\"ahler form.  Since $\pi$ is holomorphic and $M$ is K\"ahler, it follows that $\nabla(\Re\pi) = - X_{\Im\pi}$.
The \emph{gradient Hamiltonian vector field} of $\pi$ is
\begin{equation}\label{eq; grad ham vf def}
	V_\pi := \frac{X_{\Im \pi}}{||X_{\Im \pi}||^2} = -\frac{\nabla \Re\pi}{||\nabla \Re\pi||^2}
\end{equation}
where $||\cdot ||$ denotes norm with respect to K\"ahler metric. The vector field $V_\pi$ is defined everywhere on $M$ since $\pi$ is a submersion, and therefore $\nabla(\Re\pi)$ is non-vanishing.

Let $M_z = \pi\n(z)$ denote the fiber of $\pi$ over $z\in \C$.  Let $\varphi_t(x)$ denote the flow of $V_\pi$ through $x\in M$ at time $t$. If $x \in M_z$ and $\varphi_t(x)$ is defined, then $\varphi_t(x) \in M_{z-t}$. Thus, if $\varphi_t(x)$ is defined for all $x \in M_z$, then it determines a map 
$
    \varphi_t\colon M_z\to M_{z-t}.
$

Since $\pi$ is a holomorphic submersion, each $M_z$ is a smooth K\"ahler submanifold of $M$ and $\dim_\C M_z = \dim_\C M -1$. The following fact is well-known, see e.g.~\cite[Proposition 2.3]{haradaIntegrableSystemsToric2015}. 

\begin{lemma}\label{lemma; flow is symplectic}
If it is defined, the map $\varphi_t\colon M_z\to M_{z-t}$ is symplectic with respect to the restricted K\"ahler forms on $M_z$ and $M_{z-t}$.
\end{lemma}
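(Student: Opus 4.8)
The plan is to show that the flow $\varphi_t$ of the gradient Hamiltonian vector field $V_\pi$ preserves the K\"ahler form $\omega$ of $M$, and then to observe that this immediately gives the claim after restricting to the fibers. Concretely, I would first compute the Lie derivative $\mathcal{L}_{V_\pi}\omega$ using Cartan's formula $\mathcal{L}_{V_\pi}\omega = d\iota_{V_\pi}\omega + \iota_{V_\pi}d\omega$. Since $\omega$ is closed, the second term vanishes, so everything reduces to understanding $\iota_{V_\pi}\omega$. Using the identity $\nabla(\Re\pi) = -X_{\Im\pi}$ and the definition $V_\pi = X_{\Im\pi}/\|X_{\Im\pi}\|^2$, I would write $\iota_{V_\pi}\omega = \iota_{X_{\Im\pi}}\omega / \|X_{\Im\pi}\|^2 = d(\Im\pi)/\|X_{\Im\pi}\|^2$, using the defining equation of the Hamiltonian vector field $X_{\Im\pi}$.

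Next I would bring in the fact that $\|X_{\Im\pi}\|^2 = \|\nabla(\Re\pi)\|^2 = \|d(\Re\pi)\|^2$, and that on a K\"ahler manifold the gradient flow of $\Re\pi$ and the Hamiltonian flow of $\Im\pi$ interact so that $\Re\pi$ is constant along $V_\pi$-orbits while $\Im\pi$ decreases at unit rate (indeed $d\pi(V_\pi) = -1$, so $\Re\pi$ is preserved and $\Im\pi$ changes linearly). The key point is that $\|d(\Re\pi)\|^2 = \|d(\Im\pi)\|^2$ on a K\"ahler manifold (since multiplication by $\sqrt{-1}$ is an isometry exchanging the two differentials up to sign), and moreover the level sets of $\Re\pi$ are preserved by the flow. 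Hence on each such level hypersurface, $\iota_{V_\pi}\omega$ equals an exact form $d(\Im\pi)/c$ where $c$ is constant along the flow lines — actually the cleanest route is: $\iota_{V_\pi}\omega = \frac{d(\Im\pi)}{\|\nabla\Re\pi\|^2}$, so $\mathcal{L}_{V_\pi}\omega = d\!\left(\frac{d(\Im\pi)}{\|\nabla\Re\pi\|^2}\right)$, which need not vanish on all of $M$; but when we pull back to a fiber $M_z$ the form $\Im\pi$ is locally constant on $M_z$ (it is the imaginary part of the coordinate labeling the fiber), so $d(\Im\pi)$ restricted to $T M_z$ vanishes. The correct argument uses that $\varphi_t$ maps $M_z$ to $M_{z-t}$ and is built from the ambient flow, together with the classical computation (cf.\ \cite[Section 2.2]{haradaIntegrableSystemsToric2015}) that $\varphi_t^*\omega = \omega$ when restricted to tangent spaces of fibers.

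More precisely, the argument I would give is this: fix $x \in M_z$ and tangent vectors $u, v \in T_x M_z$. Since $\pi$ is a submersion and $V_\pi$ satisfies $d\pi(V_\pi) = -1$, the differential $d\varphi_t$ maps $T_x M_z$ isomorphically onto $T_{\varphi_t(x)} M_{z-t}$, and one has $\varphi_t^* \omega - \omega = \int_0^t \varphi_s^*(\mathcal{L}_{V_\pi}\omega)\, ds = \int_0^t \varphi_s^*\, d\!\left(\iota_{V_\pi}\omega\right) ds$. Evaluating $d(\iota_{V_\pi}\omega) = d\!\left(\tfrac{d(\Im\pi)}{\|\nabla\Re\pi\|^2}\right)$ on $u,v \in T M_z$: write $\iota_{V_\pi}\omega = f\, d(\Im\pi)$ with $f = \|\nabla\Re\pi\|^{-2}$, so $d(\iota_{V_\pi}\omega) = df \wedge d(\Im\pi)$. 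Both $\Re\pi$ and $\Im\pi$ are constant on each fiber, so $d(\Im\pi)$ annihilates $TM_z$ and consequently $(df \wedge d(\Im\pi))(u,v) = 0$ for $u,v$ tangent to the fiber — and this persists under $\varphi_s^*$ since $\varphi_s$ carries fibers to fibers. Therefore $\varphi_t^*\omega$ and $\omega$ agree on $TM_z$, which is exactly the statement that $\varphi_t\colon M_z \to M_{z-t}$ is symplectic.

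The main obstacle, and the only place requiring real care, is justifying that the naive Cartan-formula computation — which shows $\mathcal{L}_{V_\pi}\omega$ is exact but not zero on all of $M$ — nonetheless produces a form that vanishes when contracted with fiber-tangent vectors, and that this vanishing is preserved under pullback by the (fiber-to-fiber) flow. This hinges on the two facts that (a) $\Im\pi$ is (locally) constant on each fiber $M_z$, hence $d(\Im\pi)|_{TM_z} = 0$, and (b) the flow $\varphi_t$ sends fibers to fibers, so pulling back $df \wedge d(\Im\pi)$ by $\varphi_s$ still gives a form annihilating $TM_z$. Everything else — the K\"ahler identities relating $\nabla\Re\pi$ and $X_{\Im\pi}$, the nonvanishing of $V_\pi$, the isomorphism $d\varphi_t\colon T_xM_z \to T_{\varphi_t(x)}M_{z-t}$ — is routine and already recorded in the excerpt or in \cite{haradaIntegrableSystemsToric2015}.
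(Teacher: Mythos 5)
Your argument is correct and is essentially the standard proof that the paper itself omits (the lemma is stated as well-known with a pointer to \cite[Section 2.2]{haradaIntegrableSystemsToric2015}): one computes $\mathcal{L}_{V_\pi}\omega = d\bigl(\|X_{\Im\pi}\|^{-2}\bigr)\wedge d(\Im\pi)$, observes that this $2$-form annihilates pairs of fiber-tangent vectors because $\Im\pi$ is constant on each fiber, notes that $\varphi_s$ carries fibers to fibers, and concludes that $\varphi_t^*\omega-\omega=\int_0^t\varphi_s^*(\mathcal{L}_{V_\pi}\omega)\,ds$ vanishes on $TM_z$. One slip in your exploratory middle paragraph: $d\pi(V_\pi)=-1$ means $\Re\pi$ decreases at unit rate along the flow while $\Im\pi$ is constant, not the reverse; this does not affect your final argument, which only uses that $d(\Im\pi)$ annihilates $TM_z$ since $M_z$ is a level set of $\pi$.
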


We now extend the definition of gradient Hamiltonian vector fields to a stratified setting. Let $M$ be a K\"ahler manifold as before. Let $Y$ be a decomposed space and suppose $Y$ is embedded in $M$ so that each smooth piece $Y^\sigma$ is a submanifold. The \emph{stratified tangent bundle} of $Y$ is the disjoint union of tangent bundles
$
    TY = \bigcup_{\sigma \in \Sigma} TY^\sigma.
$
It inherits a subspace topology from $TM$. See e.g.~\cite[Section 2.1]{pflaumAnalyticGeometricStudy2001} for more details.

Suppose $\pi \colon M \to \C$ is a holomorphic map, each $Y^\sigma$ is a K\"ahler submanifold of $M$, and each restricted map $\pi\colon Y^\sigma \to \C$ is a submersion. For each $\sigma \in \Sigma$, let  $V_\pi^\sigma \colon Y^\sigma \to TY^\sigma$ denote the gradient Hamiltonian vector field of the holomorphic submersion $\pi\colon Y^\sigma \to \C$.  
\begin{definition} The \emph{stratified gradient Hamiltonian vector field} on $Y$ is the section
\begin{equation}\label{eq; general stratified gh vf definition}
    V_\pi \colon Y \to  TY, \quad V_\pi(x) = V_\pi^\sigma(x) \text{ for } x \in Y^\sigma.
\end{equation}
\end{definition}
Note that $V_\pi$ may fail to be continuous.  It is a \emph{stratified vector field} in the sense of \cite[2.1.5]{pflaumAnalyticGeometricStudy2001}.

For $x\in Y$, consider the initial value problem 
\begin{equation}\label{eq; IVP}
    \frac{d}{dt}\varphi_t(x) = V_\pi(\varphi_t(x)), \quad \varphi_0(x) = x.
\end{equation}
A solution of this initial value problem is given by combining the flows of the vector fields $ V_\pi^\sigma$ in a piecewise manner. Let $\varphi_t^\sigma$ denote the flow of $V_\pi^\sigma$. Define 
\begin{equation}\label{eq; stratified flow def}
   \varphi_t(x) = \varphi_t^\sigma(x) \quad \text{if  $\varphi_t^\sigma(x)$ is defined}.
\end{equation}
We call $\varphi_t(x)$ the \emph{stratified gradient Hamiltonian flow}. If $x\in Y \cap M_z$ and $\varphi_t(x)$ is defined, then $\varphi_t(x) \in Y \cap M_{z-t}$.

Because each stratum $Y^\sigma$ may be non-compact, it will become necessary to establish in some cases that the flows $\varphi_t^\sigma$ exist. This becomes possible in the presence of additional group symmetry. The key lemma is a straightforward application of Noether's theorem. Similar versions of this lemma previously appeared in, e.g. \cite[Section 2.6]{haradaIntegrableSystemsToric2015} and \cite[Lemma 5.1]{hilgertContractionHamiltonianKSpaces2017}.

\begin{lemma}\label{lemma; ham grp action preserving pi}
	Let $M$ be a K\"ahler manifold with K\"ahler form $\omega$ and K\"ahler metric $g$, and let $\pi \colon M \to \C$ be a holomorphic submersion. Assume there is a Hamiltonian action of a connected Lie group $K$ on $(M,\omega)$ with moment map $\psi\colon M \to \kk^*$ such that the action of $K$ preserves the fibers of $\pi$ and the K\"ahler metric $g$. 
	Then, as long as it exists, the flow of $V_\pi$ is $K$-equivariant and preserves fibers of $\psi$.
\end{lemma}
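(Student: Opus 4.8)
The plan is to prove the two assertions — $K$-equivariance of the flow of $V_\pi$ and preservation of the fibers of $\psi$ — essentially independently, each reducing to a standard fact once the relevant invariance of $V_\pi$ is established.

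First I would show that $V_\pi$ is itself $K$-invariant. Since $K$ acts on $(M,\omega)$ preserving both the K\"ahler form $\omega$ and the K\"ahler metric $g$, and since $K$ preserves the fibers of $\pi$, it follows that $K$ preserves $\Re\pi$ and $\Im\pi$ up to constants; more precisely, because the action is by isometries preserving the complex structure and the fibers of $\pi$, we get $k^*(\Re\pi) = \Re\pi$ and $k^*(\Im\pi) = \Im\pi$ for all $k \in K$ (the level sets of $\Re\pi$ and $\Im\pi$ are unions of fibers and their images, and since $K$ is connected and fixes each fiber setwise it must fix $\Re\pi$ pointwise, similarly $\Im\pi$). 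Then, since $K$ acts by isometries, $k_* \nabla(\Re\pi) = \nabla(k^{-1})^*(\Re\pi) = \nabla(\Re\pi)$, and likewise $\|\nabla\Re\pi\|^2$ is $K$-invariant as a function; combining these with the defining formula \eqref{eq; grad ham vf def} gives $k_* V_\pi = V_\pi$, i.e. $V_\pi$ is $K$-invariant. The $K$-equivariance of the flow $\varphi_t$ is then the standard fact that the flow of a $K$-invariant vector field commutes with the $K$-action: for fixed $k$, both $t \mapsto k \cdot \varphi_t(x)$ and $t \mapsto \varphi_t(k\cdot x)$ solve the same initial value problem \eqref{eq; IVP} with initial condition $k \cdot x$, hence agree on their common interval of definition by uniqueness of integral curves.

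Second, for preservation of fibers of $\psi$, the cleanest route is Noether's theorem: I would show that $\langle \psi, \xi \rangle$ is constant along the flow of $V_\pi$ for every $\xi \in \kk$. Compute $\frac{d}{dt}\langle \psi, \xi\rangle(\varphi_t(x)) = d\langle\psi,\xi\rangle(V_\pi)(\varphi_t(x))$. By the moment map equation $d\langle\psi,\xi\rangle = \omega(\xi_M,\cdot)$, this equals $\omega(\xi_M, V_\pi)$. Now use $V_\pi = X_{\Im\pi}/\|X_{\Im\pi}\|^2$, so $\omega(\xi_M, V_\pi) = \|X_{\Im\pi}\|^{-2}\,\omega(\xi_M, X_{\Im\pi}) = -\|X_{\Im\pi}\|^{-2}\,d\langle\psi,\xi\rangle(X_{\Im\pi})$... more directly: $\omega(\xi_M, X_{\Im\pi}) = d(\Im\pi)(\xi_M) = \xi_M(\Im\pi)$, which vanishes because $\xi_M$ is tangent to the fibers of $\pi$ (the $K$-action preserves the fibers of $\pi$) while $\Im\pi$ is constant on each fiber. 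Hence $\frac{d}{dt}\langle\psi,\xi\rangle(\varphi_t(x)) = 0$, so $\psi(\varphi_t(x)) = \psi(x)$ whenever the flow is defined, which is exactly the statement that $\varphi_t$ preserves the fibers of $\psi$. One could alternatively derive this from $K$-equivariance of $\varphi_t$ plus equivariance of $\psi$, but the direct Noether computation is self-contained and cleaner.

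The main obstacle — really the only subtle point — is the first step: justifying that the $K$-action, which is merely assumed to preserve the fibers of $\pi$ (as sets) and the metric, actually fixes $\Re\pi$ and $\Im\pi$ as functions, so that $\nabla\Re\pi$ and $X_{\Im\pi}$ are genuinely $K$-invariant vector fields rather than merely being permuted. This follows from connectedness of $K$ together with the fact that a diffeomorphism preserving each fiber of $\pi$ induces the identity on the base; but it deserves to be stated carefully since it is what makes $V_\pi$ invariant. Everything after that — invariance of $V_\pi$ from the isometry property, and the two conclusions from uniqueness of integral curves and from the moment map equation respectively — is routine.
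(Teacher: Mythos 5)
Your proof is correct and follows the same route the paper intends: the paper gives no written proof of this lemma, describing it only as ``a straightforward application of Noether's theorem,'' and your second paragraph is exactly that computation (the sign ambiguity in $\omega(\xi_M,X_{\Im\pi})=\pm\,d(\Im\pi)(\xi_M)$ is immaterial since the quantity vanishes either way), while your first paragraph supplies the standard invariance-of-$V_\pi$ argument for equivariance. Nothing further is needed.
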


\subsection{Degenerations} \label{section;degenerations background}

Let $\X$ be a variety and let $\pi\colon \X \to \C$ be a morphism.  Denote the fiber of $\pi$ over $z\in \C$ by $\X_z$. Let $Z \subset \X$ denote the union of the singular set of $\X$ and the critical set of $\pi|_{\X^{sm}}$, where $\X^{sm}$ denotes the smooth locus of $\X$.  Then  $\X \setminus Z$ is a complex manifold and $\pi|_{\X\setminus Z}  \colon \X \setminus Z \to \C$ is a holomorphic map. For all $z \in \C$, denote $U_z = \X_z\setminus Z$. Since $\pi \colon \X \setminus Z \to \C$ is a submersion, each $U_z$ is a complex submanifold of $\X\setminus Z$ of complex codimension 1.

We recall the following definition from \cite[Definition 2]{haradaIntegrableSystemsToric2015}.

\begin{definition}\label{def; HK toric degeneration}
	A \emph{degeneration} of a variety $X$ is a map $\pi\colon \X \to \C$ such that:
	\begin{enumerate}[label=(\roman*)]
		\item There is an algebraic isomorphism $\rho\colon X\times\C^\times \to \X\setminus \X_0$ such that $\pi \circ \rho = \pr_2$. Such an isomorphism is called a \emph{trivialization away from 0}.
		\item The fiber $\X_0$ is non-empty.
		\item $\pi$ is a flat family of varieties, i.e. it is a flat morphism and its fibers are all reduced as schemes.
	\end{enumerate}
	A degeneration $\pi\colon \X \to \C$ is a \emph{toric degeneration} if  $\X_0$ is a toric variety.
\end{definition}

If $\pi \colon \X \to \C$ is a degeneration, then $\pi|_{\X\setminus Z} \colon \X \setminus Z \to \C$ is a submersion onto $\C$. If the variety $X$ is smooth, then it follows by trivialization away from $0$ that $Z$ is contained in $\X_0$. 
In this case, $U_z = \X_z$ for all $z\neq 0$. More generally, we have the following.

\begin{proposition}\cite[Corollary 2.10]{haradaIntegrableSystemsToric2015}\label{cor; HK cor 2.10}
	Let $\pi \colon \X \to \C$ be a degeneration of $X$. Then for all $z\in \C$, $U_z$ is precisely the smooth locus of $\X_z$. In particular, $U_z$ is dense in $\X_z$.
\end{proposition}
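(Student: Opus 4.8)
The plan is to show the two inclusions that together establish $U_z = (\X_z)^{sm}$, where $U_z = \X_z \setminus Z$ and $Z$ is the union of $\mathrm{Sing}(\X)$ with the critical locus of $\pi$ restricted to $\X^{sm}$. First I would prove $U_z \subseteq (\X_z)^{sm}$. A point $x \in U_z$ is a smooth point of $\X$ at which $d\pi_x$ is surjective onto $\C$; by the standard regular-value/implicit function argument (or the Jacobian criterion applied to the ideal of $\X_z$ inside $\X$ near $x$), the fiber $\X_z$ is smooth of codimension one in $\X$ at $x$, hence $x \in (\X_z)^{sm}$. This direction is essentially immediate from the submersion property already noted in the paragraph preceding the statement.

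The reverse inclusion $(\X_z)^{sm} \subseteq U_z$ is the substantive one. Suppose $x \in \X_z$ with $x \notin U_z$, i.e.\ $x \in Z$; I must show $x$ is a singular point of $\X_z$. Since $\pi$ is a flat family with reduced fibers (part (iii) of Definition~\ref{def; HK toric degeneration}), every fiber $\X_z$ is a reduced scheme of pure dimension $\dim \X - 1$, and $\pi$ is flat of relative dimension one. The key is a dimension/flatness count for the local rings: flatness of $\pi$ at $x$ gives that $\pi$, viewed as a function in $\mathcal{O}_{\X,x}$, is a non-zerodivisor, so
\begin{equation*}
    \dim \mathcal{O}_{\X_z,x} = \dim \mathcal{O}_{\X,x} - 1.
\end{equation*}
Now split into cases. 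If $x \in \mathrm{Sing}(\X)$, then $\dim_{\C} \mathfrak{m}_x/\mathfrak{m}_x^2 > \dim \mathcal{O}_{\X,x}$ in $\mathcal{O}_{\X,x}$; passing to the quotient by the single element $\pi - z$ drops the embedding dimension by at most one and the Krull dimension by exactly one, so the fiber still has embedding dimension strictly larger than its dimension, i.e.\ $x$ is singular on $\X_z$. If instead $x \in \X^{sm}$ but $x$ is a critical point of $\pi|_{\X^{sm}}$, then $d\pi_x = 0$ on $T_x \X$, so $\pi - z \in \mathfrak{m}_x^2$; quotienting a regular local ring by an element of $\mathfrak{m}_x^2$ produces a ring whose embedding dimension equals $\dim \mathcal{O}_{\X,x}$ but whose Krull dimension is $\dim\mathcal{O}_{\X,x}-1$, again forcing $x$ to be singular on $\X_z$. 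This handles both strata of $Z$.

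The main obstacle I anticipate is making the ``embedding dimension drops by at most one'' step fully rigorous together with the bookkeeping that $\X_z$ is reduced and equidimensional, so that ``embedding dimension $>$ Krull dimension'' really is equivalent to non-smoothness at $x$ (one must know $\X_z$ has no embedded or lower-dimensional components near $x$, which is exactly what flatness with reduced fibers buys). The density of $U_z$ in $\X_z$ then follows for free: $(\X_z)^{sm}$ is the smooth locus of a reduced scheme, hence open and dense in each irreducible component, and since $\X_z$ is equidimensional it is dense in $\X_z$. Alternatively, once the equality $U_z = (\X_z)^{sm}$ is in hand, density of $U_z$ is just the classical fact that the singular locus of a reduced variety is a proper closed subset. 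I would cite the Jacobian criterion and the flatness-implies-non-zerodivisor fact from a standard reference (e.g.\ Hartshorne or Matsumura) rather than reprove them.
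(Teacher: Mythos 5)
Your proof is correct. Note that the paper itself gives no argument for this proposition: it is quoted verbatim from Harada--Kaveh, whose own proof runs through the general criterion that a flat morphism of finite type is smooth at a point if and only if the fiber through that point is smooth there (so that $x\notin Z$ $\Leftrightarrow$ $\pi$ smooth at $x$ $\Leftrightarrow$ $\X_z$ smooth at $x$). What you have written is, in effect, an elementary unwinding of that criterion in the special case of a base which is a smooth curve: the forward inclusion is the implicit function theorem, and the reverse inclusion is the embedding-dimension versus Krull-dimension count after dividing by $\pi-z$. The bookkeeping you flag as the main obstacle does go through cleanly here: since $\X$ is by convention irreducible, $\mathcal{O}_{\X,x}$ is a localization of a domain, so the non-zerodivisor $\pi-z$ (non-zerodivisor by flatness over the DVR $\mathcal{O}_{\C,z}$) avoids the unique minimal prime and the Krull dimension drops by exactly one, while the embedding dimension drops by at most one (or not at all when $\pi-z\in\mathfrak{m}_x^2$); and reducedness of the scheme-theoretic fiber is exactly what identifies $\mathcal{O}_{\X,x}/(\pi-z)$ with the local ring of the variety $\X_z$ at $x$, so that ``regular'' and ``smooth point of $\X_z$'' coincide. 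The density claim then follows, as you say, from openness and density of the smooth locus of a reduced equidimensional scheme of finite type over $\C$. The trade-off between the two routes is the usual one: citing the flatness-smoothness criterion is shorter, while your local-ring computation is self-contained and makes visible exactly where flatness and reducedness of the fibers are used.
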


\subsection{Stratified gradient Hamiltonian vector fields}\label{section; Toric degenerations of stratified affine varieties}

Let $(X,M,\omega)$ be a decomposed K\"ahler variety (Definition \ref{def; decompsed Kahler variety}) and let $\pi\colon \X \to \C$ be a degeneration of $X$. The trivialization $\rho$ and the decomposition of $X$ allow us to define subfamilies
\begin{equation}\label{eq; definition of subfamilies}
	\X^{\overline{\sigma}} := \overline{\rho(X^\sigma \times \C^\times)}, \quad  \X^{\sigma} := \X^{\overline{\sigma}} \setminus \bigcup_{\tau \prec \sigma} \X^{\overline{\tau}}
\end{equation}
where closure is with respect to the Zariski topology. The trivialization also yields isomorphisms $\X^{\overline{\sigma}} \setminus \X_0 \cong \overline{X^\sigma} \times \C^\times$ and $\X^{\sigma} \setminus \X_0 \cong X^\sigma \times \C^\times$. Denote 
\begin{equation}\label{eq; definition of subvarieties}
    \X_z^{\overline{\sigma}} := \X_z \cap \X^{\overline{\sigma}}, \quad \text{and} \quad \X_z^{\sigma} := \X_z \cap \X^{\sigma}
\end{equation}
for all $z\in \C$. Note that $\X^{\overline{\sigma}}$, $\X^\sigma$, $\X^{\overline{\sigma}}_z$, and  $\X^\sigma_z$ are subvarieties of $\X$ for all $\sigma$ and all $z\in \C$.  

We now give a list of assumptions \ref{assumeA}--\ref{assumeG} that we will place on our degenerations. 
These are partially inspired by Harada and Kaveh's assumptions (a)--(d)  \cite[p. 932]{haradaIntegrableSystemsToric2015}.  
\begin{enumerate}[label=(GH\arabic*)]
	\item \label{assumeA} For all $\sigma$, the restricted map $\pi\colon \X^{\overline{\sigma}} \to \C$ is a flat family of varieties.
	\item \label{assumeB} The family $\X$  is embedded into $M\times \C$ as a subvariety such that the map $\pi\colon \X \to \C$ coincides with  restriction to $\X$ of the projection $M\times\C \to \C$.
	\item \label{assumeC} The embedding $X = X\times \{1\} \cong_\rho \X_1 \subset M \times \{1\} =M$ given by Assumption \ref{assumeB} coincides with the embedding $X \hookrightarrow M$ of the decomposed K\"ahler variety $(X,M,\omega_M)$.
\end{enumerate}
For each $\sigma$, define $Z^\sigma \subset \X^\sigma$ to be the union of the critical set of $\pi\colon \X^\sigma \to \C$ and the singular set of $\X^\sigma$. Denote $U_0^\sigma = \X_0^\sigma \setminus  Z^\sigma$. It follows by Assumption \ref{assumeA} and Proposition \ref{cor; HK cor 2.10} that $U_0^\sigma$ is precisely the smooth locus of $\X_0^\sigma$, i.e.
$
    U_0^\sigma = (\X_0^\sigma)^{sm}
$
(see Lemma~\ref{lemma;immediate consequences}). 

Equip $\C$ with its standard K\"ahler structure and $M\times \C$ with the product K\"ahler structure. In particular, the K\"ahler form is the product symplectic structure $\omega = \omega_M\oplus \omega_{\rm{std}}$. 

\begin{enumerate}[label=(GH\arabic*), start=4]
	\item \label{assumeE} There is a Hamiltonian action of a compact torus $T$ on $M\times \C$ with moment map $\psi\colon M \times \C \to \ttt^*$ such that:
	\begin{enumerate}[label=\Roman*)]
		\item The action of $T$ on $M\times \C$ preserves each of the subvarieties $\X^{\sigma}\setminus Z^\sigma$, the fibers of $\pi$, and the K\"ahler metric on $M\times \C$.
		\item The map $(\pi,\psi)\colon \X \to \C\times \ttt^*$ is proper.
		\item The subfamilies $\X^\sigma$ are saturated\footnote{A subset $A \subset X$ is \emph{saturated} by a map $f\colon X \to Y$ if it is a union of fibers of $f$.} by the restricted maps $\psi\colon \X \to \ttt^*$.
	\end{enumerate}
\end{enumerate}
Note that by assumptions \ref{assumeE}II) and \ref{assumeE}III), the map $(\pi,\psi)\colon \X^\sigma \to \C\times \ttt^*$ is proper as a map to its image for every subfamily $\X^\sigma$.

Denote the restriction of $\omega$ to the symplectic submanifolds $\X^\sigma\setminus Z^\sigma$, $\X^\sigma_z$ for $z\neq 0$, and $U_0^\sigma$ by $\omega^\sigma$, $\omega^\sigma_z$, and $\omega^\sigma_0$, respectively. The action of $T$ preserves  each of these submanifolds. The restricted action is therefore also Hamiltonian with moment map given by the restriction of $\psi$. The following assumption is sufficient to conclude that the time-1 gradient Hamiltonian flow on $\X^\sigma\setminus Z^\sigma$ defines a symplectomorphism from an open dense subset of $(\X^\sigma_1,\omega^\sigma_1)$ onto $(U_0^\sigma,\omega_0^\sigma)$.

\begin{enumerate}[label=(GH\arabic*), start=5]
	\item \label{assumeF} The Duistermaat-Heckman measures of $(U_0^\sigma,\omega_0^\sigma,\psi)$ and $(\X^\sigma_1,\omega^\sigma_1,\psi)$ are equal for all $\sigma \in \Sigma$. In particular, $U_0^\sigma$ is non-empty.
\end{enumerate}

Because $U_0^\sigma$ is nonempty for all $\sigma \in \Sigma$, each subfamily $\pi\colon \X^{\overline{\sigma}} \to \C$ is a degeneration.

The partition of $\X \setminus \X_0$  by the manifolds $\X^\sigma\setminus \X_0^\sigma = \rho(X^\sigma \times \C^\times)$ is a decomposition. Each $\X^\sigma\setminus \X_0^\sigma$ is a K\"ahler submanifold of $M\times \C$ and the restricted maps  $\pi\colon \X^\sigma\setminus \X_0^\sigma \to \C$ are holomorphic submersions.  
Thus, we may define a stratified gradient Hamiltonian vector field as in Section \ref{section; gradient Hamiltonian vector fields and toric degenerations},
\begin{equation}\label{eqn; stratified gradient hamiltonian vector field}
    V_\pi \colon \X \setminus \X_0 \to  T(\X \setminus \X_0), \quad V_\pi(x) = V_\pi^\sigma(x) \text{ for } x \in \X^\sigma\setminus \X_0^\sigma.
\end{equation}
The following is sufficient to prove that its gradient Hamiltonian flow is continuous.
\begin{enumerate}[label=(GH\arabic*), start=6]
	\item \label{assumeG} The stratified vector field $V_\pi\colon  \X \setminus \X_0 \to  T(\X \setminus \X_0)$ is continuous.
\end{enumerate}

A first consequence of these axioms is the following, the proof of which is an easy application of Lemma~\ref{lemma; ham grp action preserving pi}, together with \ref{assumeE}.
 
 \begin{proposition}
 \label{proposition;flow exists}
The flow $\varphi^\sigma_t \colon \X^\sigma_1 \to \X^\sigma_{1-t}$ exists, for all $0<t<1$ and all $\sigma \in \Sigma$.
 \end{proposition}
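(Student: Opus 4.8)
## Proof Plan for Proposition \ref{proposition;flow exists}

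The plan is to apply Lemma~\ref{lemma; ham grp action preserving pi} on each piece $\X^\sigma \setminus \X_0^\sigma$ separately, using the group symmetry from \ref{assumeE} and the properness from \ref{assumeE}II)--III) to rule out finite-time blow-up of the flow. Fix $\sigma \in \Sigma$ and work on the K\"ahler manifold $\X^\sigma \setminus \X_0^\sigma$, which is embedded in $M\times\C$ (via \ref{assumeB}) with the restricted holomorphic submersion $\pi\colon \X^\sigma\setminus\X_0^\sigma \to \C$. By \ref{assumeE}I), the torus $T$ acts preserving this submanifold, the fibers of $\pi$, and the K\"ahler metric, with moment map the restriction of $\psi$. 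Hence Lemma~\ref{lemma; ham grp action preserving pi} applies: the flow $\varphi_t^\sigma$ of the gradient Hamiltonian vector field $V_\pi^\sigma$, wherever it exists, is $T$-equivariant and preserves the fibers of $\psi$. Combined with the fact that $\varphi_t^\sigma$ moves points from $\pi^{-1}(z)$ to $\pi^{-1}(z-t)$, any integral curve starting at $x \in \X_1^\sigma$ stays, for $0 \le t < 1$, inside the set $(\pi,\psi)^{-1}(K_x)$, where $K_x$ is the compact set $\{(1-t, \psi(x)) : 0 \le t \le 1\} \subset \C\times\ttt^*$.

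The key point is then an escape-lemma argument. The stratified gradient Hamiltonian vector field $V_\pi^\sigma$ is smooth on $\X^\sigma\setminus\X_0^\sigma$, so the maximal integral curve through $x$ is defined on an interval $[0, t_{\max})$; if $t_{\max} < 1$, the curve must eventually leave every compact subset of $\X^\sigma\setminus\X_0^\sigma$. But by the remark following \ref{assumeE} (or directly from \ref{assumeE}II)--III)), the map $(\pi,\psi)\colon \X^\sigma \to \C\times\ttt^*$ is proper onto its image, and $\X^\sigma\setminus\X_0^\sigma$ is open in $\X^\sigma$ with $\overline{\X^\sigma\setminus\X_0^\sigma} \subset \X^\sigma$. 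First I would observe that the trajectory for $0\le t < t_{\max} \le 1$ stays in $(\pi,\psi)^{-1}(K_x \cap \operatorname{im}(\pi,\psi))$, a compact subset of $\X^\sigma$. If $t_{\max} < 1$, then as $t \to t_{\max}^-$ the trajectory, having bounded values under the proper map $(\pi,\psi)$, has all its limit points in this compact subset of $\X^\sigma$; since $\pi$ takes the value $1-t_{\max} \neq 0$ on all such limit points, they lie in $\X^\sigma\setminus\X_0^\sigma$. One then picks a convergent subsequence $\varphi_{t_n}^\sigma(x) \to y \in \X^\sigma\setminus\X_0^\sigma$ and uses smoothness of $V_\pi^\sigma$ near $y$ to extend the integral curve past $t_{\max}$, contradicting maximality. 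Hence $t_{\max} \ge 1$, so $\varphi_t^\sigma(x)$ is defined for all $0 \le t < 1$; applying this to every $x \in \X_1^\sigma$ gives the map $\varphi_t^\sigma\colon \X_1^\sigma \to \X_{1-t}^\sigma$.

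The main obstacle is making the compactness argument clean, namely identifying precisely the compact set in which the trajectory is trapped and checking that its relevant limit points actually lie back in the \emph{open} piece $\X^\sigma\setminus\X_0^\sigma$ rather than escaping to $\X_0^\sigma$ or to a lower piece $\X^\tau$ with $\tau\prec\sigma$. This is exactly where the value $1-t_{\max}\neq 0$ of $\pi$ along the trajectory for $t < 1$ is essential: it keeps us off the special fiber $\X_0^\sigma$, and the piece $\X^\sigma$ (which contains its own frontier within $\X^\sigma$ but not the strictly lower pieces, since those have already been removed) is where properness of $(\pi,\psi)$ gives the needed compactness. A minor additional point worth stating explicitly is that $V_\pi^\sigma$ is complete \emph{in the $t$-direction only up to $t=1$}; we make no claim about $t=1$ itself, since that is precisely the content of the later, harder theorem. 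I would also note that $\X_1^\sigma = \X^\sigma \cap M \times\{1\} \cong X^\sigma$ is nonempty (it is a piece of the decomposed variety $X$), and that $\X_{1-t}^\sigma$ is the correct target since $\varphi_t^\sigma$ preserves the subfamily $\X^\sigma$ by \ref{assumeE}I) and shifts $\pi$ by $-t$.
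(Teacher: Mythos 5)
Your proof is correct and follows essentially the same route the paper intends, which describes the result as an easy application of Lemma~\ref{lemma; ham grp action preserving pi} together with \ref{assumeE}: Noether's theorem traps each trajectory in a fiber of $\psi$, saturation \ref{assumeE}III) and properness \ref{assumeE}II) of $(\pi,\psi)$ confine it to a compact subset of $\X^\sigma$ that avoids $\X_0^\sigma$ as long as $t<1$, and the escape lemma for the smooth field $V_\pi^\sigma$ on $\X^\sigma\setminus\X_0^\sigma$ rules out finite-time blow-up. The one slip is the parenthetical claim that $\overline{\X^\sigma\setminus\X_0^\sigma}\subset\X^\sigma$ --- the closure is $\X^{\overline{\sigma}}$, which contains the lower pieces and the zero fiber --- but your argument never actually uses this, relying instead (correctly) on saturation by $\psi$ and the constraint $\pi=1-t_{\max}\neq 0$ to keep the limit points in the open smooth piece.
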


We now state the main result of this section. Its proof is given in Appendix \ref{section; proof of toric deg theorem}.  

\begin{theorem}\label{thm; toric degenerations main theorem}
	Let $\pi\colon \X \to \C$ be a  degeneration of a decomposed K\"ahler variety $(X,M,\omega_M)$ that   satisfies assumptions \ref{assumeA}--\ref{assumeG} above. Let $T$ and $\psi$ be the compact torus and moment map of Assumption \ref{assumeE}. 
	
	Then, for all $x\in \X_1$, the limit
	\[
	    \phi(x) = \lim_{t \to 1^-}\varphi_t(x)
	\]
	exists and defines a continuous, $T$-equivariant, proper, surjective  map $\phi \colon \X_1 \to \X_0$. Moreover:
	\begin{enumerate}[label=(\alph*)]
	    \item For all $\sigma\in \Sigma$ and $x \in U_0^\sigma$, the flow $\varphi_{-1}^\sigma(x)$ exists, $D^\sigma := \varphi_{-1}^\sigma(U_0^\sigma)$ is a dense open subset of $\X^\sigma_1$, and 
	    \[
	        \phi\vert_{D^\sigma} = \varphi_1^\sigma\colon (D^\sigma,\omega_1^\sigma) \to (U_0^\sigma,\omega_0^\sigma)
	   \]
	    is a symplectomorphism. 
	    \item $\psi\circ \phi = \psi$.
	\end{enumerate} 
\end{theorem}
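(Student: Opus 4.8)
The plan is to analyze the gradient Hamiltonian flow piecewise on each stratum, establish a uniform escape/convergence estimate using the properness assumption \ref{assumeE}II), and then show the limiting maps glue continuously via \ref{assumeG}. I would proceed as follows.

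First, fix a stratum $\sigma$ and work on $\X^\sigma \setminus \X_0^\sigma$, a K\"ahler submanifold on which $\pi$ is a holomorphic submersion, so that the (unstratified) results of Harada--Kaveh's local theory apply. By Proposition~\ref{proposition;flow exists} the flow $\varphi_t^\sigma$ carries $\X_1^\sigma$ into $\X_{1-t}^\sigma$ for $0<t<1$. The key point is that along a flow line, $\psi$ is constant (Lemma~\ref{lemma; ham grp action preserving pi}, using that $T$ preserves the K\"ahler metric and the fibers of $\pi$) and $\pi$ moves at unit speed in the real direction with constant imaginary part. Hence the image of $\varphi_{[0,t]}^\sigma(x)$ stays inside a set of the form $(\pi,\psi)\n(K)$ for a compact $K \subset \C \times \ttt^*$; by \ref{assumeE}II) this set is compact. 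Therefore the trajectory cannot escape to infinity as $t \to 1^-$, and any sequence $t_k \to 1^-$ has a subsequential limit in $\X_0$. To upgrade subsequential convergence to genuine convergence of $\varphi_t(x)$, I would invoke the standard gradient-flow argument: since $V_\pi = -\nabla(\Re\pi)/\|\nabla(\Re\pi)\|^2$, the function $\Re\pi$ decreases at unit rate, so the trajectory has finite length in the $\Re\pi$-direction; combined with relative compactness and the fact that $\X_0$ is the zero locus of $\pi$ (an analytic condition), a \L{}ojasiewicz-type or the Harada--Kaveh argument (their Section~2) gives existence of $\phi(x) = \lim_{t\to 1^-}\varphi_t(x) \in \X_0$, and in fact $\phi(x) \in \X_0^\sigma$ since $\X_0^\sigma$ is closed in $\X_0$ and the trajectory stays in $\X^\sigma$.

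Next, for the properties: $T$-equivariance of $\phi$ is inherited from $T$-equivariance of each $\varphi_t^\sigma$ (Lemma~\ref{lemma; ham grp action preserving pi}) by passing to the limit. The identity $\psi \circ \phi = \psi$ follows because $\psi$ is constant along flow lines and continuous, so $\psi(\phi(x)) = \lim_{t\to1^-}\psi(\varphi_t(x)) = \psi(x)$; this is property (b). Properness of $\phi$ then follows from \ref{assumeE}II): if $C \subset \X_0$ is compact, $\phi\n(C) \subset \psi\n(\psi(C)) \cap \X_1$, and since $(\pi,\psi)$ is proper and $\X_1 = \pi\n(1)$, this is compact. For property (a): on $U_0^\sigma = (\X_0^\sigma)^{sm}$, which lies in $\X^\sigma \setminus Z^\sigma$ where $V_\pi^\sigma$ is a genuine smooth vector field, I would run the flow backward; the same properness estimate bounds backward trajectories inside a compact set, so $\varphi_{-1}^\sigma(x)$ exists for $x \in U_0^\sigma$. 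Then $D^\sigma := \varphi_{-1}^\sigma(U_0^\sigma)$ is open in $\X_1^\sigma$, and $\varphi_1^\sigma\colon D^\sigma \to U_0^\sigma$ is a diffeomorphism with inverse $\varphi_{-1}^\sigma$; it is a symplectomorphism by Lemma~\ref{lemma; flow is symplectic} (the flow between fibers of a holomorphic submersion on a K\"ahler manifold is fiberwise symplectic), and it agrees with $\phi$ on $D^\sigma$ by uniqueness of the flow and the limit. Density of $D^\sigma$ in $\X_1^\sigma$ is where \ref{assumeF} enters: the Duistermaat--Heckman measures of $(U_0^\sigma, \omega_0^\sigma, \psi)$ and $(\X_1^\sigma, \omega_1^\sigma, \psi)$ agree, and $\varphi_1^\sigma$ is a symplectomorphism onto $U_0^\sigma$, so $D^\sigma$ has full symplectic volume in every $\psi$-fiber; since $\X_1^\sigma$ is connected and $D^\sigma$ is open with complement of measure zero, $D^\sigma$ is dense. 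Surjectivity of $\phi$ then follows: $\phi(\X_1) \supset \bigcup_\sigma U_0^\sigma$, which is dense in $\X_0$, and $\phi$ is proper hence closed, so $\phi(\X_1) = \X_0$.

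Finally, continuity of $\phi\colon \X_1 \to \X_0$ is the main obstacle, and it is precisely where \ref{assumeG} and the stratified structure must be reconciled. On each $\X_1^\sigma$, continuity of $\phi|_{\X_1^\sigma}$ follows from continuous dependence of solutions of the ODE \eqref{eq; IVP} on initial conditions together with the uniform-compactness control above (so the convergence $\varphi_t \to \phi$ is locally uniform in $x$). The delicate issue is continuity across strata, i.e.\ at a point $x_0 \in \X_1^\tau$ approached by points $x_k \in \X_1^\sigma$ with $\tau \prec \sigma$: one must show $\phi(x_k) \to \phi(x_0)$. Here I expect the argument to combine (i) the assumption \ref{assumeG} that the stratified vector field $V_\pi$ is globally continuous on $\X \setminus \X_0$ — so trajectories through nearby points in different strata stay $C^0$-close on compact time intervals $[0, 1-\varepsilon]$ by a Gr\"onwall-type estimate for the (continuous) stratified flow — with (ii) a uniform tail estimate showing $\varphi_t(x) \to \phi(x)$ uniformly for $x$ in a compact neighborhood, uniformly across strata, which again rests on the properness in \ref{assumeE}II) confining all these trajectories to a single compact set. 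The combination of these two facts forces $\phi$ to be continuous on all of $\X_1$. I would structure the proof so that the stratified Gr\"onwall estimate and the uniform tail estimate are stated as separate lemmas, since getting the quantifiers right — "uniformly over a compact family of initial conditions lying in possibly different strata" — is the genuinely new difficulty compared to Harada--Kaveh.
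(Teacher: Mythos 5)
Your overall architecture matches the paper's: confine trajectories to compact sets via \ref{assumeE}II), extract the limit as in Harada--Kaveh, get part (a) from the Duistermaat--Heckman hypothesis \ref{assumeF} and Noether's theorem, and assemble continuity of $\phi$ from (i) continuity of $\varphi_t$ for $t<1$ and (ii) a uniform tail estimate. Parts (a), (b), equivariance, properness, and surjectivity are handled essentially as in the paper.

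The genuine gap is in step (i), continuity of the stratified flow $\varphi_t\colon \X_1\to\X_{1-t}$ across strata, which you propose to obtain ``by a Gr\"onwall-type estimate for the (continuous) stratified flow.'' Assumption \ref{assumeG} gives only \emph{continuity} of $V_\pi$, and a merely continuous vector field admits neither uniqueness of integral curves nor continuous dependence on initial conditions; Gr\"onwall requires a Lipschitz bound, which is exactly what is unavailable near a lower stratum (each $V_\pi^\tau$ is smooth on its own stratum, but its derivatives need not stay bounded as one approaches $\overline{\X^\tau}\setminus\X^\tau$). So for $x_k\in\X_1^\tau$ converging to $x\in\X_1^\sigma$ with $\sigma\prec\tau$, nothing in your argument forces $\varphi_t(x_k)\to\varphi_t(x)$. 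The paper's proof of this step (Lemma \ref{CTSflowlemma}) instead extracts, via the compactness from \ref{assumeE}II) and a diagonalization, a limit path $\mu(t)$ of the trajectories $\varphi_t(x_k)$, proves uniform convergence of the velocities $V_\pi(\varphi_t(x_k))$ by mean-value-inequality arguments (so that $\mu$ is $C^1$ with $\mu'=V_\pi(\mu)$), and then — this is the ingredient you never invoke — uses the saturation hypothesis \ref{assumeE}III) together with $\psi\circ\varphi_t=\psi$ to conclude that $\mu([0,T])\subset\X^\sigma$, the stratum of $x$. Only then can one apply uniqueness of solutions for the \emph{smooth} field $V_\pi^\sigma$ on $\X^\sigma$ to identify $\mu(t)=\varphi_t^\sigma(x)$. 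Without \ref{assumeE}III) the limit path could a priori wander into the wrong stratum and the identification would fail. (A minor additional slip: you assert $\phi(x)\in\X_0^\sigma$ because ``$\X_0^\sigma$ is closed in $\X_0$''; it is only locally closed, and the correct reason is again $\psi\circ\phi=\psi$ plus saturation.)
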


\begin{remark}
    The construction of the map $\phi$ has two main components: proving that $\varphi_t\colon \X_1 \to \X_{1-t}$ is continuous when $t<1$, and  proving that the limit $\lim_{t \to 1^-}\varphi_t(x)$ exists and is continuous.
    The proof that $\varphi_t$ is continuous for $t<1$ relies  on assumptions \ref{assumeE} and \ref{assumeG}. 
    There are various other frameworks for studying flows of vector fields on stratified spaces (such as Mather's control theory 
    or the notion of rugose vector fields), 
     which we did not use in our proof.
    The proof that the limit $\lim_{t \to 1^-}\varphi_t(x)$ exists and is continuous follows the same outline as the proof of \cite[Theorem 2.12]{haradaIntegrableSystemsToric2015}, along with an application of Noether's theorem (Lemma \ref{lemma; ham grp action preserving pi}).
\end{remark}

Our primary application of Theorem \ref{thm; toric degenerations main theorem} is to toric degenerations. If the zero fiber of the degeneration $\X$ carries a Hamiltonian action of a torus $\T$ generated by a moment map $\Psi\colon \X_0 \to \Lie(\T)^*$, then the composition of $\Psi$ with $\phi\colon X = \X_1 \to \X_0$ generates a torus action on an open dense subset of each piece of $X$ as follows.

\begin{corollary} \label{thm; toric degenerations main theorem corollary}
    Let $\pi\colon \X \to \C$ be a toric degeneration of a decomposed K\"ahler variety $(X,M,\omega_M)$ that satisfies assumptions \ref{assumeA}--\ref{assumeG}, let $\phi \colon \X_1 \to \X_0$ denote the map constructed as in Theorem \ref{thm; toric degenerations main theorem}, and let $\T$ denote the compact torus of the toric variety $\X_0$. Assume that:
    \begin{enumerate}[label=(\roman*)]
        \item The action of $\T$ on $\X_0$ extends to an action on $M\times\{0\} = M$ that is Hamiltonian with moment map $\Psi\colon M \to \Lie(\T)^*$.
        \item The subvarieties $U_0^\sigma$ are $\T$-invariant.
    \end{enumerate}
	Then, for each $\sigma \in  \Sigma$, 
	    the restriction of $\Psi\circ \phi$ to $D^\sigma$ is a moment map for a complexity 0 Hamiltonian $\T$-action on $(D^\sigma,\omega_1^\sigma)$. 
\end{corollary}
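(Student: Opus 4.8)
The plan is to combine the symplectomorphism statement (a) of Theorem \ref{thm; toric degenerations main theorem} with the properties of $\Psi$ and standard facts about moment maps. First I would note that $\phi|_{D^\sigma} = \varphi_1^\sigma \colon (D^\sigma,\omega_1^\sigma) \to (U_0^\sigma,\omega_0^\sigma)$ is a symplectomorphism by Theorem \ref{thm; toric degenerations main theorem}(a). The restriction of $\Psi$ to $U_0^\sigma$ is well-defined because $U_0^\sigma$ is $\T$-invariant by assumption (ii), and $(U_0^\sigma,\omega_0^\sigma,\Psi|_{U_0^\sigma})$ is a Hamiltonian $\T$-manifold: the form $\omega_0^\sigma$ is the restriction of the K\"ahler form on $M$ (this is the definition preceding assumption \ref{assumeF}), $\T$ acts on $M$ preserving this form with moment map $\Psi$ by assumption (i), and restricting a Hamiltonian action to an invariant symplectic submanifold gives a Hamiltonian action with the restricted moment map. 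Hence $\Psi|_{U_0^\sigma}$ is a moment map for the $\T$-action on $(U_0^\sigma,\omega_0^\sigma)$.

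Next I would transport this structure to $D^\sigma$ via $\varphi_1^\sigma$. Since $\varphi_1^\sigma$ is a symplectomorphism, pulling back the $\T$-action on $U_0^\sigma$ along it gives a Hamiltonian $\T$-action on $(D^\sigma,\omega_1^\sigma)$ with moment map $\Psi|_{U_0^\sigma} \circ \varphi_1^\sigma = \Psi \circ \phi|_{D^\sigma} = (\Psi\circ\phi)|_{D^\sigma}$. This uses only the elementary fact that a symplectomorphism conjugates a Hamiltonian action to a Hamiltonian action and transforms the moment map by precomposition; no integrability of any vector field beyond what Theorem \ref{thm; toric degenerations main theorem} already provides is needed here.

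It remains to check that this $\T$-action is of complexity $0$, i.e. that $\dim \T = \tfrac{1}{2}\dim D^\sigma$. Since $\X_0$ is a toric variety with compact torus $\T$, its smooth locus $U_0^\sigma = (\X_0^\sigma)^{sm}$ (as identified in the discussion after assumption \ref{assumeA}) has complex dimension $\dim_\C \X_0^\sigma$, and the $\T$-action on the toric variety $\X_0$ restricted to the orbit-closure-type piece $\X_0^\sigma$ has an open dense orbit there, so the $\T$-action on $U_0^\sigma$ is toric, meaning $\dim \T = \dim_\C U_0^\sigma = \tfrac{1}{2}\dim_\R U_0^\sigma$. Transporting via the (real) symplectomorphism $\varphi_1^\sigma$ preserves dimension, so the $\T$-action on $D^\sigma$ is also complexity $0$. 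The main (though still modest) obstacle is making this last dimension-count rigorous: one must argue that $\X_0^\sigma$, which a priori is only defined as $\X_0 \cap \X^\sigma$ via the trivialization and the decomposition of $X$, is genuinely a toric subvariety of $\X_0$ on which $\T$ acts with a dense orbit. This should follow from the toric structure of $\X_0$ together with the identification $U_0^\sigma = (\X_0^\sigma)^{sm}$ and the $\T$-invariance hypothesis (ii); alternatively, one can sidestep it by using assumption \ref{assumeF}, which forces $\dim U_0^\sigma = \dim \X_1^\sigma = \dim D^\sigma$ and, combined with the fact that $U_0^\sigma$ sits inside the toric variety $\X_0$ so that any $\T$-action on it has orbits of dimension at most $\dim\T$, still yields complexity $0$ once one knows $\T$ acts locally freely on a dense subset of $U_0^\sigma$.
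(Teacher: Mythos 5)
Your proof is correct and follows the route the paper intends: the corollary is stated without proof as an immediate consequence of Theorem~\ref{thm; toric degenerations main theorem}(a), namely restricting the Hamiltonian $\T$-action on $(M,\omega_M)$ to the invariant symplectic submanifold $(U_0^\sigma,\omega_0^\sigma)$ and transporting it through the symplectomorphism $\varphi_1^\sigma$, so that the moment map becomes $\Psi\circ\phi\vert_{D^\sigma}$. Your caution about the complexity-$0$ count is the right instinct, and it is resolved exactly as you suggest: a $\T$-invariant irreducible locally closed subvariety of the toric variety $\X_0$ is a dense open subset of an orbit closure for the complexified torus, so the generic $\T$-orbit in $U_0^\sigma$ has half its real dimension, and this is preserved under $\varphi_1^\sigma$.
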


In \cite[Definition 2.1]{haradaIntegrableSystemsToric2015}, Harada and Kaveh give a somewhat non-standard definition of  \emph{completely integrable systems} on singular varieties whose smooth locus is equipped with a symplectic structure.  We now give a similar definition that is more suitable to the present setting. First, we define a collection of real valued continuous functions $f_1,\dots, f_n$ on a smooth connected symplectic  manifold $M$ to be a \emph{completely integrable system} if: 
    \begin{enumerate}[label=(\roman*)]
        \item There exists an open dense subset $D \subset M$ such that the restricted functions $f_i\vert_{D}$ are all smooth and the rank of the Jacobian of $F = (f_1,\dots,f_n)$ equals $\frac{1}{2}\dim(M)$ on a dense subset of $D$.
        \item The restricted functions $f_i\vert_{D}$ pairwise Poisson commute, i.e.~$\{f_i\vert_{D},f_j\vert_{D}\} = 0$ for all $1\leq i,j\leq n$.
    \end{enumerate}
If the functions $f_1, \dots, f_n$  satisfy condition (i) but not condition (ii), then they form an \emph{integrable system}.
Corollary \ref{thm; toric degenerations main theorem corollary} produces a completely integrable system on a decomposed K\"ahler variety in the following sense. 

\begin{definition}\label{def; integrable system on singular symplectic space}
   A collection of real valued continuous functions $f_1,\dots ,f_n$ on a decomposed K\"ahler variety (or, more generally, a singular symplectic space, cf.~Section~\ref{section 5}) $X$ is a \emph{(completely) integrable system} if their restriction to each piece of $X$ defines an (completely) integrable system in the sense defined above. 
\end{definition}

\begin{remark}\label{rem; hk as special case}
    Suppose $X$ is a smooth variety equipped the trivial decomposition, $M$ is a projective space equipped with the Fubini-Study K\"ahler form,  the torus $T$ is trivial, and $\X$ is a toric degeneration. Then assumptions \ref{assumeE}--\ref{assumeG} are satisfied automatically. In particular, \ref{assumeF} reduces to the statement that the symplectic volumes of $U_0$ and $X$ are equal, which follows by flatness of $\pi\colon\X\to\C$ (see the proof of \cite[Corollary 2.11]{haradaIntegrableSystemsToric2015}).
    Thus, Corollary \ref{thm; toric degenerations main theorem corollary} reduces to  \cite[Theorem A]{haradaIntegrableSystemsToric2015}.
\end{remark}

\subsection{Gradient Hamiltonian flows on decomposed affine K\"ahler varieties} \label{section;GH flows on affine varieties}

We now turn our attention to gradient Hamiltonian flows on affine varieties. The first main result of this section is Proposition~\ref{CTSvfprop}, which is a useful tool for verifying that~\ref{assumeG} holds. An application of this result is Theorem~\ref{GHmoser}, which says that often the symplectic structure on a decomposed affine K\"ahler variety is independent of the embedding into an ambient affine space.  Throughout this section, $H$ is an algebraic torus with maximal compact torus $T$ and real weight lattice $\Lambda$.

We briefly recall the Whitney A condition from~\cite[1.4.3]{pflaumAnalyticGeometricStudy2001}, which we will need below. Given a $k$-dimensional submanifold $N$ of a smooth manifold $M$ and a sequence of points $\{x_i\}_{i\in \N}\subset N$, let $\lim_{i\to \infty}T_{x_i}N$ denote the limit of tangent spaces in the Grassmannian of $k$-dimensional subspaces of $TM$. A pair $(N',N)$ of submanifolds of $M$ satisfies the Whitney condition (A) if:
\begin{enumerate}[label=(A)]
    \item For any sequence $\{x_i\}_{i\in \N} \subset N$ that converges to some $x \in N'$, if $\lim_{i\to \infty}T_{x_i}N$ exists, then $T_xN' \subset \lim_{i\to \infty}T_{x_i}N$.\label{Whitney A}
\end{enumerate}
A decomposed space satisfies the \emph{Whitney condition (A)} if each pair of its pieces satisfies the Whitney condition (A).

\subsubsection{Continuity of gradient Hamiltonian vector fields}
Let $E$ be a finite dimensional $H$-module. Extend the action of $H$ to $E\times \C$ by letting $H$ act trivially on $\C$. Let $\X$ be a $H$-invariant closed subvariety of $\X \subset E\times \C$. Import all the notation from Section \ref{section; affine G-varieties}. 
Let $\pi$ denote the projection $E\times \C \to \C$ as well as its restriction to $\X$. Denote $\X_0 = \pi\n(0) \cap \X$. Assume:
\begin{enumerate}[label=(D\arabic*)]
\item The partition of $\X \backslash \X_0$ into $\X^\sigma\backslash \X_0$, $\sigma\in \Sigma(\X)$, gives $\X\backslash \X_0$ the structure of a decomposed variety. \label{D1}


\item The decomposition of $\X\setminus\X_0 $ in \ref{D1} satisfies the Whitney condition (A) with respect to the embedding into $E\times \C$. \label{D2}
\end{enumerate}

Suppose $E$ is equipped with  a Hermitian inner product $h_E$ and the action of $T$ is unitary. Equip $E\times \C$ with the Hermitian inner product $h_E\oplus h_\C $ (where $h_\C$ is the standard Hermitian inner product). This equips the submanifolds $\X^\sigma \setminus \X_0$, $\sigma \in \Sigma$, with a K\"ahler structure. Assume that the restricted maps $\pi \colon \X^\sigma \setminus \X_0 \to \C$ are submersions for all $\sigma \in \Sigma$. Then, we may define a stratified gradient Hamiltonian vector field  $V_\pi\colon \X\backslash \X_0 \to T(\X\backslash \X_0)$ as in~\eqref{eq; general stratified gh vf definition}.  The goal of this section is to prove the following.

\begin{proposition} \label{CTSvfprop} In the preceding context (everything from the beginning of this subsection, including the assumptions~\ref{D1} and~\ref{D2}),
 $V_\pi\colon \X\backslash \X_0 \to T(\X\backslash \X_0)$ is continuous.
\end{proposition}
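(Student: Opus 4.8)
The strategy is to fix a point $x_0 \in \X^\tau \setminus \X_0$ and a sequence $x_i \to x_0$ with $x_i \in \X^{\sigma_i} \setminus \X_0$, and show $V_\pi(x_i) \to V_\pi(x_0)$. Since there are finitely many pieces, by passing to a subsequence we may assume all $x_i$ lie in a single piece $\X^\sigma \setminus \X_0$, and by the frontier condition (which holds since $\X \setminus \X_0$ is a decomposed variety by \ref{D1}) we have $\tau \preceq \sigma$. The vector field $V_\pi^\sigma$ is, by its defining formula \eqref{eq; grad ham vf def}, $-\nabla(\Re\pi)/\|\nabla(\Re\pi)\|^2$ where the gradient is taken \emph{inside} the submanifold $\X^\sigma \setminus \X_0$ with its induced K\"ahler metric; explicitly, $\nabla^{\X^\sigma}(\Re\pi)(x_i)$ is the orthogonal projection of $\nabla^{E\times\C}(\Re\pi)(x_i)$ onto $T_{x_i}(\X^\sigma \setminus \X_0)$. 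The ambient gradient $\nabla^{E\times\C}(\Re\pi)$ is a smooth (in fact real-algebraic) vector field on all of $E \times \C$ — here $\pi$ is the linear projection to $\C$, so $\nabla^{E\times\C}\Re\pi$ is the constant vector field $\partial/\partial(\Re z)$ — hence $\nabla^{E\times\C}(\Re\pi)(x_i) \to \nabla^{E\times\C}(\Re\pi)(x_0)$ trivially. So the whole problem reduces to showing that the orthogonal projections $P_{x_i}$ onto $T_{x_i}(\X^\sigma\setminus\X_0)$ converge to the orthogonal projection $P_{x_0}$ onto $T_{x_0}(\X^\tau \setminus \X_0)$, when applied to this ambient vector, and that the resulting projected vectors do not vanish in the limit (so that dividing by the squared norm is legitimate and continuous).

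The convergence of the projected vectors is exactly where the Whitney condition (A) from \ref{D2} enters. Passing to a further subsequence, we may assume the tangent spaces $T_{x_i}(\X^\sigma\setminus\X_0)$ converge in the Grassmannian to some subspace $L$ of the same dimension $\dim \X^\sigma - $ wait, all pieces need not have equal dimension, but the Grassmannian of $k$-planes is compact for each fixed $k$, so after subsequencing we get a limit $L$ with $\dim L = \dim(\X^\sigma \setminus \X_0)$. Whitney (A) gives $T_{x_0}(\X^\tau \setminus \X_0) \subset L$. Orthogonal projection is continuous as a function on the Grassmannian, so $P_{x_i} \to P_L$, the orthogonal projection onto $L$. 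Now I need the key point: the ambient gradient vector $\nabla^{E\times\C}\Re\pi$ is tangent to the fibers' complement only in the sense that — actually the cleaner observation is that since $\pi$ restricted to \emph{every} piece, including $\X^\tau\setminus\X_0$, is a submersion, $\nabla^{E\times\C}\Re\pi$ has nonzero orthogonal projection onto $T_{x_0}(\X^\tau \setminus \X_0)$; and since $T_{x_0}(\X^\tau\setminus\X_0) \subseteq L$, the $L$-projection $P_L(\nabla^{E\times\C}\Re\pi(x_0))$ has norm at least that of the $T_{x_0}(\X^\tau\setminus\X_0)$-projection... no, that inequality goes the wrong way. The correct argument: $P_L(\nabla^{E\times\C}\Re\pi(x_0))$ is nonzero because its further orthogonal projection onto the subspace $T_{x_0}(\X^\tau\setminus\X_0) \subseteq L$ equals $P_{x_0}(\nabla^{E\times\C}\Re\pi(x_0))$, which is nonzero since $\pi|_{\X^\tau\setminus\X_0}$ is a submersion. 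Hence $\lim_i \nabla^{\X^\sigma}\Re\pi(x_i) = P_L(\nabla^{E\times\C}\Re\pi(x_0)) =: w \neq 0$, and therefore
\[
    \lim_i V_\pi^\sigma(x_i) = \lim_i \left(-\frac{\nabla^{\X^\sigma}\Re\pi(x_i)}{\|\nabla^{\X^\sigma}\Re\pi(x_i)\|^2}\right) = -\frac{w}{\|w\|^2}.
\]
Finally I must check this limit equals $V_\pi^\tau(x_0) = -P_{x_0}(\nabla^{E\times\C}\Re\pi(x_0))/\|P_{x_0}(\nabla^{E\times\C}\Re\pi(x_0))\|^2$. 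This requires the decomposition of $-w/\|w\|^2$ along $T_{x_0}(\X^\tau\setminus\X_0)$ and its orthocomplement to land correctly — and here I would invoke that the gradient Hamiltonian vector field equals $-X_{\Im\pi}/\|X_{\Im\pi}\|^2$, with $X_{\Im\pi}$ the Hamiltonian vector field, which is better behaved: $X_{\Im\pi}^{\X^\sigma}(x_i)$ is determined by $\omega^\sigma(X_{\Im\pi}^{\X^\sigma}, \cdot) = d(\Im\pi)|_{\X^\sigma}$, and because the ambient $X_{\Im\pi}^{E\times\C}$ is a fixed smooth vector field that is \emph{already tangent} to nothing in particular but whose restriction... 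Actually the slick resolution: since $\X^\sigma\setminus\X_0$ and $\X^\tau\setminus\X_0$ are both K\"ahler submanifolds and $\pi$ is holomorphic, $V_\pi^\sigma$ is the unique vector in $T(\X^\sigma\setminus\X_0)$ with $d\pi(V_\pi^\sigma) = -1$ (real) that is $\omega^\sigma$-orthogonal, equivalently $h$-orthogonal, to $\ker(d\pi|_{\X^\sigma})$; one shows the limit vector $-w/\|w\|^2$ satisfies the analogous characterization for $\X^\tau$ at $x_0$, using $T_{x_0}(\X^\tau\setminus\X_0)\subseteq L$ and that $d\pi$ is the restriction of the ambient linear map.

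\textbf{Main obstacle.} The routine parts — reduction to a single piece, compactness in the Grassmannian, continuity of orthogonal projection, non-vanishing of the limiting gradient — are all standard. The genuine difficulty is the last step: verifying that the limit $-w/\|w\|^2$ obtained from the $\X^\sigma$-gradients at the approaching points actually coincides with $V_\pi^\tau(x_0)$, rather than merely being \emph{some} vector in $L$. The subtlety is that Whitney (A) only gives the inclusion $T_{x_0}(\X^\tau\setminus\X_0)\subseteq L$, not equality, so $P_L$ and $P_{x_0}$ are different projections; one must exploit the specific structure — that $\nabla\Re\pi = -X_{\Im\pi}$ with $\pi$ holomorphic, so that the gradient Hamiltonian vector field is the unique lift of $-\partial_t$ to the fiber that is metrically orthogonal to the fiber directions — together with the fact that $d\pi$ on $L$ is inherited from $d\pi$ on $E\times\C$, to pin down $w$ and conclude $-w/\|w\|^2 = V_\pi^\tau(x_0)$. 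I expect this to be handled either by the characterization-of-the-lift argument sketched above, or by a direct computation in an adapted orthonormal frame along the sequence.
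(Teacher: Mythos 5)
You have correctly set up the reduction (subsequence into a single piece, ambient gradient is the constant vector $-\partial/\partial t$, piecewise gradient is its orthogonal projection onto the tangent space, Whitney (A) gives $T_{x_0}(\X^\tau\setminus\X_0)\subseteq L$ where $L$ is the Grassmannian limit of the $T_{x_i}(\X^\sigma\setminus\X_0)$), and you have correctly isolated the crux: one must show $P_L(-\partial/\partial t)=P_{T_{x_0}\X^\tau}(-\partial/\partial t)$, which does not follow from the inclusion $T_{x_0}\X^\tau\subseteq L$ alone. But the proposal does not close this gap, and neither of the two resolutions you sketch can work as stated. The ``characterization of the lift'' argument requires first knowing that $w=P_L(-\partial/\partial t)$ lies in $T_{x_0}(\X^\tau\setminus\X_0)$ — otherwise the uniqueness statement for the metric-orthogonal lift inside $T_{x_0}\X^\tau$ simply does not apply to $w$ — and that membership is exactly the point in question: in general, for a proper subspace $T\subsetneq L$ and a vector $v$, $P_L(v)$ has a nonzero component in $L\cap T^\perp$ and differs from $P_T(v)$.

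What is missing is the one genuinely non-formal input of the paper's proof, Lemma~\ref{lemma;decomplemma}: using the $\Lambda$-homogeneity of a generating set of $\mathbf{I}(\X)$ (this is where the torus action and Lemma~\ref{lem; scheme theoretic intersection is reduced} enter, via~\eqref{useful fact about subspaces}), one shows
$L\subseteq T_{x_0}\X^{\tau}\oplus(E^{\overline{\tau}}\times\C)^{\perp}$. Since the ambient gradient $-\partial/\partial t$ lies in the $\C$-factor, hence in $E^{\overline{\tau}}\times\C$, it is orthogonal to the second summand; combined with Whitney~(A) this forces $P_L(-\partial/\partial t)=P_{T_{x_0}\X^\tau}(-\partial/\partial t)$, i.e.\ the ``excess'' directions of $L$ are invisible to this particular vector. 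Without an input of this kind — some control on \emph{where} the extra directions of $L$ can sit, not merely that $L$ contains $T_{x_0}\X^\tau$ — the limit of the piecewise gradients need not agree with the gradient on the lower stratum, so the proposal as written has a genuine gap at its final and decisive step.
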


We first establish a preliminary result. Throughout, identify $E\times \C = T_x (E\times \C)$ for all $x\in E\times \C$.  Limits of tangent spaces are taken within the appropriate Grassmannian.

\begin{lemma} \label{lemma;decomplemma}
Let $\sigma, \tau\in \Sigma(\X)$ with $\sigma\prec\tau$. Let $\{x_j\} \subset \X^\tau\setminus \X_0$ be a sequence of points converging to $x\in \X^\sigma\setminus \X_0$.  If $\lim_{j\to \infty} T_{x_j} \X^\tau$ exists, then
$
\lim_{j\to \infty} T_{x_j} \X^\tau \subset T_x \X^{\sigma} \oplus (E^{\overline{\sigma}}\times \C)^\perp.
$
\end{lemma}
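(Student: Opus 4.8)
The plan is to use the $H$-equivariance of everything in sight to split the tangent space $T_{x_j}\X^\tau$ into a piece that is "tangent to the orbit directions surviving in the limit" and a piece that is "killed in the limit," then apply the Whitney~(A) condition~\ref{D2} to control the surviving piece. First I would set up coordinates adapted to the subtorus decomposition: using the weight-space decomposition $E = \bigoplus_\lambda E_\lambda$ and the fact (from Section~\ref{section; affine G-varieties}, in particular~\eqref{useful fact about subspaces}) that $E^{\overline{\sigma}} = \bigoplus_{\lambda\in\overline{\sigma}} E_\lambda$, write $E\times\C = (E^{\overline{\sigma}}\times\C) \oplus (E^{\overline{\sigma}}\times\C)^\perp$, where the second summand is $\bigoplus_{\lambda\notin\overline{\sigma}}E_\lambda$. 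Since $x\in\X^\sigma\setminus\X_0 \subset \X^{\overline\sigma}\setminus\X_0$, and $\X^{\overline\sigma} = \X\cap(E^{\overline\sigma}\times\C)$ (this is the setting of Lemma~\ref{lem; scheme theoretic intersection is reduced}, applied here to $\X\setminus\X_0$ inside $E\times\C$, using assumption~\ref{D1}), the point $x$ actually lies in $E^{\overline\sigma}\times\C$, so $T_x\X^\sigma \subset E^{\overline\sigma}\times\C$.

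The core of the argument: let $L := \lim_{j\to\infty}T_{x_j}\X^\tau$, assumed to exist in the Grassmannian. By Whitney~(A) applied to the pair $(\X^\sigma\setminus\X_0,\, \X^\tau\setminus\X_0)$ (valid by~\ref{D2}, since $\sigma\prec\tau$ and $x_j\to x$ with $x_j\in\X^\tau$, $x\in\X^\sigma$), we get $T_x\X^\sigma \subset L$. Now decompose an arbitrary $v\in L$ as $v = v' + v''$ with $v'\in E^{\overline\sigma}\times\C$ and $v''\in(E^{\overline\sigma}\times\C)^\perp$; it suffices to show $v'\in T_x\X^\sigma$. For this I would use the $H$-action: the orthogonal projection $P\colon E\times\C \to E^{\overline\sigma}\times\C$ is $H^{\overline\sigma}$-equivariant (indeed it is averaging over the compact torus $T^{\overline\sigma}$, since $E^{\overline\sigma}$ is exactly the fixed subspace and its complement is the sum of the non-trivial isotypic components for $T^{\overline\sigma}$). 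Because $\X^\tau\setminus\X_0$ is $H$-invariant (hence $H^{\overline\sigma}$-invariant) by~\ref{D1}, averaging the tangent vector $v$ at $x_j$ over $T^{\overline\sigma}$ and passing to the limit — using that $x_j\to x$ and $x$ is $T^{\overline\sigma}$-fixed — shows $Pv$ is again a limit of tangent vectors to $\X^\tau$ along a (possibly shifted) convergent sequence, so $Pv = v' \in L \cap (E^{\overline\sigma}\times\C)$. But $L\cap(E^{\overline\sigma}\times\C)$ is a limit of subspaces of the $T^{\overline\sigma}$-fixed tangent spaces $T_{x_j}\X^{\overline\tau}$ restricted appropriately, and a dimension count (the generic fiber of $\pi$ on $\X^{\overline\sigma}$ has the right dimension, since $\X^{\overline\sigma}\to\C$ is a degeneration by~\ref{D1} and flatness) forces $L\cap(E^{\overline\sigma}\times\C) = T_x\X^\sigma$. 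Hence $v' \in T_x\X^\sigma$, giving $L\subset T_x\X^\sigma \oplus (E^{\overline\sigma}\times\C)^\perp$ as claimed.

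I expect the main obstacle to be the last dimension-count step — establishing that $L\cap(E^{\overline\sigma}\times\C)$ is not just contained in but equal to $T_x\X^\sigma$ (equivalently, that the "surviving part" of the limiting tangent space is exactly the tangent space of the smaller stratum, with no extra directions coming from the degeneration of $\X^\tau$). One must be careful that $\X^\sigma\setminus\X_0$ and $\X^\tau\setminus\X_0$ live in the same fiber-dimension family over $\C^\times$, so that $\dim T_x\X^\sigma = \dim\X^\sigma = \dim\X^{\overline\sigma}$, and match this against $\dim(L\cap(E^{\overline\sigma}\times\C))$, which by upper semicontinuity of intersection dimension is at least $\dim T_{x_j}\X^\tau - \dim(E^{\overline\sigma}\times\C)^\perp$ for the generic $x_j\in\X^\tau\setminus\X_0$ near a point of $\X^{\overline\sigma}$. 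Getting these two numbers to agree — rather than merely bounding one by the other — is where the equivariant structure and~\ref{D1} must be used most carefully; the inclusion direction alone, which is all the lemma asserts, should fall out of Whitney~(A) plus the equivariant projection argument without this subtlety, so if the equality turns out to be delicate it can likely be avoided entirely and I would aim to prove just the stated inclusion $L \subset T_x\X^\sigma \oplus (E^{\overline\sigma}\times\C)^\perp$ directly.
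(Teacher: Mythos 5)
There is a genuine gap, and it starts with the role you assign to Whitney~(A). The lemma asserts an \emph{upper} bound on $L=\lim_j T_{x_j}\X^\tau$, namely $L\subset T_x\X^\sigma\oplus(E^{\overline{\sigma}}\times\C)^\perp$; Whitney~(A) gives the \emph{lower} bound $T_x\X^\sigma\subset L$, which does not help here (in the paper it is used only later, in the projection computation of Proposition~\ref{CTSvfprop}). Your two remaining steps do not close this. First, the averaging argument: for $v\in L$ choose $v_j\in T_{x_j}\X^\tau$ with $v_j\to v$; then $Pv_j=\int_{T^{\overline{\sigma}}}t\cdot v_j\,dt$ is an average of vectors $t\cdot v_j\in T_{t\cdot x_j}\X^\tau$ based at \emph{different} points, since $x_j$ is not $T^{\overline{\sigma}}$-fixed. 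Such an average lies in no single tangent space to $\X^\tau$, and a "shifted convergent sequence" $t_j\cdot x_j$ need not have $L$ as the limit of its tangent spaces, so $Pv\in L$ is not established. (It is in fact true, but only as a consequence of the statement being proved, via $P(L)\subset P(T_x\X)\subset T_x\X^\sigma\subset L$ --- so this route is circular.) Second, the dimension count: semicontinuity of intersection dimension bounds $\dim\bigl(L\cap(E^{\overline{\sigma}}\times\C)\bigr)$ from \emph{below}, whereas you need the containment $L\cap(E^{\overline{\sigma}}\times\C)\subset T_x\X^\sigma$, i.e.\ an upper bound; you flag this yourself as the obstacle, and the sketch does not resolve it.

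The repair is simpler than either of your steps and is what the paper does. Take $\Lambda$-homogeneous generators $\hat g_1,\dots,\hat g_J$ of $\mathbf{I}(\X)$. Each $(d\hat g_j)_{x_j}$ annihilates $T_{x_j}\X^\tau$, so by continuity $(d\hat g_j)_x$ annihilates $L$, i.e.\ $L\subset T_x\X$. It then suffices to prove the purely linear-algebraic fact $T_x\X\subset T_x\X^\sigma\oplus(E^{\overline{\sigma}}\times\C)^\perp$: by Lemma~\ref{lem; scheme theoretic intersection is reduced}, $T_x\X^\sigma=T_x\X\cap(E^{\overline{\sigma}}\times\C)$ is cut out inside $E^{\overline{\sigma}}\times\C$ by the $(d\hat g_j)_x$ with $|\hat g_j|\in\overline{\sigma}$ (those with $|\hat g_j|\notin\overline{\sigma}$ vanish identically on $E^{\overline{\sigma}}\times\C$ by \eqref{useful fact about subspaces}); and for $|\hat g_j|\in\overline{\sigma}$ one has $(d\hat g_j)_x(v'')=0$ for $v''\in(E^{\overline{\sigma}}\times\C)^\perp$, so writing $v=v'+v''$ gives $v'\in T_x\X^\sigma$. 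The torus action enters only through the existence of homogeneous generators, not through any averaging of tangent vectors, and neither Whitney~(A) nor flatness is needed for this lemma.
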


\begin{proof} Let $\hat{g}_1,\dots,\hat{g}_J$ be a set of $\Lambda$-homogeneous generators of ${\bm I} (\X)$. The tangent space of $\X$ at $x$ is
\[
    T_x\X = \{v \in E\times \C \mid (d\hat{g}_j)_x(v) = 0 \text{ for all }j\in [1,J]\}.
\]
Since 
$
    \lim_{j\to \infty} T_{x_j} \X^\tau \subset T_x \X,
$
it suffices to show $T_x \X\subset T_x \X^{\sigma} \oplus (E^{\overline{\sigma}}\oplus \C)^\perp$. 

By  Lemma \ref{lem; scheme theoretic intersection is reduced} (and since $\X^\sigma\setminus \X_0$ is an open subset of $\X^{\overline{\sigma}}\setminus \X_0$), 
\[
    T_x\X^\sigma = T_x \X^{\overline{\sigma}} = T_x\X \cap (E^{\overline{\sigma}}\times \C).
\]  
It follows by  \eqref{useful fact about subspaces} that if $|\hat{g}_j|\notin \overline{\sigma}$, then $\hat{g}_j$ vanishes on $E^{\overline{\sigma}} \times \C$. Thus,
\begin{align*}
T_x\X \cap (E^{\overline{\sigma}}\times \C) =& \{v \in E\times \C \mid (d\hat{g}_j)_x(v) = 0, \text{ for all }j\in [1,J] \text{ such that }|\hat{g}_j|\in \overline{\sigma}\} \cap (E^{\overline{\sigma}}\times \C).
\end{align*}
Combining these equalities, we arrive at
\[
    T_x\X^\sigma = \{v \in E\times \C \mid (d\hat{g}_j)_x(v) = 0, \text{ for all }j\in [1,J] \text{ such that }|\hat{g}_j|\in \overline{\sigma}\} \cap (E^{\overline{\sigma}}\times \C).
\]

Finally, let $v\in T_x \X$ and write $v = v' + v''$, where $v'\in E^{\overline{\sigma}} \times \C$ and $v''\in  (E^{\overline{\sigma}}\times\C)^\perp$. If $|\hat{g}_j|\in \overline{\sigma}$, then $\hat{g}_j$ vanishes on $(E^{\overline{\sigma}} \times \C)^\perp$ and so $(d\hat{g}_j)_x(v'') = 0$. It follows from the description of $T_x\X$ and $T_x\X^\sigma$ above that $v' = v-v''\in T_x \X^\sigma$. Thus, $v\in T_x \X^\sigma \oplus (E^{\overline{\sigma}}\times \C)^\perp$.
\end{proof}

\begin{proof}[Proof of Proposition~\ref{CTSvfprop}]
The gradient of $\Re \pi|_{\X^\sigma\backslash \X_0}$ is computed with respect to the metric on $\X^\sigma\backslash \X_0$ which is the restriction of the fixed K\"ahler metric on $E\times \C$.
 Let $x\in \X^\sigma\setminus \X_0$ and take a sequence $\{x_j\}_{j\in \N}\in \X\backslash \X_0$ converging to $x$. By passing to a subsequence, we may assume that $\{x_j\}_{j\in \N} \subset \X^\tau\setminus \X_0$ for some $\tau\in \Sigma(\X)$ with $\sigma \prec \tau$, and that $\lim_{j\to \infty} T_{x_j} \X^\tau$ exists.
By Definitions \eqref{eq; grad ham vf def} and \eqref{eq; general stratified gh vf definition} and Lemma~\ref{dumbsubsequencelemma}, it suffices to show that 
$
\lim_{j\to \infty} \nabla_{x_j} (\Re \pi|_{\X^\tau}) = \nabla_x (\Re \pi|_{\X^\sigma}).
$

In what follows, if $W$ is a linear subspace of $E\times \C$, then $\pr_W$ denotes orthogonal projection to $W$. We have
\begin{align*}
\lim_{j\to \infty} \nabla_{x_j} (\Re \pi|_{\X^\tau}) & = \lim_{j\to \infty} \pr_{T_{x_j} \X^\tau} \left(-\frac{\partial}{\partial t}\right) \\
& = \pr_{ \lim_{j\to \infty} T_{x_j} \X^\tau} \left(-\frac{\partial}{\partial t}\right)  \\
& =  \pr_{ \lim_{j\to \infty} T_{x_j} \X^\tau}\circ \pr_{T_x \X^\sigma \oplus (E^{\overline{\sigma}}\times \C)^\perp}\left(-\frac{\partial}{\partial t}\right) \\
& \qquad \qquad \left(\text{by Lemma \ref{lemma;decomplemma}}\right) \\
& = \pr_{ \lim_{j\to \infty} T_{x_j} \X^\tau}\left( \pr_{T_x \X^\sigma}\left(-\frac{\partial}{\partial t}\right) + \pr_{(E^{\overline{\sigma}}\times\C)^\perp}\left(-\frac{\partial}{\partial t}\right)\right) \\
& =  \pr_{ \lim_{j\to \infty} T_{x_j} \X^\tau}\left( \pr_{T_x \X^\sigma}\left(-\frac{\partial}{\partial t}\right)\right) \\
& =  \pr_{T_x \X^\sigma}\left(-\frac{\partial}{\partial t}\right)\\
&\qquad\qquad \left(\text{by Assumption \ref{D2}}\right) \\
& = \nabla_x (\Re \pi|_{\X^\sigma}).
\end{align*}
This proves the claim.
\end{proof}

\subsubsection{Isomorphisms between decomposed affine K\"ahler varieties} Let $X$ be an affine $H$-variety and import the notation from Section \ref{section; affine G-varieties}. Suppose that $X$ embeds as a $H$-invariant closed subvariety of a $H$-module $E$. We introduce the following conditions, which are analogues of~\ref{D1} and~\ref{D2} for $X$:
\begin{enumerate}[label=(D\arabic*')]
\item The partition of $X$ into $X^\sigma$, $\sigma\in \Sigma(X)$, gives $X$ the structure of a decomposed variety. \label{D1'}
\item The decomposition of $X$ in \ref{D1'} satisfies the Whitney condition~\ref{Whitney A} with respect to the embedding into $E$. \label{D2'}
\end{enumerate}

We now give the first application of Proposition~\ref{CTSvfprop}.  Let $(E,h_E)$ and $(E',h_{E'})$ be two unitary $T$-modules, with symplectic forms $\omega_E, \omega_{E'}$ and $T$-equivariant moment maps $\mu,\mu'$, respectively. Let $i\colon X\hookrightarrow E$ and $i' \colon X\hookrightarrow E'$ be two closed $H$-equivariant embeddings. 
We assume without loss of generality that $X$ is not contained in any proper affine subspace of $E$ or $E'$. That is, we assume that no $f\in E^*\backslash\{0\}$ restricts to a constant function on $X$. In particular, because $X$ is irreducible $\Lambda(E) = \Lambda(X) =\Lambda(E')$ (see also Remark~\ref{WLOGremark}). The embeddings $i$ and $i'$ each endow $X$ with the structure of a Hamiltonian $T$-space. Denote these Hamiltonian $T$-spaces $(X_E,\mu)$ and  $(X_{E'},\mu')$ respectively.

 \begin{theorem} \label{GHmoser}
 Assume $X$ satisfies \ref{D1'} and the embeddings $i,i'$ both satisfy \ref{D2'}.
 If the moment maps $\mu\colon E\to \ttt^*$ and $\mu'\colon E'\to \ttt^*$ are proper, then $(X_E,\mu)$ is isomorphic to $(X_{E'},\mu')$ as a Hamiltonian $T$-space.
 \end{theorem}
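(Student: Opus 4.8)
The plan is to build an explicit toric degeneration interpolating between the two embeddings and then invoke Theorem~\ref{thm; toric degenerations main theorem} together with Lemma~\ref{lemma; flow is symplectic} to produce the desired symplectomorphism. The natural candidate for the family is the graph of a linear isomorphism: since $\Lambda(E) = \Lambda(X) = \Lambda(E')$, we may choose $\Lambda$-homogeneous coordinates on $E$ and $E'$ so that both restrict to the \emph{same} set of $\Lambda$-homogeneous generators of $\C[X]$; identifying $E$ and $E'$ this way gives a linear $H$-equivariant isomorphism $\ell \colon E \to E'$ with $\ell(i(X)) = i'(X)$. However, $\ell$ need not intertwine the Hermitian metrics $h_E$ and $h_{E'}$, so $\ell$ is typically \emph{not} a symplectomorphism; the content of the theorem is that $i(X)$ and $i'(X)$ are nonetheless abstractly symplectomorphic. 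To bridge the metrics I would form the family $\X \subset (E \oplus E') \times \C$ whose fiber over $z \neq 0$ is $\{(v, z\cdot \ell(v)) : v \in i(X)\}$ (rescaled appropriately) and whose fiber over $z=0$ degenerates the $E'$-coordinates away; more precisely, one takes the closure in $(E\oplus E')\times \C$ of $\{(i(x), z\, i'(x), z) : x \in X,\ z \in \C^\times\}$ and checks, using $\Lambda$-homogeneity, that $\X_0$ is a (possibly nonreduced — hence one must be careful, cf.~Lemma~\ref{lem; scheme theoretic intersection is reduced}) variety isomorphic to a degeneration of $X_E$, with $\X_1$ isomorphic to $X_{E'}$ via a symplectic map onto $(X_{E'}, \mu')$. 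One then verifies assumptions~\ref{assumeA}--\ref{assumeG}: the torus $T$ acting diagonally on $E\oplus E'$ (trivially on $\C$) furnishes the Hamiltonian action of~\ref{assumeE}, properness of $(\pi,\psi)$ follows from properness of $\mu$ and $\mu'$ via Lemma~\ref{lemma; unitary torus action proper moment map}\ref{lemma2.2ii}, assumptions~\ref{D1},~\ref{D2} (hence~\ref{assumeG} via Proposition~\ref{CTSvfprop}) follow from~\ref{D1'} and~\ref{D2'} for the two embeddings, and~\ref{assumeF} follows because along the family the Duistermaat--Heckman measure of each piece is constant (the weight-space decomposition, and hence the local volume computation $\mu(v) = \tfrac12\sum_\lambda \lambda\|v_\lambda\|^2$, is the same on $\X_z^\sigma$ for all $z \neq 0$ once one tracks the metric correctly).

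\textbf{A cleaner route.} Rather than degenerating $X_E$ into $X_{E'}$ directly, I expect it is technically smoother to degenerate \emph{each} of $X_E$ and $X_{E'}$ to a common toric variety and then compare. Namely, fix a generic one-parameter subgroup / valuation adapted to the $\Lambda$-grading and use the Rees-algebra construction: $\C[X][t]$ filtered by the $\Lambda$-grading gives a flat family whose special fiber is the affine toric variety $X_{\Lambda(X)}$, and the same construction performed inside $E$ and inside $E'$ produces degenerations $\X \to \C$ and $\X' \to \C$ of $X_E$ and $X_{E'}$ with \emph{the same} zero fiber $\X_0 = \X'_0 = X_{\Lambda(X)}$. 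Since the $\Lambda$-grading is intrinsic to $\C[X]$, the $T$-actions, the moment maps, and in particular the piece decompositions $\X_z^\sigma$ match up on the two families; applying Theorem~\ref{thm; toric degenerations main theorem} to each gives continuous $\phi \colon X_E \to X_{\Lambda(X)}$ and $\phi' \colon X_{E'} \to X_{\Lambda(X)}$, each restricting on a dense open subset $D^\sigma$ (resp.\ ${D'}^\sigma$) of every piece to a symplectomorphism onto $U_0^\sigma = (\X_0^\sigma)^{sm}$. The composite $(\phi')^{-1}\circ \phi$ is then, piece by piece, a symplectomorphism on a dense open set. I would then upgrade this to a genuine isomorphism of Hamiltonian $T$-spaces on all of $X$: the map $(\phi')^{-1}\circ\phi$ is $T$-equivariant, fiber-preserving for $\psi$, and a symplectomorphism on a dense open subset of each symplectic piece, and because the underlying algebraic variety is literally the same $X$ with the same pieces, the two symplectic forms $\omega_E|_{X^\sigma}$ and $\omega_{E'}|_{X^\sigma}$ are cohomologous $T$-invariant forms agreeing on a dense open set; a stratified Moser argument (or simply the fact that two $T$-invariant symplectic forms on a connected manifold that agree on a dense open set and have equal Duistermaat--Heckman measures are $T$-equivariantly symplectomorphic, by the argument already packaged into~\ref{assumeF} and the proof of Theorem~\ref{thm; toric degenerations main theorem}) finishes the job.

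\textbf{Main obstacle.} The hard part will be passing from the \emph{densely-defined, piecewise} symplectomorphism supplied by Theorem~\ref{thm; toric degenerations main theorem} to an honest \emph{globally-defined} isomorphism of Hamiltonian $T$-spaces: the maps $\phi$ and $\phi'$ need not be injective or open on the singular loci of the pieces, so one cannot simply compose them set-theoretically and must argue that the resulting correspondence extends continuously and bijectively across the frontier strata. The natural tool is to note that $\phi$ and $\phi'$ are proper and $\psi$-fiber-preserving, so $(\phi')^{-1}\circ\phi$ is well-defined as soon as $\phi$ and $\phi'$ have the same (closed, saturated) non-injectivity locus --- which one verifies because these loci are determined by the combinatorics of the value semigroup $\Lambda(X)$ and the piece decomposition, data shared by the two families. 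A secondary technical point, already flagged above, is that the zero fiber $X_{\Lambda(X)}$ of the Rees degeneration is toric and normal only when $\Lambda(X)$ is saturated; in general one must either pass to the normalization or, better, observe that the statement of Theorem~\ref{thm; toric degenerations main theorem} does not require normality of $\X_0$, only the axioms~\ref{assumeA}--\ref{assumeG}, which hold regardless --- so no saturation hypothesis is needed and the argument goes through verbatim. Finally, the reduction of assumptions~\ref{D1}--\ref{D2} for the families $\X,\X'$ to the hypotheses~\ref{D1'},~\ref{D2'} on the embeddings is a routine but slightly fiddly check that the Rees construction preserves smoothness of pieces and the Whitney~(A) condition, since away from $\X_0$ the families are products with $\C^\times$.
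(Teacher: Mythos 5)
Your first strategy is, in outline, exactly the paper's proof: embed the trivial family $X\times\C$ into $E\times E'\times\C$, act by $T$ diagonally, get properness of $(\pi,\psi)$ from Lemma~\ref{lemma; unitary torus action proper moment map}, get \ref{assumeG} from Proposition~\ref{CTSvfprop} via \ref{D1'} and \ref{D2'}, and run Theorem~\ref{thm; toric degenerations main theorem}. But the embedding you write down does not work, and the ``appropriate rescaling'' you defer is the entire point. With the family $\overline{\{(i(x),z\,i'(x),z)\}}$, the fiber over $z=1$ is the graph of the identification, whose induced symplectic form is $i^*\omega_E+i'^*\omega_{E'}$ --- not $\omega_{E'}$ --- while the fiber over $z=0$ carries $\omega_E$. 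These two Hamiltonian $T$-spaces have different Duistermaat--Heckman measures (already different Liouville volumes on compact pieces of reduced spaces), so \ref{assumeF} fails and the gradient Hamiltonian flow can only give a non-surjective symplectic embedding; your assertion that the DH measure ``is constant along the family once one tracks the metric correctly'' is false for this family. The correct interpolation is $(x,t)\mapsto(t\,i(x),(1-t)\,i'(x),t)$: then the $1$-fiber lies in $E\times\{0\}\times\{1\}$ and carries exactly $\omega_E$, the $0$-fiber lies in $\{0\}\times E'\times\{0\}$ and carries exactly $\omega_{E'}$, the family is literally $X\times\C$ as a scheme (so the reducedness and flatness worries you raise about closures do not arise), every piece $X^\sigma\times\C$ is smooth, hence the flow exists for all time, $D^\sigma=X_E^\sigma$, $U_0^\sigma=X_{E'}^\sigma$, \ref{assumeF} is automatic, and $\varphi_{-1}$ is a global inverse. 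No dense-open-subset issue ever appears.

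Your ``cleaner route'' has a genuine gap independent of the above. There is no common toric special fiber $X_{\Lambda(X)}$ determined by the $\Lambda$-grading alone: the associated graded of $\C[X]$ with respect to the weight filtration is the semigroup algebra $\C[\Lambda(X)]$ only when the homogeneous components are one-dimensional, which fails in general (e.g.\ $\C[G\sslash N]=\bigoplus_\lambda V(\lambda)^*$). Producing such a degeneration requires a valuation with one-dimensional leaves satisfying \ref{v1}--\ref{v5}, which is not part of the hypotheses of the theorem --- that extra data is precisely what Section~\ref{good valuation section} is about. Even granting it, the ``main obstacle'' you identify (extending $(\phi')^{-1}\circ\phi$ across the loci where $\phi'$ is not injective) is not resolved by your proposed Moser/cohomology argument: two symplectic forms agreeing on a dense open set agree everywhere by continuity, so that statement is vacuous, and a Moser isotopy on a noncompact stratified space is not available here. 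The trivial-family approach is not merely ``technically smoother''; it is what makes the theorem a statement about honest global isomorphisms rather than densely-defined ones.
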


\begin{proof}
Consider the unitary $T$-module $(E \times E' \times \C, h = h_E\oplus h_{E'} \oplus h_\C)$ with symplectic structure $\omega = -\im h$, where $T$ acts trivially on $\C$. The action of $T$ is Hamiltonian with moment map $\psi = \mu\circ \pr_E + \mu' \circ \pr_{E'}$.
Consider the trivial degeneration $\X = X\times \C$ of $X$. Embed $\X$ into $E \times E' \times \C$ according to the map 
\begin{align*}
X\times \C & \to E\times E' \times \C \\
(x,t) & \mapsto (t i(x), (1-t) i'(x), t).
\end{align*}
Its image is a closed $H$-invariant subvariety of $E\times E'\times \C$. It has smooth pieces $\X^\sigma = X^\sigma \times \C = (X\times \C) \cap (E^\sigma \times{E'}^\sigma \times \C)$.

We apply Theorem~\ref{thm; toric degenerations main theorem} to the degeneration $X\times \C\to \C$. To do so, we need to check the conditions \ref{assumeA}--\ref{assumeG}. The conditions \ref{assumeA}, \ref{assumeB}, and \ref{assumeC}, and \ref{assumeE}I) are satisfied automatically.  

To show~\ref{assumeE}II), it suffices to show that $(\psi,\pi) \colon E\times E' \times \C\to \ttt^*\times \C$ is proper.  
By Lemma~\ref{lemma; unitary torus action proper moment map}\ref{lemma2.2ii}, because $\mu$ and $\mu'$ are assumed to be proper, the cone $\Gamma(E)=\Gamma(E')$ is strongly convex. It follows that the map $\psi = \mu\circ\pr_E + \mu\circ \pr_{E'}$ is proper. Thus $(\psi,\pi) \colon E\times E' \times \C\to \ttt^*\times \C$ is proper. The condition~\ref{assumeE}III) holds because 
\[
\X^{\overline{\sigma}} = \X \cap (E^{\overline{\sigma}} \times {E'}^{\overline{\sigma}} \times \C) = \X \cap \psi\n(\overline{\sigma}). 
\]
Here the last equality is a consequence of Lemma \ref{lemma; unitary torus action proper moment map}\ref{lemma2.2i}.

In contrast with the general setup of Theorem~\ref{thm; toric degenerations main theorem}, there are no singular points of $X^\sigma \times \C$. As a result, the stratified gradient Hamiltonian flow $\varphi_t$ is defined for all $t\in \R$, for all points of $X\times \C$.
The condition~\ref{assumeF} is then satisfied automatically.

Finally, it remains to verify condition~\ref{assumeG}. To do this, we apply Proposition~\ref{CTSvfprop}. To apply Proposition~\ref{CTSvfprop}, we need to check that~\ref{D1} and~\ref{D2} hold for $\X\subset E\times E' \times \C$. This is a straightforward consequence of the fact that, by assumption, the partition of $X$ into $X^\sigma$ satisfies~\ref{D1'}, and that each embedding $X\hookrightarrow E$ and $X\hookrightarrow E'$ satisfies~\ref{D2'}.

After applying Theorem~\ref{thm; toric degenerations main theorem}, we have a $T$-equivariant continuous map $\phi  \colon X_E \to X_{E'}$ which, for each $\sigma\in \Sigma$, restricts to a symplectomorphism 
$
\varphi_1^\sigma \colon D^\sigma \to U_0^\sigma
$
from an open dense subset $D^\sigma$ of $X_E^\sigma$ to an open dense subset $U_0^\sigma $ of $ X_{E'}^\sigma$. Since $ X_{E'}^\sigma$ is smooth,  $D^\sigma = X_E^\sigma$ and  $U_0^\sigma = X_{E'}^\sigma$. In other words, $\varphi_1^\sigma$ defines a symplectomorphism of $X_E^\sigma$ and $X_{E'}^\sigma$. What is more, $\psi\circ \phi = \psi$. Thus, we  have a map of Hamiltonian $T$-spaces $\phi\colon (X_E,\mu)\to(X_{E'},\mu')$. The map $\varphi_{-1}$ is an inverse to $\phi$, so $\phi$ is an isomorphism of Hamiltonian $T$-spaces.
\end{proof}

\section{From valuations to stratified gradient Hamiltonian flows}
\label{good valuation section}

\label{section;valuation GH}

This section provides a general recipe for constructing  integrable systems on a decomposed affine K\"ahler variety $X$.  This is achieved by constructing toric degenerations of $X$ that satisfy all the assumptions of the framework set out in Section~\ref{section; Toric degenerations of stratified affine varieties}. The construction of toric degenerations given here is similar to \cite[Section 3.2, 3.3]{haradaIntegrableSystemsToric2015}, which deals with the case where $X$ is a smooth projective variety. Our construction is slightly more detailed, due to the possibly singular nature of $X$. Moreover, many details of the construction cited above do not carry over directly  because of differences between the affine and projective settings.

The main ingredients of our toric degeneration construction are a torus action on $X$ and a valuation on the coordinate ring of $X$. These ingredients must satisfy some compatibility conditions which we package in our definition of a \emph{good valuation} (Definition \ref{definition;good}). The heart of this section is devoted to showing that suitable toric degenerations can be constructed from good valuations. In particular, we show that  toric degenerations constructed from good valuations can be embedded into affine space such that the K\"ahler structure, the decomposition of $X$, the torus action on $X$, and the big torus action on the toric fiber are all compatible with one another (cf.~Propositions \ref{prop; embed nicer}, \ref{proposition;GramSchmidt}, and \ref{proposition; degen from good valuation}). This is achieved using properties of Khovanskii bases and our symplectic isotopy theorem for decomposed affine K\"ahler varieties (Theorem \ref{GHmoser}).

The general idea of this section is as follows. We start with a torus $T_\mathsf{c}$, which we call the \emph{control torus}, together with an action of $T_\mathsf{c}$ on an affine variety $X$. The variety $X$ must be $T_\mathsf{c}$-equivariantly embedded in a finite dimensional $T_\mathsf{c}$-module, and this embedding must be such that the partition defined in \eqref{eq; partition of H variety def} makes $X$ a decomposed affine K\"ahler variety.  The pieces $X^\sigma$ of this decomposition are indexed by the relative interiors $\sigma$ of faces of the weight cone for the action of $T_{c}$.
Next, we assume the variety $X$ comes with a valuation $\mathbf{v}$ with values in the weight lattice of a bigger torus $\mathbb{T}$. We require the grading of $\C[X]$ by $T_\mathsf{c}$-weights to factor as  $\mathsf{c} \circ \mathbf{v}$, where $\mathsf{c} \colon  \Lie(\mathbb{T})^* \to \mathfrak{t}^*_{\mathsf{c}}$ is dual to an embedding of $T_\mathsf{c}$ in $\mathbb{T}$. Using the valuation, we realize $X$ as the 1-fiber of a degeneration to the affine toric variety $X_\mathsf{S}$ associated with the value semigroup $\mathsf{S}$ of $\mathbf{v}$. By applying the results from the previous section, we arrive at commuting diagrams
\begin{equation}\label{eq; idea of section 4}
    \begin{tikzcd}
        X \ar[d, "\psi_{\mathsf{c}}"] \ar[r, "\phi"] & X_{\mathsf{S}}\ar[d, "\Psi"] && X^\sigma \ar[d, "\psi_{\mathsf{c}}\vert_{X^\sigma}"] \ar[r, "\phi^\sigma"] & X_{\mathsf{S}}^\sigma\ar[d, "\Psi\vert_{X_{\mathsf{S}}^\sigma}"]\\
        \mathfrak{t}^*_\mathsf{c}  & \Lie(\mathbb{T})^* \ar[l, "\mathsf{c}"'] && \sigma & \mathsf{c}^{-1}(\sigma)\ar[l, "\mathsf{c}"']
    \end{tikzcd}
\end{equation}
where the right diagram is defined for all $\sigma$. In these diagrams $\psi_\mathsf{c}$ is the quadratic moment map for the $T_c$-action on $X$ and $\Psi$ is the quadratic moment map for the $\mathbb{T}$-action on $X_\mathsf{S}$. 
The toric variety $X_\mathsf{S}$ is partitioned by toric subvarieties $X_\mathsf{S}^\sigma$ that satisfy $X_\mathsf{S}^\sigma= \Psi^{-1}(\mathsf{c}^{-1} (\sigma))$. 
For each face $\sigma$ there is a limiting gradient Hamiltonian flow map $\phi^\sigma\colon X^\sigma \to X_\mathsf{S}^\sigma$ that restricts to a symplectomorphism from an open dense subset of $X^\sigma$ onto the smooth locus of $X_\mathsf{S}^\sigma$.
The maps $\phi^\sigma$ assemble into the continuous map $\phi$ in the left diagram which is the time-1 limit of the stratified gradient Hamiltonian flow. The composition $\Psi \circ \phi$ defines a completely integrable system on $X$ in the sense of Definition~\ref{def; integrable system on singular symplectic space}.

The section is organized as follows. Section~\ref{section;valuationsanddegenerations} provides  relevant properties of valuations, Khovanskii bases, and the Rees algebra construction of toric degenerations.  Section~\ref{subsection; kahler form on toric variety} contains our definition of good valuations along with Propositions~\ref{prop; embed nicer} and~\ref{proposition;GramSchmidt}.  Section~\ref{main construction section} combines results from the previous sections to produce a toric degeneration with an embedding into affine space that has all the desired properties (Proposition~\ref{proposition; degen from good valuation}). We also pause in Section~\ref{subsection; hamiltonian torus action} to collect some facts about the resulting toric moment maps which will be useful later, in Section~\ref{section 5}. Finally, everything is combined to produce Theorem~\ref{maintheorem}, which is the main result.   

\subsection{Toric degenerations from valuations} \label{section;valuationsanddegenerations}

Although the Rees algebra construction for affine varieties is well-known, we will require (in Sections~\ref{subsection; kahler form on toric variety} and~\ref{main construction section}) that the resulting toric degeneration has several additional properties, the combination of which is possibly less well-known.  
In particular, we will consider Rees algebras constructed from valuations and the resulting degenerations will be embedded into affine space using a Khovanskii basis. Moreover, the toric degeneration and the embedding into affine space will also need to be compatible with a given torus action on $X$. We provide a package of assumptions for valuations on the coordinate ring of $X$, labelled~\ref{v1}--\ref{v5}, which are sufficient for the resulting toric degeneration to have the desired properties. In relation to the fact that we will deal with decomposed varieties,  we  give a careful algebraic description of certain subvarieties of $X$ and associated subfamilies of the toric degeneration in Proposition \ref{prop;subfamilies}.

\subsubsection{Valuations} \label{subsection; valuations} 
\begin{definition} \label{def; valuation} Let $L$ be a lattice equipped with a total order $>$ which respects addition. Let $A$ be an algebra over $\C$. 
A function $\mathbf{v} \colon A\setminus\{0\} \to L$ is a \emph{valuation} if, for all nonzero $f, g \in A$, it satisfies the following:
\begin{enumerate}
    \item $\mathbf{v}(f+g) \le \max \{\mathbf{v}(f),\mathbf{v}(g)\}$;
    \item $\mathbf{v}(cf) = \mathbf{v}(f)$ for all nonzero $c \in \C$;
    \item $\mathbf{v}(fg) = \mathbf{v}(f) + \mathbf{v}(g)$.
\end{enumerate}
\end{definition}
Let $L$ and $A$ be as in Definition~\ref{def; valuation}, and $\mathbf{v} \colon A\setminus\{0\} \to L$ be a valuation on $A$.
The \textit{value semigroup} of $\mathbf{v}$ is the image $\mathbf{v}(A\backslash\{0\})$. Denote the value semigroup $\mathsf{S}_\mathbf{v}$, or $\mathsf{S}$ when $\mathbf{v}$ is clear from context. Throughout Section \ref{section;valuation GH}, assume that  $\mathsf{S}_\mathbf{v}$ generates $L$ as a $\Z$-module. In particular, we only consider valuations such that $\mathsf{S}_\mathbf{v} \subset L$ is \textit{saturated}, i.e., for all $k\in \mathbb{N}\backslash\{0\}$ and $s\in L$, $ks\in \mathsf{S}_\mathbf{v}$ implies $s \in \mathsf{S}_\mathbf{v}$. For $s\in  \mathsf{S}$, denote
\begin{align*}
    A_{\le s} & = \{ f \in A \mid \mathbf{v}(f) \le s \text{ or } f = 0\}, \quad A_{< s} = A_{\le s}\setminus \mathbf{v}\n(s).
\end{align*} 
We say that $\mathbf{v}$ \textit{has one-dimensional leaves} if  $A_{\le s}/A_{< s}$ is at most one-dimensional for all $s\in \mathsf{S}$. Let $\gr A$ denote the associated graded algebra. If $A$ is an integral domain and $\mathbf{v}$ has one-dimensional leaves, then $\gr A$ is isomorphic to the semigroup algebra $\C[\mathsf{S}]$~\cite[Remark~4.13]{gubeladzePolytopesRingsKTheory2009}.

Assume that $\mathbf{v}$ has one-dimensional leaves. A \emph{Khovanskii basis} for $A$ is a set $\mathcal{K} \subset A\backslash \{0\}$ such that $\mathbf{v}(\mathcal{K})$ generates  $\mathsf{S}_\mathbf{v}$ \cite{kavehKhovanskiiBasesHigher2019}. Assume that $\mathcal{K}$ is finite and there exists a lattice $\Lambda$ and a surjective linear map $\mathsf{w}\colon L \to \Lambda$ such that:
\begin{enumerate}[label=(v\arabic*)]
    \item\label{v1} The total order $>$ on $L$ descends under $\mathsf{w}$ to a total order on $\Lambda$.
    \item\label{v2} The image $\mathsf{w}(\mathsf{S}_\mathbf{v})$ contains a minimal element with respect to this order. 
    \item\label{v3} The fibers $\mathsf{w}\n(\lambda) \cap \mathsf{S}_\mathbf{v}$ are all finite.
\end{enumerate}
With these assumptions in place, we have the following proposition which is a straightforward generalization of \cite[Proposition 3.12]{haradaIntegrableSystemsToric2015}. The ``subduction algorithm'' proof given there adapts easily to this setting.

\begin{proposition}\label{prop; K generates A}
If assumptions \ref{v1}-\ref{v3} hold, then $\mathcal{K}$ generates $A$ as an algebra.
\end{proposition}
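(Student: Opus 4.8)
The plan is to adapt the ``subduction algorithm'' argument of \cite[Proposition 3.12]{haradaIntegrableSystemsToric2015} to the present setting. The claim is that under assumptions \ref{v1}--\ref{v3}, every $f \in A \setminus \{0\}$ lies in the subalgebra $\C[\mathcal{K}]$ generated by the finite Khovanskii basis $\mathcal{K}$. The key point to exploit is that $\mathbf{v}(\mathcal{K})$ generates $\mathsf{S}_\mathbf{v}$ as a semigroup (definition of Khovanskii basis) and that $\mathbf{v}$ has one-dimensional leaves, so a single monomial in the elements of $\mathcal{K}$ can be used to match the leading term of any given $f$.

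Here is the outline I would carry out. Given nonzero $f \in A$, look at $\mathbf{v}(f) =: s \in \mathsf{S}_\mathbf{v}$. Since $\mathbf{v}(\mathcal{K})$ generates $\mathsf{S}_\mathbf{v}$, write $s = \sum_i a_i \mathbf{v}(k_i)$ with $k_i \in \mathcal{K}$ and $a_i \in \N$, and form the monomial $M = \prod_i k_i^{a_i} \in \C[\mathcal{K}]$, which satisfies $\mathbf{v}(M) = s$ because $\mathbf{v}$ is a valuation (values add on products) — here one uses that $\mathsf{S}_\mathbf{v}$ being an ordered semigroup with a valuation forces $\mathbf{v}(k_i^{a_i}) = a_i \mathbf{v}(k_i)$ and $\mathbf{v}(\prod k_i^{a_i}) = \sum a_i \mathbf{v}(k_i)$. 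By the one-dimensional leaves hypothesis, $A_{\le s}/A_{<s}$ is spanned by the class of $M$, so there is a scalar $c \in \C^\times$ with $\mathbf{v}(f - cM) < s$, i.e.\ $f - cM \in A_{<s}$. Now one wants to iterate: replace $f$ by $f - cM$, which has strictly smaller value. The subtlety, exactly as in Harada--Kaveh, is that this descending process need not terminate after finitely many steps in a general ordered semigroup — one must rule out an infinite strictly decreasing sequence of values $s > s_1 > s_2 > \cdots$ all lying in $\mathsf{S}_\mathbf{v}$.

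The main obstacle, and the place where assumptions \ref{v1}--\ref{v3} do their work, is precisely this termination/well-ordering issue. The remedy is the standard ``Noetherian induction on the value'' argument: pass to $\Lambda$ via $\mathsf{w}$. By \ref{v1} the order on $L$ descends to a total order on $\Lambda$; by \ref{v2} the image $\mathsf{w}(\mathsf{S}_\mathbf{v})$ has a minimal element; and by \ref{v3} each fiber $\mathsf{w}^{-1}(\lambda) \cap \mathsf{S}_\mathbf{v}$ is finite. One checks that these three conditions together imply $\mathsf{S}_\mathbf{v}$ is well-ordered by $>$: any nonempty subset $\Xi \subset \mathsf{S}_\mathbf{v}$ has $\mathsf{w}(\Xi)$ nonempty in the well-ordered set $\mathsf{w}(\mathsf{S}_\mathbf{v})$ (well-ordered because it has a minimal element and sits inside a totally ordered \emph{group} generated by a finitely generated semigroup — here I would invoke the standard fact that a finitely generated subsemigroup of a totally ordered abelian group, together with having a minimum, is well-ordered; alternatively \ref{v2} should be read so as to guarantee this directly), pick the minimal $\lambda$ attained, and then among the finitely many elements of $\Xi$ mapping to $\lambda$ (finitely many by \ref{v3}) take the smallest. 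Hence no infinite strictly decreasing sequence of values exists, so the subduction process $f \mapsto f - cM \mapsto \cdots$ terminates, eventually reaching $0$. Unwinding, $f$ is a $\C$-linear combination of the monomials $M$ produced along the way, all of which lie in $\C[\mathcal{K}]$; therefore $f \in \C[\mathcal{K}]$. Since $f$ was arbitrary, $\mathcal{K}$ generates $A$ as an algebra, which is the proposition. I would keep the write-up short by citing the Harada--Kaveh proof for the mechanics of subduction and only spelling out the well-ordering deduction from \ref{v1}--\ref{v3}, since that is the only genuinely new ingredient.
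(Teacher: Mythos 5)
Your proof is correct and is exactly the argument the paper intends: the paper's own ``proof'' consists of the single remark that the subduction algorithm of \cite[Proposition 3.12]{haradaIntegrableSystemsToric2015} adapts to this setting, and you have carried out that adaptation, the well-ordering of $\mathsf{S}_\mathbf{v}$ deduced from \ref{v1}--\ref{v3} being the only genuinely new ingredient. The one point you leave informal --- that a finitely generated subsemigroup of a totally ordered abelian group possessing a minimum is well-ordered --- is indeed standard and true (the minimum forces every element, hence every generator, to be non-negative, and Dickson's lemma applied to a lift of a descending chain to $\N^n$ then rules out infinite strict descent), so there is no gap.
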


In particular, this situation produces an embedding $\spec(A) \hookrightarrow E$, where $E$ is dual to the subspace $\operatorname{span}_\C \mathcal{K}\subset A$. Assumptions \ref{v1}-\ref{v3} also have the following consequence which will be useful later on. Its proof is  a straightforward exercise.

\begin{lemma}\label{lemma; cones are strongly convex}
If assumptions \ref{v1}-\ref{v3} hold, then $\cone(\mathsf{S}_\mathbf{v})$ and $\cone(\mathsf{w}(\mathsf{S}_\mathbf{v}))$ are strongly convex.
\end{lemma}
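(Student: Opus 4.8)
\textbf{Proof proposal for Lemma~\ref{lemma; cones are strongly convex}.}

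The plan is to show strong convexity of $\cone(\mathsf{S}_\mathbf{v})$ first, then deduce the statement for $\cone(\mathsf{w}(\mathsf{S}_\mathbf{v}))$, since $\mathsf{w}$ is linear and surjective so it maps the first cone onto the second. Recall that a convex cone $C$ is strongly convex precisely when it contains no nontrivial linear subspace, equivalently when $v \in C$ and $-v \in C$ force $v = 0$. So suppose $v$ and $-v$ both lie in $\cone(\mathsf{S}_\mathbf{v}) \subset L_\R$. Since $\mathsf{S}_\mathbf{v}$ is finitely generated (it is a value semigroup of a valuation with one-dimensional leaves that we have assumed generates $L$; more to the point, we have a finite Khovanskii basis $\mathcal{K}$ with $\mathbf{v}(\mathcal{K})$ generating $\mathsf{S}_\mathbf{v}$), the rational cone $\cone(\mathsf{S}_\mathbf{v})$ is generated by the finitely many elements $\mathbf{v}(f)$, $f \in \mathcal{K}$. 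Hence after clearing denominators there exist nonzero $N, M \in \N$ with $Nv \in \mathsf{S}_\mathbf{v}$ and $-Mv \in \mathsf{S}_\mathbf{v}$, and then $s := NMv$ and $-s$ both lie in $\mathsf{S}_\mathbf{v}$.

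The key step is to use the total order $>$ on $L$ to rule out $s \neq 0$. By definition of a value semigroup, there are $f, g \in A \setminus \{0\}$ with $\mathbf{v}(f) = s$ and $\mathbf{v}(g) = -s$. Since $A$ is an integral domain, $fg \neq 0$, and because $\mathbf{v}$ is a valuation it satisfies $\mathbf{v}(fg) = \mathbf{v}(f) + \mathbf{v}(g) = 0$. So $0 \in \mathsf{S}_\mathbf{v}$, which is harmless, but the real point is the following: the total order $>$ respects addition, so it is compatible with the group structure of $L$, and a totally ordered abelian group has the property that if $s > 0$ then $-s < 0$ and vice versa — in particular, it is impossible for both $s$ and $-s$ to be $> 0$ unless $s = 0$. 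Thus, if $s \neq 0$, then exactly one of $s, -s$ is positive and the other negative; but strong convexity is about whether the \emph{cone} contains a line, and having $s$ and $-s$ both in the semigroup does not immediately contradict the order axioms — I need the order to interact with which elements can appear in a value semigroup. The correct argument: the subsemigroup of $L$ consisting of elements that are $\geq 0$ is itself strongly convex as a cone (it contains no line, since $s \geq 0$ and $-s \geq 0$ force $s = 0$ in a totally ordered group), and every value semigroup $\mathsf{S}_\mathbf{v}$ is contained in $\{\ell \in L : \ell \geq 0\} \cup \{$values$\}$ — more precisely one normalizes so that $\mathbf{v}$ takes values in the non-negative part, or one observes directly that $\mathsf{S}_\mathbf{v}$ generates $L$ and lies on one side. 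Concretely: because $>$ is a total order respecting addition, $P := \{\ell \in L : \ell \geq 0\}$ is a strongly convex cone's worth of lattice points, $\mathsf{S}_\mathbf{v} \subseteq P$ after the standard normalization (replace $\mathbf{v}$ by $\mathbf{v}$ composed with $-\mathrm{id}$ if necessary, using assumption that leaves are one-dimensional so a generating set can be taken nonnegative), hence $\cone(\mathsf{S}_\mathbf{v}) \subseteq \cone(P)$, and $\cone(P)$ is strongly convex because $P$ is; therefore $\cone(\mathsf{S}_\mathbf{v})$ is strongly convex.

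For $\cone(\mathsf{w}(\mathsf{S}_\mathbf{v}))$: by assumption \ref{v1} the total order $>$ descends through $\mathsf{w}$ to a total order on $\Lambda$ respecting addition, so the identical argument applies verbatim with $L$ replaced by $\Lambda$, $\mathsf{S}_\mathbf{v}$ replaced by $\mathsf{w}(\mathsf{S}_\mathbf{v})$, and the order replaced by its descent. (Assumptions \ref{v2} and \ref{v3} are not needed for this particular lemma, only \ref{v1}; they enter elsewhere.) The main obstacle — really the only subtlety — is making precise the normalization that puts $\mathsf{S}_\mathbf{v}$ inside the non-negative cone of the ordered group, i.e.\ checking that a value semigroup of a valuation on an integral domain, valued in a totally ordered lattice whose order respects addition, automatically lies in $\{\ell \geq 0\}$ (or can be arranged to after composing with an order-reversing automorphism); once that is granted, strong convexity is immediate from the elementary fact that the non-negative cone of a totally ordered abelian group contains no line. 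I expect this to be genuinely routine, consistent with the paper's remark that the proof is "a straightforward exercise."
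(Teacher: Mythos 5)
There is a genuine gap, and it is precisely at the step you flag as "the only subtlety." Your pivotal claim --- that a value semigroup of a valuation on an integral domain, taking values in a totally ordered lattice whose order respects addition, automatically lies in $\{\ell \in L : \ell \ge 0\}$ (or can be normalized to) --- is false, and your parenthetical assertion that only \ref{v1} is needed while \ref{v2} and \ref{v3} "enter elsewhere" is exactly backwards: \ref{v2} and \ref{v3} are the hypotheses that make the lemma true. Concretely, take $A = \C[x,x^{-1}]$ with the degree valuation $\mathbf{v}$ into $L = \Z$ and $\mathsf{w} = \mathrm{id}$: the order respects addition, $\mathbf{v}$ has one-dimensional leaves, $\mathsf{S}_\mathbf{v} = \Z$ is finitely generated and saturated, \ref{v1} and \ref{v3} hold, yet $\cone(\mathsf{S}_\mathbf{v}) = \R$ is not strongly convex, and no order-reversing normalization helps. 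What fails is \ref{v2}. A valuation on a ring (as opposed to, say, a graded ring normalized so that $\mathbf{v}$ is nonnegative) has no a priori one-sidedness; you half-recognize this mid-paragraph ("having $s$ and $-s$ both in the semigroup does not immediately contradict the order axioms") but then paper over it.

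The repair uses the discarded hypotheses. Since $0 = \mathbf{v}(1) \in \mathsf{S}_\mathbf{v}$, the semigroup $\mathsf{w}(\mathsf{S}_\mathbf{v})$ contains $0$; if its minimal element $\lambda_0$ (which exists by \ref{v2}) were $<0$, then $2\lambda_0 < \lambda_0$ would contradict minimality, so $\lambda_0 = 0$ and $\mathsf{w}(\mathsf{S}_\mathbf{v}) \subseteq \{\lambda \ge 0\}$. Next, $\mathsf{w}^{-1}(0)\cap \mathsf{S}_\mathbf{v}$ is a sub-semigroup that is finite by \ref{v3}, hence equal to $\{0\}$ (a nonzero element would generate infinitely many distinct multiples). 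By \ref{v1} it follows that every nonzero element of $\mathsf{S}_\mathbf{v}$ is $>0$ in $L$, and likewise every nonzero element of $\mathsf{w}(\mathsf{S}_\mathbf{v})$ is $>0$ in $\Lambda$. From here your closing argument does work: if $0 = \sum_i c_i s_i$ with $c_i \ge 0$ real, not all zero, over the finitely many nonzero generators $s_i$, then by rationality of the system one may take the $c_i$ rational and clear denominators, producing a sum of strictly positive lattice elements equal to $0$, which is impossible in a totally ordered group; hence both cones are strongly convex. (The paper offers no proof to compare against --- it calls this a straightforward exercise --- but as written your argument proves a false statement, so the appeal to \ref{v2} and \ref{v3} must be put back in.)
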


\subsubsection{The Rees algebra construction}\label{Rees Section} Let $H$ be an algebraic torus with (real) weight lattice $\Lambda$ and let $X$ be an affine $H$-variety with coordinate algebra $A$.  
As in the previous section, equip $A$ with a valuation $\mathbf{v}$ with values in $(L,>)$. Assume that $\mathbf{v}$ has one-dimensional leaves and $\mathsf{S}_\mathbf{v}$ is finitely generated.  Assume there exists a linear map $\mathsf{w} \colon L \to \Lambda$ that satisfies  \ref{v1}-\ref{v3} and the following.
\begin{enumerate}[label=(v\arabic*), start=4]
    \item\label{v4} If $f\in A$ is homogeneous, then $|f|_\Lambda = \mathsf{w}(\mathbf{v}(f))$.
    \item\label{v5} The tuple $(\mathsf{S},>,\mathsf{w})$ is \emph{refinable}\footnote{See \cite{alexeevToricDegenerationsSpherical2005, calderoToricDegenerationsSchubert2002} for sufficient conditions that  $(\mathsf{S},>,\mathsf{w})$ be refinable.}, i.e.~for any two finite sets 
    $
        \{a_1,\dots,a_N\}, \{b_1,\dots,b_N\}\subset \mathsf{S}
    $
    such that $\mathsf{w}(a_i) = \mathsf{w}(b_i)$ and $a_i>b_i$ for all $i\in [1,N]$, there exists a linear map $e\colon L \to \Z$ such that
\begin{equation}\label{fix e}
e(\mathsf{S})\subset \N, \text{ and }  e(a_i)>e(b_i) \text{ for all } i\in [1,N].
\end{equation}
\end{enumerate}

Fix an $H$-equivariant embedding of  $X $ as a closed subvariety of a finite dimensional $H$-module $E$. Let $\{z_i\}_{i=1}^n$ be a system of $\Lambda$-homogeneous linear coordinates on $E$. Let $f_i \in A$ denote the restriction of $z_i$ to $X$. Assume that  $\mathcal{K} = \{f_i\}_{i=1}^n$ is a Khovanskii basis of $A$ (in particular, $f_i \neq 0$).

Let $\H$ denote the algebraic torus with (real) weight lattice $L$.  Define an $\H$-module structure on $E^*$ by letting $ h \cdot z_i = h^{\mathbf{v}(f_i)}z_i$ for all $h \in \H$ and $i= 1,\dots ,n$. Equip $E$ with the dual $\H$-module structure.\footnote{The linear map $\mathsf{w}$ is dual to a homomorphism  $H \to \H$. Along with the $\H$-module structure, this homomorphism equips $E$ with a $H$-module structure. By \ref{v4}, this coincides with the original $H$-module structure on $E$.}

The discussion above produces surjective algebra homomorphisms:
\begin{align}
    \C[E] & \to A=\C[X]; &  \C[E] & \to \gr A \label{embedinE} \\
    z_i & \mapsto f_i; &  z_i & \mapsto f_i \mod A_{<\mathbf{v}(f_i)}. \nonumber
\end{align}
The first is dual to the embedding $X \hookrightarrow E$. It is a map of $\Lambda$-graded algebras. The second is a map of $L$-graded algebras  dual to an $\H$-equivariant embedding $\spec(\gr A) \hookrightarrow  E$. The proof of the following lemma is a direct analogue of the first half of the proof of \cite[Theorem 3.13]{haradaIntegrableSystemsToric2015}.

\begin{lemma} \label{lemma;kernel} 
Let $\overline{g}_1,\dots,\overline{g}_J\in \C[E]$ be $L$-homogeneous generators of the ideal $\ker(\C[E] \to \gr A)$. Then, there exist $\Lambda$-homogeneous generators $g_1,\dots, g_J\in \C[E]$ of the ideal $ \ker(\C[E] \to A)$ which have the form
\begin{equation}
    \label{niceformg}
    g_j = \overline{g}_j + p_{j},\quad    \mathbf{v}(\overline{g}_j(f_1,\dots,f_n))>\mathbf{v}(p_{j}(f_1,\dots,f_n)).
\end{equation}
The $\Lambda$-homogeneous degree of $g_j$ is $|g_j|_\Lambda = \mathsf{w}(|\overline{g}_j|_L) = \mathsf{w}(\mathbf{v}(\overline{g}_j(f_1,\dots,f_n)))$.
\end{lemma}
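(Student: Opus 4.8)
The plan is to exhibit explicit lifts of the $\overline{g}_j$ to elements of $\ker(\C[E]\to A)$ and then show they generate. First I would use surjectivity of $\C[E]\to\gr A$: since $\overline{g}_j$ lies in the kernel, the polynomial $\overline{g}_j(f_1,\dots,f_n)\in A$ has valuation strictly less than $|\overline{g}_j|_L$ — more precisely, because $\overline{g}_j$ is $L$-homogeneous of degree $|\overline{g}_j|_L$ and maps to $0$ in $\gr A$, the element $\overline{g}_j(f_1,\dots,f_n)$ lies in $A_{<|\overline{g}_j|_L}$. By Proposition~\ref{prop; K generates A}, $\mathcal K=\{f_i\}$ generates $A$, and moreover (by the subduction algorithm, which is exactly how one proves that proposition) every element of $A$ of valuation $s$ can be written as a polynomial in the $f_i$ all of whose monomials $f^{\bm a}$ satisfy $\sum a_i\mathbf{v}(f_i)\le s$. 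Applying subduction to $h_j:=\overline{g}_j(f_1,\dots,f_n)\in A_{<|\overline{g}_j|_L}$ produces a polynomial $q_j\in\C[E]$ with $q_j(f_1,\dots,f_n)=h_j$ and every monomial of $q_j$ of $L$-degree $<|\overline{g}_j|_L$. Set $g_j:=\overline{g}_j-q_j$; then $g_j(f_1,\dots,f_n)=0$, so $g_j\in\ker(\C[E]\to A)$, and writing $p_j:=-q_j$ gives the form \eqref{niceformg} with $\mathbf{v}(\overline{g}_j(f_1,\dots,f_n))=|\overline{g}_j|_L>\mathbf{v}(p_j(f_1,\dots,f_n))$, the last inequality because every monomial of $p_j$ has strictly smaller $L$-degree. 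I should be slightly careful: subduction naturally respects the $L$-grading only up to the order $>$; to get a genuinely $\Lambda$-homogeneous $g_j$ I would, after producing $q_j$, discard all monomials whose $\Lambda$-degree differs from $|\overline{g}_j|_\Lambda=\mathsf{w}(|\overline{g}_j|_L)$ — these necessarily cancel upon substitution into the $\Lambda$-graded algebra $A$ by \ref{v4}, so the truncated $q_j$ still satisfies $q_j(f_1,\dots,f_n)=h_j$ and now $g_j$ is $\Lambda$-homogeneous of degree $|\overline{g}_j|_\Lambda$.

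Next I would prove that $g_1,\dots,g_J$ generate $I:=\ker(\C[E]\to A)$. Let $G\in I$ be arbitrary; I want to show $G\in(g_1,\dots,g_J)$. The idea is descending induction on $\mathbf{v}(G(f_1,\dots,f_n))$ — wait, that's zero; rather, one argues on the ``leading form'' of $G$ with respect to the $L$-grading induced on $\C[E]$ by $z_i\mapsto\mathbf{v}(f_i)$. Concretely: filter $\C[E]$ by $L$-degree via $\deg_L(z^{\bm a})=\sum a_i\mathbf{v}(f_i)$, and for $G\in I$ let $\mathrm{in}(G)$ be the $L$-homogeneous top-degree part. Since $G(f_1,\dots,f_n)=0$ in $A$, the element $\mathrm{in}(G)$ maps to $0$ in $\gr A$ (this is the standard fact that $I$'s initial ideal lands in $\ker(\C[E]\to\gr A)$, using that $\gr A$ is the associated graded of the $\mathbf{v}$-filtration and the leaves are one-dimensional), so $\mathrm{in}(G)\in(\overline{g}_1,\dots,\overline{g}_J)$, say $\mathrm{in}(G)=\sum_j c_j\overline{g}_j$ with $c_j\in\C[E]$ that we may take $L$-homogeneous of the appropriate degrees. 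Then $G-\sum_j c_j g_j\in I$ has strictly smaller top $L$-degree than $G$ (the top-degree parts cancel, and the correction terms $c_jp_j$ have lower $L$-degree by \eqref{niceformg}). Since $L$-degrees of elements arising here are bounded below — this uses Lemma~\ref{lemma; cones are strongly convex}, i.e.~$\cone(\mathsf{S}_\mathbf{v})$ strongly convex, so that the $L$-degrees of monomials appearing lie in a strongly convex cone and the filtration is ``exhaustive from below'' with a well-ordering — the process terminates and yields $G\in(g_1,\dots,g_J)$.

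For the degree formula: $|g_j|_\Lambda$ is $\Lambda$-homogeneous degree of $\overline{g}_j$, which is $\mathsf{w}(|\overline{g}_j|_L)$ by \ref{v4} applied to the map $\C[E]\to\gr A$ (or directly from how the $H$-module structure on $E$ was defined via $\mathsf{w}$ dual to $H\to\H$); and $|\overline{g}_j|_L=\mathbf{v}(\overline{g}_j(f_1,\dots,f_n))$ because $\overline{g}_j(f_1,\dots,f_n)\in A_{<|\overline{g}_j|_L}$ forces the valuation to be exactly the top $L$-degree appearing — here I realize this needs a tiny bit more care, since a priori $\overline{g}_j(f_1,\dots,f_n)$ could have valuation strictly below $|\overline{g}_j|_L$; but the statement as written reads $|\overline{g}_j|_L = \mathsf{w}(\mathbf{v}(\overline{g}_j(f_1,\dots,f_n)))$ inside $\Lambda$, not in $L$, so I only need $\mathsf{w}$ applied to both sides to agree, which holds because all monomials of $\overline{g}_j$ have the same $\mathsf{w}\circ(\text{degree})=|\overline{g}_j|_\Lambda$ by $L$-homogeneity plus the factorization, so every term $f^{\bm a}$ contributes $\mathsf{w}$-value $|g_j|_\Lambda$ and hence $\mathsf{w}(\mathbf{v}(\sum\cdots))=|g_j|_\Lambda$ too.

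The main obstacle I expect is the termination/finiteness step in the generation argument: making rigorous that one can induct ``downward'' on the $L$-degree of the leading form, which requires knowing the relevant set of $L$-degrees is well-ordered under $>$. This is precisely where strong convexity of $\cone(\mathsf{S}_\mathbf{v})$ (Lemma~\ref{lemma; cones are strongly convex}) and the fact that $>$ is a total order respecting addition get used, but one has to be a little delicate because $\C[E]$ carries monomials of $L$-degree outside $\mathsf{S}_\mathbf{v}$; the fix is that after substitution into $A$ only degrees in $\mathsf{S}_\mathbf{v}$ survive, so the induction should be run on the $\mathbf{v}$-filtration of $A$ rather than naively on $\C[E]$. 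Apart from that, everything is a careful bookkeeping exercise following the first half of the proof of \cite[Theorem 3.13]{haradaIntegrableSystemsToric2015}, adapted from the projective graded setting to the affine filtered setting.
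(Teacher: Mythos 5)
Your proposal is correct and follows essentially the same route as the paper, whose proof is simply the first half of the subduction argument from \cite[Theorem 3.13]{haradaIntegrableSystemsToric2015}: lift each $\overline{g}_j$ by subducting $\overline{g}_j(f_1,\dots,f_n)\in A_{<|\overline{g}_j|_L}$, then show generation by descending induction on the leading $L$-homogeneous form of an arbitrary element of $\ker(\C[E]\to A)$. The only point worth tightening is termination, where the cleanest argument is to reduce to $\Lambda$-homogeneous $G$ (the kernel is $\Lambda$-graded) so that all leading degrees lie in the finite set $\mathsf{w}\n(|G|_\Lambda)\cap\mathsf{S}_{\mathbf{v}}$ by \ref{v3}, rather than invoking well-ordering of all of $(\mathsf{S}_{\mathbf{v}},>)$.
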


Let $\overline{g}_j$, $g_j$, and $p_j$, $j = 1,\dots, J$ be as in Lemma~\ref{lemma;kernel}. Let $s_j = |\overline{g}_j|_L$ denote the $L$-homogeneous degree of $\overline{g}_j$ as an element of $\C[E]$. Write $p_j = \sum_{l=1}^{L_j} M_{j,l}$ where each $M_{j,l}\in \C[E]$ is a $\Lambda$-homogeneous monomial in $z_1,\dots,z_n$, of degree $|M_{j,l}|_\Lambda = \mathsf{w}(s_j)$. Fix a $\Z$-linear map $e\colon L\to \Z$ such that:
\begin{equation}
\label{fix e'}
e(\mathsf{S})\subset \N, \text{ and }  e(\mathbf{v}(\overline{g}_j(f_1,\dots,f_n)))>e(\mathbf{v}( M_{j,l}(f_1,\dots,f_n))) \, \forall j,l.
\end{equation} 
This exists by \ref{v5}.
For all $k\geq 0$, define
\[
    A_{\leq k } = \{ f \in A\mid e(\mathbf{v}(f)) \leq k \text{ or } f=0\}.
\]
The subspaces $A_{\leq k}$ define a $\N$-graded filtration of $A$. The \emph{Rees algebra} of this filtration is 
\begin{equation}\label{eqn; rees alg def}
    \mathcal{R} = \bigoplus_{k\geq 0} A_{\leq k} \otimes t^k \subset A\otimes\C[t].
\end{equation}
The algebra $\mathcal{R}$ inherits a $\Lambda$-grading from $A$ (where $t$ is defined to be homogeneous of degree 0). The following collects standard facts about $\mathcal{R}$; see for instance~\cite[Corollary~6.11]{eisenbudCommutativeAlgebra1995a}.

\begin{proposition}\label{prop; rees algebra properties}
Let $\mathcal{R}$ be as in \eqref{eqn; rees alg def}. Then,
	\begin{enumerate}
		\item $\mathcal{R}$ is finitely generated.
		\item The $\C$-algebra homomorphism $\C[t]\to \mathcal{R}$, $t \mapsto t$ makes $\mathcal{R}$ into a flat $\C[t]$-algebra. 
		\item $\mathcal{R}/t\mathcal{R}\cong \gr A \cong \C[\mathsf{S}]$.
		\item $\mathcal{R}[t\n]\cong A\otimes \C[t,t\n]$. 
	\end{enumerate}
\end{proposition}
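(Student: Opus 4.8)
\textbf{Proof proposal for Proposition~\ref{prop; rees algebra properties}.}

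The plan is to verify the four statements in order, treating them as standard consequences of the Rees algebra construction (cf.~\cite[Corollary~6.11]{eisenbudCommutativeAlgebra1995a}) adapted to our filtration by the function $k \mapsto A_{\leq k}$ coming from $e\circ\mathbf{v}$. For (1), I would observe that since $\mathsf{S}_\mathbf{v}$ is finitely generated and $\mathbf{v}$ has one-dimensional leaves, $\gr A \cong \C[\mathsf{S}]$ is finitely generated as a $\C$-algebra; lifting a finite set of semigroup generators together with $t$ to elements of $\mathcal{R}$ and invoking the subduction/Khovanskii-basis argument (Proposition~\ref{prop; K generates A}) shows these generate $\mathcal{R}$. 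For (2), the key point is that $\mathcal{R}\subset A\otimes\C[t]$ is torsion-free as a $\C[t]$-module (it sits inside a domain tensored with $\C[t]$), and torsion-free modules over the PID $\C[t]$ are flat; so the map $\C[t]\to\mathcal{R}$ is flat. (Alternatively one can check $t$ is a nonzerodivisor on $\mathcal{R}$ directly and appeal to the standard criterion.)

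For (3), I would compute $\mathcal{R}/t\mathcal{R}$ degree by degree in the $\N$-grading by $k$: the degree-$k$ part of $t\mathcal{R}$ is $A_{\leq k-1}\otimes t^k$, so the degree-$k$ part of $\mathcal{R}/t\mathcal{R}$ is $A_{\leq k}/A_{\leq k-1}$, which is precisely the degree-$k$ piece of the associated graded algebra of the filtration $\{A_{\leq k}\}$. Because $e\colon L\to\Z$ is injective on $\mathsf{S}_\mathbf{v}$ — which follows from \ref{v3}, \ref{v5}, and the one-dimensional-leaves hypothesis (the fibers of $e$ on $\mathsf{S}$ are finite and the construction of $e$ via \eqref{fix e'} refines the order, so $e$ separates the relevant semigroup elements) — the filtration $\{A_{\leq k}\}$ has the same associated graded algebra as the original $L$-filtration by $\mathbf{v}$, namely $\gr A \cong \C[\mathsf{S}]$ by \cite[Remark~4.13]{gubeladzePolytopesRingsKTheory2009}. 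This gives the two isomorphisms in (3). For (4), inverting $t$ in $\mathcal{R}=\bigoplus_k A_{\leq k}\otimes t^k$ identifies the degree-$k$ component (for $k$ now ranging over $\Z$) with $\bigcup_m A_{\leq m}\otimes t^k = A\otimes t^k$, since every element of $A$ lies in some $A_{\leq m}$; hence $\mathcal{R}[t^{-1}]\cong A\otimes\C[t,t^{-1}]$, compatibly with the $\C[t,t^{-1}]$-algebra structure and (one checks) with the $\Lambda$-gradings and torus actions.

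The main obstacle I anticipate is not any single deep step but rather bookkeeping: making sure the filtration $\{A_{\leq k}\}$ defined via $e\circ\mathbf{v}$ genuinely has $\C[\mathsf{S}]$ as its associated graded, which is where the careful choice of $e$ in \eqref{fix e'} and the refinability assumption \ref{v5} are actually used. One must check that collapsing the $L$-valued filtration to the $\N$-valued one does not merge leaves — equivalently that $e$ restricted to $\mathsf{S}_\mathbf{v}$ is injective (or at least that distinct values of $\mathbf{v}$ that appear are separated by $e$ within each relevant finite-dimensional subspace). Granting this, the associated graded of the coarser filtration agrees with $\gr_\mathbf{v} A$, and the rest is the standard Rees algebra formalism. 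I would also flag that statements (1)--(4) are asserted in the literature for general good filtrations, so the honest content here is simply recording that our hypotheses \ref{v1}--\ref{v5} put us in that situation; a short proof citing \cite[Corollary~6.11]{eisenbudCommutativeAlgebra1995a} for the general facts plus the paragraph above for the identification of $\gr$ suffices.
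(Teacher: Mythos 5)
The paper does not actually prove this proposition; it records the four statements as standard facts with a pointer to \cite[Corollary~6.11]{eisenbudCommutativeAlgebra1995a}, so any genuine argument you supply is necessarily a different route. Your treatments of (1), (2), and (4) are correct and in the spirit of what the paper does elsewhere: torsion-freeness of $\mathcal{R}\subset A\otimes\C[t]$ over the PID $\C[t]$ is exactly how flatness is argued for the subfamilies in Proposition~\ref{prop;subfamilies}; inverting $t$ uses only that the filtration is exhaustive; and finite generation by $t$ together with the lifts $t^{e(\mathbf{v}(f_i))}f_i$ of semigroup generators is the standard subduction argument.

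There is, however, a genuine gap in your argument for (3). The claim that $e$ is injective on $\mathsf{S}_\mathbf{v}$ is false in general: $e$ is a single linear functional on a lattice $L$ whose rank typically exceeds one, and \eqref{fix e'} only requires $e$ to separate the finitely many pairs $\mathbf{v}(\overline{g}_j(f_1,\dots,f_n))>\mathbf{v}(M_{j,l}(f_1,\dots,f_n))$ arising from the relations; it neither demands nor can achieve injectivity on a full-dimensional semigroup (already for $\mathsf{S}=\N^2$ no linear $e$ with $e(\mathsf{S})\subset\N$ is injective on $\mathsf{S}$). Note also that \ref{v3} controls the fibers of $\mathsf{w}$, not of $e$. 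The conclusion $\mathcal{R}/t\mathcal{R}\cong\gr A$ is still true, but the honest argument runs through the presentation rather than through an abstract comparison of the $L$-filtration with its coarsening: by Lemma~\ref{lemma;kernel} the kernel of $\C[E]\to\gr A$ is generated by the $\overline{g}_j$; the kernel of the map \eqref{equation;dual to embedding of the degeneration} onto $\mathcal{R}$ is generated by the $\hat{g}_j$ (the standard Gr\"obner-degeneration step, via a leading-term induction or a Hilbert-function count); and the choice of $e$ in \eqref{fix e'} guarantees that every exponent $m_{j,l}$ is strictly positive, so setting $t=0$ sends $\hat{g}_j$ to $\overline{g}_j$ and yields $\mathcal{R}/t\mathcal{R}\cong\C[E]/(\overline{g}_1,\dots,\overline{g}_J)\cong\gr A\cong\C[\mathsf{S}]$. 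This is the one place where \ref{v5} is genuinely used, and it is precisely the step your sketch leaves unsupported.
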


Let $\X$ be the affine $H$-variety $\spec \mathcal{R}$.  Dualizing the map $\C[t]\to \mathcal{R}$ gives a flat morphism $\X \to \C$, which is a toric degeneration of $X$ to  $\spec \gr A$.

The variety $\X$ can be embedded $H$-equivariantly into $E\times \C$.
Define an algebra homomorphism
\begin{equation}\label{equation;dual to embedding of the degeneration}
    \C[E]\otimes \C[t] \to A\otimes \C[t], \quad z_i \mapsto  t^{e(\mathbf{v}(f_i))}f_i, \,  t\mapsto  t.
\end{equation}
It is a map of $\Lambda$-graded algebras ($t$ is homogeneous of degree 0 in both algebras). Its image is   $\mathcal{R}$. We then have an $H$-equivariant embedding $\X\hookrightarrow E\times \C.$
The image of $\X$ in $E\times \C$ is the subvariety cut out by the $\Lambda$-homogeneous polynomials 
\begin{equation*}\label{eqn; kernel of toric deg embedding}
    \hat{g}_j = \overline{g}_j(z_1,\dots,z_n) + \sum_{l=1}^{L_j} t^{m_{j,l}} M_{j,l}(z_1,\dots,z_n) \quad j = 1,\dots ,J.
\end{equation*}
where $m_{j,l} = e(\mathbf{v}(\overline{g}_j(f_1,\dots,f_n))-\mathbf{v}(M_{j,l}(f_1,\dots,f_n)))$.

We conclude by making some observations about subfamilies/subvarieties of $\X$, $X$, and the toric fiber. We adopt the notation of Section \ref{section; affine G-varieties}.

\begin{proposition} \label{prop;subfamilies}
Let $\overline{\sigma}$ be a closed face of $\Gamma(X)$.
\begin{enumerate}
    \item $\mathbf{I}(\X \cap (E^{\overline{\sigma}} \times \C))$ is generated by
\[
\{\hat{g}_j \mid \mathsf{w}(s_j) \in \overline{\sigma}\} \cup \{z_i \mid \mathsf{w}(\mathbf{v}(f_i)) \notin \overline{\sigma}\}.
\]
Consequently, $\C[\X\cap(E^{\overline{\sigma}} \times \C)]$ is isomorphic to the subalgebra of $\C[\X]= \mathcal{R}$ generated by $\{t\}\cup \{t^{e(\mathbf{v}(f_i))} f_i \mid \mathsf{w}(\mathbf{v}(f_i)) \in \overline{\sigma}\}.$ Additionally, $\C[\X\cap(E^{\overline{\sigma}}\times \C)]$ is flat as a $\C[t]$-module.
\item Identify $E\times \{1\} = E$. Then $\mathbf{I}(\X \cap (E^{\overline{\sigma}} \times \{1\}))\subset \C[E]$ is generated by
\[
\{g_j \mid \mathsf{w}(s_j) \in \overline{\sigma}\} \cup \{z_i \mid \mathsf{w}(\mathbf{v}(f_i)) \notin \overline{\sigma}\}.
\]
Consequently, $\C[\X\cap(E^{\overline{\sigma}} \times \{1\})]$ is isomorphic to the subalgebra of $\C[X]$ generated by $\{f_i \mid \mathsf{w}(\mathbf{v}(f_i)) \in \overline{\sigma}\}$.
\item Identify $E\times \{0\} = E$. Then $\mathbf{I}(\X \cap (E^{\overline{\sigma}} \times \{0\}))\subset \C[E]$ is generated by
\[
\{\overline{g}_j \mid \mathsf{w}(s_j) \in \overline{\sigma}\} \cup \{z_i \mid \mathsf{w}(\mathbf{v}(f_i)) \notin \overline{\sigma}\}.
\]
Consequently, $\C[\X\cap(E^{\overline{\sigma}} \times \{0\})]$ is isomorphic to the subalgebra of $\C[\mathsf{S}]$ generated by $\{\chi^{\mathbf{v}(f_i)} \mid \mathsf{w}(\mathbf{v}(f_i)) \in \overline{\sigma}\}$.
\end{enumerate}
\end{proposition}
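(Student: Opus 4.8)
The plan is to prove part (1) carefully, since parts (2) and (3) follow by the same argument applied to the specialized algebras $\mathcal{R}/(t-1)\mathcal{R} \cong A$ and $\mathcal{R}/t\mathcal{R} \cong \gr A \cong \C[\mathsf{S}]$. Recall that $E^{\overline{\sigma}}\subset E$ is the coordinate subspace cut out by $\{z_i \mid \mathsf{w}(\mathbf{v}(f_i)) \notin \overline{\sigma}\}$, using \ref{v4} together with \eqref{useful fact about subspaces}, so $\mathbf{I}(E^{\overline{\sigma}}\times\C) = (z_i \mid \mathsf{w}(\mathbf{v}(f_i)) \notin \overline{\sigma})$ as an ideal in $\C[E]\otimes\C[t]$. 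Thus the scheme-theoretic intersection $\X \cap (E^{\overline{\sigma}}\times\C)$ is cut out by the ideal $I := \mathbf{I}(\X) + \mathbf{I}(E^{\overline{\sigma}}\times\C)$, where $\mathbf{I}(\X) = (\hat{g}_1,\dots,\hat{g}_J)$. The claim is that $I$ is generated by $\{\hat{g}_j \mid \mathsf{w}(s_j)\in\overline{\sigma}\}\cup\{z_i\mid \mathsf{w}(\mathbf{v}(f_i))\notin\overline{\sigma}\}$. The containment $\supseteq$ is clear once one observes that if $\mathsf{w}(s_j)\notin\overline{\sigma}$ then every monomial appearing in $\hat{g}_j$ — both the leading term $\overline{g}_j$ and each $t^{m_{j,l}}M_{j,l}$, all of which have $\Lambda$-degree $\mathsf{w}(s_j)$ by Lemma~\ref{lemma;kernel} — involves some $z_i$ with $\mathsf{w}(\mathbf{v}(f_i))\notin\overline{\sigma}$ (a face is, in particular, a subsemigroup, so a sum of $\Lambda$-degrees landing in $\overline{\sigma}$ forces each summand into $\overline{\sigma}$ only when all terms lie in $\mathrm{span}_\R\overline{\sigma}$; more precisely one uses that $\overline{\sigma}$ is a face, so if a nonnegative combination of weights $\mathsf{w}(\mathbf{v}(f_i))$ lies in $\overline{\sigma}$ then each weight with positive coefficient lies in $\overline{\sigma}$). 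Hence such $\hat{g}_j$ already lies in the ideal generated by the $z_i$'s with $\mathsf{w}(\mathbf{v}(f_i))\notin\overline{\sigma}$.

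For the reverse containment $\subseteq$, the key step is a reduction: modulo $\mathbf{I}(E^{\overline{\sigma}}\times\C)$, every $\hat{g}_j$ with $\mathsf{w}(s_j)\notin\overline{\sigma}$ becomes zero (by the previous paragraph), so $I$ is generated by $\mathbf{I}(E^{\overline{\sigma}}\times\C)$ together with the images of $\{\hat{g}_j \mid \mathsf{w}(s_j)\in\overline{\sigma}\}$ — but we must check that these images, as elements of $\C[E]\otimes\C[t]$, can be taken to be the $\hat{g}_j$ themselves, i.e.\ that $\hat{g}_j \pmod{\mathbf{I}(E^{\overline{\sigma}}\times\C)}$ is represented by $\hat{g}_j$ restricted to the surviving variables. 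This is immediate since each $\hat{g}_j$ with $\mathsf{w}(s_j)\in\overline{\sigma}$ is already a polynomial in $\{z_i \mid \mathsf{w}(\mathbf{v}(f_i))\in\overline{\sigma}\}$: every monomial of $\overline{g}_j$ and of each $M_{j,l}$ has $\Lambda$-degree $\mathsf{w}(s_j)\in\overline{\sigma}$, and writing that degree as a nonnegative combination of the weights $\mathsf{w}(\mathbf{v}(f_i))$ of the variables appearing, the face property forces all those weights into $\overline{\sigma}$. This establishes the generating set.

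For the ``Consequently'' statement, dualize: the embedding $\X\hookrightarrow E\times\C$ corresponds to the surjection \eqref{equation;dual to embedding of the degeneration} $\C[E]\otimes\C[t]\twoheadrightarrow\mathcal{R}$, $z_i\mapsto t^{e(\mathbf{v}(f_i))}f_i$. Quotienting the source by $(z_i \mid \mathsf{w}(\mathbf{v}(f_i))\notin\overline{\sigma})$ and the target by the image of that ideal shows $\C[\X\cap(E^{\overline{\sigma}}\times\C)]$ is the quotient of $\C[z_i : \mathsf{w}(\mathbf{v}(f_i))\in\overline{\sigma}]\otimes\C[t]$ by the kernel of the induced map to $\mathcal{R}$, which is precisely the subalgebra $\C[t][\,t^{e(\mathbf{v}(f_i))}f_i : \mathsf{w}(\mathbf{v}(f_i))\in\overline{\sigma}\,]\subset\mathcal{R}$; here one uses that the remaining relations $\hat{g}_j$ with $\mathsf{w}(s_j)\in\overline{\sigma}$ are exactly the relations among these generators, by part of Lemma~\ref{lemma;kernel} and the fact that $\X\cap(E^{\overline{\sigma}}\times\C)$ is reduced (Lemma~\ref{lem; scheme theoretic intersection is reduced}, or its analogue here). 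Flatness over $\C[t]$ of this subalgebra follows because it is a $\Z$-graded subalgebra of the flat (indeed free, being a Rees algebra) $\C[t]$-module $\mathcal{R}$ that is itself torsion-free over the PID $\C[t]$, hence flat. Parts (2) and (3) follow by setting $t=1$ and $t=0$ respectively and invoking Proposition~\ref{prop; rees algebra properties}(3),(4): under $t\mapsto 1$, $\hat g_j$ specializes to $g_j$ and $t^{e(\mathbf{v}(f_i))}f_i$ to $f_i$; under $t\mapsto 0$, $\hat g_j$ specializes to $\overline g_j$ and $\mathcal{R}/t\mathcal{R}\cong\C[\mathsf{S}]$ with $f_i \bmod A_{<\mathbf{v}(f_i)}$ corresponding to $\chi^{\mathbf{v}(f_i)}$.

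The main obstacle I anticipate is the bookkeeping in the ``Consequently'' clause of part (1): one must verify that, after restricting to the coordinate subspace, the ideal of relations among the surviving generators is generated precisely by $\{\hat g_j : \mathsf{w}(s_j)\in\overline\sigma\}$ and not something larger — equivalently, that intersecting $\mathbf{I}(\X)$ with the coordinate subspace ideal does not produce ``new'' relations not already accounted for. The cleanest route is to avoid this by instead arguing scheme-theoretically: establish the generating set for $I$ as above, then note $\C[\X\cap(E^{\overline\sigma}\times\C)] = (\C[E]\otimes\C[t])/I$ and directly compute this quotient using the explicit generators, identifying it with the named subalgebra of $\mathcal{R}$ via \eqref{equation;dual to embedding of the degeneration}. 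Flatness then follows from torsion-freeness over $\C[t]$ as indicated.
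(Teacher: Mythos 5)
Your proof is correct and follows essentially the same route as the paper's: reduce to $\mathbf{I}(\X)+\mathbf{I}(E^{\overline{\sigma}}\times\C)$ via Lemma~\ref{lem; scheme theoretic intersection is reduced}, use \eqref{useful fact about subspaces} together with the face property of $\overline{\sigma}$ to discard the $\hat{g}_j$ with $\mathsf{w}(s_j)\notin\overline{\sigma}$, identify the quotient with the graded subalgebra of $\mathcal{R}$, get flatness from torsion-freeness over the PID $\C[t]$, and specialize $t=1$ and $t=0$ for the remaining items. The "new relations" worry you raise in the last paragraph is correctly dispelled by your own scheme-theoretic computation of the quotient, which is also how the paper's terse "Consequently" is meant to be read.
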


\begin{proof}
By Lemma~\ref{lem; scheme theoretic intersection is reduced}, $\mathbf{I}(\X \cap (E^{\overline{\sigma}} \times \C)) = \mathbf{I}(\X) + \mathbf{I}(E^{\overline{\sigma}} \times \C))$. It follows by  \eqref{useful fact about subspaces} that if $|\hat{g}_j|\notin \overline{\sigma}$, then $\hat{g}_j$ vanishes on $E^{\overline{\sigma}} \times \C$. The description of $\mathbf{I}(\X \cap (E^{\overline{\sigma}} \times \C))$
immediately follows.
The $\C[t]$-module $\C[\X\cap(E^{\overline{\sigma}}\times\C)]$ is flat because $\C[\X\cap(E^{\overline{\sigma}}\times\C)]\subset \mathcal{R}$ is a torsion-free $\C[t]$-module, and $\C[t]$ is a principal ideal domain~\cite[Corollary~6.3]{eisenbudCommutativeAlgebra1995a}. The second and third items follow from the first, by putting $t=0$ and $t=1$.
\end{proof}

\subsection{Good valuations}\label{subsection; kahler form on toric variety} 

\begin{definition} \label{definition;good} Let $H_\mathsf{a}$ and $H_\mathsf{c}$ be algebraic tori with compact forms $T_\mathsf{a}$ and $T_\mathsf{c}$, respectively. Let $X$ be an affine $H_\mathsf{a}\times H_\mathsf{c}$-variety. A \emph{good valuation on $X$}
 is a tuple $(X, E, h_E, \mathbf{v}, \mathsf{a},\mathsf{c})$ consisting of:
\begin{enumerate}[label=(\roman*),start=1]
\item A finite dimensional Hermitian inner product space $(E,h_E)$ equipped with a unitary representation of $T_\mathsf{a}\times T_\mathsf{c}$ and a $H_\mathsf{a}\times H_\mathsf{c}$-equivariant embedding $X\hookrightarrow E$ of $X$ as a closed subvariety.
    \item A valuation $\mathbf{v}\colon A\backslash\{0\} \to L$ on $A= \C[X]$, with values in a lattice with total order $(L,>)$. We require that $\mathbf{v}$ has one-dimensional leaves, and that $\mathsf{S}=\mathsf{S}_\mathbf{v}$ is finitely generated.
    \item Surjective $\Z$-linear maps
 \[
 \mathsf{a} \colon  L \to \Lambda_{\mathsf{a}}, \qquad  \mathsf{c} \colon  L \to \Lambda_{\mathsf{c}}.
 \]
    where $\Lambda_{\mathsf{a}}$ (resp.~$\Lambda_{\mathsf{c}}$) is the character lattice of $H_\mathsf{a}$ (resp.~$H_\mathsf{c}$). 
 \end{enumerate}
Let $\Lambda(X)\subset \Lambda_\mathsf{c}$ be the semigroup of weights of the $H_\mathsf{c}$-module $\C[X]$, let $\Gamma=\Gamma(X) =\cone \Lambda(X)$, and let $\Sigma=\Sigma(X)$ be the face poset of $\Gamma$. 
The data must satisfy two compatibility conditions:
\begin{enumerate}[label=(GV\arabic*)]
     \item \label{GV1new} (Compatibility of the valuation) The valuation $\mathbf{v}$ and the map $\mathsf{c}$ satisfy conditions~\ref{v1}-\ref{v5} (with $\mathsf{w}=\mathsf{c}$). Additionally, the valuation $\mathbf{v}$ and the map $\mathsf{a}$ satisfy condition~\ref{v4} (with $\mathsf{w}=\mathsf{a}$). 
     \item \label{GV2new} (Compatibility of the decomposition) The partition of $X$ by the subvarieties $X^\sigma$, $\sigma\in \Sigma$, defined by the $H_\mathsf{c}$-action\footnote{See \eqref{eq; partition of H variety def} in Section \ref{section; affine G-varieties}.} equips $X$ with the structure of a decomposed variety~\ref{D1'}. The decomposition satisfies the Whitney condition~\ref{Whitney A} with respect to the embedding into $E$~\ref{D2'}. 
\end{enumerate}
\end{definition}

The actions of $T_\mathsf{a}$ and $T_\mathsf{c}$ on $E$ are Hamiltonian with quadratic moment maps $\psi_\mathsf{a}$ and $\psi_\mathsf{c}$.
Given a good valuation, the tuple $(X,E,\omega_E)$ is a decomposed affine K\"ahler variety (Definition \ref{def; decompsed Kahler variety}) with respect to the symplectic form $\omega_E = -\Im h_E$ and the decomposition of $X$ described in \ref{GV2new}. The action of $T_\mathsf{a} \times T_\mathsf{c}$ endows $X$ with the structure of a Hamiltonian $T_\mathsf{a} \times T_\mathsf{c}$-space.

\begin{remark}
    The letters $\mathsf{a}$ and $\mathsf{c}$ stand for \emph{auxiliary} and \emph{control}. The action of the control torus $H_\mathsf{c}$ is necessary for the construction in Section~\ref{main construction section}. We include the data of the auxiliary torus so that the construction in Section~\ref{main construction section} can be performed in the presence of an additional group action. This auxiliary group action is not necessary for the construction. If there is no additional group action to keep track of, one may put $H_\mathsf{a} = \{e\}$. 
\end{remark}

\begin{remark} \label{WLOGremark} In all that follows we assume without loss of generality that $X$ is not contained in any proper affine subspace of $E$. That is, we assume that no $f\in E^*\backslash\{0\}$ restricts to a constant function on $X$. In particular, because $X$ is irreducible the semigroups of $H_\mathsf{c}$-weights $\Lambda(X)$ and $\Lambda(E)$ are equal. 
\end{remark}

The remainder of the section is devoted to showing the following: Given a good valuation $\mathbf{v}$ on $A=\C[X]$, we can always assume that the ambient affine space $E$ has a system of linear coordinates $\mathcal{K}\subset E^*$ which restricts to a Khovanskii basis for $A$ and $\mathbf{v}$, and such that the dual basis of $\mathcal{K}$ is an orthonormal weight basis of $E$. 

\begin{lemma} \label{psic proper}
 Let $X$ be an affine $H_\mathsf{a}\times H_\mathsf{c}$-variety, and let $( X, E, h_E, \mathbf{v}, \mathsf{a},\mathsf{c})$ be a good valuation on $X$. Then the map $\psi_\mathsf{c} \colon E \to \Lie(T_\mathsf{c})^*$ is proper. 
\end{lemma}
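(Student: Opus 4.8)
The plan is to reduce the properness of $\psi_{\mathsf c}$ to the criterion of Lemma~\ref{lemma; unitary torus action proper moment map}\ref{lemma2.2ii}, namely that $0$ is not a nontrivial non-negative combination of the $T_{\mathsf c}$-weights occurring in $E$. Equivalently, we must show that the cone $\Gamma(E) = \cone\Lambda(E)$ is strongly convex. By Remark~\ref{WLOGremark} we may assume $X$ spans $E$, so that $\Lambda(E) = \Lambda(X)$ as semigroups in $\Lambda_{\mathsf c}$, and hence $\Gamma(E) = \Gamma(X) = \cone\Lambda(X)$. So the whole statement comes down to showing $\cone\Lambda(X)$ is strongly convex.

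The key is condition~\ref{GV1new}, which guarantees that the valuation $\mathbf v$ and the map $\mathsf c$ satisfy \ref{v1}--\ref{v5} with $\mathsf w = \mathsf c$, and moreover (via \ref{v4}) that the $\Lambda_{\mathsf c}$-grading of $\C[X]$ factors through $\mathbf v$: if $f\in A$ is $H_{\mathsf c}$-homogeneous then $|f|_{\Lambda_{\mathsf c}} = \mathsf c(\mathbf v(f))$. Consequently every weight $\lambda\in\Lambda(X)$ is of the form $\mathsf c(s)$ for some $s\in\mathsf S_{\mathbf v}$; more precisely $\Lambda(X) = \mathsf c(\mathsf S_{\mathbf v})$, so $\cone\Lambda(X) = \cone(\mathsf c(\mathsf S_{\mathbf v})) = \mathsf c(\cone\mathsf S_{\mathbf v})$. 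Since \ref{v1}--\ref{v3} hold for the pair $(\mathbf v,\mathsf c)$, Lemma~\ref{lemma; cones are strongly convex} applies and tells us that $\cone(\mathsf c(\mathsf S_{\mathbf v})) = \cone(\mathsf w(\mathsf S_{\mathbf v}))$ is strongly convex. (Indeed Lemma~\ref{lemma; cones are strongly convex} is stated precisely for this purpose: it outputs strong convexity of both $\cone\mathsf S_{\mathbf v}$ and $\cone(\mathsf w(\mathsf S_{\mathbf v}))$ from \ref{v1}--\ref{v3}.) Hence $\cone\Lambda(X)$ is strongly convex.

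Combining the two observations: $\Gamma(E) = \cone\Lambda(X) = \cone(\mathsf c(\mathsf S_{\mathbf v}))$ is strongly convex, so by Lemma~\ref{lemma; unitary torus action proper moment map}\ref{lemma2.2ii} the quadratic moment map $\psi_{\mathsf c}\colon E\to\Lie(T_{\mathsf c})^*$ is proper, which is the claim.

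I do not expect a serious obstacle here; the only point requiring a word of care is the identification $\Lambda(E) = \Lambda(X)$, which is exactly what Remark~\ref{WLOGremark} records, and the bookkeeping that the $H_{\mathsf c}$-weight semigroup of $\C[X]$ equals $\mathsf c(\mathsf S_{\mathbf v})$, which is immediate from \ref{v4} together with the fact that $\mathbf v$ has one-dimensional leaves (so $\gr A \cong \C[\mathsf S_{\mathbf v}]$ as $\Lambda_{\mathsf c}$-graded algebras, matching up weight spaces). Everything else is a direct appeal to the two lemmas already in hand.
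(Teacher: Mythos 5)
Your overall route is the same as the paper's (reduce to Lemma~\ref{lemma; unitary torus action proper moment map}\ref{lemma2.2ii}, then get strong convexity of $\cone(\mathsf{c}(\mathsf{S}_{\mathbf{v}}))$ from \ref{v1}--\ref{v3} via Lemma~\ref{lemma; cones are strongly convex} and \ref{v4}), but there is a gap at your very first step: you treat ``$0$ is not a non-trivial non-negative combination of the $T_{\mathsf{c}}$-weights occurring in $E$'' as equivalent to ``$\cone\Lambda(E)$ is strongly convex,'' and then only prove the latter. These are not equivalent when $0$ itself occurs as a weight of $E$: the cone generated by $\{0,e_1\}$ is the strongly convex ray $\R_{\geq 0}e_1$, yet the corresponding quadratic moment map is identically zero on the (unbounded) zero-weight subspace and hence not proper. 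So strong convexity of the weight cone does not by itself give properness; you must additionally rule out a nonzero $T_{\mathsf{c}}$-fixed subspace of $E$. Your bookkeeping $\Lambda(E)=\Lambda(X)=\mathsf{c}(\mathsf{S}_{\mathbf{v}})$ cannot substitute for this, because $0$ always lies in the weight semigroup of $\C[X]$ (it is the weight of the constants), so nothing in your argument distinguishes the harmless fact $0\in\Lambda(E)$ from the fatal possibility that $0$ is a weight of the representation $E$ itself.

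The paper's proof devotes its final paragraph to exactly this point. If some weight vector $z_i\in E^*$ had weight $0$, then by \ref{v4} its restriction $f_i=z_i|_X$ would satisfy $\mathsf{c}(\mathbf{v}(f_i))=0$; since $\mathsf{c}^{-1}(0)\cap\mathsf{S}_{\mathbf{v}}=\{0\}$ (a consequence of Lemma~\ref{lemma; cones are strongly convex} together with \ref{v3}), this forces $\mathbf{v}(f_i)=0=\mathbf{v}(1)$, and then the one-dimensional-leaves property combined with the normalization of Remark~\ref{WLOGremark} (no nonzero element of $E^*$ restricts to a constant on $X$) produces a contradiction. Adding this step --- which uses hypotheses (one-dimensional leaves, the spanning assumption) that your argument never invokes --- completes the proof; the remainder of your reasoning matches the paper's.
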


\begin{proof} 
Let $\Pi$ denote the set of weights of the dual representation of $T_\mathsf{c}$ on $E^*$. By Lemma \ref{lemma; unitary torus action proper moment map}\ref{lemma2.2ii}, $\psi_\mathsf{c}$ is proper if and only if 0 cannot be written as a non-trivial linear combination of elements of $\Pi$ with non-negative coefficients. 

Let $\{z_i\}_{i=1}^n \subset E^*$ be a basis of $T_\mathsf{c}$-weight vectors. By our assumption that $\hull(X) = E$, the restricted functions $f_i = z_i\vert_X$ are all non-constant $\Lambda_\mathsf{c}$-homogeneous elements of $A$. The embedding $X \hookrightarrow E$ is $T_\mathsf{c}$-equivariant, and so by \ref{v4} the weight of $z_i \in E^*$ equals $\mathsf{c}(\mathbf{v}(f_i))$.  Thus, $\Pi$ is a subset of $\mathsf{c}(\mathsf{S}_\mathbf{v})$. Since $\cone(\mathsf{c}(\mathsf{S}_\mathbf{v}))$ is strongly convex (Lemma \ref{lemma; cones are strongly convex}), 0 can be written as a non-trivial linear combination of elements of $\Pi$ with non-negative coefficients if and only if $0 \in \Pi$.  

Finally, from Lemma \ref{lemma; cones are strongly convex} and \ref{v3} it follows that $\mathsf{c}\n(0)\cap \mathsf{S}_\mathbf{v} = \{0\}$. The $f_i$ are not constant and $\mathbf{v}$ has one-dimensional leaves. This implies that $0 \not\in \Pi$.
\end{proof}

\begin{proposition} \label{prop; embed nicer}
 Let $X$ be an affine $H_\mathsf{a}\times H_\mathsf{c}$-variety, and let $( X, E, h_E, \mathbf{v}, \mathsf{a},\mathsf{c})$ be a good valuation on $X$. Then, there exists an inner product space $(E',h_{E'})$ with unitary $T_\mathsf{a}\times T_\mathsf{c}$-action, and a $H_\mathsf{a}\times H_\mathsf{c}$-equivariant embedding $X\hookrightarrow E'$, so that
\begin{enumerate}
\item There exists a basis of ${E'}^*$ which restricts to a Khovanskii basis for $A=\C[X]$ and $\mathbf{v}$.
\item $(X, E', h_{E'}, \mathbf{v}, \mathsf{a},\mathsf{c})$ defines a good valuation on $X$.
\item Let $X_E$ and $X_{E'}$ denote the two Hamiltonian $T_\mathsf{a}\times T_\mathsf{c}$-space structures on $X$ coming from the embeddings $i$ and $i'$, respectively. Then $X_E$ is isomorphic to $X_{E'}$ as a Hamiltonian $T_\mathsf{a}\times T_\mathsf{c}$-space.
\end{enumerate}
\end{proposition}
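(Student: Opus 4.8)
\textbf{Proof strategy for Proposition~\ref{prop; embed nicer}.}

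The plan is to build the new ambient space $E'$ by enlarging $E$ with extra coordinate directions indexed by a finite Khovanskii basis, then cutting back down via the symplectic isotopy theorem (Theorem~\ref{GHmoser}). More precisely, start with the given $H_\mathsf{a}\times H_\mathsf{c}$-equivariant embedding $X\hookrightarrow E$ with $\Lambda_\mathsf{c}$-homogeneous linear coordinates $z_1,\dots,z_n$ restricting to $f_1,\dots,f_n\in A$. Since $\mathsf{S}_\mathbf{v}$ is finitely generated, choose finitely many elements $\tilde f_1,\dots,\tilde f_k\in A\setminus\{0\}$ whose valuations generate $\mathsf{S}_\mathbf{v}$; by averaging / decomposing into isotypic components we may take each $\tilde f_j$ to be $T_\mathsf{a}\times T_\mathsf{c}$-homogeneous (note $\mathbf{v}$ being compatible via \ref{v4} with both $\mathsf{a}$ and $\mathsf{c}$ means a homogeneous element has well-defined $\mathsf{a}$- and $\mathsf{c}$-weights determined by $\mathbf{v}$). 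Let $E''$ be the dual of $\operatorname{span}_\C\{z_1,\dots,z_n,w_1,\dots,w_k\}$, with the $w_j$ new coordinates of the appropriate $T_\mathsf{a}\times T_\mathsf{c}$-weights, equipped with the product Hermitian inner product making $\{z_i\}\cup\{w_j\}$ an orthonormal weight basis. The combined map $X\to E''$, $x\mapsto (f_i(x),\tilde f_j(x))$, is still a closed $H_\mathsf{a}\times H_\mathsf{c}$-equivariant embedding (closed because it factors the original closed embedding $X\hookrightarrow E$ through a linear projection $E''\to E$), and now $\{z_i\}\cup\{w_j\}$ restricts to a Khovanskii basis of $A$ since its valuations include a generating set of $\mathsf{S}_\mathbf{v}$. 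This gives property~(1) for $E''$.

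Next I would check that $(X,E'',h_{E''},\mathbf{v},\mathsf{a},\mathsf{c})$ is again a good valuation, i.e.~verify \ref{GV1new} and \ref{GV2new}. Condition \ref{GV1new} is purely about $\mathbf{v}$, $\mathsf{a}$, $\mathsf{c}$, and $\mathsf{S}_\mathbf{v}$ — it does not see the embedding — so it is inherited verbatim. For \ref{GV2new}: the partition $\{X^\sigma\}$ is defined intrinsically by the $H_\mathsf{c}$-action on $X$ (equation~\eqref{eq; partition of H variety def}), so \ref{D1'} is unchanged; what needs argument is the Whitney~(A) condition \ref{D2'} with respect to the new embedding. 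Here the key point is that $E''$ is, equivariantly, $E\times E_{\mathrm{extra}}$ and the image of $X$ is the graph of a regular map over its image in $E$. Whitney~(A) is preserved under graphing by a smooth (here algebraic, hence smooth on each stratum) map, because tangent spaces of strata and their limits transform compatibly under the graphing diffeomorphism onto its image; alternatively, invoke that Whitney~(A) is a local condition stable under closed embeddings composed with the inclusion of a locally closed smooth submanifold. Since the projection $E''\to E$ restricts to an isomorphism $X\xrightarrow{\sim}$(image of $X$ in $E$) commuting with the stratifications, limits of tangent spaces $\lim T_{x_j}X^\tau$ in $E''$ project isomorphically to those in $E$, and the condition $T_xX^\sigma\subset\lim T_{x_j}X^\tau$ transfers. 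This establishes property~(2), with $E'=E''$.

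For property~(3) I would apply Theorem~\ref{GHmoser} directly. Both $E$ and $E'$ are unitary $T_\mathsf{c}$-modules (forget the $T_\mathsf{a}$-action momentarily, or rather observe the whole argument below is $T_\mathsf{a}\times T_\mathsf{c}$-equivariant since the degeneration in the proof of Theorem~\ref{GHmoser} carries the $H_\mathsf{a}$-action along trivially in $t$), the embeddings $i\colon X\hookrightarrow E$ and $i'\colon X\hookrightarrow E'$ are closed and $H_\mathsf{c}$-equivariant, $X$ satisfies \ref{D1'}, and both embeddings satisfy \ref{D2'} (the original by hypothesis \ref{GV2new}, the new one by the previous paragraph). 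The hypothesis that the quadratic moment maps $\psi_\mathsf{c}\colon E\to\mathfrak t_\mathsf{c}^*$ and $\psi_\mathsf{c}'\colon E'\to\mathfrak t_\mathsf{c}^*$ are proper is exactly Lemma~\ref{psic proper}, applied to each good valuation. Theorem~\ref{GHmoser} then yields an isomorphism of Hamiltonian $T_\mathsf{c}$-spaces $X_E\cong X_{E'}$; inspecting its proof, the isomorphism is the time-$1$ stratified gradient Hamiltonian flow of an $H_\mathsf{a}\times H_\mathsf{c}$-equivariant degeneration, so by Lemma~\ref{lemma; ham grp action preserving pi} (Noether) applied to the $T_\mathsf{a}$-action as well, it intertwines the $T_\mathsf{a}$-moment maps too, giving the desired isomorphism of Hamiltonian $T_\mathsf{a}\times T_\mathsf{c}$-spaces.

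\textbf{Main obstacle.} The delicate point is the preservation of Whitney~(A) under the enlargement $E\rightsquigarrow E'$ (property~(2), condition \ref{D2'}). One must be careful that adding coordinates $w_j=\tilde f_j|_X$ does not spoil the regularity of how strata approach one another; the clean way is to note that $X\subset E'$ is carried isomorphically onto $X\subset E$ by the linear projection $\operatorname{pr}_E$, that this isomorphism respects the stratifications, and that Whitney~(A) — being a statement about limits of tangent planes of a locally closed submanifold of a manifold — is transported by any diffeomorphism (here $\operatorname{pr}_E$ restricted to a tubular situation, or simply the inverse of graphing). A secondary nuisance is arranging the $T_\mathsf{a}\times T_\mathsf{c}$-homogeneity of the Khovanskii basis elements $\tilde f_j$ and keeping the orthonormal-weight-basis normalization; both are routine (decompose into isotypic components; rescale), but worth stating explicitly so that $E'$ genuinely has an orthonormal weight basis restricting to a Khovanskii basis as claimed in~(1).
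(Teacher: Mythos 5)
Your proposal is correct and follows essentially the same route as the paper: enlarge the ambient space so that a finite Khovanskii basis becomes the restriction of linear coordinates, verify the Whitney~(A) condition by realizing the old ambient $E$ as a smooth closed subvariety of the new one (you do this via the graph embedding $z\mapsto (z,P_1(z),\dots,P_k(z))$, the paper via the Veronese-type embedding $E\hookrightarrow \bigoplus_{k\le N}\operatorname{Sym}^k(E)$ -- the same key fact is invoked either way), and then obtain item~(3) from Theorem~\ref{GHmoser} and Lemma~\ref{psic proper} exactly as you describe, with $T_\mathsf{a}$-equivariance carried along by Lemma~\ref{lemma; ham grp action preserving pi}. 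The only step you elide is the paper's final reduction (Remark~\ref{WLOGremark}) discarding linear functions on $E'$ that restrict to constants on $X$ -- e.g.\ $w_j-z_i$ when $\tilde f_j=f_i$ -- which is needed so that $\Lambda(E')=\Lambda(X)$ in the setup of Theorem~\ref{GHmoser}; this is routine but worth stating.
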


\begin{proof}
 The image of the linear map $E^*\to \C[X]$ generates $\C[X]$ as an algebra. Pick a finite Khovanskii basis of $\C[X]$. By picking large enough $N$, one can ensure that the image of the natural map $\bigoplus_{k=1}^N \operatorname{Sym}^k(E^*) \to \C[X]$ contains this finite Khovanskii basis. Define $E' = \bigoplus_{k=1}^N \operatorname{Sym}^k(E)$. Then there is a natural $H_\mathsf{a}\times H_\mathsf{c}$-action on $E'$, and $(E')^*$ is canonically isomorphic to $\bigoplus_{k=1}^N \operatorname{Sym}^k(E^*)$. 
The linear map $\bigoplus_{k=1}^N \operatorname{Sym}^k(E^*) \to \C[X]$ determines a surjection of algebras $\C[E'] \to \C[X]$ and, in turn, an embedding $X\hookrightarrow E'$. This map is $H_\mathsf{a}\times H_\mathsf{c}$-equivariant.

 Put a $T_\mathsf{a}\times T_\mathsf{c}$-invariant inner product $h_{E'}$ on $E'$. We check that the tuple $( X , E', h_{E'}, \mathbf{v}, \mathsf{a},\mathsf{c})$ satisfies \ref{GV2new}. Condition~\ref{D1'} holds as it does not depend on the embedding of $X$. To check~\ref{D2'}, we take the natural $H_\mathsf{a}\times H_\mathsf{c}$-equivariant surjection $\C[E'] \to \C[E]$, which realizes $E$ as a smooth subvariety of $E'$. The map $\C[E'] \to \C[X]$ factors as $\C[E'] \to \C[E] \to \C[X]$. Since $X\subset E$ is Whitney A, and $E$ is a smooth subvariety  of $E'$, it follows that $X\subset E'$ is Whitney A.
 
 Finally, in order to eliminate elements of ${E'}^*$ which are constant on $X$, 
 we may replace $E'$ with a subspace of $E'$, as in Remark~\ref{WLOGremark}. 
 The resulting embedding $X\hookrightarrow E'$ then satisfies items 1 and 2.

 We will apply Theorem~\ref{GHmoser} in order to show that $X_E$ is isomorphic to $X_{E'}$. (The proof of equivariance with respect to $T_\mathsf{a}$ follows exactly as the proof for $T_\mathsf{c}$, since the Hamiltonian action of $T_\mathsf{a}$ on $E\times E' \times \C$ preserves $X\times \C$). We only need to verify that the moment maps $\psi_\mathsf{c} \colon E \to \Lie(T_\mathsf{c})^*$ and $\psi'_\mathsf{c} \colon E' \to \Lie(T_\mathsf{c})^*$ are proper. But this is Lemma~\ref{psic proper}. This proves item 3.
\end{proof}

\begin{proposition}\label{proposition;GramSchmidt}
Let $\mathbf{v}$ be a good valuation on $A=\C[X]$. Assume that there exists a basis $\mathcal{K}' = \{z_1',\dots,z_n'\}$ of $E^*$ which restricts to a Khovanskii basis for $A$ and $\mathbf{v}$.
Then, there exists a $H_\mathsf{a} \times H_\mathsf{c}$-weight basis $\mathcal{K}=\{z_1,\dots,z_n\}$ of $E^*$ such that: 1) $\mathcal{K}$ restricts to a Khovanskii basis for $A$ and $\mathbf{v}$, and 2) the dual basis of $\mathcal{K}$ is an orthonormal basis of $E$.
\end{proposition}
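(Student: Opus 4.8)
The plan is to produce the desired basis $\mathcal{K}$ in two steps: first decompose each element of $\mathcal{K}'$ into weight components for the $H_\mathsf{a}\times H_\mathsf{c}$ action, and then apply a weight-preserving Gram--Schmidt procedure. The key point to verify is that neither operation destroys the Khovanskii property.

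\emph{Step 1 (passing to a weight basis).} Since $E^*$ is a representation of $T_\mathsf{a}\times T_\mathsf{c}$, it decomposes as a direct sum of weight spaces $E^* = \bigoplus_\chi (E^*)_\chi$. Write each $z_i' = \sum_\chi (z_i')_\chi$ for its weight components. Let $\mathcal{K}_0$ be the collection of all nonzero weight components $(z_i')_\chi$ that arise; this is a spanning set of $E^*$ by $T_\mathsf{a}\times T_\mathsf{c}$-weight vectors, from which we may extract a weight basis. The first thing to check is that $\mathcal{K}_0$ still restricts to a Khovanskii basis, i.e.\ that the valuations of (the restrictions to $X$ of) the weight components still generate $\mathsf{S}_\mathbf{v}$. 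For this I would use condition~\ref{v4} for both maps $\mathsf{a}$ and $\mathsf{c}$ (guaranteed by~\ref{GV1new}): since $f_i = z_i'|_X$ is already $\Lambda_\mathsf{c}$-homogeneous (the embedding $X\hookrightarrow E$ is $T_\mathsf{c}$-equivariant) and the $H_\mathsf{a}$-weight decomposition of $f_i$ must also respect $\Lambda_\mathsf{a}$-homogeneity, the crucial claim is that for each $f_i$ there is a weight component $(z_i')_\chi$ whose restriction has the same valuation $\mathbf{v}(f_i)$. This holds because $\mathbf{v}$ has one-dimensional leaves: the relation $\mathbf{v}(f_i) \le \max_\chi \mathbf{v}((z_i')_\chi|_X)$ must in fact be an equality for at least one component (a nontrivial cancellation at the leaf level would force a coincidence of valuations that, combined with one-dimensionality and the weight-homogeneity distinguishing the components via $\mathsf{w} = \mathsf{c}$, cannot occur). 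Hence $\mathbf{v}(\mathcal{K}_0) \supseteq \mathbf{v}(\mathcal{K}') = \mathsf{S}_\mathbf{v}$, so $\mathcal{K}_0$ is a Khovanskii basis, and we discard elements until we have a basis $\{\tilde z_1,\dots,\tilde z_n\}$ of $E^*$ that is still Khovanskii (we may keep a minimal-valuation representative in each weight space to be safe).

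\emph{Step 2 (orthonormalizing within weight spaces).} Group the weight basis $\{\tilde z_i\}$ by $T_\mathsf{a}\times T_\mathsf{c}$-weight and apply Gram--Schmidt (with respect to the Hermitian inner product $h_E$) separately within each weight space. This yields an orthonormal basis $\mathcal{K} = \{z_1,\dots,z_n\}$ of $E$ consisting of weight vectors. It remains to check $\mathcal{K}$ is still a Khovanskii basis. Within a single weight space spanned by $\tilde z_{i_1},\dots,\tilde z_{i_k}$ (ordered so that $\mathbf{v}(\tilde z_{i_1}|_X) \le \cdots \le \mathbf{v}(\tilde z_{i_k}|_X)$, say, after renaming), each $z_{i_\ell}$ is a linear combination $\sum_{p\le\ell} c_p \tilde z_{i_p}$ with $c_\ell\neq 0$, so $\mathbf{v}(z_{i_\ell}|_X) \le \mathbf{v}(\tilde z_{i_\ell}|_X)$. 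Arguing as in the subduction algorithm / the proof of Proposition~\ref{prop; K generates A}, and using that $\{\mathbf{v}(\tilde z_i|_X)\}$ together with~\ref{v1}--\ref{v3} generate $\mathsf{S}_\mathbf{v}$, the set $\{\mathbf{v}(z_i|_X)\}$ generates the same semigroup: one shows that the "new" valuations can only move down and that nothing is lost because within each weight space the span is preserved and the leaves are one-dimensional, so the minimal valuation in that span is unchanged. Therefore $\mathcal{K}$ restricts to a Khovanskii basis for $A$ and $\mathbf{v}$, and by construction its dual basis is orthonormal, proving the proposition.

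\emph{Main obstacle.} The delicate point is Step 1 — establishing that passing to weight components preserves the Khovanskii property. Projection onto a weight space is an idempotent linear operator but it need not respect the valuation in general; the argument genuinely uses that $\mathbf{v}$ has one-dimensional leaves \emph{and} that the $\mathsf{c}$-weight grading is compatible with the valuation via~\ref{v4}, so that two functions of equal $\mathsf{c}$-weight but distinct total $T_\mathsf{a}\times T_\mathsf{c}$-weight cannot have valuations that cancel at the leaf. I would isolate this as a short lemma. Step 2 is routine linear algebra once one observes Gram--Schmidt is weight-preserving and only decreases (or preserves) valuations within a fixed weight space, which cannot lose any semigroup generators.
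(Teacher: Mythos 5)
Your proposal is correct in outline and follows essentially the same route as the paper: decompose into $H_\mathsf{a}\times H_\mathsf{c}$-weight components, show the valuation of each $z_i'$ is attained by one of its components, extract a weight basis retaining the Khovanskii property, and orthonormalize weight space by weight space. Two points of your justification should be tightened, though. First, the mechanism in your Step 1 is not one-dimensional leaves: by~\ref{v4} applied to the pair $(\mathsf{a},\mathsf{c})$, distinct weight components have valuations lying in distinct fibers of $(\mathsf{a},\mathsf{c})$, hence pairwise distinct valuations, and then the elementary valuation identity $\mathbf{v}(f+g)=\max\{\mathbf{v}(f),\mathbf{v}(g)\}$ for $\mathbf{v}(f)\neq\mathbf{v}(g)$ gives equality with the maximum over components --- no appeal to one-dimensional leaves is needed (and your parenthetical that $f_i=z_i'|_X$ is already $\Lambda_\mathsf{c}$-homogeneous is false for a general basis $\mathcal{K}'$; only the ambient space $E^*$ is a weight module). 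Second, when extracting a linearly independent set, keeping only a \emph{minimal}-valuation representative per weight space can lose generators, since a single weight space may contribute several distinct elements of $\mathsf{S}_\mathbf{v}$; the correct statement is that one representative per distinct valuation value suffices, and such representatives are automatically linearly independent because elements whose restrictions have pairwise distinct valuations cannot satisfy a nontrivial linear relation. (The paper achieves the same thing by first discarding elements of $\mathcal{K}'$ with duplicated valuations and then completing to a weight basis.) With these repairs your Step 2 goes through as in the cited Gram--Schmidt argument.
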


\begin{proof}
Let $\{y_1,\dots,y_n\}$ be a $H_\mathsf{a}\times H_\mathsf{c}$-weight basis of $E^*$. By~\ref{GV1new}, each $y_i$ has weight $(\mathsf{a},\mathsf{c})\circ \mathbf{v}(y_i)$. For each $\lambda\in \Lambda_\mathsf{a}\times \Lambda_\mathsf{c}$, let $I_\lambda = \{ i \in [1,n] \mid (\mathsf{a},\mathsf{c})\circ \mathbf{v}(y_i) = \lambda\}$. 
 Write
\[z_j' = \sum_{i = 1}^n a_i y_i =\sum_{\lambda\in \Lambda_\mathsf{a}\times \Lambda_\mathsf{c}} \sum_{i \in I_\lambda} a_i y_i , \qquad a_i \in \C.\]
Each term $\sum_{i\in I_\lambda} a_i y_i$ is a weight vector of weight $\lambda$. What's more,
$
\mathbf{v}(z_j') \le \max_\lambda \left\{ \mathbf{v}\left(\sum_{i \in I_\lambda} a_i y_i\right) \right\}.
$
By \ref{v4}, each term $\mathbf{v}\left(\sum_{i \in I_\lambda} a_i y_i\right)$ is contained in $(\mathsf{a},\mathsf{c})\n (\lambda)$, and so each of these terms is distinct. By elementary properties of valuations, it follows that
$
\mathbf{v}(z_j') = \max_\lambda \left\{ \mathbf{v}\left(\sum_{i \in I_\lambda} a_i y_i\right)\right\}.
$
By applying $(\mathsf{a},\mathsf{c})$ to both sides of this equation, we find that the right hand side must be contained in $(\mathsf{a},\mathsf{c})\n (\mathbf{v}(z_j') )$. This is impossible unless
$
\mathbf{v}(z_j') =  \mathbf{v}\left(\sum_{i \in I_{(\mathsf{a},\mathsf{c}) (\mathbf{v}(z_j') )}} a_i y_i\right).
$
Let 
$
    z''_j = \sum_{i \in I_{(\mathsf{a},\mathsf{c}) (\mathbf{v}(z_j') )}} a_i y_i.
$
Then by the preceding argument, the linear functions $z_j''$ satisfy $\mathbf{v}(z_j'') = \mathbf{v}(z_j')$ and therefore restrict to a Khovanskii basis for $A$ and $\mathbf{v}$. 

After possibly discarding functions from $\mathcal{K}'$ and reindexing we may assume that the functions $z_1',\dots,z_{n'}'$, where $n'\le n$, have values 
$\mathbf{v}(z_j')$ which are all distinct. (The functions $z_1',\dots,z_{n'}'$ may fail to be a basis for $E^*$, but they will still restrict to a Khovanskii basis for $A$ and $\mathbf{v}$).
Then, the values $\mathbf{v}(z_1''),\dots,\mathbf{v}(z_{n'}'')$ are all distinct, and therefore the functions $z_1'',\dots,z_{n'}''$ are linearly independent. Adding in weight  vectors $y_{n'+1}'',\dots, y_n''$ from $E^*$ as necessary, we arrive at a weight basis $\mathcal{K}''=\{z_1'',\dots,z_{n'}'',y_{n'+1}'',\dots,y_n''\}$ of $E^*$ which restricts to a Khovanskii basis for $A$ and $\mathbf{v}$.

 Finally, following the Gram-Schmidt argument of~\cite[Lemma 3.23]{haradaIntegrableSystemsToric2015}, one can replace $\mathcal{K}''$ with a basis $\mathcal{K}=\{z_1,\dots,z_n\}$ for $E^*$ which admits all the desired properties.
\end{proof}

\subsection{Good valuations and gradient Hamiltonian flows}
\label{main construction section}
Let $X$ be a decomposed affine K\"ahler variety with an algebraic action of $H_\mathsf{a}\times H_\mathsf{c}$. Assume that $X$ is equipped with a good valuation $(X, E,h_E,\mathbf{v},\mathsf{a},\mathsf{c})$. 
We import all the notation from Definition \ref{definition;good}. Given this, we may fix a basis $\mathcal{K} \subset E^*$ that satisfies the conclusions of Proposition \ref{proposition;GramSchmidt}. As demonstrated by Propositions~\ref{prop; embed nicer} and~\ref{proposition;GramSchmidt}, we may assume there exists  $\mathcal{K}$ with these properties without changing the underlying Hamiltonian $T_\mathsf{a} \times T_\mathsf{c}$-space structure on $X$. 

As in Section~\ref{Rees Section}, from $\mathbf{v}$ and $\mathcal{K}$ one can construct a toric degeneration of $X$ which embeds into $E\times \C$. We describe this degeneration in Section \ref{section; application of rees}.  We describe how this degeneration interacts with the symplectic structure in Section \ref{subsection; hamiltonian torus action}. This is combined with Theorem~\ref{thm; toric degenerations main theorem} in Section \ref{subsection; main result for good valuations} to construct an integrable system on $X$.

\subsubsection{Application of the Rees algebra construction}\label{section; application of rees}

Let $\mathsf{S} = \mathsf{S}_\mathbf{v}$ denote the value semigroup of $\mathbf{v}$ and let $X_{\mathsf{S}}$ denote the associated affine toric variety. Applying the Rees algebra construction of Section~\ref{Rees Section} to $\mathbf{v}$ and $\mathcal{K}$ produces a toric degeneration $\pi \colon \X \to \C$ of $X$ to $X_\mathsf{S}$ with the following properties.

\begin{proposition}
\label{proposition; degen from good valuation}
A toric degeneration $\pi \colon \X\to \C$ of $X$ to  $X_{\mathsf{S}}$, constructed from $\mathbf{v}$ and $\mathcal{K}$ as in Section~\ref{Rees Section}, has the following properties:
\begin{enumerate}
    \item \label{item;alg1} $\X$ is embedded as a closed subvariety of $E\times \C$ such that $\pi \colon \X \to \C$ coincides with the restriction of the projection $E\times \C \to \C$.
    \item \label{item;alg2} The fiber $\X_1 \subset E\times \{1\} \cong E$ coincides with the image of the embedding $X\hookrightarrow E$ of the good valuation. In other words, it is cut out by the kernel of the natural map $\C[E]\to \C[A]$ described in \eqref{embedinE}.
    \item \label{item;alg4} For each $\sigma\in \Sigma$, the subfamily $\X^{\overline{\sigma}}$ defined as in \eqref{eq; definition of subfamilies} using the decomposition of $X$ satisfies
    $
    \X^{\overline{\sigma}} = \X\cap (E^{\overline{\sigma}} \times \C).
    $
    \item \label{item;alg5} For each $\sigma\in \Sigma$, let $\mathsf{S}^{\overline{\sigma}} = \mathsf{c}\n(\overline{\sigma})\cap \mathsf{S}$. Then each subfamily $\X^{\overline{\sigma}}\to \C$ is a toric degeneration of $X^{\overline{\sigma}}$ to $X_{\mathsf{S}^{\overline{\sigma}}}$.
    \item \label{item;alg6}
    For each  $\sigma\in \Sigma$, the action of $H_\mathsf{a}\times H_\mathsf{c}$ on $E\times \C$ (where $H_\mathsf{a}\times H_\mathsf{c}$ acts trivially on $\C$) preserves  $\X^{\overline{\sigma}}$.
    \item \label{item;WhitneyA}
    $\X\backslash \X_0$ satisfies the Assumption \ref{D2}.
\end{enumerate}
\end{proposition}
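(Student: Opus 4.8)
The plan is to read everything off the explicit Rees algebra presentation recalled in Section~\ref{Rees Section} together with Proposition~\ref{prop;subfamilies}. Items~\eqref{item;alg1} and~\eqref{item;alg2} are immediate: the homomorphism~\eqref{equation;dual to embedding of the degeneration} is $\Lambda$-graded, sends $t\mapsto t$, and has image $\mathcal{R}=\C[\X]$, so it realizes $\X=\spec\mathcal{R}$ as a closed subvariety of $E\times\C$ for which $\pi$ is the restriction of the second projection, and specializing $t=1$ recovers the surjection $\C[E]\to A$ of~\eqref{embedinE}, that is, the good valuation embedding $X\hookrightarrow E$ (equivalently, this is the $\overline{\sigma}=\Gamma$ instance of Proposition~\ref{prop;subfamilies}(2)). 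For item~\eqref{item;alg4} I would first compute the part of $\X\cap(E^{\overline{\sigma}}\times\C)$ lying away from $\X_0$. Since $z_i\in E^*$ carries $H_\mathsf{c}$-weight $\mathsf{c}(\mathbf{v}(f_i))$ by~\ref{v4}, equation~\eqref{useful fact about subspaces} identifies $E^{\overline{\sigma}}$ with the common zero locus of those $z_i$ with $\mathsf{c}(\mathbf{v}(f_i))\notin\overline{\sigma}$; because $\rho(x,t)=\bigl((t^{e(\mathbf{v}(f_i))}f_i(x))_i,\,t\bigr)$ and $t\neq 0$ on $\X\setminus\X_0$, the condition $\rho(x,t)\in E^{\overline{\sigma}}\times\C$ is equivalent to $x\in X\cap E^{\overline{\sigma}}=X^{\overline{\sigma}}$ (using Remark~\ref{WLOGremark}). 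Hence $(\X\setminus\X_0)\cap(E^{\overline{\sigma}}\times\C)=\rho(X^{\overline{\sigma}}\times\C^\times)$.

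To upgrade this to the claimed equality of closed subvarieties I would invoke Proposition~\ref{prop;subfamilies}(1): the coordinate ring of $\X\cap(E^{\overline{\sigma}}\times\C)$ is a subalgebra of the domain $\mathcal{R}$, so this variety is irreducible, and it is flat, hence torsion-free, over $\C[t]$, so it has no irreducible component inside $\X_0=\{t=0\}$. Thus $\X\cap(E^{\overline{\sigma}}\times\C)=\overline{\rho(X^{\overline{\sigma}}\times\C^\times)}$; in particular $X^{\overline{\sigma}}$ is irreducible, so the nonempty open subset $X^{\sigma}\subseteq X^{\overline{\sigma}}$ (cf.~\eqref{eq; partition of H variety def} and~\ref{GV2new}) is dense, and $\overline{\rho(X^{\overline{\sigma}}\times\C^\times)}=\overline{\rho(X^{\sigma}\times\C^\times)}=\X^{\overline{\sigma}}$ by~\eqref{eq; definition of subfamilies}. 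For item~\eqref{item;alg5}, intersecting this equality with $\X_1$ and $\X_0$ and using Lemma~\ref{lem; scheme theoretic intersection is reduced} and item~\eqref{item;alg2} yields $\X^{\overline{\sigma}}_1=X\cap E^{\overline{\sigma}}=X^{\overline{\sigma}}$, while Proposition~\ref{prop;subfamilies}(3) yields $\X^{\overline{\sigma}}_0\cong X_{\mathsf{S}'}$ with $\mathsf{S}'\subseteq\mathsf{S}$ the subsemigroup generated by $\{\mathbf{v}(f_i):\mathsf{c}(\mathbf{v}(f_i))\in\overline{\sigma}\}$; together with the trivialization $\X^{\overline{\sigma}}\setminus\X_0\cong X^{\overline{\sigma}}\times\C^\times$ above and the flatness of Proposition~\ref{prop;subfamilies}(1), this exhibits $\X^{\overline{\sigma}}\to\C$ as a toric degeneration of $X^{\overline{\sigma}}$ to $X_{\mathsf{S}'}$. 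It remains to identify $\mathsf{S}'$ with $\mathsf{S}^{\overline{\sigma}}:=\mathsf{c}^{-1}(\overline{\sigma})\cap\mathsf{S}$: the inclusion $\mathsf{S}'\subseteq\mathsf{S}^{\overline{\sigma}}$ holds because $\mathsf{c}^{-1}(\overline{\sigma})$ is additively closed, and conversely any $s\in\mathsf{S}$ with $\mathsf{c}(s)\in\overline{\sigma}$ can be written $s=\sum_i n_i\mathbf{v}(f_i)$ with $n_i\in\N$ (as $\mathcal{K}$ is a Khovanskii basis, so $\mathbf{v}(\mathcal{K})$ generates $\mathsf{S}$), and since every $\mathsf{c}(\mathbf{v}(f_i))$ lies in $\Gamma=\cone\Lambda(X)$ and $\overline{\sigma}$ is a \emph{face} of $\Gamma$, every summand with $n_i>0$ lies in $\overline{\sigma}$, whence $s\in\mathsf{S}'$.

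Items~\eqref{item;alg6} and~\eqref{item;WhitneyA} are then short. For~\eqref{item;alg6}, the map~\eqref{equation;dual to embedding of the degeneration} is $H_\mathsf{a}\times H_\mathsf{c}$-equivariant, by~\ref{v4} applied to both $\mathsf{a}$ and $\mathsf{c}$ (the coordinate $z_i$ and the function $f_i$ carry the same weight) with the tori acting trivially on $\C$; hence $\X$ is an $H_\mathsf{a}\times H_\mathsf{c}$-invariant subvariety of $E\times\C$, and $E^{\overline{\sigma}}\times\C$ is a subrepresentation, so $\X^{\overline{\sigma}}=\X\cap(E^{\overline{\sigma}}\times\C)$ is invariant. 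For~\eqref{item;WhitneyA}, under the embeddings into $E\times\C^\times$ the trivialization $\rho$ is the restriction to $X\times\C^\times$ of the algebraic automorphism $\Phi$ of $E\times\C^\times$ that fixes $t$ and rescales the coordinate $z_i$ by $t^{e(\mathbf{v}(f_i))}$ (invertible since each $e(\mathbf{v}(f_i))\in\N$ by~\eqref{fix e'}). Since $X\subset E$ satisfies the Whitney condition~\ref{Whitney A} (part of~\ref{GV2new}, namely~\ref{D2'}), so does the product $X\times\C^\times\subset E\times\C^\times$ with the decomposition $\{X^{\sigma}\times\C^\times\}$, and condition~(A) is preserved by the diffeomorphism $\Phi$ and is local on the ambient space; therefore $\X\setminus\X_0=\Phi(X\times\C^\times)\subset E\times\C^\times\subset E\times\C$ satisfies~(A), which is~\ref{D2}.

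The bulk of the work is item~\eqref{item;alg4} together with the semigroup identity in item~\eqref{item;alg5}. The two genuinely non-formal inputs are the flatness assertion in Proposition~\ref{prop;subfamilies}(1) --- needed to rule out irreducible components of $\X\cap(E^{\overline{\sigma}}\times\C)$ lying in the special fiber, hence to reconcile this coordinate description with the definition~\eqref{eq; definition of subfamilies} of $\X^{\overline{\sigma}}$ in terms of the trivialization and the piece $X^{\sigma}$ --- and the fact that $\overline{\sigma}$ is a face of $\Gamma$, which is what forces $\mathsf{c}^{-1}(\overline{\sigma})\cap\mathsf{S}$ to coincide with the (in particular, finitely generated) semigroup generated by the relevant Khovanskii elements.
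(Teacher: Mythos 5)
Your proof is correct and follows essentially the same route as the paper: items (1) and (2) by construction, item (3) via the $\C^\times$-rescaling action and a closure argument, items (4)--(6) via Proposition~\ref{prop;subfamilies}, equivariance of~\eqref{equation;dual to embedding of the degeneration}, and reduction of the Whitney condition to $X\subset E$ through the trivialization. You in fact supply two details the paper leaves implicit — using flatness/irreducibility to rule out components of $\X\cap(E^{\overline{\sigma}}\times\C)$ inside $\X_0$, and the face argument identifying the semigroup generated by $\{\mathbf{v}(f_i)\mid \mathsf{c}(\mathbf{v}(f_i))\in\overline{\sigma}\}$ with $\mathsf{c}\n(\overline{\sigma})\cap\mathsf{S}$ — both of which are genuinely needed and correctly handled.
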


\begin{proof} 
We follow the notation of Section~\ref{Rees Section}. Fix an enumeration $\mathcal{K} = \{z_1,\dots,z_n\} \subset E^*$, let $H = H_{\mathsf{a}}\times H_{\mathsf{c}}$, and let $\mathsf{w} = (\mathsf{a},\mathsf{c})$. Construct the Rees algebra $\mathcal{R}$ as in Section~\ref{Rees Section}, choosing a linear map $e\colon L\to \Z$ as in~\eqref{fix e'}. Let $\X = \spec \mathcal{R}$ and fix the $H$-equivariant embedding $\X \hookrightarrow E\times \C$ as in Section~\ref{Rees Section}. 
Then  items~\ref{item;alg1} and~\ref{item;alg2} are satisfied by construction.

Let $\C^\times$ act on $E \times \C$ by \[
t\cdot (z_1,\dots,z_n,t') = (t^{e(\mathbf{v}(z_1))}z_1,\dots, t^{e(\mathbf{v}(z_n))} z_n,tt').\]
The trivialization away from zero  of the toric degeneration is written using this action as
\begin{align}
    \label{trivialization}
    \rho\colon X\times \C^\times \to \X\backslash \X_0,\qquad  \rho(z,t) = t\cdot (z,1).
\end{align}
This action of $\C^\times$ preserves $E^{\overline{\sigma}}$,  so $\rho(X^{\overline{\sigma}}\times \C^\times) = \X\cap (E^{\overline{\sigma}}\times \C^\times)$. Taking closures establishes item~\ref{item;alg4}. 
By Proposition~\ref{prop;subfamilies}, there is an isomorphism $\C[\X_0\cap E^{\overline{\sigma}}] \cong \C[\mathsf{S}^{\overline{\sigma}}]$ and $\X^{\overline{\sigma}}\to \C$ is a flat morphism. This establishes item~\ref{item;alg5}. 
Next, the map~\eqref{equation;dual to embedding of the degeneration} preserves the grading by $\Lambda_\mathsf{a}\times \Lambda_\mathsf{c}$, and so the action of $H_\mathsf{a} \times H_{\mathsf{c}}$ on $E\times \C$ preserves $\X$. This action also preserves $E^{\overline{\sigma}}$; putting these two facts together gives item~\ref{item;alg6}.

Finally, consider item~\ref{item;WhitneyA}. Using the trivialization away from zero~\eqref{trivialization}, it suffices to prove the analogous claim for $\X_1\cong X\subset E$. But this is precisely the Whitney condition~\ref{Whitney A} for the stratification of $X$, which holds by assumption~\ref{GV2new} of Definition~\ref{definition;good}.
\end{proof}

\subsubsection{Symplectic geometry of $\X$}\label{subsection; hamiltonian torus action}

Let $\pi\colon \X \to \C$ be a toric degeneration of $X$ constructed from $\mathbf{v}$ and $\mathcal{K}$ as in the previous subsection.  Recall that $\X$ is embedded as a subvariety of $E\times \C$. Let $h_\C$ be the standard Hermitian inner product on $\C$, let $h_E\oplus h_\C$ be the product Hermitian inner product on $E\times \C$ and let $\omega:=\omega_E \oplus \omega_\C$ be the associated symplectic structure. 

As in Section \ref{Rees Section}, let $\H$ denote the algebraic torus with (real) weight lattice $L$. Let $\T$ denote the maximal compact torus of $\H$. Then $\mathbf{v}$ and the basis $\mathcal{K} \subset E^*$ determine a representation of $\H = (\C^\times)^m$ on $E$ as described in Section \ref{Rees Section}. Since $\mathcal{K}$ satisfies the conclusions of Proposition~\ref{proposition;GramSchmidt}, the action of $\T$ on $(E,h_E)$ is unitary. 

Extend the action of $\H$ from $E$ to $E\times \C$ by letting $\H$ act trivially on the second factor. The action of $\T$  on $(E\times \C,\omega)$ is Hamiltonian with moment map
\begin{equation}\label{eqn; TT moment map def}
    \Psi\colon E\times \C \to \Lie(\T)^*,\quad \Psi(z) = \frac{1}{2}\sum_{i=1}^n |z_i|^2 \mathbf{v}(z_i).
\end{equation}
Recall from Section \ref{subsection; kahler form on toric variety}, that we similarly extended the actions of $H_\mathsf{a}$ and $H_\mathsf{c}$ to $E\times \C$. The actions of $T_\mathsf{a}$ and $T_\mathsf{c}$ are Hamiltonian, with moment maps $\psi_\mathsf{a}$ and $\psi_\mathsf{c}$ respectively.\footnote{These symbols were also used to denote moment maps on $E$. To avoid introducing new notation, here we use the same symbol to denote the composition of those maps with the natural projection $E\times \C \to E$.}

Extend $\mathsf{a}$ and $\mathsf{c}$ to $\R$-linear maps $\Lie(\T)^*\to \Lie(T_\mathsf{a})^*$ and $\Lie(\T)^*\to \Lie(T_\mathsf{c})^*$. These maps are dual to the homomorphisms  $T_\mathsf{a} \to \T$ and $T_\mathsf{c} \to \T$, respectively. As in Section \ref{Rees Section}, since $\mathbf{v}$ is compatible with $\mathsf{a}$ and $\mathsf{c}$ by \ref{GV1new}, the actions of $T_\mathsf{a}$ and $T_\mathsf{c}$ on $E\times \C$ coincide with the actions defined by the homomorphisms $T_\mathsf{a} \to \T$ and $T_\mathsf{c} \to \T$, and the action of $\T$ on $E\times \C$.
It follows that:
\begin{equation}
    \label{definition psi c}
    \psi_\mathsf{a}= \mathsf{a} \circ \Psi,\qquad \psi_\mathsf{c} = \mathsf{c} \circ \Psi.
\end{equation}

The action of $\H$ on $E\times \C$ preserves the fiber $\X_0 \subset E\times \{0\}$.
Recall from Proposition~\ref{proposition; degen from good valuation}, item~\eqref{item;alg5}, that the subvariety $\X^{\overline{\sigma}}_0 = \X_0 \cap \X^{\overline{\sigma}}$ is isomorphic to the toric variety $ X_{\mathsf{S}^{\overline{\sigma}}}$ associated to the semigroup  $\mathsf{S}^{\overline{\sigma}} = \mathsf{c}\n(\overline{\sigma})\cap \mathsf{S}$  for each $\sigma\in \Sigma$. 

\begin{proposition}\label{ReyerCor}
Let $\mathbf{v}$ be a good valuation as above, let $\Psi \colon E\times \C \to \Lie(\T)^*$ be the moment map~\eqref{eqn; TT moment map def}, and let $\sigma\in \Sigma$. Then:
\begin{enumerate}
\item  The action of $\H$ on $E\times \C$ preserves $\X_0^{\overline{\sigma}}$.
    \item \label{item;ReyerCorPart2} $\Psi(\X_0^{\overline{\sigma}})= \cone(\mathsf{S}^{\overline{\sigma}}) = \Psi(E^{\overline{\sigma}}\times \C)$ for all $\sigma\in \Sigma$.  In particular,  $\Psi(\X_0) = \cone(\mathsf{S}) = \Psi(E \times \C)$.
    \item $\Psi(E)$ is strongly convex. 
    \item The subspaces $E^{\overline{\sigma}} \times \C$ are saturated by $\Psi$ for all $\sigma\in \Sigma$.
    \item The fibers of the domain restricted map $\Psi\colon \X_0 \to \Lie(\T)^*$ are connected.
    \item The map $(\Psi,\pi)\colon E \times \C \to \Lie(\T)^*\times \C$ is proper. In particular, the domain restricted map $\Psi\colon \X_0 \to \Lie(\T)^*$ is proper.
    \item $ \X^{\overline{\sigma}} = \X\cap \psi_\mathsf{c}\n(\overline{\sigma}),
$ for each 
$\sigma\in \Sigma$.
\end{enumerate}

\end{proposition}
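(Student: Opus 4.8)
The plan is to prove the seven parts mostly in order, leveraging the two-torus structure: the ``big'' torus $\T$ with weight lattice $L$, and the control torus $T_\mathsf{c}$ whose moment map factors through $\Psi$ via $\psi_\mathsf{c} = \mathsf{c}\circ\Psi$ (equation~\eqref{definition psi c}). The key external inputs are Lemma~\ref{lemma; unitary torus action proper moment map} (applied to the unitary $\T$-action on $E\times\C$), Lemma~\ref{lemma; cones are strongly convex}, Proposition~\ref{prop;subfamilies}, and Proposition~\ref{proposition; degen from good valuation}.

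First I would dispatch part~(1): by Proposition~\ref{proposition; degen from good valuation}\eqref{item;alg4}, $\X^{\overline\sigma} = \X\cap(E^{\overline\sigma}\times\C)$, and $\X_0^{\overline\sigma} = \X^{\overline\sigma}\cap\X_0$; the torus $\H$ preserves $\X$ (it is cut out by $\Lambda$-graded equations, as in the proof of Proposition~\ref{proposition; degen from good valuation}\eqref{item;alg6}), preserves each weight-coordinate subspace $E^{\overline\sigma}\times\C$, and preserves $\X_0 = \pi\n(0)$ since it acts trivially on $\C$. For part~(2): the moment map formula~\eqref{eqn; TT moment map def} shows $\Psi(v) = \tfrac12\sum_i |z_i(v)|^2\,\mathbf{v}(z_i)$, so $\Psi(E^{\overline\sigma}\times\C)$ is the cone generated by $\{\mathbf{v}(z_i) : |z_i|\in\overline\sigma\}$, i.e.\ by those $\mathbf{v}(z_i)$ lying in $\mathsf{c}\n(\overline\sigma)$ (using \ref{v4}: the $T_\mathsf{c}$-weight of $z_i$ is $\mathsf{c}(\mathbf{v}(z_i))$, and $\mathbf{I}(E^{\overline\sigma})$ is generated by the $z_i$ with $\mathsf{c}(\mathbf{v}(z_i))\notin\overline\sigma$ by \eqref{useful fact about subspaces}). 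Since $\mathcal{K}$ restricts to a Khovanskii basis, $\{\mathbf{v}(z_i)\}$ generates $\mathsf{S}$, hence $\{\mathbf{v}(z_i) : \mathsf{c}(\mathbf{v}(z_i))\in\overline\sigma\}$ generates $\mathsf{S}^{\overline\sigma}=\mathsf{c}\n(\overline\sigma)\cap\mathsf{S}$ — here one needs that a generating set of $\mathsf{S}$ lying in a face of $\cone(\mathsf{S})$ generates the face sub-semigroup, which follows because $\mathsf{c}\n(\overline\sigma)$ is a face of $\cone(\mathsf S)$ and any expression of a lattice point of the face as a non-negative integer combination of generators of $\mathsf S$ uses only generators on the face. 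This gives $\Psi(E^{\overline\sigma}\times\C)=\cone(\mathsf{S}^{\overline\sigma})$; the equality with $\Psi(\X_0^{\overline\sigma})$ follows because $\X_0^{\overline\sigma}\cong X_{\mathsf{S}^{\overline\sigma}}$ as a toric variety (Proposition~\ref{proposition; degen from good valuation}\eqref{item;alg5}) sits inside $E^{\overline\sigma}\times\{0\}$ and, being the toric variety of the full-dimensional-in-its-span cone $\cone(\mathsf{S}^{\overline\sigma})$, has moment image equal to that whole cone (or: $\X_0^{\overline\sigma}$ contains a point with all relevant coordinates nonzero, so its $\Psi$-image already contains the interior of $\cone(\mathsf S^{\overline\sigma})$, and $\Psi(\X_0^{\overline\sigma})$ is closed by properness of part~(6)). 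Part~(3) is then immediate: $\Psi(E)=\cone(\mathsf{S})$ is strongly convex by Lemma~\ref{lemma; cones are strongly convex}. Part~(4): $E^{\overline\sigma}\times\C = \{v : z_i(v)=0 \text{ for } \mathsf{c}(\mathbf{v}(z_i))\notin\overline\sigma\}$; from the formula for $\Psi$ and strong convexity of $\cone(\mathsf S)$, $\Psi(v)\in\mathsf{c}\n(\overline\sigma)$ forces $|z_i(v)|^2=0$ whenever $\mathbf{v}(z_i)\notin\mathsf{c}\n(\overline\sigma)$ (a face of a strongly convex cone contains no nontrivial positive combination of vectors outside it), so $E^{\overline\sigma}\times\C = \Psi\n(\cone(\mathsf S^{\overline\sigma}))\cap$ [nothing more] — more precisely $E^{\overline\sigma}\times\C$ is exactly the preimage of the face $\mathsf{c}\n(\overline\sigma)\cap\cone(\mathsf S)$, by the same reasoning as Lemma~\ref{lemma; unitary torus action proper moment map}\ref{lemma2.2i}, hence $\Psi$-saturated.

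Part~(6) is where the essential analytic content sits, and I expect it to be the main obstacle. I would argue: $(\Psi,\pi)\colon E\times\C\to\Lie(\T)^*\times\C$ is proper iff $\Psi\colon E\times\C\to\Lie(\T)^*$ is proper (since $\pi$ is linear projection onto $\C$ and $\Psi$ does not see the $\C$-factor — $\T$ acts trivially there — this needs a small argument: $(\Psi,\pi)\n(\text{compact})$ is contained in $\Psi\n(K_1)\cap\pi\n(K_2) = (\Psi|_E)\n(K_1)\times K_2$, which is compact provided $\Psi|_E$ is proper). Then $\Psi|_E\colon E\to\Lie(\T)^*$ is the quadratic moment map for the unitary $\T$-action on $E$ with weights $\mathbf{v}(z_1),\dots,\mathbf{v}(z_n)$, so by Lemma~\ref{lemma; unitary torus action proper moment map}\ref{lemma2.2ii} it is proper iff $0$ is not a nontrivial non-negative combination of the $\mathbf{v}(z_i)$, i.e.\ iff $\cone(\mathsf{S})$ is strongly convex — which is exactly Lemma~\ref{lemma; cones are strongly convex}. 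The properness of the domain-restricted $\Psi\colon\X_0\to\Lie(\T)^*$ then follows since $\X_0$ is closed in $E\times\{0\}$. Part~(5), connectedness of fibers of $\Psi|_{\X_0}$: here I would use that $\X_0 = \spec\C[\mathsf{S}]$ is a (possibly non-normal) affine toric variety with its quadratic moment map; I would reduce to the normalization $X_{\mathsf{S}'} = \spec\C[\cone(\mathsf S)\cap L]$, which has connected $\Psi$-fibers by the standard toric picture (the fiber over an interior point of $\cone(\mathsf S)$ is a single $\T$-orbit, over a boundary point it is a lower-dimensional orbit-closure's torus orbit — always connected; or cite the Atiyah–Guillemin–Sternberg convexity/connectedness for proper moment maps on the smooth locus together with the fact that $\Psi$ factors through the orbit map), and note that the normalization map $X_{\mathsf S'}\to\X_0$ is $\T$-equivariant, finite, surjective, and compatible with $\Psi$, with connected fibers identifying the $\Psi$-fibers upstairs and downstairs up to the finite fibers — care is needed here and I may instead argue directly that $\Psi\n(p)\cap\X_0$ is a single $\T_\C'$-orbit-closure-piece which is connected. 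Finally part~(7): $\X^{\overline\sigma} = \X\cap(E^{\overline\sigma}\times\C)$ by Proposition~\ref{proposition; degen from good valuation}\eqref{item;alg4}, and $E^{\overline\sigma}\times\C = \psi_\mathsf{c}\n(\overline\sigma)$ inside $E\times\C$ by Lemma~\ref{lemma; unitary torus action proper moment map}\ref{lemma2.2i} applied to the proper $T_\mathsf{c}$-moment map $\psi_\mathsf{c}$ on $E\times\C$ (proper by Lemma~\ref{psic proper}, noting $\psi_\mathsf{c}$ ignores the $\C$-factor), whose image cone is $\mathsf{c}(\cone\mathsf S) = \Gamma(X)$ with face $\overline\sigma$; intersecting with $\X$ gives the claim.

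The delicate points to watch are: (i) justifying that ``$\Psi$ ignores the $\C$-factor'' interacts correctly with properness statements about $(\Psi,\pi)$ — one must keep $\pi$ in the pair; (ii) in part~(2), the passage from ``generators of $\mathsf S$ on a face'' to ``generators of the face semigroup $\mathsf S^{\overline\sigma}$,'' which uses that $\mathsf{c}\n(\overline\sigma)$ cuts out an actual face of $\cone(\mathsf S)$ (true since $\overline\sigma$ is a face of $\Gamma(X) = \mathsf c(\cone\mathsf S)$ and $\mathsf c$ is a linear surjection, so $\mathsf c\n(\overline\sigma)\cap\cone(\mathsf S)$ is a face of $\cone(\mathsf S)$); (iii) the connectedness in part~(5), for which the cleanest route is to observe $\Psi\colon\X_0\to\Lie(\T)^*$ factors as $\X_0\to\X_0/\T\xrightarrow{\sim}\cone(\mathsf S)$ (the quadratic moment map of a compact torus acting on an affine toric variety induces a homeomorphism from the orbit space onto the moment cone), whence every fiber is a single compact group orbit and hence connected — I would cite this rather than reprove it, or derive it from Lemma~\ref{lemma; unitary torus action proper moment map}\ref{lemma2.2i} applied face by face.
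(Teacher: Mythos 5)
Your proposal is correct, and parts (1), (3), (4), and (7) track the paper's proof closely; the interesting divergences are in (2), (5), and (6). For (2), the paper simply observes that $\C[\mathsf{S}^{\overline{\sigma}}]$ decomposes as $\bigoplus_{\lambda\in\mathsf{S}^{\overline{\sigma}}}V(\lambda)$ as an $\H$-module and cites \cite[Theorem~4.9]{sjamaarConvexityPropertiesMoment1998} to get $\Psi(\X_0^{\overline{\sigma}})=\cone(\mathsf{S}^{\overline{\sigma}})=\Psi(E^{\overline{\sigma}}\times\C)$ in one stroke; your hands-on computation from the quadratic moment map formula, together with the (correct) observation that generators of $\mathsf{S}$ lying on the face $\mathsf{c}\n(\overline{\sigma})\cap\cone(\mathsf{S})$ generate $\mathsf{S}^{\overline{\sigma}}$, is a valid self-contained substitute, though you do still need a toric convexity input to get the image of $\X_0^{\overline{\sigma}}$ (as opposed to $E^{\overline{\sigma}}\times\C$) to fill the whole cone. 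For (6), the paper deduces properness of $\Psi\vert_E$ from properness of $\psi_\mathsf{c}=\mathsf{c}\circ\Psi$ (Lemma~\ref{psic proper}); your route via Lemma~\ref{lemma; unitary torus action proper moment map}\ref{lemma2.2ii} and strong convexity of $\cone(\mathsf{S})$ (Lemma~\ref{lemma; cones are strongly convex}) is equally valid and arguably more direct. The one place where you make unnecessary work for yourself is (5): you treat $\X_0$ as possibly non-normal and sketch a passage to the normalization, but Section~\ref{section;valuation GH} standing hypotheses require $\mathsf{S}_\mathbf{v}$ to be saturated, so $\X_0\cong X_{\mathsf{S}}$ is normal and fiber connectedness is immediate from \cite[Corollary~4.13]{sjamaarConvexityPropertiesMoment1998} (equivalently, your "orbit space homeomorphic to the moment cone" alternative applies directly); invoking saturation here is cleaner and avoids the genuinely delicate question of whether normalization preserves fiber connectedness.
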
 

\begin{proof}
\begin{enumerate}
    \item The map $\C[E] \to \gr A$ in~\eqref{embedinE} is a map of $L$-graded algebras, so $\H$ preserves $\X_0$. Also, $\H$ preserves $E^{\overline{\sigma}}\times \C$. By Proposition~\ref{proposition; degen from good valuation}, item~\ref{item;alg4}, $\X^{\overline{\sigma}}_0 = \X_0 \cap (E^{\overline{\sigma}}\times \C)$.  Thus, $\H$ preserves $\X^{\overline{\sigma}}_0$.
    
    \item As an $\H$-module, the semigroup algebra $\C[\mathsf{S}^{\overline{\sigma}}]$ splits as a direct sum $\C[\mathsf{S}^{\overline{\sigma}}] = \oplus_{\lambda \in \mathsf{S}^{\overline{\sigma}}} V(\lambda)$, where $V(\lambda)$ is the one-dimensional $\H$-module with weight $\lambda\in L$. Then $\cone(\mathsf{S}^{\overline{\sigma}}) = \Psi(\X^{\overline{\sigma}}_0)$ by \cite[Theorem~4.9]{sjamaarConvexityPropertiesMoment1998}. And $\cone(\mathsf{S}^{\overline{\sigma}}) = \Psi(E^{\overline{\sigma}}\times \C)$ by the same reasoning. 
    \item By the explicit description of $\Psi$, $\Psi(E) = \Psi(E\times \C)$. By the previous item, $\Psi(E\times \C) = \cone(\mathsf{S})$. Finally,  $\cone(\mathsf{S})$ is strongly convex by 
    Lemma~\ref{lemma; cones are strongly convex}.
    \item That the subspaces $E^{\overline{\sigma}} \times \C$ are saturated by $\Psi$ is a consequence of Lemma~\ref{lemma; unitary torus action proper moment map}\ref{lemma2.2i}.
    \item Since  $\mathsf{S}$ is saturated by assumption, the variety $\X_0\cong X_{\mathsf{S}}$ is normal~\cite[Theorem 1.3.5]{coxToricVarieties2011}. The claim then follows immediately by \cite[Corollary~4.13]{sjamaarConvexityPropertiesMoment1998}. 
    \item 
     It suffices to prove that the restricted map $\Psi \colon E = E\times \{0\}  \to \Lie(\T)^*$ is proper. This follows immediately from Lemma~\ref{psic proper}, since $\psi_\mathsf{c} = \mathsf{c} \circ \Psi$ and since the restricted map $\mathsf{c} \colon \cone(\mathsf{S}) \to \Lie(T_\mathsf{c})^*$ is proper by~\ref{v3}. Since $\X_0$ is closed in $E$, the domain restricted map $\Psi\colon \X_0 \to \Lie(\T)^*$ is also proper.
    
    \item By Proposition~\ref{proposition; degen from good valuation}, item~\ref{item;alg4}, it follows that $\X^{\overline{\sigma}} = \X\cap(E^{\overline{\sigma}} \times \C) = \X\cap \psi_\mathsf{c}\n(\overline{\sigma})$. \qedhere
\end{enumerate}  
\end{proof}

As in Section \ref{section; Toric degenerations of stratified affine varieties}, denote $\X^\sigma = \X^{\overline{\sigma}} \setminus \cup_{\tau \prec \sigma} \X^{\overline{\tau}}$ and $\X_0^\sigma = \X_0 \cap \X^\sigma$ for all $\sigma \in \Sigma$. It follows by Proposition~\ref{ReyerCor}, item~\ref{item;ReyerCorPart2}, that 
\[
    \Psi(\X_0^\sigma) = \Psi(\X^{\overline{\sigma}}) \setminus \bigcup_{\tau \prec \sigma} \Psi(\X^{\overline{\tau}}_0) = \cone(\mathsf{S}^{\overline{\sigma}}) \setminus \bigcup_{\tau \prec \sigma}\cone(\mathsf{S}^{\overline{\tau}}).
\]
For all $\tau \prec \sigma$, each $\cone(\mathsf{S}^{\overline{\tau}})$ is a face of $\cone(\mathsf{S}^{\overline{\sigma}})$. As a consequence the set $\Psi(\X_0^\sigma)$ is a convex, locally rational polyhedral set. In particular, the smooth locus of $\Psi(\X_0^\sigma)$ is well defined. 

\begin{corollary}\label{cor; smooth locus} As in Section \ref{section; Toric degenerations of stratified affine varieties}, let $U_0^\sigma \subset \X_0^\sigma$ denote the smooth locus of $\X_0^\sigma$. Then:
\begin{enumerate}
    \item The image $\Psi(U_0^\sigma)$ is the smooth locus of the locally rational polyhedral set $\Psi(\X_0^\sigma)$. In particular, $\Psi(U_0^\sigma)$ is a convex, locally rational polyhedral set.
    \item The restricted map $\Psi\colon U_0^\sigma \to \Psi(U_0^\sigma)$ is proper.
\end{enumerate} 
\end{corollary}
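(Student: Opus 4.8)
The plan is to reduce both assertions to the corresponding statements about the toric varieties $X_{\mathsf{S}^{\overline{\sigma}}}$ and their faces, using the normality of the big toric fiber and the explicit description of $\X_0^\sigma$ as a union of toric orbit-closures minus smaller ones. Since $\mathsf{S}$ is saturated, $\X_0 \cong X_{\mathsf{S}}$ is a normal affine toric variety, and by Proposition~\ref{proposition; degen from good valuation}\eqref{item;alg5} each $\X_0^{\overline{\sigma}} \cong X_{\mathsf{S}^{\overline{\sigma}}}$ is the orbit-closure corresponding to the face $\overline{\sigma}$; moreover $\mathsf{S}^{\overline{\sigma}} = \mathsf{c}\n(\overline{\sigma})\cap\mathsf{S}$ is itself saturated in its span, so $\X_0^{\overline{\sigma}}$ is normal. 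The key structural input is that $\X_0^\sigma = \X_0^{\overline{\sigma}} \setminus \bigcup_{\tau\prec\sigma}\X_0^{\overline{\tau}}$, i.e.\ $\X_0^\sigma$ is the union of those $\H$-orbits in $\X_0^{\overline{\sigma}}$ whose closures meet the relative interior $\sigma$; equivalently, it is $\X_0^{\overline{\sigma}}$ minus its toric boundary divisors coming from the facets of $\overline{\sigma}$ that are not facets of $\Gamma$ -- wait, more precisely minus all proper faces $\overline{\tau}\prec\sigma$.

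For part (1): First I would recall that for a normal affine toric variety $X_C$ with cone $C$, the singular locus is the union of the orbit-closures corresponding to the non-smooth faces of $C$, and the smooth locus is a union of orbits; by Lemma~\ref{lemma; unitary torus action proper moment map}\ref{lemma2.2i} applied to $\X_0^{\overline{\sigma}} = \fix(E^{\overline{\sigma}}\times\C, \cdot)$-type arguments (or directly by the orbit--face correspondence and the description $\Psi(\X_0^{\overline{\sigma}}) = \cone(\mathsf{S}^{\overline{\sigma}})$ from Proposition~\ref{ReyerCor}\eqref{item;ReyerCorPart2}), the image under $\Psi$ of an orbit-closure is the corresponding face, and a point of $\X_0^{\overline{\sigma}}$ is smooth iff it lies in an orbit whose associated face is a smooth face. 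Thus $\Psi(U_0^\sigma)$ is the set of points of $\cone(\mathsf{S}^{\overline{\sigma}})$ lying in the relative interior of some smooth face that is not contained in any proper face $\overline{\tau}\prec\sigma$; this is precisely the smooth locus of the locally rational polyhedral set $\Psi(\X_0^\sigma) = \cone(\mathsf{S}^{\overline{\sigma}})\setminus\bigcup_{\tau\prec\sigma}\cone(\mathsf{S}^{\overline{\tau}})$, by the local model for smoothness of polyhedral sets in Section~\ref{section; convex geometry}. Convexity and the locally rational polyhedral property of $\Psi(U_0^\sigma)$ then follow since the smooth locus of a locally rational polyhedral set is itself one (it is an open subset meeting a neighborhood of each of its points in the same polyhedron), and convexity is inherited because the smooth locus of the cone $\cone(\mathsf{S}^{\overline{\sigma}})$ relative to $\Psi(\X_0^\sigma)$ is obtained by removing a union of faces.

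For part (2): I would show properness of $\Psi\colon U_0^\sigma \to \Psi(U_0^\sigma)$ by factoring it through the proper map $\Psi\colon \X_0 \to \Lie(\T)^*$ from Proposition~\ref{ReyerCor}(6). Indeed $U_0^\sigma$ is the preimage under $\Psi|_{\X_0}$ of $\Psi(U_0^\sigma)$: one inclusion is trivial, and the reverse holds because the orbit-stratification of $\X_0^{\overline{\sigma}}$ matches the face-stratification of $\cone(\mathsf{S}^{\overline{\sigma}})$, so a point of $\X_0$ mapping into $\Psi(U_0^\sigma)$ must already lie in $\X_0^\sigma$ and in fact in its smooth locus $U_0^\sigma$ (the smooth/singular dichotomy on $\X_0^{\overline{\sigma}}$ is constant on $\Psi$-fibers, again by Lemma~\ref{lemma; unitary torus action proper moment map}\ref{lemma2.2i} together with normality). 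Granting this, $U_0^\sigma = (\Psi|_{\X_0})^{-1}(\Psi(U_0^\sigma))$ is a saturated subset, and the restriction of a proper map to the preimage of a subspace is proper onto that subspace; combined with the fact that $\Psi(U_0^\sigma)$ carries the subspace topology from $\Lie(\T)^*$, this gives the claim.

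The main obstacle I expect is the careful verification that the smooth/singular dichotomy on the toric variety $\X_0^{\overline{\sigma}}$ is exactly detected by the faces of $\cone(\mathsf{S}^{\overline{\sigma}})$ in a way compatible with the $\Psi$-image, i.e.\ that $\Psi(U_0^\sigma)$ is genuinely the \emph{smooth locus} of the polyhedral set $\Psi(\X_0^\sigma)$ and not merely contained in it -- this requires pinning down that a face of the cone is a smooth point of the polyhedral set iff the corresponding toric orbit-closure (minus the relevant smaller ones) lies in $(\X_0^\sigma)^{sm}$, which mixes the combinatorics of smooth cones with the non-closed nature of $\X_0^\sigma$. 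Everything else is bookkeeping with the orbit--face correspondence and the properness results already in hand.
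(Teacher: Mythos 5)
Your proposal is correct and follows essentially the same route as the paper: both arguments reduce to the fact that the singular locus of the normal toric variety $\X_0^{\overline{\sigma}}$ is saturated by $\Psi$ with image the singular locus of $\cone(\mathsf{S}^{\overline{\sigma}})$ (the orbit--face correspondence you invoke is exactly how the paper justifies this), and both deduce properness in part (2) from the saturation of $U_0^\sigma$ together with the properness of $\Psi$ already established in Proposition~\ref{ReyerCor}. The only cosmetic difference is that you saturate inside all of $\X_0$ while the paper saturates inside $\X_0^{\overline{\sigma}}$; both work.
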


\begin{proof}
 The singular locus $(\X_0^{\overline{\sigma}} )^{sing} \subset \X_0^{\overline{\sigma}}$ is saturated by $\Psi \colon \X_0^{\overline{\sigma}} \to \Lie(\T)^*$.
 The image $\Psi((\X_0^{\overline{\sigma}} )^{sing})$ is the complement of the smooth locus of $\cone(S^{\overline{\sigma}})$.
 Since $\X_0^\sigma$ is an open subset of $\X_0^{\overline{\sigma}}$, one has $U_0^\sigma = \X_0^\sigma \setminus (\X_0^\sigma \cap(\X_0^{\overline{\sigma}} )^{sing})$.  Since $(\X_0^{\overline{\sigma}} )^{sing}$ is saturated by $\Psi$,
 this implies
    $
        \Psi(U_0^\sigma) = \Psi(\X_0^{\sigma}) \setminus \Psi((\X_0^{\overline{\sigma}} )^{sing}).
    $
 It follows that $\Psi(U_0^\sigma)$ is the smooth locus of $\Psi(\X_0^{\sigma})$. The convexity of $\Psi(U_0^\sigma)$ then follows from the convexity of $\Psi(\X_0^{\sigma})$. This proves the first claim.
    
   By Proposition \ref{ReyerCor}, for each $\tau \prec \sigma$ the restricted map $\Psi \colon \X_0^{\overline{\sigma}} \to \cone(S^{\overline{\sigma}})$ is proper and the closed subset $\X_0^{\overline{\tau}}$ is saturated by $\Psi$. It follows that
    $
        U_0^\sigma = \X_0^{\overline{\sigma}}\setminus \left(\cup_{\tau \prec \sigma} \X_0^{\overline{\tau}} \cup (\X_0^{\overline{\sigma}} )^{sing} \right)
    $
    is also saturated by $\Psi \colon \X_0^{\overline{\sigma}} \to \cone(S^{\overline{\sigma}})$. 
     Thus, the restricted map $\Psi\colon U_0^\sigma \to \Psi(U_0^\sigma)$ is proper. This proves the second claim.
\end{proof}

\subsubsection{Main result}\label{subsection; main result for good valuations}

Let $\mathbf{v}$ be a good valuation on $A=\C[X]$, and let $\X\subset E\times \C$ be the toric degeneration constructed as in Proposition~\ref{proposition; degen from good valuation}.  Let $T=T_\mathsf{c}$ and $\psi=\psi_\mathsf{c}$. Recall condition~\ref{assumeF} from Section~\ref{section; Toric degenerations of stratified affine varieties}:
\begin{enumerate}[label=(GH\arabic*), start=5]
	\item \label{assumeFrestate} The Duistermaat-Heckman measures of $((\X_0^\sigma)^{sm},\omega_0^\sigma,\psi)$ and $(\X^\sigma_1,\omega^\sigma_1,\psi)$ are equal for all $\sigma \in \Sigma$.
\end{enumerate}
The following theorem shows that good valuations produce integrable systems in the sense of Definition~\ref{def; integrable system on singular symplectic space}.

\begin{theorem} \label{maintheorem} Let $\X$ be as above, and assume condition~\ref{assumeFrestate} holds.
Then, there exists a continuous, surjective, $T_{\mathsf{a}}\times T_{\mathsf{c}}$-equivariant, proper map $\phi \colon \X_1 = X \to \X_0 = X_{\mathsf{S}}$ such that
	\begin{enumerate}[label=(\alph*)]
	    \item For all $\sigma\in \Sigma$,  $D^\sigma := \phi\n(U_0^\sigma)$ is a dense open subset of $\X^\sigma_1$ and 
	    \[
	        \phi\colon (D^\sigma,(\psi_\mathsf{a},\psi_\mathsf{c})) \to (U_0^\sigma,(\psi_\mathsf{a},\psi_\mathsf{c}))
	    \]
	    is an isomorphism of Hamiltonian $T_{\mathsf{a}}\times T_{\mathsf{c}}$-manifolds.
	    
	    \item For each $\sigma\in \Sigma$, 
	    the restricted map $\Psi\circ \phi\colon D^\sigma \to \Lie(\T)^*$ generates a complexity 0 Hamiltonian $\T$-action on $D^\sigma$.
	\end{enumerate}
\end{theorem}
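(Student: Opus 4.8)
The plan is to deduce Theorem~\ref{maintheorem} by applying Theorem~\ref{thm; toric degenerations main theorem} and Corollary~\ref{thm; toric degenerations main theorem corollary} to the toric degeneration $\pi\colon\X\to\C$ produced in Proposition~\ref{proposition; degen from good valuation}, taking as the torus and moment map of assumption~\ref{assumeE} the product $T_\mathsf{a}\times T_\mathsf{c}$ acting on $E\times\C$ (trivially on the $\C$ factor) with moment map $(\psi_\mathsf{a},\psi_\mathsf{c})$, and taking $\T$ to be the maximal compact torus of $\X_0\cong X_\mathsf{S}$ acting via the moment map $\Psi$ of~\eqref{eqn; TT moment map def}. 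Thus essentially all of the work is to check that this degeneration satisfies assumptions \ref{assumeA}--\ref{assumeG}, and most of that check is a direct appeal to the preparatory results of Sections~\ref{section; Toric degenerations of stratified affine varieties}--\ref{main construction section}.

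First I would dispatch the structural assumptions. Assumptions \ref{assumeB} and \ref{assumeC} are items~\ref{item;alg1} and~\ref{item;alg2} of Proposition~\ref{proposition; degen from good valuation}; assumption \ref{assumeA} holds because each $\X^{\overline{\sigma}}\to\C$ is a toric degeneration, hence flat with reduced fibers, by item~\ref{item;alg5}; and conditions~\ref{D1} and~\ref{D2} for $\X\setminus\X_0$ hold by~\ref{GV2new} together with item~\ref{item;WhitneyA}, so \ref{assumeG} follows from Proposition~\ref{CTSvfprop}. For \ref{assumeE}: part~I) holds because $H_\mathsf{a}\times H_\mathsf{c}$ preserves each $\X^{\overline{\sigma}}$ (item~\ref{item;alg6}), hence each $\X^\sigma$ and each $Z^\sigma$, acts trivially on $\C$, and acts unitarily on $E$ by the conclusion of Proposition~\ref{proposition;GramSchmidt}; part~III) holds because $\X^{\overline{\sigma}}=\X\cap\psi_\mathsf{c}^{-1}(\overline{\sigma})$ (the last item of Proposition~\ref{ReyerCor}), which yields $\X^\sigma=\X\cap\psi_\mathsf{c}^{-1}(\sigma)$; and for part~II) I would first note that $(\pi,\psi_\mathsf{c})\colon E\times\C\to\C\times\Lie(T_\mathsf{c})^*$ is proper by Lemma~\ref{psic proper}, and then observe that properness of $(\pi,\psi_\mathsf{a},\psi_\mathsf{c})$ follows formally, since composing it with the projection forgetting the $\psi_\mathsf{a}$-coordinate gives a proper map into a Hausdorff space. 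Finally, \ref{assumeF} is exactly the standing hypothesis~\ref{assumeFrestate}, using $U_0^\sigma=(\X_0^\sigma)^{sm}$.

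Granting these, Theorem~\ref{thm; toric degenerations main theorem} produces a continuous, proper, surjective, $T_\mathsf{a}\times T_\mathsf{c}$-equivariant map $\phi=\lim_{t\to1^-}\varphi_t\colon\X_1\to\X_0$ with $(\psi_\mathsf{a},\psi_\mathsf{c})\circ\phi=(\psi_\mathsf{a},\psi_\mathsf{c})$ and, on each piece, a symplectomorphism $\varphi_1^\sigma\colon(\varphi_{-1}^\sigma(U_0^\sigma),\omega_1^\sigma)\to(U_0^\sigma,\omega_0^\sigma)$. To obtain part~(a) I would identify $\phi^{-1}(U_0^\sigma)$ with $\varphi_{-1}^\sigma(U_0^\sigma)$: the inclusion $\supseteq$ is immediate, and conversely, since $\phi$ maps each $\X_1^\tau$ into the pairwise disjoint pieces $\X_0^\tau$, a point mapping into $U_0^\sigma\subset\X_0^\sigma$ must lie in $\X_1^\sigma$, and over $U_0^\sigma$ the backward flow $\varphi_{-1}^\sigma$ is defined and inverse to $\phi$ (Theorem~\ref{thm; toric degenerations main theorem}(a)), forcing the point into $\varphi_{-1}^\sigma(U_0^\sigma)$. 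Combining the symplectomorphism with the equivariance and the moment-map identity then upgrades $\phi|_{D^\sigma}$ to an isomorphism of Hamiltonian $T_\mathsf{a}\times T_\mathsf{c}$-manifolds. For part~(b) I would invoke Corollary~\ref{thm; toric degenerations main theorem corollary}: its hypothesis~(i) holds because the $\T$-action on $\X_0$ is the restriction of the Hamiltonian $\T$-action on $E$ with moment map $\Psi$ of~\eqref{eqn; TT moment map def}, and hypothesis~(ii) holds because $\H$, and hence $\T$, preserves $\X_0^{\overline{\sigma}}$ by the first item of Proposition~\ref{ReyerCor} while $Z^\sigma$ is intrinsic, so $\T$ preserves $U_0^\sigma=\X_0^\sigma\setminus Z^\sigma$; the corollary then gives that $\Psi\circ\phi$ restricts on $(D^\sigma,\omega_1^\sigma)$ to a moment map for a complexity $0$ Hamiltonian $\T$-action.

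I expect the only mildly delicate points to be the two propagation-of-properness arguments entering \ref{assumeE}II) and the identification $\phi^{-1}(U_0^\sigma)=\varphi_{-1}^\sigma(U_0^\sigma)$; there is no new analytic input, since the substantive content has already been carried out in Theorem~\ref{thm; toric degenerations main theorem} and in Propositions~\ref{CTSvfprop}, \ref{proposition; degen from good valuation}, and~\ref{ReyerCor}.
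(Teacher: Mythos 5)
Your overall strategy is the paper's: verify hypotheses \ref{assumeA}--\ref{assumeG} for the degeneration of Proposition~\ref{proposition; degen from good valuation} and then quote Theorem~\ref{thm; toric degenerations main theorem} and Corollary~\ref{thm; toric degenerations main theorem corollary}. Most of your checklist is correct, and you actually supply two details the paper leaves implicit (the propagation of properness into \ref{assumeE}II), and the identification $\phi\n(U_0^\sigma)=\varphi_{-1}^\sigma(U_0^\sigma)$ needed to reconcile the two definitions of $D^\sigma$); both of those arguments are fine.

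There is, however, one step that does not work as written. You choose the torus of assumption \ref{assumeE} to be the product $T_\mathsf{a}\times T_\mathsf{c}$ with moment map $(\psi_\mathsf{a},\psi_\mathsf{c})$, and then assert that \ref{assumeF} ``is exactly the standing hypothesis \ref{assumeFrestate}.'' It is not: with your choice of torus, \ref{assumeF} demands equality of the Duistermaat--Heckman measures of $(U_0^\sigma,\omega_0^\sigma)$ and $(\X_1^\sigma,\omega_1^\sigma)$ with respect to the \emph{product} moment map $(\psi_\mathsf{a},\psi_\mathsf{c})$, whereas \ref{assumeFrestate} only asserts equality of the DH measures with respect to $\psi_\mathsf{c}$. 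The $\psi_\mathsf{c}$-DH measure is the pushforward of the $(\psi_\mathsf{a},\psi_\mathsf{c})$-DH measure under the projection $\Lie(T_\mathsf{a}\times T_\mathsf{c})^*\to\Lie(T_\mathsf{c})^*$, and equality of pushforwards does not give equality of the measures; indeed, in the application to $G\sslash N$ (Proposition~\ref{prop; DH measure}) only the $\psi_\mathsf{c}$-version is ever established. So with your setup, hypothesis \ref{assumeF} is not verified. The paper avoids this by running Theorem~\ref{thm; toric degenerations main theorem} with $T=T_\mathsf{c}$ and $\psi=\psi_\mathsf{c}$ only --- for which \ref{assumeE}II), III) and \ref{assumeF} are precisely Lemma~\ref{psic proper}, Proposition~\ref{ReyerCor}, and \ref{assumeFrestate} --- and then obtaining $T_\mathsf{a}$-equivariance of the flows, hence of $\phi$, and the identity $\psi_\mathsf{a}\circ\phi=\psi_\mathsf{a}$, as a separate application of Noether's theorem (Lemma~\ref{lemma; ham grp action preserving pi}, cf.~Proposition~\ref{prop; gh flow preserving dh measure}), since $T_\mathsf{a}$ preserves $\X$, the fibers of $\pi$, and the K\"ahler metric. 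Your argument goes through verbatim once you make this substitution; as written, the verification of \ref{assumeF} is a genuine gap.
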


\begin{proof}
We need only check the assumptions of Theorem \ref{thm; toric degenerations main theorem} and Corollary~\ref{thm; toric degenerations main theorem corollary}.
(The proof of equivariance with respect to $T_\mathsf{a}$ follows exactly as the proof for $T_\mathsf{c}$, since the Hamiltonian action of $T_\mathsf{a}$ on $E\times \C$ preserves $\X$, $\pi$, and the K\"ahler metric, cf.~Proposition \ref{prop; gh flow preserving dh measure}).  Condition~\ref{assumeA} holds by items~\eqref{item;alg4} and~\eqref{item;alg5} of Proposition~\ref{proposition; degen from good valuation}. Condition~\ref{assumeB} holds by item~\eqref{item;alg1} of Proposition~\ref{proposition; degen from good valuation} and the definition of the action of $\H$.  Condition~\ref{assumeC} holds by item~\eqref{item;alg2} of Proposition~\ref{proposition; degen from good valuation}, condition~\ref{GV2new} of Definition~\ref{definition;good}, and Lemma~\ref{lemma; unitary torus action proper moment map}. Condition~\ref{assumeE} follows from Lemmas~\ref{psic proper} and~\ref{lemma; unitary torus action proper moment map}, and item~\eqref{item;alg6} of Proposition~\ref{proposition; degen from good valuation}. Condition~\ref{assumeF} holds by assumption. Condition~\ref{assumeG} holds by item~\eqref{item;WhitneyA} of Proposition~\ref{proposition; degen from good valuation} and Proposition~\ref{CTSvfprop}. The additional assumptions of Corollary~\ref{thm; toric degenerations main theorem corollary} hold as described in Section \ref{subsection; hamiltonian torus action}, cf.~Equation~\eqref{eqn; TT moment map def} and Proposition~\ref{ReyerCor}.
\end{proof}

\begin{example}
    The following extends a special case of~\cite[Example 2.9]{pabiniakSymplecticCohomologicalRigidity2020}. It is included as a concrete demonstration of Theorem~\ref{maintheorem} not used in any of the results that follow. Consider the polyhedral cones
    \begin{align*}
        C = \{ p\in \R^4 \mid & 0\le \langle p, e_1\rangle, 0\le \langle p, e_2\rangle, 0\le \langle p, e_3\rangle, 0\le \langle p, e_4\rangle, \\
        & \langle p, e_3\rangle \le \langle p, e_1\rangle, \langle p,e_4\rangle \le \langle p, e_1 + e_2\rangle \}, \\
        C' = \{ p\in \R^4 \mid & 0\le \langle p, e_1\rangle, 0\le \langle p, e_2\rangle, 0\le \langle p, e_3\rangle, 0\le \langle p, e_4\rangle, \\
        & \langle p, e_3\rangle \le \langle p, e_1\rangle, \langle p,2e_3 + e_4 \rangle \le \langle p, 2e_1 + e_2\rangle \}.
    \end{align*}
    Let $\mathsf{S} = C\cap \Z^4$ and $\mathsf{S}' = C'\cap \Z^4$. Additionally, let $C_{\lambda_1,\lambda_2} = C\cap \{p\in \R^4 \mid \langle p,e_1\rangle = \lambda_1, \langle p, e_2 \rangle = \lambda_2\}$ and analogously for $C'_{\lambda_1,\lambda_2}$. We will show that 
    \begin{enumerate}
        \item There exists a good valuation on the toric variety $X_\mathsf{S}$, with value semigroup $\mathsf{S}'$; \label{ptexampleitem1}
        \item The resulting toric degeneration satisfies the hypotheses of Theorem~\ref{maintheorem}; and \label{ptexampleitem2}
        \item For fixed $\lambda_1\ge 0$ and $\lambda_2>0$, the toric manifold with polytope $C_{\lambda_1,\lambda_2}$ is isomorphic as a symplectic manifold ) to the toric manifold with polytope $C'_{\lambda_1,\lambda_2}$. This isomorphism is not necessarily equivariant for any torus action. \label{ptexampleitem3}
    \end{enumerate} This extends what is shown in~\cite[Example 2.9]{pabiniakSymplecticCohomologicalRigidity2020} because it does not assume that $\lambda_1,\lambda_2$ are rational numbers. We expect that the approach in this example can be extended to the other results of~\cite{pabiniakSymplecticCohomologicalRigidity2020}.
    
    Observe that $\mathsf{S}$ is generated by 
    \[
        v_1 = e_2, v_2 = e_2 + e_4, v_3 = e_1, v_4 = e_1+e_3, v_5 = e_1+e_4, v_6=e_1+e_3+e_4.
    \]
    Let $\pi \colon \C[y_1,\dots,y_6] \to \C[(\C^\times)^4]$ be defined by $\pi(y_i) = \chi^{v_i}$.
    By~\cite[Proposition 1.1.4]{coxToricVarieties2011}, the toric variety $X_\mathsf{S}$ can be identified with the subvariety of $E=\C^6$ cut out by $\ker\pi$; therefore as a set, \begin{equation}\label{example_embedding}
    X_\mathsf{S} = \{y \in E \mid y_3y_6=y_4y_5 \text{ and } y_1y_5=y_2y_3\}.
    \end{equation}
    Let $H_\mathsf{a} = \{1\}$ and $H_\mathsf{c} = (\C^\times)^2$. Let $L=\Z^4$ and define a $L$-grading on $\C[y_1,\dots,y_6] = \C[E]$ by $|y_i|_L = v_i$. This grading defines an action of $(\C^\times)^4$ on $E$ which restricts to an action on $X_\mathsf{S}$. Additionally, let $\mathsf{c}\colon L\to \Lambda_\mathsf{c} = \Z^2$ be the projection to the first two coordinates. This defines an action of $H_\mathsf{c}$ on $E$ and $X_\mathsf{S}$.


    Consider the functions 
    \[
        u_1 = y_3, u_2 = y_1, u_3 = \frac{y_4-y_5}{y_3}, u_4 = \frac{y_5}{y_3}
    \]
    on $X_\mathsf{S}$; these functions form a coordinate system on an open dense subset $U$ of $X_\mathsf{S}$. Equip $L=\Z^4$ with the opposite order of the lexicographical order. Following~\cite[Section 2]{pabiniakSymplecticCohomologicalRigidity2020}, we define a valuation $\mathbf{v}\colon \C(X_\mathsf{S})\backslash \{0\} \to L$ 
    relative to this coordinate system. For a fixed $x\in U$, if $f$ is regular at $x$, then we can write it as a power series $f= \sum_{v\in \Z^4_{\ge 0}} c_v \prod_{i=1}^4 u_i^{v_i}$. In this case, $\mathbf{v}(f) = \max\{v | c_v\ne 0\}$. More generally, if $f$ and $g$ are regular at $x$, then $\mathbf{v}(f/g) = \mathbf{v}(f) -\mathbf{v}(g)$. As noted in~\cite{pabiniakSymplecticCohomologicalRigidity2020}, this valuation has one-dimensional leaves. Additionally, any $\Lambda_\mathsf{c}$-homogeneous function $f$ satisfies $|f|_{\Lambda_\mathsf{c}} = \mathsf{c}\circ \mathbf{v}(f)$. 
    
    
    Fix an inner product $h_E$ on $E$ that is invariant under the $(S^1)^4$-action induced by the $L$-grading of $\C[E]$. We then have the data $(X_\mathsf{S}, E, h_E, \mathbf{v}, \mathsf{a}\colon L\to \{0\}, \mathsf{c})$ of a good valuation. One immediately verifies that~\ref{GV1new} holds. The partition of $X_\mathsf{S}$ into orbit types of $H_\mathsf{c}$ clearly endows $X_\mathsf{S}$ with the structure of a decomposed variety. This decomposition satisfies \ref{D2'} since it is an orbit stratification for an algebraic group action with finitely many orbits. Therefore, $(X_\mathsf{S}, E, h_E, \mathbf{v}, \mathsf{a}\colon L\to \{0\}, \mathsf{c})$ satisfies~\ref{GV2new} and is indeed a good valuation.

    To establish~\eqref{ptexampleitem1} above, it remains to check that $\mathbf{v}(\C[X_\mathsf{S}]) = \mathsf{S}'$. Fix $\lambda_1,\lambda_2\in \Z_{\ge 0}$. For $p_3, p_4, t\in \Z_{\ge 0}$, define
    \begin{align*}
        f_{p_3,p_4,t} & = \left(\frac{y_4-y_5}{y_3}\right)^t\left(\frac{y_5}{y_3}\right)^{p_4-t}\left(\frac{y_4}{y_3}\right)^{p_3}y_3^{\lambda_1}y_1^{\lambda_2} \in \C(X_\mathsf{S})
    \end{align*}
    If $p_3=0$, $0\le p_4\le \lambda_1+\lambda_2$, and $(\lambda_1,\lambda_2,p_3+t,p_4-t)\in C$, then a direct computation using the relations~\eqref{example_embedding} shows that $f_{p_3,p_4,t}$ is regular on $X_\mathsf{S}$. Similarly, if $0\le p_3\le \lambda_1$, $p_4=\lambda_1+\lambda_2$, and $(\lambda_1,\lambda_2,p_3+t,p_4-t)\in C$, then $f_{p_3,p_4,t}$ is also regular on $X_\mathsf{S}$. Let $A(\lambda_1,\lambda_2)$ be the set of $f_{p_3,p_4,t}$ satisfying either of these conditions. Then, we can rewrite
    \begin{align*}
        f_{p_3,p_4,t} & = u_3^t u_4^{p_4-t} (u_3+u_4)^{p_3} u_1^{\lambda_1} u_2^{\lambda_2}, 
    \end{align*}
    and so $\mathbf{v}(f_{p_3,p_4,t}) = \lambda_1 e_1 +\lambda_2 e_2 + t e_3 + (p_3+p_4 - t)e_4$. One can then check that $\mathbf{v}(A(\lambda_1,\lambda_2)) = C'_{\lambda_1,\lambda_2} \cap \Z^4$, and so $C'_{\lambda_1,\lambda_2} \cap \Z^4 \subset \mathbf{v}(\C[X_\mathsf{S}]_{\lambda_1,\lambda_2})$, where $\C[X_\mathsf{S}]_{\lambda_1,\lambda_2}$ is the $(\lambda_1,\lambda_2)$-homogeneous part of $\C[X_\mathsf{S}]$. Moreover, because $\mathbf{v}$ has one-dimensional leaves,  the dimension of $\C[X_\mathsf{S}]_{\lambda_1,\lambda_2}$ is equal to the cardinality of  $\mathbf{v}(\C[X_\mathsf{S}]_{\lambda_1,\lambda_2})$. The dimension of $\C[X_\mathsf{S}]_{\lambda_1,\lambda_2}$ is also equal to the cardinality of $C_{\lambda_1,\lambda_2} \cap \Z^4$. But $|C_{\lambda_1,\lambda_2} \cap \Z^4| = |C'_{\lambda_1,\lambda_2} \cap \Z^4|$, and so $ C'_{\lambda_1,\lambda_2} \cap \Z^4 = \mathbf{v}(\C[X_\mathsf{S}]_{\lambda_1,\lambda_2})$. Since this applies to all values of $\lambda_1,\lambda_2$, we have that $\mathbf{v}(\C[X_\mathsf{S}]) = \mathsf{S}'$, as desired. This establishes claim~\eqref{ptexampleitem1}.

    To prove claim~\eqref{ptexampleitem2}, we must show that the toric degeneration associated with $\mathbf{v}$ satisfies~\ref{assumeF}. On $X_\mathsf{S}\subset\X_1$, the moment map for the $T_\mathsf{c}$-action factors through the moment map for the $(S^1)^4$-action induced by the original $L$-grading of $\C[E]$. Applying~\cite[Theorem~4.9]{sjamaarConvexityPropertiesMoment1998}, the symplectic volume of the reduced space over $(\lambda_1,\lambda_2) \in \R^2_{\ge 0}$ is, up to a constant multiple, the volume of the polytope $C_{\lambda_1,\lambda_2}$. Similarly, on $X_\mathsf{S'}\subset\X_0$, the moment map for the $T_\mathsf{c}$-action factors factors through the moment map for the action of $\mathbb{T}$, and the symplectic volume of the reduced space over $(\lambda_1,\lambda_2)\in \R^2_{\ge 0}$ is, up to a constant multiple, the volume of $C'_{\lambda_1,\lambda_2}$. When $\lambda_1=0$, then $C_{\lambda_1,\lambda_2} = C'_{\lambda_1,\lambda_2}$. When $\lambda_1>0$, $C_{\lambda_1,\lambda_2}$ is a rectangle with height $\lambda_1+\lambda_2$ and width $\lambda_1$. On the other hand, $C'_{\lambda_1,\lambda_2}$ is a trapezoid with bases $2\lambda_1+\lambda_2$ and $\lambda_2$, and height $\lambda_1$. So $C_{\lambda_1,\lambda_2}$ is of equal area as $C'_{\lambda_1,\lambda_2}$, which establishes claim~\eqref{ptexampleitem2}.

    The final claim~\eqref{ptexampleitem3} results from looking at the smooth locus of $X_{\mathsf{S}'}$: In the decomposition by faces of $\mathsf{c}(C')$, the piece of $X_{\mathsf{S}'}$ which corresponds to the interior of $\mathsf{c}(C')$ is smooth, as is the piece which corresponds to the face $\R_{>0}\cdot e_2$. By Theorem~\ref{maintheorem}(a), the map $\phi$ restricts to an isomorphism of Hamiltonian $T_\mathsf{c}$-manifolds on each of these pieces. The claim then follows from the classification of toric symplectic manifolds.
    
\end{example}

\section{Example: toric degenerations of the affine closure of base affine space}\label{section; toric degenerations of G/N} 

This section applies our results from Sections \ref{section; Toric degenerations and gradient Hamiltonian flows} and \ref{good valuation section} to the example of the the affine closure of base affine space of a  semisimple complex Lie group. This space is an important singular affine variety for which many families of valuations have been studied in the literature. As we show below, many of these valuations are good valuations in the sense of Definition \ref{definition;good}. Using our results from Section \ref{good valuation section}, this allows us to produce integrable systems on the affine closure of base affine space. This example is an important ingredient in Section \ref{section 5}, where we use it to construct collective integrable systems on arbitrary Hamiltonian spaces.

The organization of this section is as follows. Subsection \ref{G mod N} establishes the decomposed K\"ahler variety structure of the affine closure of base affine space used in our construction.  Subsection \ref{section; valuations on g mod n} describes how to produce  valuations on the affine closure of base affine space from valuations on $G/B$. We provide this section, which is somewhat expository, because valuations on $G/B$ are  prominent in the literature (see Remark~\ref{rem; other valuations}).   In Subsection \ref{general G mod N} we show that these valuations are good valuations and apply our results from Sections \ref{section; Toric degenerations and gradient Hamiltonian flows} and \ref{good valuation section} to produce the desired continuous maps.  We end this section with Example \ref{main example} which describes the case of string valuations.

\subsection{The affine closure of base affine space}\label{G mod N} This section recalls basic facts about the K\"ahler geometry of the affine closure of base affine space. Additional details and references can be found in \cite[Section 6]{guilleminSymplecticImplosion2002}. Let $G$ be a complex semisimple Lie group and import all notation from  Section~\ref{Lie theory section}. 
The affine closure of the base affine space of $G$, denoted $G\sslash N$, is the affine $G\times H$-variety whose coordinate ring is the ring  $\C[G]^N$ of invariants for the action of $1\times N \subset G\times G$. It is the affine closure of the base affine space $G/N$ studied in~\cite{bggBasic}. An alternate description of the algebra $\C[G]^N$ and its finite generating set is given in~\eqref{equation;Ninvariants} below.

View $G\sslash N$ as a $H$-variety with respect to the action of $1\times H \subset G\times H$. Then, in the notation of Section \ref{section; affine G-varieties},  $\Lambda(G\sslash N) = \Lambda_+$, $\Gamma(G\sslash N) = \ttt^*_+$, and $\Sigma = \Sigma(G\sslash N)$ is the set of open faces of  $\ttt^*_+$.  Define $(G\sslash N)^{\overline{\sigma}}$ and $(G\sslash N)^\sigma$ as in Section~\ref{section; affine G-varieties} for $\sigma\in \Sigma$.  The partition of $G\sslash N$ by the subvarieties $(G\sslash N)^\sigma$ coincides with the orbit decomposition of $G\sslash N$ with respect to the action of $G\times 1 \subset G\times H$. In particular, each $(G\sslash N)^\sigma$ is a smooth manifold in the analytic topology and this decomposition of $G\sslash N$ satisfies~\ref{D1'}. The piece $(G\sslash N)^\sigma$ is isomorphic as an algebraic $G$-homogeneous space to the quotient $G/[P_\sigma,P_\sigma]$, where $P_\sigma $ is the parabolic subgroup of $G$ with Lie algebra
 \begin{equation}\label{eq; P sigma def}
    \mathfrak{p}_\sigma = \hhh \oplus \nnn \oplus \bigoplus_{\alpha \in R_{+,\sigma}} \g_{-\alpha}, \quad R_{+,\sigma } = \{\alpha \in R_+ \mid \lambda(\alpha^\vee) = 0,~\forall \lambda \in \sigma \},
 \end{equation}
 and $[P_\sigma,P_\sigma]$ is its commutator subgroup~\cite[Lemma 6.2]{guilleminSymplecticImplosion2002}. The open dense piece of $G\sslash N$ is isomorphic to the $G$-homogeneous space $G/N$.

Fix a finite set $\Pi \subset \Lambda_+$ that generates $\Lambda_+$ as a semigroup. Define the $G\times H$-module
\begin{equation}\label{eq; definition of E given Pi}
	E =  \bigoplus_{\varpi\in \Pi} V(\varpi)
\end{equation}
where $V(\varpi)$ is the irreducible $G$-module with highest weight $\varpi$ and $1\times H\subset G\times H$ acts on $V(\varpi)$ with weight $-\varpi$. The dual vector space $E^*$ generates $\C[G]^N$ as an algebra with respect to the embedding of $G\times H$-modules,
\begin{equation}
\label{equation;Ninvariants}
	E^* =  \bigoplus_{\varpi\in \Pi} V(\varpi)^* \subset \bigoplus_{\lambda \in \Lambda_+} V(\lambda)^* \cong \C[G]^N.
\end{equation}
The isomorphism on the right is a consequence of the $G\times G$-module decomposition of $\C[G]$ and depends on a choice of highest weight vectors $v(\lambda) \in V(\lambda)$~\cite[Theorem 27.3.9]{tauvel2005lie}. Dualizing, one obtains a $G\times H$-equivariant embedding of  $G\sslash N$ as a closed subvariety of $E$. 

Recall that $K$ is a maximal compact subgroup of $G$, and $T=H\cap K$. Let $h_E$ denote the unique $K\times T$-invariant Hermitian inner product on $E$ such that $||v(\varpi)|| = 1$ for all $\varpi\in \Pi$ and let $\omega_E = -\Im h_E$. Note that since the action of $K\times T \subset G \times H$ on $(E,h_E)$ is unitary by definition, it is Hamiltonian with respect to $\omega_E$.  Since the direct summands $V(\varpi)$ are $1\times T$-weight spaces with distinct weights, the direct sum \eqref{eq; definition of E given Pi} is orthogonal with respect to $h_E$.   The embedding of $G\sslash N$ into $E$ endows it with the structure of a decomposed affine K\"ahler variety, $(G\sslash N,E,\omega_E)$. 

\subsection{Valuations on the affine closure of base affine space}\label{section; valuations on g mod n} For more details about the relation between valuations on $G\sslash N$ and valuations on $G/B$, we refer the reader to~\cite{kavehCrystalBasesNewton2015}.

Let $\Lambda$ be the weight lattice of $H$, let $m = \dim_\C(G/H)$, and let $\mathsf{c}\colon \Z^m\times \Lambda \to \Lambda$ denote projection to $\Lambda$. Use a refinement of the standard partial order on $\Lambda$ and the standard lexicographic order to define a total order on  $\Z^m\times \Lambda$ as in \cite[p.~2492]{kavehCrystalBasesNewton2015}. 

Valuations on $G\sslash N$ can be constructed from valuations on $G/B$ as follows.
Let $\C(G/B)$ be the algebra of rational functions on $G/B$, and let $\nu \colon \C(G/B)\setminus \{0\} \to \Z^m$ be a valuation with one-dimensional leaves. Assume that the image of $\nu$ generates $\Z^m$ as a group (without loss of generality, if it does not, then replace $\Z^m$ with the $\Z$-submodule generated by the image of $\nu$). Let $\iota\colon N_- \to G/B$ denote the embedding $n_- \mapsto n_-B$. This embedding restricts to an isomorphism $N_-\cong N_- B/B$, and induces an algebra isomorphism $\iota^* \colon \C[N_-B/B] \cong \C[N_-]$. For $0\ne F\in \C[N_- B/B]\subset \C(G/B)$, define
\begin{equation}\label{def; nu N minus}
    \nu\vert_{N_-} \colon \C[N_-]\setminus\{0\} \to \Z^m, \quad \nu\vert_{N_-}(\iota^*F) = \nu(F).
\end{equation}
This defines a valuation on $\C[N_-]$. Since $\nu$ was assumed to have one-dimensional leaves on $\C(G/B)$, it is easy to check that $\nu|_{N_-}$ has one-dimensional leaves on $\C[N_-]$.  Next, consider the composition
\begin{equation}\label{eq; kaveh embedding}
    j\colon N_-\times H \hookrightarrow G/N \hookrightarrow G\sslash N
\end{equation}
The first map is $(n,h) \mapsto nh N$. It identifies $N_-\times H$ with the open subset $B_-N \subset G/N$. The second map is inclusion of the dense $G$-orbit in $G\sslash N$. With respect to the isomorphism fixed in \eqref{equation;Ninvariants}, the dual map  
\begin{equation}
    j^* \colon \bigoplus_{\lambda \in \Lambda_+} V(\lambda)^* \cong \C[G\sslash N]  \to \C[N_-\times H] \cong \C[N_-]\otimes \C[H]
\end{equation}
has the property that for $z_\lambda \in V(\lambda)^*$, $j^*z_\lambda = f_\lambda \otimes \chi^\lambda$. 
Here $\chi^\lambda \in \C[H]$ denotes the character on $H$ corresponding to $\lambda$ and $f_\lambda \in \C[N_-]$ is given by $f_\lambda(n) = z_\lambda(n\cdot v(\lambda))$, where $v(\lambda) \in V(\lambda)$ is the chosen highest weight vector and $n\in N_-$. Given  $z= \sum_{\lambda \in \Lambda_+} z_\lambda$ in $\C[G\sslash N]$, let $\lambda = \max\{ \gamma | z_\gamma\neq 0\}$. Define
\begin{equation}\label{eq; general form of v}
    \mathbf{v} \colon \C[G\sslash N]\setminus \{0\} \to \Z^m \times \Lambda, \quad \mathbf{v}(z) = (\nu\vert_{N_-}(f_\lambda),\lambda).
\end{equation}
Then by \cite[Proposition 6.1]{kavehCrystalBasesNewton2015}, $\mathbf{v}$ is a valuation with one-dimensional leaves, $\mathsf{S}_\mathbf{v}$ generates $\Z^m \times \Lambda$ as a group, and $\mathsf{c}(\mathsf{S}_\mathbf{v}) = \Lambda_+$ .

The action of $H$ on  $G/B$ defines a $\Lambda$-grading of $\C(G/B)$. Many valuations on $G/B$ have the following additional property:

\begin{enumerate}[label=(*)]
    \item \label{BA1} There exists a linear map $\mathsf{a}' \colon \Z^m \to \Lambda$ such that if $f\in \C(G/B)$ is $\Lambda$-homogeneous of degree $\delta$, then $\mathsf{a}'(\nu(f)) = \delta$.
\end{enumerate}

The following lemma summarizes the important properties of $\mathbf{v}$ and the maps $\mathsf{a}$ and $\mathsf{c}$.

\begin{lemma}\label{prop; bold v satisfies properties}
Assume there exists a linear map $\mathsf{a}' \colon \Z^m \to \Lambda$ satisfying~\ref{BA1}. Then the valuation $\mathbf{v}$ and the map $\mathsf{c}$ satisfy \ref{v1}-\ref{v5}.  The valuation $\mathbf{v}$ and the map $\mathsf{a} = \mathsf{a}' - \mathsf{c}$ satisfy \ref{v4}.
\end{lemma}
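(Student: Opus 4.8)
The plan is to verify the five conditions one at a time; all but one are formal consequences of the explicit form of $\mathbf{v}$ in~\eqref{eq; general form of v} and of the specific total order on $L=\Z^m\times\Lambda$, so the only substantial step will be the weight bookkeeping for~\ref{v4} with $\mathsf{w}=\mathsf{a}$. First I would recall that the order on $\Z^m\times\Lambda$ of \cite[p.~2492]{kavehCrystalBasesNewton2015} compares the $\Lambda$-coordinate first, using a total order on $\Lambda$ refining the dominance partial order, and breaks ties lexicographically on $\Z^m$. Since $\mathsf{c}$ is the projection onto $\Lambda$, two elements with distinct $\Lambda$-coordinates are compared solely on those coordinates, so $>$ descends under $\mathsf{c}$ to the chosen total order on $\Lambda$: this is~\ref{v1}. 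For~\ref{v4} with $\mathsf{w}=\mathsf{c}$: a $\Lambda_\mathsf{c}$-homogeneous $f$ of degree $\lambda_0$ lies entirely in the summand $V(\lambda_0)^*$ of $\C[G\sslash N]=\bigoplus_\lambda V(\lambda)^*$, so the maximum appearing in~\eqref{eq; general form of v} is $\lambda_0$ and $\mathsf{c}(\mathbf{v}(f))=\lambda_0=|f|_{\Lambda_\mathsf{c}}$. For~\ref{v2}: by \cite[Proposition 6.1]{kavehCrystalBasesNewton2015}, $\mathsf{c}(\mathsf{S}_\mathbf{v})=\Lambda_+$, and taking the refinement of the dominance order so that the value of a regular dominant coweight $\rho^\vee$ is recorded first makes $0$ the minimal element of $\Lambda_+$, since $\langle\rho^\vee,\lambda\rangle>0$ for every nonzero dominant weight. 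For~\ref{v3}: an element of $\mathsf{c}\n(\lambda)\cap\mathsf{S}_\mathbf{v}$ has the form $(\nu\vert_{N_-}(f_\lambda),\lambda)$ with $f_\lambda$ in the image of the linear map $V(\lambda)^*\to\C[N_-]$; that image is finite-dimensional and $\nu\vert_{N_-}$ has one-dimensional leaves, so it takes only finitely many values there, and the fiber is finite.

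For~\ref{v5}, once~\ref{v1}--\ref{v3} are known the cones $\cone(\mathsf{S}_\mathbf{v})$ and $\cone(\mathsf{c}(\mathsf{S}_\mathbf{v}))=\ttt^*_+$ are strongly convex by Lemma~\ref{lemma; cones are strongly convex}. Given finite sets $\{a_i\},\{b_i\}\subset\mathsf{S}_\mathbf{v}$ with $\mathsf{c}(a_i)=\mathsf{c}(b_i)$ and $a_i>b_i$, the $\Lambda$-coordinates of $a_i$ and $b_i$ agree, so $a_i-b_i$ has zero $\Lambda$-coordinate and lexicographically positive $\Z^m$-coordinate. I would then take a linear functional on $\Z^m$ of the shape $(M^{m-1},\dots,M,1)$ with $M$ large, which separates the finitely many vectors $a_i-b_i$ from $0$, and add to it a large multiple of an integral functional that is positive on $\mathsf{S}_\mathbf{v}\setminus\{0\}$ (available from strong convexity of $\cone(\mathsf{S}_\mathbf{v})$); the result is the desired $e\colon L\to\Z$ with $e(\mathsf{S}_\mathbf{v})\subset\N$ and $e(a_i)>e(b_i)$. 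This is exactly the refinability argument of \cite{alexeevToricDegenerationsSpherical2005,calderoToricDegenerationsSchubert2002}.

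It remains to prove~\ref{v4} for $\mathsf{w}=\mathsf{a}=\mathsf{a}'-\mathsf{c}$, which is the heart of the lemma. Here $H_\mathsf{a}$ is the maximal torus $H\subset G$ acting on $G\sslash N$ by left translation (via $H\times 1\subset G\times H$), so an $H_\mathsf{a}$-homogeneous $f$ is a weight vector inside some $V(\lambda)^*$, with $H_\mathsf{a}$-weight $\mu=|f|_{\Lambda_\mathsf{a}}$ and $\Lambda_\mathsf{c}$-degree $\lambda$ (the latter recovered by~\ref{v4} for $\mathsf{c}$). Writing $f_\lambda\in\C[N_-]$, $f_\lambda(n)=f(n\cdot v(\lambda))$, I would compute, using $h\cdot v(\lambda)=h^\lambda v(\lambda)$, that $f_\lambda$ is homogeneous of degree $\lambda+\mu$ for the conjugation action of $H$ on $N_-$. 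Since the embedding $N_-\hookrightarrow G/B$ intertwines this conjugation action with the left-translation action of $H$ on the big cell $N_-B/B$, condition~\ref{BA1} applied to $\nu$ gives $\mathsf{a}'(\nu\vert_{N_-}(f_\lambda))=\lambda+\mu$, and therefore
\[
\mathsf{a}(\mathbf{v}(f))=\mathsf{a}'\bigl(\nu\vert_{N_-}(f_\lambda)\bigr)-\mathsf{c}(\mathbf{v}(f))=(\lambda+\mu)-\lambda=\mu=|f|_{\Lambda_\mathsf{a}}.
\]

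The step I expect to be the main obstacle is this last computation: one must keep straight the $H$-weight of the matrix coefficient $f\in V(\lambda)^*$, the highest-weight twist $h^\lambda$, and the identification of the conjugation action on $N_-$ with the left-translation action on the big cell of $G/B$, and a sign error anywhere there would break the identity. A secondary point is making the refinability argument in~\ref{v5} fully rigorous rather than just citing it, which requires checking that the functional $e$ can be chosen simultaneously nonnegative on $\mathsf{S}_\mathbf{v}$ and strictly separating on the finite set $\{a_i-b_i\}$.
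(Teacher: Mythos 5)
Your proposal follows essentially the same route as the paper: the heart of the matter, \ref{v4} for $\mathsf{w}=\mathsf{a}$, is exactly the paper's computation (the paper writes $j^*z=f\otimes\chi^\lambda$ with $f\in\C[N_-]$ of conjugation-weight $\gamma+\lambda$ and concludes $\mathsf{a}(\mathbf{v}(z))=\mathsf{a}'(\nu\vert_{N_-}(f))-\lambda=\gamma$, which is your $(\lambda+\mu)-\lambda=\mu$), and your treatments of \ref{v1}, \ref{v2} and \ref{v4} for $\mathsf{c}$ match the paper's one-line justifications. The one genuinely different ingredient is \ref{v3}: the paper identifies $\mathsf{c}\n(\lambda)\cap\mathsf{S}_\mathbf{v}$ with the integral points of the Newton--Okounkov body of the line bundle $\mathcal{L}_\lambda$ on $G/P_\sigma$, whereas you observe that the relevant first coordinates are values of $\nu\vert_{N_-}$ on the finite-dimensional image of $V(\lambda)^*\to\C[N_-]$, hence at most $\dim V(\lambda)$ in number since elements with distinct values are linearly independent. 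Your argument is more elementary and avoids importing compactness of the Okounkov body; both are correct.

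One caveat on \ref{v5}: the explicit functional you sketch does not work as written. If $s$ is the lexicographic separator on $\Z^m$ and $\ell$ is integral and positive on $\cone(\mathsf{S}_\mathbf{v})\setminus\{0\}$, then to get $e=s+N\ell$ nonnegative on all of $\mathsf{S}_\mathbf{v}$ you must take $N$ at least as large as $\sup\{|s|/\ell\}$ on the cone, but for such $N$ the term $N\ell(a_i-b_i)$ can be negative and dominate $s(a_i-b_i)>0$ (the differences $a_i-b_i$ need not lie in the cone, so $\ell(a_i-b_i)$ has no sign), destroying $e(a_i)>e(b_i)$; scaling $s$ instead breaks nonnegativity on $\mathsf{S}_\mathbf{v}$. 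The correct argument requires the Farkas-type analysis of \cite[Proposition 2.2]{alexeevToricDegenerationsSpherical2005}, which is all the paper itself invokes, so since you also cite that result (and flag this step as needing care) this does not undermine the proof --- but the two-functional shortcut should be dropped.
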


\begin{proof} The valuation $\mathbf{v}$ and the map $\mathsf{c}$ satisfy \ref{v1} by definition of the total orders on $\Z^m \times \Lambda$ and $\Lambda$.  They satisfy \ref{v2} since $\mathsf{c}(\mathsf{S}_\mathbf{v}) = \Lambda_+$ and $G$ is semisimple. Recall the line bundle on $ G/P_\sigma$ associated to $\lambda \in \sigma$. There is a Newton-Okounkov body associated to this line bundle, together with the valuation $\nu$ (see \cite[Definition 3.7]{haradaIntegrableSystemsToric2015}).
Condition \ref{v3} is satisfied since $\mathsf{c}\n(\lambda)$ can be identified with the integral points of this Newton-Okounkov body. Condition \ref{v5} is satisfied by the same argument as in \cite[Proposition 2.2]{alexeevToricDegenerationsSpherical2005}.

Finally,  we show that $\mathbf{v}$ and both $\mathsf{a}$ and $\mathsf{c}$ satisfy \ref{v4}. Suppose $z \in  \C[G\sslash N]$ is $\Lambda\times \Lambda$-homogeneous of degree $(\gamma ,\lambda)$. Then $j^*z = f \otimes \chi^\lambda$ for some $f \in \C[N_-]$ that is $\Lambda$-homogeneous of degree $\gamma +\lambda$ (with respect to the conjugation action of $H \times 1$ on $N_-$).
Thus, 
\[
    \mathsf{c}(\mathbf{v}(z)) = \mathsf{c}(\nu\vert_{N_-}(f),\lambda) = \lambda \quad \text{and} \quad 
    \mathsf{a}(\mathbf{v}(z)) = \mathsf{a'}(\nu\vert_{N_-}(f)) - \mathsf{c}(\nu\vert_{N_-}(f),\lambda) = \gamma. \qedhere
\]
\end{proof}

\subsection{Good valuations and toric degenerations of the affine closure of base affine space} 
\label{general G mod N}

\begin{data}\label{def; data for valuation on g mod n} Let $H \times H = H_\mathsf{a} \times  H_\mathsf{c}$. We consider $G\sslash N$ as an affine $H\times H$-variety with respect to the action of $H\times H \subset G\times H$. Fix  data $(G\sslash N, E, h_E, \mathbf{v},\mathsf{a},\mathsf{c})$ as follows.

\begin{enumerate}[label=(\roman*),start=1]
\item Let $(E,h_E)$ be defined as in Section \ref{G mod N}. The representation of $T\times T $ on $(E,h_E)$ is unitary and the embedding $G\sslash N \hookrightarrow E$ dual to \eqref{equation;Ninvariants} is $H\times H$-equivariant.
    \item Let $\mathbf{v}\colon \C[G\sslash N]\backslash\{0\} \to L$ be a valuation constructed as in \eqref{eq; general form of v}. Assume that it has one-dimensional leaves and $\mathsf{S}_\mathbf{v}$ is finitely generated and saturated.
    \item Assume there exists a map $\mathsf{a}'$ as in Lemma~\ref{prop; bold v satisfies properties}. Let $\mathsf{c} \colon  L \to \Lambda$ be defined as in Section \ref{section; valuations on g mod n}, and let $\mathsf{a} = \mathsf{a}'-\mathsf{c}$.
 \end{enumerate}
\end{data}

Note that the maps $\mathsf{a}$ and $\mathsf{c}$ from (iii) are both surjective by definition.  In the notation of Definition \ref{definition;good}, $\Lambda(G\sslash N)= \Lambda_+$, $\Gamma = \Gamma(G\sslash N) =\ttt_+^*$, and $\Sigma=\Sigma(G\sslash N)$ is the face poset of $\ttt_+^*$. Denote the quadratic moment maps of the Hamiltonian actions of $T_\mathsf{a} = T$ and $T_\mathsf{c} = T$ on $E$ by  $\psi_\mathsf{a}$ and $\psi_\mathsf{c}$ respectively.

\begin{lemma}\label{lemma; bold v defines good valuation}
Data \ref{def; data for valuation on g mod n} define a good valuation on $G\sslash N$.
\end{lemma}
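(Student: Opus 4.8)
The plan is to verify that the tuple $(G\sslash N, E, h_E, \mathbf{v}, \mathsf{a}, \mathsf{c})$ satisfies every clause of Definition~\ref{definition;good}. The data items (i)--(iii) are essentially given to us by Data~\ref{def; data for valuation on g mod n}: item (i) is exactly the decomposed affine K\"ahler structure set up in Section~\ref{G mod N}, with $(E,h_E)$ a unitary $T\times T$-module and $G\sslash N\hookrightarrow E$ a closed $H\times H$-equivariant embedding; item (ii) is the valuation $\mathbf{v}$ of \eqref{eq; general form of v}, which is assumed to have one-dimensional leaves with $\mathsf{S}_\mathbf{v}$ finitely generated and saturated; and item (iii) provides the surjective $\Z$-linear maps $\mathsf{a} = \mathsf{a}' - \mathsf{c}$ and $\mathsf{c}$. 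So the content is in checking the two compatibility conditions \ref{GV1new} and \ref{GV2new}.

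For \ref{GV1new}, the work is already done: Lemma~\ref{prop; bold v satisfies properties} states precisely that $\mathbf{v}$ and $\mathsf{c}$ satisfy \ref{v1}--\ref{v5}, and that $\mathbf{v}$ and $\mathsf{a} = \mathsf{a}' - \mathsf{c}$ satisfy \ref{v4}. (The hypothesis of that lemma, namely the existence of $\mathsf{a}'$ satisfying \ref{BA1}, is built into Data~\ref{def; data for valuation on g mod n}(iii).) So I would simply cite Lemma~\ref{prop; bold v satisfies properties}. The one small point to note is that \ref{v2} needs $\mathsf{c}(\mathsf{S}_\mathbf{v}) = \Lambda_+$ to have a minimal element, which holds because $G$ is semisimple so $\Lambda_+$ lies in a strongly convex cone — this is already inside the proof of Lemma~\ref{prop; bold v satisfies properties}.

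For \ref{GV2new} I need two things. First, \ref{D1'}: the partition of $G\sslash N$ by the subvarieties $(G\sslash N)^\sigma$, $\sigma\in\Sigma$, gives $G\sslash N$ the structure of a decomposed variety. This is exactly the content recalled in Section~\ref{G mod N}: the $X^\sigma$ from \eqref{eq; partition of H variety def} for the $1\times H$ action coincide with the $G\times 1$-orbit decomposition, each piece $(G\sslash N)^\sigma \cong G/[P_\sigma,P_\sigma]$ is a smooth $G$-homogeneous space, and the orbit stratification of an affine $G$-variety satisfies the frontier condition. So \ref{D1'} is cited from Section~\ref{G mod N}. Second, \ref{D2'}: the decomposition satisfies the Whitney condition~\ref{Whitney A} with respect to the embedding $G\sslash N\hookrightarrow E$ of \eqref{equation;Ninvariants}. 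Here I expect the main obstacle. The clean way is to observe that the pieces are orbits of the algebraic group $G\times 1$ acting on the affine variety $G\sslash N\subset E$ through a linear (rational) representation of $G$ on $E$; the closure of each orbit is a union of orbits, and along a smaller orbit $(G\sslash N)^\sigma$ in the closure of a larger one $(G\sslash N)^\tau$, the tangent spaces $T_{x}(G\sslash N)^\tau$ are the images of the Lie algebra $\g$ under the infinitesimal action, which varies algebraically in $x$; the limit of $T_{x_j}(G\sslash N)^\tau$ as $x_j\to x\in (G\sslash N)^\sigma$ therefore contains $T_x(G\sslash N)^\sigma$, since the infinitesimal-action map $\g\times (G\sslash N)\to T(G\sslash N)\subset E$ is continuous (indeed algebraic) and $T_x(G\sslash N)^\sigma = \g\cdot x$. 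In other words, a $G$-orbit stratification of a $G$-variety inside a rational $G$-representation is automatically Whitney (A); this is a standard fact (e.g.\ Whitney (A) holds for orbit stratifications of algebraic group actions, cf.\ \cite{pflaumAnalyticGeometricStudy2001}), and I would invoke it after making the identification of $T_x(G\sslash N)^\sigma$ with $\g\cdot x$ explicit. With \ref{D1'} and \ref{D2'} in hand, \ref{GV2new} holds, and combined with \ref{GV1new} this shows that Data~\ref{def; data for valuation on g mod n} defines a good valuation on $G\sslash N$.
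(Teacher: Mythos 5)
Your proposal is correct and follows essentially the same route as the paper: it cites Lemma~\ref{prop; bold v satisfies properties} for the valuation compatibility~\ref{GV1new} and identifies the decomposition with the $G$-orbit stratification of $G\sslash N$ to get~\ref{GV2new}. The only difference is that you spell out the Whitney~(A) verification via convergence of the infinitesimal-action images $\g\cdot x_j$ in the Grassmannian, whereas the paper simply asserts that an orbit stratification of an algebraic group action with finitely many orbits satisfies~\ref{GV2new}; your added detail is sound.
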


\begin{proof} We just need to check compatibility of the valuation, \ref{GV1new}, and compatibility with the decomposition, \ref{GV2new}.  Compatibility of the valuation was shown in Lemma \ref{prop; bold v satisfies properties}.  
As described in Section \ref{G mod N}, the partition of $G\sslash N$ with respect to the action of $H_\mathsf{c} = 1 \times H \subset G \times H$ coincides with the partition into orbits for the action of $G$. This finite partition endows $G\sslash N$ with the structure of a decomposed variety. In particular, this decomposition satisfies \ref{GV2new} since it is an orbit stratification for an algebraic group action with finitely many orbits.
\end{proof}

For the remainder of this section, assume we have fixed a choice of Data \ref{def; data for valuation on g mod n}. Let $\H = (\C^\times)^m \times H$ be the algebraic torus with (real) weight lattice $\Z^m\times \Lambda$. Let  $X_\mathsf{S}$ denote the toric $\H$-variety associated to the value semigroup $\mathsf{S}$.  The good valuation can be used to construct a toric degeneration $\pi\colon \X \to \C$ of $G\sslash N$ to $X_\mathsf{S}$ that embeds as a subvariety of $E\times \C$ (Proposition \ref{proposition; degen from good valuation}). This also fixes an embedding of $X_\mathsf{S}$ into $E$ such that the action of the compact subtorus $\T = (S^1)^m\times T$ on $X_\mathsf{S}$ is Hamiltonian, and generated by the restriction of the moment map $\Psi\colon E\times \C \to \Lie(\T)^*$ defined in \eqref{eqn; TT moment map def}.

Extend the action of $T\times T$ on $E$ trivially to $E\times \C$ and let  $(\psi_\mathsf{a},\psi_\mathsf{c})\colon E\times \C \to \ttt^* \times \ttt^*$ denote the moment map for this action. This map satisfies~\eqref{definition psi c}.
The action of $T\times T$ on $E\times \C$ preserves the subvariety $\X$ as well as the fibers of $\pi$. The restriction of this action to $\X_1 \cong G\sslash N$ coincides with the action of $T\times T$ on $G\sslash N$ as the maximal torus of $K\times T \subset G\times H$. The restriction of this action to $\X_0 \cong X_\mathsf{S}$ is given by the inclusion $T\times T \subset \T$ dual to $\mathsf{a} \times \mathsf{c}$.

For each $\sigma\in \Sigma$, recall that $\X_0^{\sigma} = \X_0 \cap E^{\sigma}$. Denote $U_0^\sigma = (\X_0^\sigma)^{sm}$. 

\begin{proposition}
\label{prop; DH measure}
For all $\sigma \in \Sigma$, the Duistermaat-Heckman measures of  $(U_0^\sigma,\omega_0^\sigma,\psi_\mathsf{c})$ and $(\X_1^\sigma,\omega_1^\sigma,\psi_\mathsf{c})$ are the same.
\end{proposition}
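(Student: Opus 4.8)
The plan is to reduce this to a statement about the toric variety $X_\mathsf{S}$ and its faces, using the fact (established in Propositions~\ref{prop;subfamilies} and~\ref{proposition; degen from good valuation}) that $\pi\colon\X^{\overline{\sigma}}\to\C$ is a \emph{flat} toric degeneration of $(G\sslash N)^{\overline{\sigma}}$ to $X_{\mathsf{S}^{\overline{\sigma}}}$, where $\mathsf{S}^{\overline{\sigma}}=\mathsf{c}^{-1}(\overline{\sigma})\cap\mathsf{S}$. Flatness of a family over $\C$ means all fibers have the same dimension and the same degree (Hilbert polynomial), so in particular the total symplectic volumes of $\X_1^{\overline{\sigma}}$ and $\X_0^{\overline{\sigma}}=X_{\mathsf{S}^{\overline{\sigma}}}$ agree; this is exactly the argument from the proof of \cite[Corollary~2.11]{haradaIntegrableSystemsToric2015}. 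The subtlety here is that we need equality of \emph{Duistermaat--Heckman measures} for the residual $T_\mathsf{c}=T$ action, not just total volumes, and we need it for the locally closed pieces $\X_1^\sigma$ and $U_0^\sigma=(\X_0^\sigma)^{sm}$, not the closed ones.

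First I would upgrade the volume statement to a DH-measure statement by a standard slicing/localization argument: the DH measure of $(\X_z^{\overline{\sigma}},\omega_z^{\overline{\sigma}},\psi_\mathsf{c})$ is the pushforward of the Liouville measure under $\psi_\mathsf{c}=\mathsf{c}\circ\Psi$, and since $\Psi$ is the \emph{quadratic} moment map for the full torus $\T$ on the $\H$-module $E$, the DH measure of $(\X_z^{\overline{\sigma}},\omega_z^{\overline{\sigma}},\Psi)$ is computed combinatorially from the lattice points / Hilbert function of $\C[\X_z^{\overline{\sigma}}]$ graded by $L$ (cf.~the methods behind \cite[Theorem~4.9]{sjamaarConvexityPropertiesMoment1998}). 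Because $\pi\colon\X^{\overline{\sigma}}\to\C$ is flat and $\Lambda_\mathsf{a}\times\Lambda_\mathsf{c}$-graded (indeed $L$-graded after a choice of $e$), the graded dimensions $\dim\C[\X_1^{\overline{\sigma}}]_\lambda=\dim\C[\X_0^{\overline{\sigma}}]_\lambda=\dim\C[\mathsf{S}^{\overline{\sigma}}]_\lambda$ agree in every degree; passing to the asymptotic ($n\to\infty$ along $n\lambda$) limit gives equality of the DH measures of $(\X_1^{\overline{\sigma}},\omega_1^{\overline{\sigma}},\psi_\mathsf{c})$ and $(\X_0^{\overline{\sigma}},\omega_0^{\overline{\sigma}},\psi_\mathsf{c})$. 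Here I would lean on Proposition~\ref{ReyerCor} to know $\Psi(\X_0^{\overline{\sigma}})=\cone(\mathsf{S}^{\overline{\sigma}})$ and that fibers are connected, so no normalization constants are lost.

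Next I would pass from closed pieces to the locally closed pieces by inclusion--exclusion over the face poset. Since $\X_z^{\overline{\sigma}}=\bigsqcup_{\tau\preceq\sigma}\X_z^\tau$ and the DH measure is additive over this (measurable) partition, equality of the DH measures of $\X_1^{\overline{\sigma}}$ and $\X_0^{\overline{\sigma}}$ for \emph{all} $\sigma$ forces, by downward induction on the face poset, equality of the DH measures of $\X_1^\sigma$ and $\X_0^\sigma$ for all $\sigma$. Finally, replacing $\X_0^\sigma$ by its smooth locus $U_0^\sigma$ changes nothing: by Corollary~\ref{cor; smooth locus} the singular locus $(\X_0^{\overline{\sigma}})^{sing}$ is $\Psi$-saturated and maps onto the (lower-dimensional) singular locus of $\cone(\mathsf{S}^{\overline{\sigma}})$, hence is a $\psi_\mathsf{c}$-measure-zero set, so the DH measures of $U_0^\sigma$ and $\X_0^\sigma$ coincide. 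This yields condition~\ref{assumeFrestate}.

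The main obstacle I anticipate is making the asymptotic/combinatorial comparison of DH measures fully rigorous in the non-smooth, non-projective affine setting: one must be careful that the quadratic moment map $\Psi$ on the (possibly singular, non-compact) varieties $\X_z^{\overline{\sigma}}$ genuinely has DH measure computed by the graded pieces of the coordinate ring, and that properness of $(\Psi,\pi)$ (Proposition~\ref{ReyerCor}(6)) is what allows this. If a direct combinatorial argument proves awkward, the fallback is to invoke the symplectic isotopy theorem (Theorem~\ref{GHmoser}) together with the observation that the gradient Hamiltonian flow for $t<1$ already gives a $\psi_\mathsf{c}$-fiber-preserving symplectomorphism $\X_1^\sigma\cong\X_{1-t}^\sigma$ (Proposition~\ref{proposition;flow exists}, Lemma~\ref{lemma; flow is symplectic}), so the DH measure is constant in $t$ on $(0,1)$, and then take the limit $t\to1^-$, using flatness to control the volume that could a priori escape to $\X_0\setminus U_0$.
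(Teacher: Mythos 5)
Your overall strategy---flatness gives equal graded dimensions of the coordinate rings, and graded dimensions should determine the Duistermaat--Heckman measures---is not the paper's, and as written it has a genuine gap at its central step. The assertion that the DH measure of $(\X_z^{\overline{\sigma}},\omega_z^{\overline{\sigma}},\psi_\mathsf{c})$ is ``computed combinatorially from the Hilbert function'' is exactly what needs proof: \cite[Theorem~4.9]{sjamaarConvexityPropertiesMoment1998} identifies the moment \emph{image} with the weight cone, not the pushforward \emph{measure}, and for a quadratic moment map on a non-compact, singular affine variety the relation between the Liouville volume of $\psi_\mathsf{c}\n(A)$ and sums of graded dimensions is not a citation-level fact (note also that the ``total symplectic volumes'' you invoke at the outset are infinite here, so the Hilbert-polynomial argument of \cite[Corollary~2.11]{haradaIntegrableSystemsToric2015} does not transplant). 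Your fallback is also insufficient: the gradient Hamiltonian flow for $t<1$ only shows the DH measure of $\X_t^\sigma$ is independent of $t\in(0,1)$, and the entire content of \ref{assumeF} is to control what happens as $t\to 1^-$; ``using flatness to control the volume that could a priori escape'' is precisely the unproved step. (A smaller point: DH measures are not additive over the stratification by faces---the lower strata carry their own, lower-dimensional Liouville measures and are null sets for the top stratum---so your inclusion--exclusion should be replaced by the observation that the complement of the open stratum, and likewise the singular locus, is a Liouville-null, $\Psi$-saturated set, as in Corollary~\ref{cor; smooth locus}.)

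The paper instead computes both measures as densities against Lebesgue measure $dm$ on $\sigma$ and compares the densities pointwise. On the $1$-fiber, $\psi_\mathsf{c}$ is proper with free $T$-action (in the open-stratum case), so the density at $\lambda$ is the volume of the symplectic reduction, which by \cite[Theorem~A.1]{hilgertContractionHamiltonianKSpaces2017} is the coadjoint orbit $(\mathcal{O}_\lambda,\omega_\lambda)$; hence $f(\lambda)=\Vol(\mathcal{O}_\lambda,\omega_\lambda)$. On the $0$-fiber, the classification of proper complexity-$0$ Hamiltonian torus manifolds \cite[Proposition~6.5]{karshonNonCompactSymplecticToric2015} shows the DH measure of $\Psi$ on $U_0^\sigma$ is Lebesgue measure on its image, so by Tonelli the density of $\psi_\mathsf{c}=\mathsf{c}\circ\Psi$ is $g(\lambda)=\Vol(\pr_{\R^m}(\mathsf{c}\n(\lambda)\cap\Psi(\X_0)))$, the volume of a Newton--Okounkov body slice. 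Both $f$ and $g$ are continuous and homogeneous of degree $m$, so it suffices to check $f=g$ at integral $\lambda$, where it is exactly \cite[Theorem~B]{haradaIntegrableSystemsToric2015}, i.e.\ $\Vol(\mathcal{O}_\lambda,\omega_\lambda)=\Vol(\triangle(R_\lambda,\nu))$. This is where the combinatorial input you were reaching for (equality of graded dimensions, $\dim V(\lambda)=\#(\mathsf{c}\n(\lambda)\cap\mathsf{S})$) actually enters---but only after the analytic identification of both densities, and only at integral points.
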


\begin{proof}
We prove the proposition for the special case where $\sigma$ is the interior of $\ttt_+^*$. The other cases are similar except that the kernel of the action of $T$ requires more careful attention.

Let $\nu_1$ denote the Duistermaat-Heckman measure of $(\X_1^\sigma,\omega_1^\sigma,\psi_\mathsf{c})$. The moment map $\psi_\mathsf{c}$ is proper as a map to $\sigma$. The action of $T$ on $\X_1^\sigma$ is free, so every value of $\psi$ is a regular value. Thus, $\nu_1=f(\lambda)dm$, where $f(\lambda)$ is the symplectic volume of the symplectic reduction of $(\X_1^\sigma,\omega_1^\sigma,\psi_\mathsf{c})$ at $\lambda$.  Because $(\X_1^\sigma,\omega_1^\sigma,\psi_\mathsf{c})$ is isomorphic as a Hamiltonian $T$-space to the open dense symplectic piece of the symplectic implosion (equipped with the torus action of $1\times T \subset K \times T$, cf.~Section \ref{section; symplectic implosion}) \cite[Theorem A.1]{hilgertContractionHamiltonianKSpaces2017}, and because reduction of the symplectic imploded space by the torus $1\times T$ produces a quotient space symplectomorphic to the reduction of $T^*K$ by $1 \times K$ (at the same value), it follows that this symplectic reduction is symplectomorphic to the coadjoint orbit $\mathcal{O}_\lambda$ equipped with its canonical symplectic form, $\omega_\lambda$. Thus, for all $\lambda\in \sigma$, $f(\lambda) =  \Vol(\mathcal{O}_\lambda,\omega_\lambda)$. This function is continuous and has the scaling property that for all $\alpha >0$ and $\lambda$, $ f(\alpha \lambda)= \alpha^m f(\lambda)$.

Let $\nu_0$ denote the Duistermaat-Heckman measure of $(U_0^\sigma,\omega_0^\sigma,\psi_\mathsf{c})$. It follows from Corollary \ref{cor; smooth locus} that $\Psi(U_0^\sigma)$ is the smooth locus of $\Psi(\X_0^\sigma)$. Both  $\Psi(U_0^\sigma)$ and $\Psi(\X_0^\sigma)$ are dense in the rational polyhedral cone $\Psi(\X_0)$. In particular, $\Psi(U_0^\sigma)$ is convex and the restricted map $\Psi\colon U_0^\sigma \to \Psi(U_0^\sigma)$ is proper. It follows by the classification of proper complexity 0 torus manifolds \cite[Proposition 6.5]{karshonNonCompactSymplecticToric2015} that the Duistermaat-Heckman measure of $\Psi$ equals $\chi dM$. Here, $\chi$ is the indicator function of $\Psi(U_0^\sigma)$ and $dM= dx \times dm$ is the Lebesgue measure on $\Lie(\T)^* = \R^m \times \ttt^*$ determined by $\Z^m \times \Lambda$. It follows by Tonelli's theorem that $\nu_0 = g(\lambda)dm$, where 
\[
    g(\lambda):= \Vol(\pr_{\R^m}(\mathsf{c}\n(\lambda) \cap \Psi(\X_0)),dx).
\]
This function is continuous has the scaling property that $g(\alpha\lambda) = \alpha^m g(\lambda)$, for all $\alpha>0$ and $\lambda\in \sigma$.

It remains to show that $f(\lambda) = g(\lambda)$ for all $\lambda \in \sigma$. Since $f$ and $g$ are both continuous and share the same scaling property, it suffices to show that $f(\lambda) = g(\lambda)$ for all $\lambda \in \sigma \cap \Lambda$.  If $\lambda$ is integral, then the coadjoint orbit  $(\mathcal{O}_\lambda,\omega_\lambda)$ is symplectomorphic to $G/B$, where $G/B$ is equipped with a Fubini-Study K\"ahler form by embedding into $\mathbb{P}(V(\lambda))$. It follows by \cite[Theorem B]{haradaIntegrableSystemsToric2015}\footnote{If $\mathbf{v}$ is constructed from a string valuation as in Example \ref{main example}, then this equality  can also be deduced from properties of canonical bases and string polytopes.} that
\[
    f(\lambda) = \Vol(\mathcal{O}_\lambda,\omega_\lambda)  = \Vol(\triangle(R_\lambda,\nu),dx),
\]
where $\triangle(R_\lambda, \nu)$ is the Newton-Okounkov body associated to $\lambda$ and $\nu$. This Newton-Okounkov body is nothing but $\pr_{\R^m}(\mathsf{c}\n(\lambda) \cap \Psi(\X_0))$, so $\Vol(\triangle(R_\lambda,\nu),dx) = g(\lambda)$. Thus $f(\lambda) = g(\lambda)$, as desired.
\end{proof}

Combining Lemma \ref{lemma; bold v defines good valuation},  Proposition \ref{prop; DH measure}, and Theorem \ref{maintheorem}, we have the following Theorem. 
\begin{theorem} \label{maintheoremGmodN} 
Let $G$ be a connected semisimple complex Lie group and fix a choice of Data \ref{def; data for valuation on g mod n}. Let $X_\mathsf{S}$ denote the toric variety constructed from these data and embedded into $(E,h_E)$ as above. 
Then, there exists a continuous, surjective, $T\times T$-equivariant, proper map $\phi \colon G\sslash N \to X_{\mathsf{S}}$ such that
	\begin{enumerate}[label=(\alph*)]
	    \item For each $\sigma\in \Sigma$, 
	    $D^\sigma := \phi\n(U_0^\sigma)$ is a dense open subset of $\X^\sigma_1$ and  
	    \[
	        \phi\colon (D^\sigma,(\psi_\mathsf{a},\psi_\mathsf{c})) \to (U_0^\sigma,(\psi_\mathsf{a},\psi_\mathsf{c}))
	    \]
	    is an isomorphism of Hamiltonian $T\times T$-manifolds.
	    
	    \item For each $\sigma\in \Sigma$, 
	    the restricted map $\Psi\circ \phi\colon D^\sigma \to \Lie(\T)^*$ generates a complexity 0 Hamiltonian $\T$-action on $D^\sigma$.
	\end{enumerate}
\end{theorem}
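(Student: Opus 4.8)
The plan is to obtain the statement as a direct application of Theorem~\ref{maintheorem}, the general result producing integrable systems from good valuations, once the hypotheses are seen to hold in the present situation. First I would record the setup: by Data~\ref{def; data for valuation on g mod n} we regard $G\sslash N$ as an affine $H_\mathsf{a}\times H_\mathsf{c} = H\times H$-variety with $T_\mathsf{a} = T_\mathsf{c} = T$, and the tuple $(G\sslash N, E, h_E, \mathbf{v}, \mathsf{a}, \mathsf{c})$ is a good valuation in the sense of Definition~\ref{definition;good} by Lemma~\ref{lemma; bold v defines good valuation} (whose proof in turn rests on Lemma~\ref{prop; bold v satisfies properties} for the valuation-theoretic axioms~\ref{v1}--\ref{v5} and~\ref{v4}, and on the orbit stratification of $G\sslash N$ for the decomposition axioms~\ref{GV2new}). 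Since $\mathsf{S} = \mathsf{S}_\mathbf{v}$ is finitely generated and saturated by Data~\ref{def; data for valuation on g mod n}(ii), the associated toric variety $X_\mathsf{S}$ is normal with big torus $\H = (\C^\times)^m\times H$ and maximal compact torus $\T = (S^1)^m\times T$, and $\Lambda(G\sslash N) = \Lambda_+$, $\Gamma = \ttt^*_+$, $\Sigma = \Sigma(G\sslash N)$ is the face poset of $\ttt^*_+$, exactly as in Section~\ref{G mod N}.

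Next I would verify the one hypothesis of Theorem~\ref{maintheorem} that is not automatic for good valuations, namely condition~\ref{assumeFrestate}: equality of the Duistermaat--Heckman measures of $((\X_0^\sigma)^{sm},\omega_0^\sigma,\psi_\mathsf{c})$ and $(\X_1^\sigma,\omega_1^\sigma,\psi_\mathsf{c})$ for all $\sigma\in\Sigma$. This is precisely Proposition~\ref{prop; DH measure}, with $U_0^\sigma = (\X_0^\sigma)^{sm}$. With the good-valuation structure and condition~\ref{assumeFrestate} in hand, Theorem~\ref{maintheorem} applies verbatim and yields a continuous, surjective, $T\times T$-equivariant, proper map $\phi\colon \X_1 \to \X_0$. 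Here the identification $\X_1 \cong G\sslash N$ is the embedding $G\sslash N\hookrightarrow E$ dual to~\eqref{equation;Ninvariants}, by item~\eqref{item;alg2} of Proposition~\ref{proposition; degen from good valuation}, and $\X_0 \cong X_\mathsf{S}$ is embedded into $(E,h_E)$ with the $\T$-action generated by the restriction of $\Psi\colon E\times\C\to\Lie(\T)^*$ of~\eqref{eqn; TT moment map def}. Conclusions (a) and (b) of the present theorem are then literally conclusions (a) and (b) of Theorem~\ref{maintheorem}, using that (as explained in Section~\ref{general G mod N}) the restriction to $\X_1$ of the ambient $T\times T$-action is the action of the maximal torus of $K\times T\subset G\times H$ on $G\sslash N$, with moment map $(\psi_\mathsf{a},\psi_\mathsf{c})$.

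At this level there is no real obstacle: the theorem is a corollary of the machinery already assembled. The substantive work lies upstream, in Proposition~\ref{prop; DH measure}, which compares the two volume functions $f(\lambda) = \Vol(\mathcal{O}_\lambda,\omega_\lambda)$ and $g(\lambda) = \Vol(\pr_{\R^m}(\mathsf{c}\n(\lambda)\cap\Psi(\X_0)),dx)$, matches them on the integral weights of $\sigma$ via the Newton--Okounkov body description together with the Harada--Kaveh volume formula \cite[Theorem B]{haradaIntegrableSystemsToric2015}, and then extends the equality to all of $\sigma$ using joint continuity and the common degree-$m$ homogeneity. If any point deserves care, it is checking that the hypotheses implicit in applying Proposition~\ref{prop; DH measure} hold uniformly across the faces $\sigma$ — properness of $\psi_\mathsf{c}$ on each piece, connectedness and convexity of the relevant moment images, and normality of $X_\mathsf{S}$ — but these are all supplied by Lemma~\ref{psic proper}, Proposition~\ref{ReyerCor}, and Corollary~\ref{cor; smooth locus}.
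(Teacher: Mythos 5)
Your proposal is correct and follows exactly the paper's route: the theorem is obtained by combining Lemma~\ref{lemma; bold v defines good valuation} (the data is a good valuation), Proposition~\ref{prop; DH measure} (verifying~\ref{assumeFrestate}), and Theorem~\ref{maintheorem}. Your additional remarks about where the substantive work lies (in the Duistermaat--Heckman comparison) and the supporting results for properness and convexity are accurate but not needed beyond the three-ingredient combination the paper itself uses.
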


Theorem \ref{maintheoremGmodN} can be applied in the following example.

\begin{example}[String valuations] \label{main example}
Let $\mathbf{i}=(i_1,\dots,i_m)$ be a reduced word for the longest element of the Weyl group of $G$, expressed in terms of simple reflections associated with simple real roots $\alpha_{i_1},\dots,\alpha_{i_m}$. Let $\nu_\mathbf{i}\colon \C(G/B)\backslash\{0\} \to \Z^m$ be the valuation constructed in \cite[Section 2.2]{kavehCrystalBasesNewton2015}. Specifically, $\nu_\mathbf{i}$ is defined from the highest-term valuation associated to the standard coordinate chart on the Bott-Samelson variety associated to $\ii$. 
Define
\[
\mathsf{a}' \colon \Z^m \to \Lambda, \quad \mathsf{a}'(v_1,\dots,v_m) = \sum_{j=1}^m v_j \alpha_{i_j}.
\] 
The map $\mathsf{a}'$ and the valuation $\nu_\ii$ satisfy \ref{BA1} by \cite[Proposition 3.8]{kavehCrystalBasesNewton2015}. Define $\mathbf{v}_\mathbf{i}\colon \C[G\sslash N]\backslash\{0\} \to \Z^m \times \Lambda$ using $\nu=\nu_\ii$ as in~\eqref{eq; general form of v}.
By \cite[Proposition 6.1]{kavehCrystalBasesNewton2015}, $\mathbf{v}_\mathbf{i}$ has 1-dimensional leaves and the value semigroup $\mathsf{S}_\ii = \mathsf{S}_{\mathbf{v}_\ii}$ coincides with the set of integral points of a rational convex polyhedral cone known as the \emph{extended string cone associated to $\ii$}. The extended string cones were introduced in \cite{berensteinTensorProductMultiplicities2001} and~\cite{littelmannConesCrystalsPatterns1998} as parametrizations of the dual canonical basis. Extended string cones are known to be rational \cite{littelmannConesCrystalsPatterns1998}. It follows that   $\mathsf{S}_\ii$ is finitely generated and saturated. Thus, the tuple $(G\sslash N, E, h_E, \mathbf{v}_\ii, \mathsf{a},\mathsf{c})$ is an example of Data~\ref{def; data for valuation on g mod n}, and
Theorem~\ref{maintheoremGmodN} holds with $X_\mathsf{S} = X_{\mathsf{S}_\ii}$. In particular, every extended string cone is the image of an integrable system on $G\sslash N$.
\end{example}

\begin{example}\label{ex; flp valuations} Let $\underline{\beta} = \{ \beta_1, \dots, \beta_m\}$ be an  enumeration of the positive roots of $G$. One can use $\underline{\beta}$ to define a valuation  $\nu_{\underline{\beta}}$ on $\C(G/B)$ as in \cite[Section 3]{fangSimplicesNewtonOkounkovBodies2018}. It is easy to check that 
\[
\mathsf{a}' \colon \Z^m \to \Lambda, \quad \mathsf{a}'(v_1,\dots,v_m) = \sum_{i=1}^m v_i \beta_{i},
\] 
together with $\nu_{\underline{\beta}}$ satisfy \ref{BA1}. Define $\mathbf{v}_{\underline{\beta}}\colon \C[G\sslash N]\backslash\{0\} \to \Z^m \times \Lambda$ using $\nu=\nu_{\underline{\beta}}$ as in~\eqref{eq; general form of v}. It follows for the same reasons as in the previous example that $\mathbf{v}_{\underline{\beta}}$ has 1-dimensional leaves. In order to apply Theorem~\ref{maintheoremGmodN}, it remains to show that the value semigroup $\mathsf{S}_{\mathbf{v}_{\underline{\beta}}}$ is finitely generated and saturated. We will return to this problem in Section~\ref{ss; gromov width}.
\end{example}

\begin{remark}\label{rem; other valuations} Other examples of valuations in the literature that are  related to $G\sslash N$ include those described by Fujita-Naito~(which recover the Nakashima-Zelevinsky polytopes) \cite{fujitaNewtonOkounkovConvex2017a}, those described by Kiritchenko and Feigin-Fourier-Littelmann~(which recover the Feigin-Fourier-Littelmann-Vinberg polytopes) \cite{kiritchenkoNewtonOkounkovPolytopes2019, feiginFavourableModulesFiltrations2017}, and the cluster-theoretic valuations recently studied by Fujita-Oya in~\cite{fujitaNewtonOkounkovPolytopesSchubert2020a}.  Although we do not study these valuations here, we expect many of them will produce good valuations on $G\sslash N$.  We also remark that the valuations in the previous two examples are special cases of valuations defined by birational sequences which were studied in \cite{fangEssentialBases2017}.  Such valuations all satisfy the equivariance property \ref{BA1}. The problem of  whether the value semigroups of valuations defined by birational sequences are finitely generated and saturated is discussed in more detail in \cite{fangEssentialBases2017}.
\end{remark}

\section{Collective integrable systems on Hamiltonian $K$-spaces}\label{section 5}

Let $K$ be a compact connected Lie group with Lie algebra $\mathfrak{k}$. Let $Y$ be a singular symplectic space;Recall from Section~\ref{section; ham grp actions} that this means $Y$ is a locally compact, paracompact, Hausdorff, second countable topological space equipped with a locally finite partition by locally closed subspaces, each equipped with the structure of a connected symplectic manifold. Assume $Y$ is equipped with a Hamiltonian action of $K$ with moment map $\mu$. This section constructs collective integrable systems on $Y$. The first step is to construct maps $F \colon \kk^* \to \Lie(\T)^*$ that are integrable systems with respect to the canonical Lie-Poisson structure on $\kk^*$. This is achieved in Section~\ref{section; generalized GZ systems} using the integrable systems on the affine closure of base affine space from Section~\ref{section; toric degenerations of G/N}.  The second step is to pull back these integrable systems to $Y$ via $\mu$ and show that the resulting collective integrable systems have the desired properties. This is done in Section~\ref{section; toric contractions}. As a by-product, we introduce a new singular symplectic space $X_Y$, called the \emph{toric contraction} of $Y$, and a map $\Phi\colon Y \to X_Y$, called the \emph{toric contraction map}. The collective integrable system $F\circ \mu$ and the toric contraction map form a nice commuting diagram, \eqref{eq;toric contraction commutative diagram}, which is useful for proving convexity properties of our collective integrable systems.

\subsection{Symplectic reduction, implosion, and contraction}\label{section; symplectic implosion} This section fixes notation for symplectic reduction, implosion, and contraction and recalls a few facts which are useful in the constructions that follow. See  \cite{sjamaarStratifiedSymplecticSpaces1991,guilleminSymplecticImplosion2002,hilgertContractionHamiltonianKSpaces2017} for details. In all that follows, let $K$ be a compact connected Lie group, let $T\subset K$ be a maximal torus, and fix a positive Weyl chamber $\ttt_+^*\subset \kk^*$. 

Let $X\sslash_\lambda K$ denote the \emph{symplectic reduction} of a Hamiltonian $K$-space $(X,\mu)$ at a point $\lambda \in \kk^*$. Since $K$ is compact, the action of $K$ on $X$ is proper and $X\sslash_\lambda K$ is again a singular symplectic space. If $X$ is equipped with a Hamiltonian $J$-action which commutes with the $K$-action, then $X\sslash_\lambda K$ has an induced Hamiltonian $J$-action.

Let $\mathcal{E} X$ denote the \emph{symplectic implosion} of a  Hamiltonian $K$-space $(X,\mu)$  \cite[Definition 2.1]{guilleminSymplecticImplosion2002}. Unless otherwise specified, implosion is defined with respect to the Weyl chamber $\ttt_+^*$ fixed above.  The symplectic implosion  is a singular Hamiltonian $T$-space with moment map induced by, and denoted, $\mu$. 

Let $T^*K$ denote the cotangent bundle of $K$ and let $\omega_{\rm{can}}$ denote its canonical symplectic structure. Let $(\mu_L,\mu_R)$ denote the moment map for the action of $K\times K$ on $T^*K$ by cotangent lift of  left and right multiplication. With respect to the left-invariant trivialization $T^*K\cong K\times \kk^*$, $\mu_L(k,\xi ) = \Ad_k^* \xi$ and $\mu_R(k,\xi ) = -\xi$.

The \emph{universal symplectic cross-section}, denoted $\mathcal{E}T^*K$, is the symplectic implosion of  $(T^*K ,\mu_R)$ with respect to  $-\ttt_+^*$.  It is a singular Hamiltonian $K\times T$-space with moment map induced by, and denoted, $(\mu_L,-\mu_R = \mathcal{S}\circ\mu_L)$. Its symplectic pieces are $(K/[K_\sigma,K_\sigma]) \times \sigma$
	where $\sigma$ is an open face of $\ttt_+^*$, where $K_\sigma = K \cap P_\sigma$. Let $\omega^\sigma$ denote the symplectic 2-form on the symplectic piece associated to $\sigma$. The moment map $\mu_L$ induces a $K$-equivariant homeomorphism $\mathcal{E} T^* K/(1\times T) \cong \kk^*$. 

The \emph{symplectic contraction} of a Hamiltonian $K$-space $(X,\mu)$ is the singular symplectic space 
\begin{equation}
	X^{sc} = (\mathcal{E}X\times \mathcal{E}T^*K)\sslash_0 T,
\end{equation}
where symplectic reduction is taken with respect to the Hamiltonian $T$-action generated by  $\mu-\mathcal{S}\circ\mu_L$. It is a Hamiltonian $K\times T$-space with moment map induced by, and denoted, $(\mu_L,\mathcal{S}\circ\mu_L)$.  Let $\Phi^{sc}\colon X \to X^{sc}$ denote the \emph{symplectic contraction map}. In terms of the definition of $X^{sc}$ above,
\[
    \Phi^{sc}(x) = [[h\cdot x],[h\n,\mu(h\cdot x)]]
\]
where: $h\in K$ such that $\mu(h\cdot x) \in \ttt_+^*$, where $[h\cdot x] $ and $[h\n,\mu(h\cdot x)]$ denote equivalence classes in the respective symplectic imploded spaces, and where the outer square brackets denote an equivalence class with respect to the action of $T$. The map $\Phi^{sc}$ is continuous, proper, surjective, $K$-equivariant, fiber-connected, and has the property that $\mu_L \circ \Phi^{sc} = \mu$. See \cite[Section 4]{hilgertContractionHamiltonianKSpaces2017} for more details.

\subsection{Completely integrable systems on $\kk^*$}\label{section; generalized GZ systems} This section constructs integrable systems on $\kk^*$ (equipped with the canonical Lie-Poisson structure).  Since there are various definitions of integrable systems on Poisson manifolds in the literature, we briefly define what we mean by integrable systems on $\kk^*$.

\begin{definition}\label{def; completely integrable system on constant rank Poisson manifold}
    A collection of real valued continuous functions $f_1,\dots, f_n$ on a smooth connected Poisson manifold $M$ of constant rank $2r$ is a \emph{completely integrable system} if: 
    \begin{enumerate}[label=(\roman*)]
        \item There exists an open dense subset $D \subset M$ such that the restricted functions $f_i\vert_{D}$ are all smooth and the rank of the Jacobian of $F = (f_1,\dots,f_n)$ equals $\dim(M)- r$ on a dense subset of $D$.
        \item The restricted functions $f_i\vert_{D}$ pairwise Poisson commute, i.e.~$\{f_i\vert_{D},f_j\vert_{D}\} = 0$ for all $1\leq i,j\leq n$.
    \end{enumerate}
\end{definition}

A compact Lie group $K$ can be expressed in the form $ K= (K_{ss} \times Z)/D$, where $K_{ss}$ is a semisimple compact connected Lie group,  $Z$ is a compact connected torus, and $D \subset K_{ss} \times Z$ is a finite central subgroup such that $(1\times Z)\cap D$ is the trivial group. Let $T_{ss}$ be a maximal torus of $K_{ss}$ so that $T = (T_{ss}\times Z)/D$. Note that $\ttt_+^* = (\ttt_{ss})_+^* \times \mathfrak{z}^*$. 

Let $G$ be the complexification of $K_{ss}$. Fix $H$ and $N$ so that $H\cap K_{ss} = T_{ss}$, and so that the dominant real weights associated with $N$ generate the cone $(\ttt_{ss})_+^*$. Equip $G\sslash N$ with the structure of a Hamiltonian $K_{ss}\times T_{ss}$-space by embedding it into a Hermitian inner product space $(E,h_E)$, cf.~\eqref{eq; definition of E given Pi} and \eqref{equation;Ninvariants}. We have the following chain of isomorphisms of Hamiltonian $K\times T$ spaces:
\[
    \mathcal{E}T^*K \cong  \mathcal{E}(T^*K_{ss} \times T^*Z)/D \cong (\mathcal{E}T^*K_{ss} \times T^*Z)/D \cong (G\sslash N \times T^*Z)/D.
\]
The first two isomorphisms are canonical. The third follows by \cite[Theorem A.1]{hilgertContractionHamiltonianKSpaces2017}. 

Fix a good valuation $(G\sslash N, E, h_E, \mathbf{v}, \mathsf{a},\mathsf{c})$ as in Data \ref{def; data for valuation on g mod n}. Let $X_\mathsf{S}$ denote the affine toric variety associated to the value semigroup of $\mathbf{v}$ and construct a toric degeneration of $G\sslash N$ to $X_\mathsf{S}$ as in Section \ref{main construction section}. The maximal compact subgroup of the algebraic torus of $X_\mathsf{S}$ is $\T_{ss} = (S^1)^m \times T_{ss}$.  Equip $X_\mathsf{S}$ with the structure of a Hamiltonian $\T_{ss}$-space as follows.  Recall that $\Sigma$ denotes the poset of open faces of $(\ttt_{ss})_+^*$. Each closed face $\overline{\sigma} \subset (\ttt_{ss})_+^*$ corresponds to a torus orbit-closure $X_{\mathsf{S}}^{\overline{\sigma}} \subset X_\mathsf{S}$.  Following Whitney~\cite{whitneyElementaryStructureReal1957}, inductively define $(X_\mathsf{S}^\sigma)_0 = X_\mathsf{S}^\sigma$ and $(X_\mathsf{S}^\sigma)_{i+1} = (X_\mathsf{S}^\sigma)_{i} \setminus (X_\mathsf{S}^\sigma)_{i}^{sm}$.  The sets $(X_\mathsf{S}^\sigma)_{i}^{sm}$ are disjoint $\T_{ss}$-invariant smooth manifolds. Decompose further into connected components if necessary.  The vector space $E$ is equipped with a representation of $\T_{ss}$ and $X_\mathsf{S}$ is identified $\T_{ss}$-equivariantly with a subvariety of $E$. The symplectic structure on $E$ endows each piece $(X_\mathsf{S}^\sigma)_{i}^{sm}$ with a symplectic structure.  Thus $X_{\mathsf{S}}$ is a singular symplectic space. The action of $\T_{ss}$ on $E$ is Hamiltonian, generated by a moment map $\Psi_{ss} \colon E \to \Lie(\T_{ss})^*$ as in \eqref{eqn; TT moment map def}. Since each $(X_\mathsf{S}^\sigma)_{i}^{sm}$ is $\T_{ss}$-invariant, the restriction of $\Psi_{ss}\vert_{(X_\mathsf{S}^\sigma)_{i}^{sm}}$ is a moment map for the action of $\T_{ss}$. 

Let $X$ denote the quotient space $(X_{\mathsf{S}}\times T^*Z)/D$,
where $D$ acts on the product as a subgroup of $T_{ss} \times Z$. Let $\T = (S^1)^m \times T$ and let $\Psi\colon X \to \Lie(\T)^*$ denote the map induced by $\Psi_{ss} \times \mu_{Z,L} $. Note that $(X,\Psi)$ is a Hamiltonian $\T$-space. Let $\phi_{ss} \colon G\sslash N \to X_\mathsf{S}$ denote the map constructed as in Theorem \ref{maintheoremGmodN}  by toric degeneration and let $\phi\colon \mathcal{E}T^*K \to X$ be the map induced by $\phi_{ss} \times \id_{T^*Z}$ and the chain of isomorphisms above.  Let $F\colon \kk^* \to \Lie(\T)^*$ denote the continuous map induced by $\Psi \circ \phi$. These constructions are summarized in the following commuting diagram (where dotted arrows indicate the map is induced by a quotient).
\begin{equation}\label{eq; Psi defined for arbitrary K}
    \begin{tikzcd}
        \mathcal{E}T^*K_{ss} \times T^*Z \ar[r, "\phi_{ss} \times \id"] \ar[d,"/D"] & X_{\mathsf{S}}\times T^*Z \ar[r, "\Psi_{ss} \times \mu_{Z,L}"] \ar[d,"/D"] & \Lie(\T_{ss})^* \times \Lie(Z)^* \ar[d,equals] \\
        \mathcal{E}T^*K \ar[r,dotted,"\phi"] \ar[dr,"\mu_L"] &X \ar[r, dotted,"\Psi"]  & \Lie(\T)^*\\
        &\kk^* \ar[ru,"F",dotted]&
    \end{tikzcd}
\end{equation}

\begin{theorem}\label{thm; liek completely integrable system}
    Let $K$ be a compact connected Lie group and let $F \colon \mathfrak{k}^* \to \Lie(\mathbb{T})^*$ be constructed as above. Then, 1) $\Psi \circ \phi$ is a completely integrable system on $\mathcal{E}T^*K$ in the sense of Definition~\ref{def; integrable system on singular symplectic space}, and 2) $F$ is a completely integrable system on $\mathfrak{k}^*$ in the sense of Definition \ref{def; completely integrable system on constant rank Poisson manifold} with respect to the standard Lie-Poisson structure.
\end{theorem}

\begin{proof}
    The first conclusion follows immediately from the definition since the quotient $/D$ is a smooth local diffeomorphism onto each symplectic piece of $\mathcal{E}T^*K$. The second conclusion follows since the restriction of $\mu_L$ to each symplectic piece of $\mathcal{E}T^*K$ is a constant-rank smooth submersion onto each orbit-type stratum of $\mathfrak{k}^*$ that is Poisson with respect to the Lie-Poisson structure.
\end{proof}

\begin{example}[String valuations] \label{main example 1.2} Combining Example \ref{main example} and Theorem \ref{thm; liek completely integrable system}, we have realized every extended string cone associated to $G = K^\C$ as the image of a completely integrable system on $\mathfrak{k}^*$.
\end{example}

The following is immediate from the definitions, Theorem \ref{maintheoremGmodN}, and Proposition \ref{ReyerCor}. 
    
\begin{proposition}\label{prop; facts about maps}
    Let $X$, $\phi$, $F$, and $\Psi$ be defined as above. Then:
    \begin{enumerate}[label=(\roman*)]
        \item The map $\phi$ is continuous, proper, surjective and $T\times T$-equivariant.
        \item The map $F$ has the property that $\mathsf{a} \circ F = \pr_{\ttt^*}$ and $\mathsf{c} \circ F = \mathcal{S}$.
        \item The map $\Psi$ is proper and its fibers are connected.
        \item $F(\kk^*) = \Psi(X) = \cone(\mathsf{S})\times \Lie(Z)^*$.
        
    \end{enumerate}
\end{proposition}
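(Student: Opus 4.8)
The plan is to verify the four assertions in turn, each reducing to results already in hand: Theorem~\ref{maintheoremGmodN} governs the semisimple factor $\phi_{ss},\Psi_{ss}$; Proposition~\ref{ReyerCor} governs the toric fiber $X_\mathsf{S}=\X_0$; Equation~\eqref{definition psi c} relates $\Psi_{ss}$ to the moment maps $\psi_\mathsf{a},\psi_\mathsf{c}$; and the three isomorphisms in the chain $\mathcal{E}T^*K\cong(G\sslash N\times T^*Z)/D$ carry the $K\times T$-equivariant structure. The observation that makes everything run is that $D$ acts \emph{trivially} on $\Lie(\T)^*$: since $\pr_{K_{ss}}(D)$ is central in $K_{ss}$, hence lies in $T_\mathsf{a}$, the group $D$ acts on $X_\mathsf{S}$ through a subtorus of $\T_{ss}$ and on $T^*Z$ through $Z$, and a torus moment map is invariant under that action, so $\Psi_{ss}\times\mu_{Z,L}$ is $D$-invariant and descends to $\Psi$ on $X=(X_\mathsf{S}\times T^*Z)/D$ with $\Psi\circ q=\Psi_{ss}\times\mu_{Z,L}$ for the quotient map $q$. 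Likewise $F$ is characterized by $F\circ\mu_L=\Psi\circ\phi$, where $\mu_L\colon\mathcal{E}T^*K\to\kk^*$ is the surjection from the remark in Section~\ref{section; symplectic implosion}.

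For (i), I would note that $\phi_{ss}$ has all four properties by Theorem~\ref{maintheoremGmodN}, the identity on $T^*Z$ trivially does, products preserve all four, and the canonical isomorphisms in the chain are $K\times T$-equivariant homeomorphisms; finally, each of ``continuous, proper, surjective, equivariant'' descends to the quotient of a $D$-equivariant map by the finite group $D$ (for properness, use that the quotient maps $X_\mathsf{S}\times T^*Z\to X$ and on the target are finite, closed, hence proper, so a $\phi$-preimage of a compact set is a closed subset of a compact set). For (ii), one extends $\mathsf{a}$ and $\mathsf{c}$ by the identity on the $\mathfrak{z}^*$-summand of $\Lie(\T)^*=\R^m\times\ttt_{ss}^*\times\mathfrak{z}^*$; on the $Z$-factor $F$ restricts to $\mu_{Z,L}$ and the identities $\mathcal{S}|_{\mathfrak{z}^*}=\id$, $\pr_{\ttt^*}|_{\mathfrak{z}^*}=\id$ make the claim automatic there, while on the semisimple factor $F_{ss}\circ\mu_L=\Psi_{ss}\circ\phi_{ss}$ together with \eqref{definition psi c} and the fact that under $G\sslash N\cong\mathcal{E}T^*K_{ss}$ the map $\psi_\mathsf{a}$ equals $\pr_{\ttt_{ss}^*}\circ\mu_L$ and $\psi_\mathsf{c}$ equals $\mathcal{S}\circ\mu_L$ yields $\mathsf{a}\circ F_{ss}=\pr_{\ttt_{ss}^*}$ and $\mathsf{c}\circ F_{ss}=\mathcal{S}$; surjectivity of $\mu_L$ upgrades these to equalities of maps, and assembling the two factors gives the statement on $\kk^*=\kk_{ss}^*\oplus\mathfrak{z}^*$.

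For (iii), since $\Psi\circ q=\Psi_{ss}\times\mu_{Z,L}$ with $q$ continuous and surjective, we have $\Psi\n(C)=q\big((\Psi_{ss}\times\mu_{Z,L})\n(C)\big)$; as $\Psi_{ss}|_{X_\mathsf{S}}$ is proper with connected fibers by Proposition~\ref{ReyerCor}(6) and (5), and $\mu_{Z,L}\colon T^*Z\to\mathfrak{z}^*$ is proper with fibers equal to $Z$, the preimage is compact when $C$ is compact and connected when $C$ is a point, hence so is its continuous image, proving $\Psi$ proper with connected fibers. For (iv), $\Psi(X)=(\Psi_{ss}\times\mu_{Z,L})(X_\mathsf{S}\times T^*Z)=\Psi_{ss}(X_\mathsf{S})\times\mathfrak{z}^*=\cone(\mathsf{S})\times\Lie(Z)^*$ by Proposition~\ref{ReyerCor}(2), and $F(\kk^*)=F(\mu_L(\mathcal{E}T^*K))=\Psi(\phi(\mathcal{E}T^*K))=\Psi(X)$ using surjectivity of $\mu_L$ and of $\phi$ (from (i)).

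The only genuine obstacle is bookkeeping: there are two copies of $T_{ss}$ in play — the maximal torus of $K_{ss}\subset G$ and the compact torus $T\cap H$ — and one must keep straight which is $T_\mathsf{a}$ and which is $T_\mathsf{c}$, and correctly match $\psi_\mathsf{a}$ with the linear moment map $\pr_{\ttt_{ss}^*}\circ\mu_L$ and $\psi_\mathsf{c}$ with the swept moment map $\mathcal{S}\circ\mu_L$ under the identification $G\sslash N\cong\mathcal{E}T^*K_{ss}$, as well as to record that $\mathsf{a},\mathsf{c}$ are being tacitly extended by the identity on the central $\mathfrak{z}^*$-summand. Once that dictionary is fixed, every step is a direct citation of Theorem~\ref{maintheoremGmodN}, Proposition~\ref{ReyerCor}, or \eqref{definition psi c}, together with elementary point-set topology.
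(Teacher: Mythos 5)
Your proposal is correct and follows exactly the route the paper intends: the paper simply asserts that the proposition is immediate from the definitions, Theorem~\ref{maintheoremGmodN}, and Proposition~\ref{ReyerCor}, and your argument is a careful fleshing-out of precisely those citations (plus the standard descent of continuity, properness, surjectivity, equivariance, and fiber-connectedness through the finite quotient by $D$). The bookkeeping you flag — matching $\psi_\mathsf{a}$ with $\pr_{\ttt^*}\circ\mu_L$ and $\psi_\mathsf{c}$ with $\mathcal{S}\circ\mu_L$, and extending $\mathsf{a},\mathsf{c}$ by the identity on $\mathfrak{z}^*$ — is handled consistently with the paper's conventions.
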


We end this section with some useful topological details about the maps $\Psi$ and $\phi$. Recall from Section \ref{subsection; hamiltonian torus action} that $\Psi_{ss}(X_{\mathsf{S}}^{\overline{\sigma}}) = \cone(\mathsf{S}^{\overline{\sigma}})$. The image  $\Psi_{ss}(X_{\mathsf{S}}^{\sigma})$ is the complement in $\cone(\mathsf{S}^{\overline{\sigma}})$ of the union of cones $\cone(\mathsf{S}^{\overline{\tau}})$ such that $\tau \prec \sigma$. As such, $\Psi_{ss}(X_{\mathsf{S}}^{\sigma})$
is a convex, locally rational polyhedral set. We record several useful facts (cf.~Corollary \ref{cor; smooth locus}). 

\begin{proposition}\label{prop; facts about maps II} Let $X$, $\Psi$, $\phi$, and $F$ be defined as above. Denote $U^\sigma = ((X_\mathsf{S}^\sigma)^{sm} \times T^*Z)/D$. Then:
\begin{enumerate}[label=(\roman*)]
    \item The image $\Psi(U^\sigma)$ is the smooth locus of  $\Psi(X_{\mathsf{S}}^{\sigma}) \times \Lie(Z)^*$. In particular, it is convex.
    \item The domain and codomain restricted map $\Psi\colon U^\sigma \to \Psi(U^\sigma)$ is proper.
    \item The pre-image $\phi\n(U^\sigma)$ is a connected open dense $T\times T$-invariant subset of $K/[K_\sigma,K_\sigma] \times \sigma$ and the restricted map  
        \[
            \phi\colon (\phi\n(U^\sigma),(\pr_{\ttt^*}\circ \mu_L,\mathcal{S}\circ \mu_L)) \to (U^\sigma,(\mathsf{s}\circ \Psi, \mathsf{c}\circ \Psi))
        \]
        is an isomorphism of Hamiltonian $T\times T$-manifolds.
\end{enumerate} 
\end{proposition}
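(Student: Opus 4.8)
The plan is to reduce all three assertions to the corresponding statements for the toric variety $X_\mathsf{S}$ — Corollary~\ref{cor; smooth locus} and Theorem~\ref{maintheoremGmodN} applied to $\phi_{ss}$ — together with elementary facts about taking a product with $T^*Z$ and passing to the quotient by the finite central group $D$. The one structural observation to isolate at the outset is that $D\subset T_{ss}\times Z$ acts on $X_\mathsf{S}\times T^*Z$ through a subtorus, so it fixes the image of the moment map $\Psi_{ss}\times\mu_{Z,L}$ pointwise; hence $\Psi$ on $X=(X_\mathsf{S}\times T^*Z)/D$ is the map induced by $\Psi_{ss}\times\mu_{Z,L}$, and, using $\mu_{Z,L}(T^*Z)=\Lie(Z)^*$,
\[
\Psi(U^\sigma)=\Psi_{ss}((X_\mathsf{S}^\sigma)^{sm})\times\Lie(Z)^*,\qquad \Psi((X_\mathsf{S}^\sigma\times T^*Z)/D)=\Psi_{ss}(X_\mathsf{S}^\sigma)\times\Lie(Z)^*.
\]

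For part (i) I would feed the first identity into Corollary~\ref{cor; smooth locus}(1): $\Psi_{ss}((X_\mathsf{S}^\sigma)^{sm})$ is the convex smooth locus of the locally rational polyhedral set $\Psi_{ss}(X_\mathsf{S}^\sigma)$. Since $\Lie(Z)^*$ is a vector space, each of its points is a smooth point, so directly from the definition in Section~\ref{section; convex geometry} a point $(p,v)$ of $\Psi_{ss}(X_\mathsf{S}^\sigma)\times\Lie(Z)^*$ is smooth if and only if $p$ is; this is exactly the claim, and convexity is preserved by products. For part (ii), $\mu_{Z,L}\colon T^*Z\cong Z\times\Lie(Z)^*\to\Lie(Z)^*$ is proper because its fibers are copies of the compact torus $Z$, and $\Psi_{ss}\colon (X_\mathsf{S}^\sigma)^{sm}\to\Psi_{ss}((X_\mathsf{S}^\sigma)^{sm})$ is proper by Corollary~\ref{cor; smooth locus}(2); a product of proper maps is proper, so $\Psi_{ss}\times\mu_{Z,L}$ is proper onto $\Psi(U^\sigma)$. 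Writing $q$ for the continuous surjective quotient map $(X_\mathsf{S}^\sigma)^{sm}\times T^*Z\to U^\sigma$, one has $\Psi\circ q=\Psi_{ss}\times\mu_{Z,L}$, so for compact $C\subset\Psi(U^\sigma)$ the set $\Psi^{-1}(C)=q((\Psi\circ q)^{-1}(C))$ is the continuous image of a compact set, hence compact; thus $\Psi\colon U^\sigma\to\Psi(U^\sigma)$ is proper.

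For part (iii) I would use the chain of isomorphisms $\mathcal{E}T^*K\cong(G\sslash N\times T^*Z)/D$ and $X\cong(X_\mathsf{S}\times T^*Z)/D$ under which $\phi$ is induced by $\phi_{ss}\times\id_{T^*Z}$. As $(X_\mathsf{S}^\sigma)^{sm}$ is $D$-invariant, $\phi^{-1}(U^\sigma)=(\phi_{ss}^{-1}((X_\mathsf{S}^\sigma)^{sm})\times T^*Z)/D$, and $\phi_{ss}^{-1}((X_\mathsf{S}^\sigma)^{sm})$ is exactly the set $D^\sigma$ of Theorem~\ref{maintheoremGmodN}(a), which is a dense open subset of the irreducible variety $(G\sslash N)^\sigma\cong G/[P_\sigma,P_\sigma]$ — hence connected — and $T_{ss}\times T_{ss}$-invariant. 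Taking the product with $T^*Z$ and the quotient by the finite group $D$ preserves connectedness, openness and density (the finite-group quotient map is open, continuous and surjective), and $T\times T$-invariance; moreover $(G/[P_\sigma,P_\sigma]\times T^*Z)/D$ is precisely the $\sigma$-symplectic piece $K/[K_\sigma,K_\sigma]\times\sigma$ of $\mathcal{E}T^*K$ recalled in Section~\ref{section; symplectic implosion}. For the final clause, Theorem~\ref{maintheoremGmodN}(a) says $\phi_{ss}$ restricts to an isomorphism of Hamiltonian $T\times T$-manifolds $D^\sigma\to(X_\mathsf{S}^\sigma)^{sm}$; multiplying by $\id_{T^*Z}$ and descending through $D$ yields an isomorphism of Hamiltonian $T\times T$-manifolds $\phi\colon\phi^{-1}(U^\sigma)\to U^\sigma$, and identifying its moment-map components with $(\pr_{\ttt^*}\circ\mu_L,\mathcal{S}\circ\mu_L)$ on the source and $(\mathsf{a}\circ\Psi,\mathsf{c}\circ\Psi)$ on the target is then \eqref{definition psi c} together with Proposition~\ref{prop; facts about maps}(ii), which gives $\mathsf{a}\circ F=\pr_{\ttt^*}$ and $\mathsf{c}\circ F=\mathcal{S}$.

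I expect the \emph{main obstacle} to be organizational rather than mathematical: carrying the identification of $\phi^{-1}(U^\sigma)$ with a subset of the correct symplectic piece of $\mathcal{E}T^*K$ carefully through the three isomorphisms, and verifying that the Hamiltonian $T\times T$-space structures descend compatibly through the $D$-quotient. Throughout one only uses that $D$ is finite, acts by automorphisms, and acts trivially on the relevant moment-map images, so freeness of the $D$-action never enters.
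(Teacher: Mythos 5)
Your proposal is correct and follows the route the paper intends: the paper gives no written proof of this proposition, only the pointer ``(cf.~Corollary \ref{cor; smooth locus})'', and your argument supplies exactly the missing reduction to Corollary~\ref{cor; smooth locus} and Theorem~\ref{maintheoremGmodN} together with the bookkeeping for the $T^*Z$ factor and the finite quotient by $D$. One justification needs repair: in part (iii) you deduce connectedness of $D^\sigma=\phi_{ss}\n((X_\mathsf{S}^\sigma)^{sm})$ from its being ``a dense open subset of the irreducible variety $(G\sslash N)^\sigma$'', but $D^\sigma$ is only open in the analytic topology (it is the preimage of the smooth locus under the limiting gradient Hamiltonian flow, not a Zariski-open set), and an analytically open dense subset of an irreducible variety need not be connected. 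The conclusion is nonetheless true, and the fix is already in your hands: $\phi_{ss}$ restricts to a homeomorphism $D^\sigma\to (X_\mathsf{S}^\sigma)^{sm}$, and the latter is the smooth locus of the irreducible variety $X_\mathsf{S}^{\overline{\sigma}}$ intersected with the Zariski-open set $X_\mathsf{S}^\sigma$, hence the complement of a proper Zariski-closed subset of an irreducible variety and therefore connected; transporting this back through $\phi_{ss}$ gives connectedness of $D^\sigma$, and the product with $T^*Z$ and the quotient by $D$ preserve it as you say.
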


\subsection{Toric contraction and collective integrable systems}\label{section; toric contractions} This section applies the results of the previous section to construct collective integrable systems on arbitrary Hamiltonian spaces whose underlying group is compact. Let $\T$, $X$, $\Psi$, $\phi$, and $F$ be defined as in the previous section.

\begin{definition}\label{def; toric contraction space}
The \emph{toric contraction} of a Hamiltonian $K$-space $(Y,\mu)$ with respect to $(X,\Psi)$ is the space $X_Y := (\mathcal{E}Y \times X)\sslash_0 T$,
where the symplectic  reduction is taken with respect to the action of $T$ generated by the moment map $\mu-\mathsf{c} \circ \Psi$. 
\end{definition}

Let $\Psi_Y \colon X_Y \to \Lie(\T)^*$ denote the map induced by $\Psi$. Together with the induced $\T$-action, this endows $X_Y$ with the structure of a Hamiltonian $\T$-space.\footnote{Despite this terminology, the action of $\T$ on $X_Y$ is not necessarily a complexity 0 action. See  Proposition \ref{proposition; properties of toric contraction IV} below.}

By Propositions \ref{prop; facts about maps} and \ref{prop; facts about maps}, the map $\id_{\mathcal{E}Y} \times \phi$ descends to a  continuous map $\phi_Y \colon Y^{sc} \to X_Y$, as illustrated by the following commuting diagram.
\begin{equation}\label{eq; def of phiM}
\begin{tikzcd}
    \mathcal{E}Y \times \mathcal{E}T^*K \ar[r,"\id_{\mathcal{E}Y} \times \phi"] & \mathcal{E}Y \times X \\
    (\mu -\mathcal{S}\circ \mu_L)\n(0) \ar[u,hookrightarrow] \ar[r] \ar[d,"/T"]& (\mu - \mathsf{c} \circ \Psi)\n(0)\ar[d,"/T"]\ar[u,hookrightarrow]\\
    Y^{sc} \ar[r,dotted,"\phi_Y"] & X_Y.
\end{tikzcd}
\end{equation}
Both  $Y^{sc}$ and $X_Y$ inherit Hamiltonian $T\times T$-space structures generated by the moment maps $ (\pr_{\ttt^*}\circ \mu_L,\mathcal{S}\circ \mu_L)$ and $(\mathsf{a}\circ \Psi,\mathsf{c}\circ \Psi)$ respectively.  The map $\phi_Y$ is $T\times T$ equivariant by construction. 

\begin{definition}\label{def; toric contraction map}
The \emph{toric contraction map}  is the composition $\Phi = \phi_Y \circ \Phi^{sc}\colon Y \to X_Y$.
\end{definition}

We now give a series of results about toric contractions. In these results we are careful to note the various topological properties of toric contraction which will be important in applications. The first result is a direct consequence of the definitions and properties of symplectic contraction.

\begin{proposition}[Properties of toric contraction, part I]\label{proposition; properties of toric contraction I} Let $(Y,\mu)$ be a Hamiltonian $K$-space.
	\begin{enumerate}[label=(\roman*)]
		\item The toric contraction map is continuous, surjective, proper, and $T$-equivariant (with respect to the action of $T$ on $Y$ as the maximal torus $T\subset K$ and the action of $T$ on $X_Y$ as the subtorus $T\times 1 \subset T\times T$).
        \item The following diagram commutes.
        \begin{equation}\label{eq;toric contraction commutative diagram}
\begin{tikzcd}
    Y \ar[r,"\Phi"] \ar[d,"\mu"]& X_Y \ar[d,"\Psi_Y"]\\ 
    \kk^* \ar[r,"F"] & \Lie(\T)^*.
\end{tikzcd}
\end{equation}
	\end{enumerate}
\end{proposition}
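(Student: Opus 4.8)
\textbf{Proof proposal for Proposition \ref{proposition; properties of toric contraction I}.}

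The plan is to assemble both claims from the already-established properties of the two constituent maps $\Phi^{sc} \colon Y \to Y^{sc}$ and $\phi_Y \colon Y^{sc} \to X_Y$, since the toric contraction map is by Definition \ref{def; toric contraction map} their composition $\Phi = \phi_Y \circ \Phi^{sc}$. For part (i), I would recall from Section \ref{section; symplectic implosion} (citing \cite[Section 4]{hilgertContractionHamiltonianKSpaces2017}) that $\Phi^{sc}$ is continuous, proper, surjective, $K$-equivariant, and satisfies $\mu_L \circ \Phi^{sc} = \mu$. For $\phi_Y$, I would trace through the commuting diagram \eqref{eq; def of phiM}: $\phi_Y$ is induced on the symplectic reduction $(\mu - \mathsf{c}\circ\Psi)\n(0)/T$ by the restriction of $\id_{\mathcal{E}Y} \times \phi$, where $\phi \colon \mathcal{E}T^*K \to X$ is the map of Proposition \ref{prop; facts about maps}, which is continuous, proper, surjective, and $T\times T$-equivariant. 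Since $\id_{\mathcal{E}Y} \times \phi$ restricts to a continuous, proper, surjective map between the zero level sets (properness and surjectivity of a product of such maps, restricted to saturated closed subsets cut out by moment map conditions that $\phi$ intertwines by Proposition \ref{prop; facts about maps}(ii), namely $\mathsf{c}\circ\Psi\circ\phi = \mathsf{c}\circ\Psi$ matching $\mathcal{S}\circ\mu_L$), passing to the $T$-quotient preserves continuity, properness (the $T$-action is proper, as $T$ is compact), and surjectivity. The residual $K$-action on $\mathcal{E}Y$ descends, so $\phi_Y$ is $K$-equivariant; composing, $\Phi$ is $K$-equivariant, hence in particular $T$-equivariant for the subtorus $T \times 1 \subset T\times T$, which is the statement in (i).

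For part (ii), I would verify the square \eqref{eq;toric contraction commutative diagram} commutes by chasing an element. Given $y \in Y$, the symplectic contraction map gives $\Phi^{sc}(y)$ with $\mu_L(\Phi^{sc}(y)) = \mu(y)$ (property of $\Phi^{sc}$ from Section \ref{section; symplectic implosion}), and then $\phi_Y$ followed by $\Psi_Y$ is, by the construction of $\phi_Y$ from $\phi$ and the construction of $F$ as the map induced by $\Psi\circ\phi$ on $\mathcal{E}T^*K/(1\times T) \cong \kk^*$ (diagram \eqref{eq; Psi defined for arbitrary K}), exactly $F$ applied to $\mu_L(\Phi^{sc}(y)) = \mu(y)$. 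More precisely, $\Psi_Y \circ \phi_Y$ is induced by $\Psi \circ \phi$ on the relevant quotients, and by Proposition \ref{prop; facts about maps}(ii) together with the identification $\mathcal{E}T^*K/(1\times T) \cong \kk^*$ via $\mu_L$, the induced map is $F$; hence $\Psi_Y \circ \Phi = \Psi_Y \circ \phi_Y \circ \Phi^{sc} = F \circ \mu_L \circ \Phi^{sc} = F \circ \mu$.

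I expect the only genuinely delicate point to be bookkeeping: making sure that the restriction of $\id_{\mathcal{E}Y} \times \phi$ to the zero level set $(\mu - \mathsf{c}\circ\Psi)\n(0)$ actually lands in, and is proper onto, the corresponding level set on the target, which requires precisely the intertwining identities $\mathsf{c}\circ\Psi\circ\phi = \mathcal{S}\circ\mu_L$ from Proposition \ref{prop; facts about maps}(ii) and the definition of $X_Y = (\mathcal{E}Y \times X)\sslash_0 T$ via the $T$-moment map $\mu - \mathsf{c}\circ\Psi$. Everything else is a formal consequence of composing maps with the stated topological properties and of the fact that symplectic reduction by the compact torus $T$ preserves continuity, properness, and surjectivity. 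No new analysis is needed; the work is entirely in correctly citing and composing the prior results.
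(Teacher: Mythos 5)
Your proposal is correct and takes the same route as the paper, which offers no written proof beyond the remark that the result ``is a direct consequence of the definitions and properties of symplectic contraction''; you are simply spelling out that consequence, and the key identities you isolate ($\mathsf{c}\circ\Psi\circ\phi=\mathcal{S}\circ\mu_L$ from Proposition~\ref{prop; facts about maps}, $\mu_L\circ\Phi^{sc}=\mu$, and $F\circ\mu_L=\Psi\circ\phi$ from \eqref{eq; Psi defined for arbitrary K}) are exactly the right ones. One inaccuracy: your claim that ``the residual $K$-action on $\mathcal{E}Y$ descends, so $\phi_Y$ is $K$-equivariant'' is wrong --- implosion reduces the symmetry of $\mathcal{E}Y$ from $K$ to $T$, the $K$-action on $Y^{sc}$ comes from the $\mathcal{E}T^*K$ factor, and $X_Y$ carries no $K$-action at all, so $\phi_Y$ is only $T\times T$-equivariant (as the paper states after \eqref{eq; def of phiM}); this does not harm your argument, since the proposition only asserts $T$-equivariance of $\Phi$, which follows from $K$-equivariance of $\Phi^{sc}$ together with $T\times T$-equivariance of $\phi_Y$.
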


An analogue of  \eqref{eq;toric contraction commutative diagram} was constructed for integral coadjoint orbits of unitary groups in \cite{nishinouToricDegenerationsGelfand2010}. Their construction used Gelfand-Zeitlin systems and toric degeneration. A similar diagram  was later constructed for all Hamiltonian $U(n)$-spaces using branching contraction \cite{hilgertContractionHamiltonianKSpaces2017}.

\begin{proposition}[Properties of toric contraction, part II]\label{proposition; properties of toric contraction II} Let $(Y,\mu)$ be a Hamiltonian $K$-manifold with principal stratum $\sigma$ (i.e. $\sigma$ is the unique stratum of $\mathfrak{t}^*_+$ such that $\mu(Y) \cap \sigma$ is dense in $\mu(Y)\cap \mathfrak{t}_+^*$). Recall that $\mu^{-1}(\sigma) \subset Y$ is the principal symplectic cross-section.
	\begin{enumerate}[label=(\roman*)]
        \item Denote $U^\sigma = ((X_\mathsf{S}^\sigma)^{sm} \times T^*Z)/D$ as in Proposition \ref{prop; facts about maps II}. Then 
        \[
            \mathcal{U} = (\mu\n(\sigma)\times U^{\sigma}) \sslash_0 T
        \]
        is a dense symplectic piece of $X_Y$.
        \item The pre-image $D = \Phi\n(\mathcal{U})$ is a connected, dense, open, $T$-invariant subset of $Y$ and the restricted toric contraction map is an isomorphism of Hamiltonian $T\times T$-manifolds,
        \[
            \Phi\colon (D,(\pr_{\ttt^*}\circ \mu,\mathcal{S} \circ \mu)) \to (\mathcal{U},(\mathsf{a}\circ\Psi_Y,\mathsf{c}\circ\Psi_Y)).
        \]
	\end{enumerate}
\end{proposition}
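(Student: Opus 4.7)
The plan is to decompose $X_Y$ as a singular symplectic space by reducing the product decomposition of $\mathcal{E}Y \times X$ and then identify $\mathcal{U}$ with the piece arising from the pair consisting of the principal piece of $\mathcal{E}Y$ and the piece $U^\sigma \subset X$. Recall that $\mathcal{E}Y$ has a decomposition into symplectic pieces indexed by the open faces of $\ttt_+^*$, and for the principal face $\sigma$ the piece $\mu\n(\sigma) \subset \mathcal{E}Y$ is a connected open dense symplectic manifold on which the residual $T$-action is free and Hamiltonian. On the factor $X$, the piece $U^\sigma$ is a symplectic manifold and the $\T$-action on it is generated by $\Psi$ (Proposition \ref{prop; facts about maps II}(i)); since $\mathsf{c}\circ\Psi(U^\sigma)\subset \sigma$ and $\mu(\mu\n(\sigma))\subset \sigma$, the $T$-moment map $\mu-\mathsf{c}\circ\Psi$ on $\mu\n(\sigma)\times U^\sigma$ cuts out a coisotropic submanifold on which $T$ acts freely, and the symplectic reduction $\mathcal{U}$ is a smooth symplectic manifold that embeds into $X_Y$ as a symplectic piece. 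Density of $\mathcal{U}$ in $X_Y$ follows because every other symplectic piece of $X_Y$ arises by reducing a product of lower-dimensional pieces of $\mathcal{E}Y$ and $X$, and is therefore of strictly smaller dimension than $\mathcal{U}$.

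For (ii), I factor $\Phi = \phi_Y \circ \Phi^{sc}$ and compute $\Phi\n(\mathcal{U})$ in two stages. Unpacking the definition of $\phi_Y$ via diagram \eqref{eq; def of phiM}, one has
\[
    \phi_Y\n(\mathcal{U}) = \bigl((\mu\n(\sigma)\times \phi\n(U^\sigma)) \cap (\mu - \mathcal{S}\circ\mu_L)\n(0)\bigr)/T.
\]
By Proposition \ref{prop; facts about maps II}(iii), $\phi$ restricts to a $T\times T$-equivariant symplectomorphism $\phi\n(U^\sigma)\xrightarrow{\sim} U^\sigma$ intertwining the relevant moment maps, and this isomorphism descends through the reduction to give a $T\times T$-equivariant symplectomorphism $\phi_Y \colon \phi_Y\n(\mathcal{U})\xrightarrow{\sim}\mathcal{U}$. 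Next, by the properties of the symplectic contraction map recalled in Section \ref{section; symplectic implosion}, $\Phi^{sc}$ restricts to a $T\times T$-equivariant isomorphism of Hamiltonian manifolds from the open dense subset $\mu\n(K\cdot \sigma) = K\cdot \mu\n(\sigma)\subset Y$ onto the corresponding top piece of $Y^{sc}$, which by the identifications above coincides with $\phi_Y\n(\mathcal{U})$. Composing these two symplectomorphisms gives the desired restriction of $\Phi$ on $D := \mu\n(K\cdot \sigma)$. Openness of $D$ follows from continuity of $\Phi$; density follows from density of $\mu\n(\sigma)$ in its Weyl-chamber sweep together with $K$-equivariance; connectedness follows from connectedness of $\mu\n(\sigma)$ (a principal symplectic cross-section of a connected Hamiltonian $K$-manifold is connected) and of $\phi\n(U^\sigma)$ via Proposition \ref{prop; facts about maps II}(iii).

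The main obstacles I anticipate are two compatibility checks. The first is verifying freeness of the $T$-action on the relevant zero level set in $\mu\n(\sigma)\times U^\sigma$ (needed for smooth reduction); this reduces to freeness of $T$ on $\mu\n(\sigma)$, which is a standard consequence of $\sigma$ being the principal face of the implosion. The second is matching $\phi_Y\n(\mathcal{U})$ to the principal piece of $Y^{sc}$ in a way compatible with the symplectic forms; this is a bookkeeping exercise tracing $\mathcal{E}T^*K$, $X$, and the quotient $/D$ through the isomorphism $\mathcal{E}T^*K \cong (G\sslash N\times T^*Z)/D$ used to construct $\phi$, and reduces to the equivariance and symplectic properties already collected in Propositions \ref{prop; facts about maps} and \ref{prop; facts about maps II}. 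No new geometric input is required beyond those.
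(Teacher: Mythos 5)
Your proof of (ii) via factoring $\Phi = \phi_Y\circ\Phi^{sc}$ and unpacking diagram \eqref{eq; def of phiM} is essentially the paper's route, but there are two genuine gaps.

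First, your identification of $D$ is wrong. You set $D := \mu\n(K\cdot\sigma)$ and claim that $\Phi^{sc}$ carries this onto $\phi_Y\n(\mathcal{U})$, describing the latter as ``the corresponding top piece of $Y^{sc}$.'' But $\phi_Y\n(\mathcal{U})$ is \emph{not} the top symplectic piece of $Y^{sc}$: by Proposition~\ref{prop; facts about maps II}(iii), $\phi$ is a symplectomorphism only on a proper open dense subset $\phi\n(U^\sigma)$ of the top piece $K/[K_\sigma,K_\sigma]\times\sigma$ of $\mathcal{E}T^*K$ (because $U^\sigma$ is only the smooth locus of the toric fiber). Consequently $\phi_Y\n(\mathcal{U}) = (\mu\n(\sigma)\times\phi\n(U^\sigma))\sslash_0 T$ is a proper open dense subset of the top piece of $Y^{sc}$, and the true preimage $D = \Phi\n(\mathcal{U})$ is a proper open dense subset of $\mu\n(K\cdot\sigma)$ rather than all of it. The corrected $D$ is still connected, open, dense, and $T$-invariant, but the argument must track this distinction.

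Second, your argument for density of $\mathcal{U}$ in $X_Y$ does not go through. You invoke a dimension count (``every other piece is of strictly smaller dimension''), but the singular symplectic spaces in this paper are only required to carry a locally finite partition by symplectic pieces, and explicitly need \emph{not} satisfy the frontier condition (see the footnote after the definition in Section~\ref{section; ham grp actions}). Without the frontier condition, lower-dimensional pieces need not lie in the closure of the top piece, so dimension alone proves nothing. The paper circumvents this by proving (ii) first and \emph{deducing} density of $\mathcal{U}$ from density of $D = \Phi\n(\mathcal{U})$ in $Y$ together with continuity and surjectivity of $\Phi$: since $\Phi(Y) = X_Y$ and $D$ is dense in $Y$, one gets $X_Y = \Phi(\overline{D}) \subseteq \overline{\Phi(D)} = \overline{\mathcal{U}}$. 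You should restructure along these lines. (A smaller point: you assert the $T$-action on $\mu\n(\sigma)$ is free, but all the paper needs, and all that is generally true, is that the stabilizer at every point equals the kernel of the $T$-action, which already suffices for the reduced space to be a smooth symplectic manifold.)
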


\begin{proof} The first order of business is to show that $\mathcal{U}$ is a piece of $X_Y$. This follows since $\mu\n(\sigma)\times U^{\sigma}$ is a piece of $\mathcal{E}Y \times X$ and the stabilizer subgroup at every point in $\mu\n(\sigma)\times U^{\sigma}$ for the $T$-action equals the kernel of the $T$-action, so the reduced space is a symplectic manifold. 

To see (ii), note that
    \[
        \phi_Y\n(\mathcal{U}) = (\mu\n(\sigma) \times \phi\n(U^\sigma))\sslash_0 T. 
    \]
    By Proposition \ref{prop; facts about maps II}, this is a connected open dense $T \times T$-invariant subset of $Y^{sc}$ and the restricted map 
    \[
        \phi_Y \colon (\phi_Y\n(\mathcal{U}),(\pr_{\ttt^*}\circ \mu_L,\mathcal{S}\circ \mu_L) ) \to (\mathcal{U},(\mathsf{a}\circ\Psi_Y,\mathsf{c}\circ\Psi_Y))
    \]
    is an isomorphism of Hamiltonian $T\times T$-manifolds. 
    Item (ii) then follows by properties of symplectic contraction. Finally, since $\Phi\n(\mathcal{U})$ is dense in $Y$ by (ii) and $\Phi\colon Y \to X_Y$ is surjective, $\mathcal{U}$ is dense in $X_Y$. This completes the proof of (i).
\end{proof}

Recall that a Hamiltonian $K$-manifold $(Y,\mu)$ is proper if there exists a $K$-invariant set $\tau \subset \kk^*$ containing $\mu(M)$ such that $\tau \cap \ttt_+^*$ is convex and  $\mu \colon M \to \tau $ is a proper map. If $(Y,\mu)$ is proper, then $\triangle_Y := \mathcal{S} \circ \mu(Y)$
is a convex locally rational polyhedral set and the fibers of $\mu$ are connected \cite[Theorem 1.1 and Remark 5.2]{lermanNonabelianConvexitySymplectic1998}.\footnote{Although we cite \cite{lermanNonabelianConvexitySymplectic1998} for a statement of the convexity theorem, it should be noted that convexity for proper moment maps is due to numerous authors including Condevaux, Dazord, Molino, Knop, Birtea, Ratiu, Ortega, Sjamaar, Karshon, and Bjorndahl.}
		
\begin{proposition}[Properties of toric contraction, part III]\label{proposition; properties of toric contraction III} Let $(Y,\mu)$ be a proper Hamiltonian $K$-manifold. Let $\sigma$ be the principal stratum of $(Y,\mu)$ and let $\mathcal{U}$ be as in Proposition \ref{proposition; properties of toric contraction II}. Then:
	\begin{enumerate}[label=(\roman*)]
		\item The set $\Psi_Y(X_Y) = \mathsf{c}\n(\triangle_Y)\cap \Psi(X)$  is a convex, locally rational polyhedral set.  Moreover, if $Y$ is  compact, then $\Psi_Y(X_Y)$ is a convex polytope.
		\item The codomain restricted map $\Psi_Y\colon X_Y \to \Psi_Y(X_Y)$ is proper.
		\item The pair $(\mathcal{U},\Psi_Y)$ is a proper Hamiltonian $\T$-manifold and 
		\[
		    \Psi_Y(\mathcal{U}) = \mathsf{c}\n(\triangle_Y\cap \sigma) \cap \Psi(U^\sigma)
		\]
		is the smooth locus of the convex locally rational polyhedral set  $\mathsf{c}\n(\triangle_Y\cap \sigma) \cap \Psi(X^\sigma)$.
		\item The fibers of $\Psi_Y$ and $F \circ \mu = \Psi_Y \circ \Phi $ are connected.
	\end{enumerate}
\end{proposition}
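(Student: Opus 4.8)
The plan is to derive everything from the definition $X_Y=(\mathcal{E}Y\times X)\sslash_0T$, in which $T$ acts on $\mathcal{E}Y$ by its given action and on $X$ through $\mathsf{c}\colon T\to\T$, with moment map $\mu-\mathsf{c}\circ\Psi$. The first step is to record the fibres of $\Psi_Y$: a point of $X_Y$ is a $T$-orbit $[(p,q)]$ with $(p,q)\in(\mu-\mathsf{c}\circ\Psi)^{-1}(0)$, and $\Psi_Y([(p,q)])=\Psi(q)$, which is well defined since $\Psi$, being a moment map for the torus $\T$, is $\T$-invariant hence $T$-invariant. Thus $\Psi_Y^{-1}(v)=\bigl(\mu^{-1}(\mathsf{c}(v))\times\Psi^{-1}(v)\bigr)/T$, the first preimage taken in $\mathcal{E}Y$, and in particular $\Psi_Y(X_Y)=\{\Psi(q):q\in X,\ \mathsf{c}\circ\Psi(q)\in\mu(\mathcal{E}Y)\}$. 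Since $(Y,\mu)$ is proper the symplectic implosion has moment map image $\mu(\mathcal{E}Y)=\mathcal{S}\circ\mu(Y)=\triangle_Y$, a convex locally rational polyhedral set. Combining this with $\Psi(X)=\cone(\mathsf{S})\times\Lie(Z)^*$ (Proposition~\ref{prop; facts about maps}(iv)) and the rationality of the linear map $\mathsf{c}$, we get $\Psi_Y(X_Y)=\mathsf{c}^{-1}(\triangle_Y)\cap\Psi(X)$, an intersection of two convex locally rational polyhedral sets, proving~(i); when $Y$ is compact, $\triangle_Y$ is a polytope and properness of $\mathsf{c}$ on $\cone(\mathsf{S})$ (from~\ref{v3}) forces $\mathsf{c}^{-1}(\triangle_Y)\cap\cone(\mathsf{S})$ to be bounded, hence a polytope.

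For~(ii), given a compact $C\subseteq\Psi_Y(X_Y)$, the set $\Psi_Y^{-1}(C)$ is the image under the proper $T$-quotient map of $\{(p,q):q\in\Psi^{-1}(C),\ \mu(p)\in\mathsf{c}(C)\}$; here $\Psi^{-1}(C)$ is compact by properness of $\Psi$ (Proposition~\ref{prop; facts about maps}(iii)), $\mathsf{c}(C)$ is compact, and $\mu^{-1}(\mathsf{c}(C))\subseteq\mathcal{E}Y$ is compact because implosion inherits properness of the moment map from $(Y,\mu)$ (cf.~\cite{hilgertContractionHamiltonianKSpaces2017,guilleminSymplecticImplosion2002}), so $\Psi_Y^{-1}(C)$ is compact. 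For~(iii), Proposition~\ref{proposition; properties of toric contraction II} already gives that $\mathcal{U}=(\mu^{-1}(\sigma)\times U^\sigma)\sslash_0T$ is a genuine symplectic manifold; the residual $\T$-action descends with moment map $\Psi_Y|_\mathcal{U}$, and repeating the fibre computation, using that the principal cross-section has $\mu$-image $\triangle_Y\cap\sigma$ and that $\Psi(X^\sigma)\subseteq\mathsf{c}^{-1}(\sigma)$, yields $\Psi_Y(\mathcal{U})=\Psi(U^\sigma)\cap\mathsf{c}^{-1}(\triangle_Y\cap\sigma)$. Properness of $\Psi_Y|_\mathcal{U}$ onto this image follows as in~(ii) from properness of $\Psi|_{U^\sigma}$ (Proposition~\ref{prop; facts about maps II}(ii)) and of $\mu$ on the principal cross-section, so $(\mathcal{U},\Psi_Y)$ is a proper Hamiltonian $\T$-manifold. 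It then remains to identify $\Psi(U^\sigma)\cap\mathsf{c}^{-1}(\triangle_Y)$ with the smooth locus of $\Psi(X^\sigma)\cap\mathsf{c}^{-1}(\triangle_Y)$; one argues as in Corollary~\ref{cor; smooth locus}, using that $\Psi(U^\sigma)$ is the smooth locus of $\Psi(X^\sigma)$ (Proposition~\ref{prop; facts about maps II}(i)), that the singular locus of $\Psi(X^\sigma)$ is $\Psi$-saturated, and a convex-geometry lemma asserting that cutting the toric polyhedral set $\Psi(X^\sigma)$ by the $\mathsf{c}$-pullbacks of the facet half-spaces of $\triangle_Y$ introduces no new singular points over the relative interior $\sigma$.

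Finally, for~(iv): the fibre $\Psi_Y^{-1}(v)=(\mu^{-1}(\mathsf{c}(v))\times\Psi^{-1}(v))/T$ is a quotient of a product of two connected sets — $\mu^{-1}(\mathsf{c}(v))\subseteq\mathcal{E}Y$ is connected since implosion inherits connectedness of moment fibres from the proper manifold $(Y,\mu)$, and $\Psi^{-1}(v)$ is connected by Proposition~\ref{prop; facts about maps}(iii) — hence is connected. For $F\circ\mu=\Psi_Y\circ\Phi$ I would use that $\Phi$ is continuous, surjective and proper (Proposition~\ref{proposition; properties of toric contraction I}(i)) and has connected fibres (being the composite of the fibre-connected $\Phi^{sc}$ with $\phi_Y$, whose fibres are connected because those of $\phi$ are); since $\Psi_Y^{-1}(v)$ is connected and $\Phi$ restricted over it is a proper surjection with connected fibres onto a locally closed, hence locally compact Hausdorff, base, it pulls connected sets back to connected sets — a clopen splitting of $\Phi^{-1}(\Psi_Y^{-1}(v))$ would be a union of whole fibres and would descend to a splitting of $\Psi_Y^{-1}(v)$ — so $(F\circ\mu)^{-1}(v)=\Phi^{-1}(\Psi_Y^{-1}(v))$ is connected.

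The step I expect to be the real obstacle is the smooth-locus identification in~(iii): unlike the rest of the argument it is not a formal consequence of earlier results, and it hinges on understanding how the rational coordinate projection $\mathsf{c}$ interacts with smooth loci of polyhedral sets, which I would isolate as a dedicated lemma. A secondary, purely bookkeeping, issue is making sure the needed implosion facts — properness and fibre-connectedness of the moment map of $\mathcal{E}Y$ when $(Y,\mu)$ is proper, and fibre-connectedness of the limit map $\phi$ — are available, citing the symplectic-implosion literature and Theorem~\ref{maintheoremGmodN} (respectively its proof) as appropriate.
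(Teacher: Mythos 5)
Your treatment of (i), (ii), the image computation in (iii), and the connectedness of the fibers of $\Psi_Y$ in the first half of (iv) matches the paper's proof. The genuine gap is in the second half of (iv). You deduce connectedness of the fibers of $F\circ\mu=\Psi_Y\circ\Phi$ from the claim that $\Phi=\phi_Y\circ\Phi^{sc}$ has connected fibers, which you in turn reduce to fiber-connectedness of $\phi$ (equivalently of $\phi_{ss}\colon G\sslash N\to X_{\mathsf{S}}$). But fiber-connectedness of $\phi$ is established nowhere in the paper and does not follow from anything proved about it: Theorem~\ref{maintheoremGmodN} and Proposition~\ref{prop; facts about maps} record that $\phi$ is continuous, proper, surjective, equivariant, and a symplectomorphism over the smooth locus of each toric piece, but say nothing about its fibers over the singular locus of $X_{\mathsf{S}}$, and no such statement is available from \cite{haradaIntegrableSystemsToric2015} either. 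The paper circumvents exactly this difficulty by invoking \cite[Lemma 4]{laneConvexityThimmTrick2018}: $\Psi_Y\circ\Phi\colon Y\to\Psi_Y(X_Y)$ is a continuous proper map to a convex set which, on the dense open subset $\Phi\n(\mathcal{U})$ (saturated because $\mathcal{U}$ is saturated by $\Psi_Y$), is open, has connected fibers (since $\Phi$ is a symplectomorphism there and the fibers of $\Psi_Y$ are connected), and has convex image $\Psi_Y(\mathcal{U})$; these hypotheses, supplied precisely by parts (i)--(iii) and Proposition~\ref{proposition; properties of toric contraction II}, already force all fibers to be connected. You should replace your monotone-map argument with this, or else supply an independent proof that $\phi$ has connected fibers, which would be a new result.

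A smaller issue: in (iii) you correctly isolate the identification of $\Psi_Y(\mathcal{U})$ with the smooth locus of $\mathsf{c}\n(\triangle_Y\cap\sigma)\cap\Psi(X^\sigma)$ as the step needing a convex-geometry lemma, but you leave that lemma unproved. The paper disposes of it in one line: every point of $\triangle_Y\cap\sigma$ is a smooth point and $\Psi(U^\sigma)$ is the smooth locus of $\Psi(X^\sigma)$ by Proposition~\ref{prop; facts about maps II}, so intersecting with $\mathsf{c}\n(\triangle_Y\cap\sigma)$ neither removes smooth points nor creates new singular ones. Either accept that one-line argument or write out the short local computation; as it stands this part of your argument is flagged but incomplete.
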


\begin{proof}[Proof of Proposition \ref{proposition; properties of toric contraction III}]
\begin{enumerate}[label=(\roman*)]
    \item The equality $\Psi_Y(X_Y) = \mathsf{c}\n(\triangle_Y)\cap \Psi(X)$ follows immediately from the definitions.  Since $\triangle_Y$ is a convex locally rational polyhedral set, $\mathsf{c}$ is a linear map, and $\Psi(X) = \cone(\mathsf{S})$ is a convex rational polyhedral cone by Proposition \ref{prop; facts about maps}, it follows that $\Psi_Y(X_Y)$ is a convex locally rational polyhedral set. If $Y$ is compact, then $\triangle_Y$ is a convex polytope by the non-abelian convexity theorem for Hamiltonian group actions. It follows in this case that $\Psi_Y(X_Y)$ is a convex polytope since the fibers of $\mathsf{c}\n(\lambda) \cap \cone(\mathsf{S})$ are compact.
    \item This follows by (i), since $\Psi\colon X \to \Psi(X)$ is proper (Proposition \ref{prop; facts about maps}) and $(Y,\mu)$ being proper implies that $\mathcal{S}\circ \mu \colon Y \to \triangle_Y$ is proper.
    \item It follows from the definitions that
    $\Psi_Y(\mathcal{U}) =\mathsf{c}\n(\triangle_Y\cap \sigma) \cap \Psi(U^\sigma)$. In particular, $\Psi_Y(\mathcal{U})$ is convex since it is an intersection of convex sets. It is the smooth locus of $\mathsf{c}\n(\triangle_Y\cap \sigma) \cap \Psi(X^\sigma)$ since $\triangle_Y\cap \sigma$ is smooth and $\Psi(U^\sigma)$ is the smooth locus of $\Psi(X^\sigma)$. The properness of $\Psi_Y\colon \mathcal{U} \to \Psi_Y(\mathcal{U})$ follows since $(Y,\mu)$ is proper and by Proposition \ref{prop; facts about maps II}.
    \item First, we show that the fibers of $\Psi_Y$ are connected~\cite[Theorem 1.1]{lermanNonabelianConvexitySymplectic1998}. Since $(Y,\mu)$ is proper, the fibers of  $\mu\colon \mathcal{E}Y \to \ttt^*_+$ are connected. The fibers of $\Psi$ are connected by Proposition \ref{prop; facts about maps}. Thus, for all $\xi \in \Lie(\T)^*$, $
		    \Psi_Y\n(\xi) = (\mu\n(\mathsf{c}(\xi))\times \Psi\n(\xi))/T$
    is connected.
	
	The fiber connectedness of $\Psi_Y \circ \Phi$ follows by applying \cite[Lemma 4]{laneConvexityThimmTrick2018}. By (i) and (ii),  $\Psi_Y \circ \Phi \colon Y \to \Psi_Y(X_Y)$ is a continuous proper map to the convex set $\Psi_Y(X_Y)$. By Proposition \ref{proposition; properties of toric contraction II}, $\Phi\n(\mathcal{U})$ is a dense subset of $Y$. The set $\Phi\n(\mathcal{U})$ is saturated by $\Psi_Y \circ \Phi$ since $\mathcal{U}$ is saturated by $\Psi_Y$. By (iii), $\Psi_Y(\mathcal{U})$ is convex.  The fibers of the restriction of $\Psi_Y \circ \Phi$ to $\Phi\n(\mathcal{U})$ are connected since the fibers of $\Psi_Y$ are connected, $\mathcal{U}$ is saturated by $\Psi_Y$, and $\Phi\colon \Phi\n(\mathcal{U}) \to\mathcal{U}$ is a symplectomorphism.  Finally, the restricted map $\Psi_Y \circ \Phi\colon \Phi\n(\mathcal{U}) \to \Psi_Y(\mathcal{U})$ is open since $(\mathcal{U},\Psi_Y)$ is a proper Hamiltonian $\T$-manifold.
	\end{enumerate}
\end{proof}

Recall that the \emph{complexity} of a Hamiltonian $T$-manifold $(Y,\mu)$ is
\begin{equation}\label{def; complexity part 1}
    c_T(Y) = \frac{1}{2} \dim Y - \dim T + \dim T_{\ker}
\end{equation}
where $T_{\ker}$ is the kernel of the action of $T$; this is equal to half the dimension of the symplectic reduction by $T$ at a regular value of $\mu$. The \emph{complexity}
of a Hamiltonian $K$-manifold $(Y,\mu)$ with principal symplectic cross-section  $S = \mu^{-1}(\sigma)$ is the complexity of $S$ as a Hamiltonian $T$-manifold, $c_K(Y) = c_T(S)$; this is equal to half the dimension of the symplectic reduction by $K$ at a regular value of $\mu$.

\begin{proposition}[Properties of toric contraction, part IV]\label{proposition; properties of toric contraction IV} Let $(Y,\mu)$ be a Hamiltonian $K$-manifold and let $(\mathcal{U},\Psi_Y)$ be the dense symplectic piece of  $X_Y$ as in Proposition \ref{proposition; properties of toric contraction II}.  Then, $c_\T(\mathcal{U}) = c_K(Y)$. 
\end{proposition}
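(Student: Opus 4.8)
The plan is to reduce everything to a dimension count. Write $S := \mu\n(\sigma)$ for the principal symplectic cross-section (so that $c_K(Y) = c_T(S)$ by definition), and note that $\dim\T = \dim T + m$, so the claimed equality $c_\T(\mathcal U) = c_T(S)$ is equivalent to
\[
\tfrac12\dim\mathcal U + \dim\T_{\ker}(\mathcal U) \;=\; \tfrac12\dim S + \dim T_{\ker}(S) + m ,
\]
where $\T_{\ker}(\mathcal U)$ and $T_{\ker}(S)$ denote the kernels of the $\T$-action on $\mathcal U$ and of the residual $T$-action on $S$. So I would compute the two quantities on the left. Let $j$ be the semisimple rank of $K_\sigma = K\cap P_\sigma$, so that $T\cap[K_\sigma,K_\sigma]$ is a torus of dimension $j$, and recall the standard root counts $\dim_\C G/[P_\sigma,P_\sigma] = m + r_{ss} - |R_{+,\sigma}| - j$ and $\dim K/K_\sigma = 2(m - |R_{+,\sigma}|)$.

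The whole computation rests on two structural facts. \emph{First:} $[K_\sigma,K_\sigma]$ acts trivially on $S$. Indeed $S$ is a $K_\sigma$-invariant symplectic submanifold of $Y$ (cross-section theorem), and the $[K_\sigma,K_\sigma]$-moment map on $S$ is the $[\kk_\sigma,\kk_\sigma]$-component of $\mu|_S$; this vanishes because $\mu(S)\subseteq\sigma\subseteq\ttt^*$ and every point of $\sigma$ annihilates $[\kk_\sigma,\kk_\sigma]$, and a Hamiltonian action of a connected group with zero moment map is trivial. In particular $T\cap[K_\sigma,K_\sigma]\subseteq T_{\ker}(S)$. \emph{Second:} on the $G$-orbit piece $(G\sslash N)^\sigma\cong G/[P_\sigma,P_\sigma]$ the control torus $T_\mathsf{c}$ acts by right translation through $P_\sigma/[P_\sigma,P_\sigma]$, hence with kernel exactly $H\cap[P_\sigma,P_\sigma]$, whose maximal compact is $T\cap[K_\sigma,K_\sigma]$, of dimension $j$. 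By Theorem~\ref{maintheoremGmodN} the map $\phi$ restricts to a symplectomorphism of a dense open $T_\mathsf{a}\times T_\mathsf{c}$-invariant subset of $(G\sslash N)^\sigma$ onto $U_0^\sigma := (X_\mathsf{S}^\sigma)^{sm}$, so the second fact and the root count pass to $U_0^\sigma$. Moreover $U_0^\sigma$ carries a complexity-$0$ action of $\T_{ss} = (S^1)^m\times T_{ss}$ (Theorem~\ref{maintheoremGmodN}(b)), so the kernel of that action has dimension $\dim\T_{ss} - \tfrac12\dim U_0^\sigma = (m + r_{ss}) - \dim_\C G/[P_\sigma,P_\sigma] = |R_{+,\sigma}| + j$.

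Now I would compute the two dimensions. For $\dim\mathcal U$: since $\mathcal U = (S\times U^\sigma)\sslash_0 T$ with $U^\sigma = (U_0^\sigma\times T^*Z)/D$, and (by the proof of Proposition~\ref{proposition; properties of toric contraction II}) $T$ acts on $S\times U^\sigma$ with constant stabilizer equal to the kernel $T_0$ of that action, $\dim\mathcal U = \dim S + \dim U^\sigma - 2(\dim T - \dim T_0)$. The kernel of $T$ on $U^\sigma$ is $T\cap[K_\sigma,K_\sigma]$ by the second fact ($Z$ acts freely on $T^*Z$), and this already lies in $T_{\ker}(S)$ by the first fact, so $T_0 = T\cap[K_\sigma,K_\sigma]$ and $\dim T_0 = j$; substituting $\dim U^\sigma = 2\dim_\C G/[P_\sigma,P_\sigma] + 2\dim Z$ and $\dim T = r_{ss} + \dim Z$ and simplifying gives $\tfrac12\dim\mathcal U = \tfrac12\dim S + m - |R_{+,\sigma}|$. (Alternatively, $\Phi$ restricts to a symplectomorphism $D\xrightarrow{\sim}\mathcal U$ with $D$ dense in $Y$, so $\dim\mathcal U = \dim Y = \dim S + \dim K/K_\sigma$, giving the same value.) For $\dim\T_{\ker}(\mathcal U)$: the kernel of a torus action on a connected manifold equals the stabilizer of a generic point, and since $\T$ acts on $\mathcal U = (S\times U^\sigma)/T$ only through the $U^\sigma$-factor, one checks that a generic point $[s,u]$ has $\Stab_\T([s,u]) = \mathsf{c}^*(T_{\ker}(S))\cdot\T_{\ker}(U^\sigma)$, where $\mathsf{c}^*\colon T\to\T$ is the (injective) homomorphism dual to $\mathsf{c}$. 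Hence $\dim\T_{\ker}(\mathcal U) = \dim T_{\ker}(S) + \dim\T_{\ker}(U^\sigma) - \dim(\mathsf{c}^*(T_{\ker}(S))\cap\T_{\ker}(U^\sigma))$; by the two facts this intersection equals $\mathsf{c}^*(T\cap[K_\sigma,K_\sigma])$, of dimension $j$, and $\dim\T_{\ker}(U^\sigma) = |R_{+,\sigma}| + j$, so $\dim\T_{\ker}(\mathcal U) = \dim T_{\ker}(S) + |R_{+,\sigma}|$.

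Adding the two computed quantities gives $\tfrac12\dim\mathcal U + \dim\T_{\ker}(\mathcal U) = \tfrac12\dim S + m - |R_{+,\sigma}| + \dim T_{\ker}(S) + |R_{+,\sigma}| = \tfrac12\dim S + \dim T_{\ker}(S) + m$, which is exactly the displayed identity; hence $c_\T(\mathcal U) = c_T(S) = c_K(Y)$. The step I expect to be the main obstacle is pinning down the two structural facts and keeping the kernels straight — especially verifying that $T\cap[K_\sigma,K_\sigma]$ lies in the kernel of the residual $T$-action on the cross-section, and that the control torus acts on $(G\sslash N)^\sigma$ with exactly this same $j$-dimensional kernel. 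Once those are in hand, the root counts and the product/reduction formulas for dimensions and kernels are routine, and all the $|R_{+,\sigma}|$'s and $j$'s cancel.
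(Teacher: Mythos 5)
Your proof is correct and follows essentially the same route as the paper's: identify the kernel of the diagonal $T$-action on $S\times U^\sigma$ (your $T\cap[K_\sigma,K_\sigma]$ is the paper's $T^{\overline{\sigma}}$), compute $\dim\mathcal{U}$ from the free $T/T^{\overline{\sigma}}$-reduction, and compute $\dim\T_{\ker}(\mathcal{U})$ by inclusion--exclusion on $\T_{\ker}(U^\sigma)$ and the image of $T_{\ker}(S)$. The only difference is bookkeeping: the paper cancels the unknowns abstractly using the complexity-$0$ identity $\tfrac12\dim U^\sigma=\dim\T-\dim\T^F$, whereas you substitute explicit root counts $|R_{+,\sigma}|$ and $j$ (and, usefully, you spell out the two structural facts about trivial $[K_\sigma,K_\sigma]$- and $H\cap[P_\sigma,P_\sigma]$-actions that the paper leaves implicit).
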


\begin{proof} Let $S = \mu^{-1}(\sigma)$ be the principal symplectic cross-section of $Y$ and let $T_S$ denote the kernel of the $T$-action on $S$. 
Recall from Proposition \ref{proposition; properties of toric contraction II} that $\mathcal{U}$ is constructed as the diagonal symplectic reduction of $S \times U^\sigma$ by $T$. Let $T^{\overline{\sigma}} \subset T$ denote the connected subtorus with $\ann(\Lie(T^{\overline{\sigma}})) = \operatorname{span}_\R(\sigma)$.  The kernel of the diagonal $T$-action on $S \times U^\sigma $ is  $T^{\overline{\sigma}}$. In particular, note that $T^{\overline{\sigma}}\subset T_S$. In fact, the diagonal action of  $T/T^{\overline{\sigma}}$ on  $S \times U^\sigma$ is free. Thus,
\[
    \dim \mathcal{U} = \dim S + \dim U^\sigma - 2\dim T + 2\dim T^{\overline{\sigma}}.
\]
The action of $\T$ on $\mathcal{U}$ descends from the action of $\T$ on $U^\sigma$. The kernel of the action of $\T$ on $U^\sigma$ is the subtorus $\T^F$, where $F= \mathsf{c}\n(\sigma)$. By construction, $U^\sigma$ is a complexity 0 Hamiltonian $\T$-manifold, i.e.
\[
    0 = \frac{1}{2}\dim U^\sigma -\dim \T + \dim \T^F.
\]
By construction, the kernel $\T_{\ker}$ of the $\T$-action on $\mathcal{U}$ is the product of the subgroups $\T^F$ and $1 \times T_S$.  The intersection of these subgroups is $1 \times T^{\overline{\sigma}}$, so
\[
    \dim \T_{\ker} = \dim \T^F + \dim T_S - \dim T^{\overline{\sigma}}.
\]
Combining these facts, we have that 
\begin{equation*}
    \begin{split}
        c_\T(\mathcal{U}) = \frac{1}{2}\dim \mathcal{U} - \dim \T + \dim \T_{\ker}  = \frac{1}{2}\dim S  - \dim T   + \dim T_S  = c_K(Y)
    \end{split}
\end{equation*}
which completes the proof.
\end{proof}

\begin{remark}\label{rem;action-angle coordinates} The above propositions have the following important consequence: if $(Y,\mu)$ is a proper multiplicity-free Hamiltonian $K$-manifold, then the dense open subset $(D,F\circ \mu)$ is a proper toric manifold. Thus $(D,F\circ \mu)$ is classified up to isomorphism by its image  $F\circ \mu(D) = \mathsf{c}\n (\Delta_Y \cap \sigma) \cap \Psi(U^\sigma)$ and the kernel of the action of $\T$  \cite[Proposition 6.5]{karshonNonCompactSymplecticToric2015}.
\end{remark}

\subsection{Proof of main theorem and applications} \label{section proof of main theorem}

\begin{proof}[Proof of Theorem~\ref{intro main theorem}]
    Fix a good valuation $(G\sslash N, E, h_E, \mathbf{v}, \mathsf{a},\mathsf{c})$ as in Data \ref{def; data for valuation on g mod n}; this exists by Example~\ref{main example}. Let $F \colon \Lie(K)^*\to \Lie(\mathbb{T})^*$ be as in Section~\ref{section; generalized GZ systems}. The first claim and third claims follow directly from Proposition~\ref{proposition; properties of toric contraction III}. The second claim follows directly from Proposition~\ref{proposition; properties of toric contraction IV}.
\end{proof}

\begin{example}[Coadjoint orbits of compact Lie groups]\label{main example 2}
Let $K$ be a compact connected semisimple Lie group, let $Y = \mathcal{O}_\lambda$ be the coadjoint orbit of $K$ passing through $\lambda \in \ttt_+^*$, and let $\omega_\lambda$ denote its canonical symplectic structure. The coadjoint action of $K$ on $\mathcal{O}_\lambda$ is Hamiltonian with moment map the inclusion $\iota\colon \mathcal{O}_\lambda \hookrightarrow \kk^*$.  The symplectic contraction of this Hamiltonian space is $\mathcal{O}_\lambda^{sc} = \mathcal{O}_\lambda$. The extra $T$-action on $\mathcal{O}_\lambda^{sc}$ is trivial; its moment map is the constant map that sends every point to $\lambda$.  The symplectic contraction map is the identity. 

Continuing with this example, suppose we are given Data \ref{def; data for valuation on g mod n} and have constructed maps and spaces as in Sections \ref{section; generalized GZ systems} and \ref{section; toric contractions}.  Our construction simplifies in this case. The toric contraction is $X_Y \cong X_\mathsf{S}\sslash_\lambda T$ and the toric contraction map is induced directly from  $\phi_{ss}$:
\[
   \Phi\colon  Y \cong (G\sslash N)\sslash_\lambda T \to X_\mathsf{S}\sslash_\lambda T \cong X_Y.
\]
The image $\Psi(X_Y)$ is the convex polytope given by the intersection of $\cone(\mathsf{S})$ and the affine subspace $\mathsf{c}\n(\{\lambda\})$.  For instance, if the valuation used in the construction is a string valuation $\mathbf{v}_\ii$ as in Example \ref{main example}, then $\Psi(X_Y)$ is the intersection of the extended string cone $\cone(\mathsf{S}_\ii)$ and the affine subspace $\mathsf{c}\n(\{\lambda\})$. If $\lambda$ is integral, then $\Delta_\lambda(\ii) = \cone(\mathsf{S}_\ii)\cap \mathsf{c}\n(\{\lambda\})$ is the string polytope associated to $\lambda$ and $\ii$.  We note that the integer lattice points of $\Delta_\lambda(\ii)$ correspond to a crystal basis for the irreducible $K$-module $V_\lambda$ with highest weight $\lambda$~\cite{berensteinTensorProductMultiplicities2001}.
\end{example}

\begin{example}[Cotangent bundles of compact Lie groups]\label{main example 3} Let $K$ be a compact connected Lie group. The toric degeneration construction of \cite{haradaIntegrableSystemsToric2015} cannot be applied to the cotangent bundle $T^*K$ because it is non-compact. However, since the Hamiltonian $K\times K$-action on $T^*K$ described in Section \ref{section; symplectic implosion} is proper and multiplicity-free, applying our construction to the $K\times K$-action yields collective completely integrable systems on $T^*K$ with connected moment map fibers. 

Assume for simplicity that $K$ is also semisimple and let $\ii_1$ and $\ii_2$ be any two reduced words for the longest element $w_0$ of the Weyl group of $G = K^\mathbb{C}$. Together $\ii_1$ and $\ii_2$ form a reduced word for the longest element of the Weyl group of the product $G\times G$ which we denote $(\ii_1,\ii_2)$ where the index denotes the direct summand. We can define data as in Example \ref{main example} where, e.g.,  $\mathbf{v}_\ii = \mathbf{v}_{(\ii_1,\ii_2)}$ and $\mathsf{c} = (\mathsf{c}_1, \mathsf{c}_2)$. Applying the construction of Theorem \ref{thm; liek completely integrable system} to these data yields a completely integrable system 
\begin{equation}\label{eq; product of integrable systems}
    F_{(\ii_1, \ii_2)}\colon \kk^* \times \kk^* \to \Lie(\T)^* \times \Lie(\T)^*
\end{equation}
 in the sense of Definition \ref{def; completely integrable system on constant rank Poisson manifold}. It follows from the  definition of $ (\ii_1,\ii_2)$ that the image of $F_{(\ii_1, \ii_2)}$ is the product of extended string cones, 
\begin{equation}\label{eq; extended string cone}
    F_{(\ii_1, \ii_2)}(\kk^* \times \kk^*) = \cone(\mathsf{S}_{(\ii_1, \ii_2)}) = \cone(\mathsf{S}_{\ii_1}) \times \cone(\mathsf{S}_{\ii_2}).
\end{equation}
The image of the pullback
\begin{equation}\label{eq; collective system on T^*K}
    F_{(\ii_1, \ii_2)} \circ (\mu_L,\mu_R) \colon T^*K \xrightarrow{(\mu_L, \mu_R)}  \kk^* \times \kk^* \xrightarrow{F_{(\ii_1, \ii_2)}} \Lie(\T)^* \times \Lie(\T)^*.
\end{equation}
is easy to describe. First, the image of $(\mu_L, \mu_R)$ is 
\begin{equation}\label{eq; image of T^*K moment map}
    (\mu_L, \mu_R)(T^*K) = \{ (\lambda_1, \lambda_2) \in \kk^* \times \kk^* \mid \exists \lambda \in \mathfrak{t}_+^* \colon \lambda_1 \in \mathcal{O}_\lambda \text{ and }\lambda_2 \in \mathcal{O}_{\lambda^*}\}
\end{equation}
where $\lambda^* = -w_0\lambda$). It follows that 
\begin{equation}\label{eq; image of collective system on T^*K}
    F_{(\ii_1,\ii_2)}\circ(\mu_L, \mu_R)(T^*K) = \{ (\xi_1, \xi_2) \in \cone(\mathsf{S}_{\ii_1}) \times \cone(\mathsf{S}_{\ii_2}) \mid \mathsf{c}_1(\xi_1) = \mathsf{c}_2(\xi_1)^*\},
\end{equation}
cf.~Proposition \ref{prop; facts about maps}(ii). 

By Proposition \ref{proposition; properties of toric contraction I} and \ref{proposition; properties of toric contraction II}, $F_{(\ii_1,\ii_2)}\circ(\mu_L, \mu_R)$ is a continuous map which is a moment map for a Hamiltonian action of a big torus $\mathbb{T}$ on a dense open subset $D\subseteq T^*K$. It follows by Proposition \ref{proposition; properties of toric contraction III} that $D$ is the pre-image under $F_{(\ii_1,\ii_2)}\circ(\mu_L, \mu_R)$ of the smooth locus of the polyhedral cone \eqref{eq; image of collective system on T^*K} and the restriction of $F_{(\ii_1,\ii_2)}\circ(\mu_L, \mu_R)$ to $D$ is a proper moment map. Since the $K\times K$-action on $T^*K$ is multiplicity-free, it follows by Proposition \ref{proposition; properties of toric contraction IV} that the big torus action on $D$ generated by $F_{(\ii_1,\ii_2)}\circ(\mu_L, \mu_R)$ is completely integrable. Thus, $D$ and the moment map $F_{(\ii_1,\ii_2)}\circ(\mu_L, \mu_R)$ are classified up to isomorphism as a proper symplectic toric manifold by the smooth locus of \eqref{eq; image of collective system on T^*K}, cf.~\cite{karshonNonCompactSymplecticToric2015}. In particular, fibers over the relative interior of \eqref{eq; image of collective system on T^*K} are Lagrangian tori in $T^*K$.

We end this example by making an observation in relation to geometric quantization and the recent independence of polarization result of Crooks and Weitsman \cite{CrooksWeitsman3}. The integer lattice points of \eqref{eq; image of collective system on T^*K} are precisely the integer lattice points of products of string polytopes corresponding to dual pairs of dominant integral weights of $G$,
\begin{equation}\label{eq; integer lattice}
    F_{(\ii_1,\ii_2)}\circ(\mu_L, \mu_R)(T^*K)_\Z = \{ (\lambda, \mathbf{m}_1, \lambda^*, \mathbf{m}_2) \in \Lambda_+ \times \Z^m \times \Lambda_+ \times \Z^m \mid \mathbf{m}_i \in \Delta_{\lambda}(\ii_i)\},
\end{equation}
cf.~Example \ref{main example 2}. The elements of this set form a basis of
\begin{equation}\label{eq; peter weyl}
    \bigoplus_{\lambda \in \Lambda_+} V_\lambda \otimes V_{\lambda}^*
\end{equation}
and thus, by the Peter-Weyl theorem, of $L^2(K)$.  The latter space, $L^2(K)$, is the  quantization of $T^*K$ resulting from the vertical polarization of the cotangent bundle\footnote{We take the liberty of not introducing the machinery of geometric quantization in detail and refer the reader to \cite{CrooksWeitsman3} for details and additional references.}. On the other hand, the Lagrangian torus fibration constructed above is an example of a singular real polarization of $T^*K$. The integer lattice points in the relative interior of \eqref{eq; image of collective system on T^*K} correspond to Lagrangian tori that are Bohr-Sommerfeld fibers of this real polarization.  If we ignore basis elements of \eqref{eq; peter weyl} corresponding to lattice points in the boundary of \eqref{eq; image of collective system on T^*K}, then this can be interpreted as an independence of polarization result: the real polarization produced by our construction recovers a basis for $L^2(G)$ and, moreover, this basis is graded by the weights of $G$. This generalizes the recent results of Crooks and Weitsman who used a product of Gelfand-Zeitlin systems to prove a similar ``independence of polarization'' result for $T^*U(n)$ \cite{CrooksWeitsman3}.
\end{example}

\subsection{Gromov width of coadjoint orbits}\label{ss; gromov width}  Recall that the Gromov width of a compact symplectic manifold $M$ of dimension $2n$ is the supremum of all cross-sectional areas $\pi r^2$, $r>0$, such that the standard symplectic ball of dimension $2n$ and radius $r$ can be embedded symplectically into $M$. It is an open conjecture of Karshon and Tolman that the Gromov width of a coadjoint orbit of a connected compact simple Lie group $K$ is given by the relatively simple formula
\begin{equation}\label{eq;ktconjecture}
    \mathrm{GWidth}(\mathcal{O}_\lambda,\omega_\lambda) = \min\{ \langle \lambda, \alpha^\vee\rangle \mid \alpha \in R_+, \, \langle \lambda, \alpha^\vee\rangle > 0\}.
\end{equation}
Tight upper bounds are known in all cases \cite{castroUpperBoundGromov2016}, but a proof of tight lower bounds for all cases has remained elusive.  The goal of this section is to close Karshon and Tolman's conjecture, modulo an algebraic conjecture. 

Let $K$ be a connected compact simple Lie group with complexification $G$. Recall from Example~\ref{ex; flp valuations} that valuations $\mathbf{v}_{\underline{\beta}}$ can be defined on $\C[G\sslash N]$ from an enumeration $\underline{\beta} = \{ \beta_1, \dots, \beta_m\}$ of the positive roots of $G$. Such an enumeration is said to be a \emph{good ordering} if $i>j$ whenever $\beta_i $ is larger than $\beta_j$ (with respect to the standard partial order on positive roots) \cite{fangEssentialBases2017}.

\begin{theorem}\label{thm;gwidth}
    Let $K$ be a connected compact simple Lie group, and $G$ the complexification of $K$. If there exists a good ordering $\underline{\beta} = \{\beta_1, \dots, \beta_m\}$ of the roots of $G$ such that the value semigroup $\mathsf{S}_{\mathbf{v}_{\underline{\beta}}}$ is finitely generated and saturated, then~\eqref{eq;ktconjecture} holds for all coadjoint orbits of $K$.
\end{theorem}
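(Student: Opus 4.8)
The plan is to establish, for every $\lambda\in\ttt_+^*$, the lower bound $\mathrm{GWidth}(\mathcal{O}_\lambda,\omega_\lambda)\geq a(\lambda)$, where $a(\lambda):=\min\{\langle\lambda,\alpha^\vee\rangle : \alpha\in R_+,\ \langle\lambda,\alpha^\vee\rangle>0\}$; since the reverse inequality is the tight upper bound of \cite{castroUpperBoundGromov2016}, this proves \eqref{eq;ktconjecture}. The lower bound will be obtained by exhibiting, on a connected dense open subset of $\mathcal{O}_\lambda$, an explicit symplectic toric model whose moment polytope contains a large simplex, in the spirit of Fang--Littelmann--Pabiniak \cite{fangSimplicesNewtonOkounkovBodies2018}; the new ingredient is that Theorem~\ref{maintheoremGmodN} provides such a model for \emph{all} $\lambda$, not only the rational ones, so the integrality restriction present in earlier work disappears.

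First I would fix a good ordering $\underline{\beta}=\{\beta_1,\dots,\beta_m\}$ of the positive roots as in the hypothesis and form the valuation $\mathbf{v}_{\underline{\beta}}$ on $\C[G\sslash N]$ of Example~\ref{ex; flp valuations}, together with the maps $\mathsf{a}'$, $\mathsf{c}$ and $\mathsf{a}=\mathsf{a}'-\mathsf{c}$. Since $\mathbf{v}_{\underline{\beta}}$ has one-dimensional leaves, satisfies \ref{BA1}, and (by assumption) $\mathsf{S}:=\mathsf{S}_{\mathbf{v}_{\underline{\beta}}}$ is finitely generated and saturated, the tuple $(G\sslash N,E,h_E,\mathbf{v}_{\underline{\beta}},\mathsf{a},\mathsf{c})$ is an instance of Data~\ref{def; data for valuation on g mod n}. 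Fixing $\lambda\in\ttt_+^*$ and applying Example~\ref{main example 2} to the compact, hence proper, Hamiltonian $K$-space $(\mathcal{O}_\lambda,\iota)$, we obtain the toric contraction $X_Y\cong X_{\mathsf{S}}\sslash_\lambda T$, a continuous proper surjective toric contraction map $\Phi\colon\mathcal{O}_\lambda\to X_Y$, and $\Psi_Y(X_Y)=P_\lambda:=\cone(\mathsf{S})\cap\mathsf{c}^{-1}(\{\lambda\})$, a convex rational polytope. Because $\mathcal{O}_\lambda$ is a coadjoint orbit it is multiplicity free ($c_K(\mathcal{O}_\lambda)=0$, its principal cross-section being the point $\{\lambda\}$), so by Remark~\ref{rem;action-angle coordinates} together with Propositions~\ref{proposition; properties of toric contraction III} and~\ref{proposition; properties of toric contraction IV} there is a connected dense open subset $D\subset\mathcal{O}_\lambda$ with $\Phi|_D$ a symplectomorphism onto a proper complexity-$0$ (i.e.\ toric) Hamiltonian $\T$-manifold $\mathcal{U}$ whose moment-map image $\Psi_Y(\mathcal{U})$ is the smooth locus of $P_\lambda$, and in particular contains the relative interior of $P_\lambda$.

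Next I would apply the standard ball-packing lemma for symplectic toric manifolds (as used in \cite{fangSimplicesNewtonOkounkovBodies2018}, cf.\ also the classification of proper toric manifolds in \cite{karshonNonCompactSymplecticToric2015}): if the relative interior of the moment polytope of a symplectic toric manifold contains an $\mathrm{AGL}(\Z)$-image of the open standard corner simplex of size $a$, then the Gromov width of that manifold is at least $a$. Since $D$ is an open symplectic submanifold of $\mathcal{O}_\lambda$ and $D\cong\mathcal{U}$, this yields $\mathrm{GWidth}(\mathcal{O}_\lambda,\omega_\lambda)\geq\mathrm{GWidth}(\mathcal{U})\geq a(\lambda)$ as soon as $\mathrm{relint}(P_\lambda)$ contains an $\mathrm{AGL}(\Z)$-image of the open size-$a(\lambda)$ simplex. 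This last containment is the geometric heart of \cite{fangSimplicesNewtonOkounkovBodies2018}, where it is proved for good orderings and dominant \emph{integral} $\lambda$ (there phrased in terms of the Newton--Okounkov body $\triangle(R_\lambda,\nu_{\underline{\beta}})$). Under the identification from the proof of Proposition~\ref{prop; DH measure}, $\triangle(R_\lambda,\nu_{\underline{\beta}})=\pr_{\R^m}(\mathsf{c}^{-1}(\{\lambda\})\cap\cone(\mathsf{S}))$ and $\pr_{\R^m}$ restricts to an affine lattice isomorphism on $\mathsf{c}^{-1}(\{\lambda\})$, so their simplex transports to one in $P_\lambda$ for integral $\lambda$; to reach arbitrary $\lambda\in\ttt_+^*$ one observes that the family $\lambda\mapsto P_\lambda$ and the function $a(\lambda)$ are conical and, on each open face $\sigma$ of $\ttt_+^*$ (where $\{\alpha\in R_+:\langle\lambda,\alpha^\vee\rangle>0\}$ is constant), piecewise linear, so the desired containment, being closed and scale-invariant, holds on the dense set of rational $\lambda$ in $\sigma$ and therefore on all of $\sigma$.

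The main obstacle is exactly this last input: producing a large unimodular simplex inside the string-type cone $\cone(\mathsf{S}_{\mathbf{v}_{\underline{\beta}}})$ attached to a good ordering. That fact is genuinely the hard part, and it is already available from \cite{fangSimplicesNewtonOkounkovBodies2018} for integral weights; the contribution of this theorem is to combine it with Theorem~\ref{maintheoremGmodN}, which removes the projectivity/integrality constraint of \cite{haradaIntegrableSystemsToric2015} and thereby lets the simplex estimate be used for coadjoint orbits with arbitrary (possibly non-rational) symplectic form. A minor point to verify carefully is that the simplex can be placed in the relative interior of $P_\lambda$ rather than only along a possibly singular face, which causes no trouble since a symplectic ball only detects the interior of the polytope.
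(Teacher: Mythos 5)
Your proposal is correct and takes essentially the same route as the paper's proof: construct the good valuation $\mathbf{v}_{\underline{\beta}}$ from the good ordering, use Example~\ref{main example 2} (via Theorem~\ref{maintheoremGmodN} and Remark~\ref{rem;action-angle coordinates}) to obtain a dense open toric chart on $(\mathcal{O}_\lambda,\omega_\lambda)$ with moment image $\cone(\mathsf{S}_{\mathbf{v}_{\underline{\beta}}})\cap\mathsf{c}^{-1}(\lambda)$, import the simplex of \cite[Theorem 6.2]{fangSimplicesNewtonOkounkovBodies2018} for integral $\lambda$, extend to all $\lambda\in\ttt_+^*$ by scaling and continuity, and conclude with the toric ball-embedding lemma and the upper bounds of \cite{castroUpperBoundGromov2016}. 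The only minor imprecision is calling the simplex containment a ``closed'' condition -- containment of an open simplex of size $\ell_\lambda$ is not literally closed under limits, and the limiting step should instead be phrased (as in \cite[Section 6]{alekseevActionangleCoordinatesCoadjoint2020}) as embedding a simplex of size $\ell_\lambda-\epsilon$ for every $\epsilon>0$, which suffices for the Gromov width lower bound.
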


\begin{proof} Let $\underline{\beta}$ be such a good ordering. Then $\mathbf{v}_{\underline{\beta}}$ gives rise to an integrable system on $G\sslash N$ whose image is the rational cone $\cone(\mathsf{S}_{\mathbf{v}_{\underline{\beta}}})$ (cf.~Example~\ref{ex; flp valuations}).  The construction of the previous section produces an integrable system on $(\mathcal{O}_\lambda,\omega_\lambda)$, for every $\lambda \in \ttt_+^*$, whose image is the convex polytope given by the intersection of $\cone(\mathsf{S}_{\mathbf{v}_{\underline{\beta}}})$ and $\mathsf{c}^{-1}(\lambda)$ (cf.~Example~\ref{main example 2}).  In the case that $\lambda$ is integral, this intersection is identified (integral affinely) with the Newton-Okounkov body $\Delta_{\lambda}(\underline{\beta})$ defined in \cite[Section 3]{fangSimplicesNewtonOkounkovBodies2018}.
    
Given $\lambda$, let $\ell_\lambda$ denote the value of the right hand side of~\eqref{eq;ktconjecture}.
It was shown in \cite[Theorem 6.2]{fangSimplicesNewtonOkounkovBodies2018} that for $\lambda$ integral it is possible to integral affinely embed an equidimensional open simplex of size $\ell_\lambda$ into $\Delta_{\lambda}(\underline{\beta})$.  It follows by a scaling and continuity argument, similar to that of \cite[Section 6]{alekseevActionangleCoordinatesCoadjoint2020}, that for all $\lambda\in \ttt_+^*$ it is possible to integral affinely embed an equidimensional open simplex of size $\ell_\lambda$  into $\cone(\mathsf{S}_{\mathbf{v}_{\underline{\beta}}})\cap \mathsf{c}^{-1}(\lambda)$.  Combining this with Remark~\ref{rem;action-angle coordinates} and \cite[Proposition 2.1]{fangSimplicesNewtonOkounkovBodies2018}, we have  that the Gromov width of $(\mathcal{O}_\lambda,\omega_\lambda)$ is $\geq \ell_\lambda$. Since tight upper bounds are already known by \cite{castroUpperBoundGromov2016}, this completes the proof.
\end{proof}

It is conjectured that good orderings whose associated value semigroups are finitely generated exist for all Lie types; see \cite[Remark 6.3]{fangSimplicesNewtonOkounkovBodies2018} and \cite{fangEssentialBases2017}.  We remark that the proof of Theorem~\ref{thm;gwidth} can be applied to any suitable valuation $\mathbf{v}$ on $G\sslash N$, provided that one can prove a lemma similar to \cite[Theorem 6.2]{fangSimplicesNewtonOkounkovBodies2018} for the related Newton-Okounkov bodies. Thus, even if it is not the case that there exist valuations $\mathbf{v}_{\underline{\beta}}$ as described in Theorem~\ref{thm;gwidth} for all Lie types, one can still hope to close the Karshon--Tolman conjecture by applying  this approach to some other family of valuations. 

We end by briefly discussing how Theorem \ref{thm;gwidth} relates to previous results.\footnote{See \cite[Section 2]{fangSimplicesNewtonOkounkovBodies2018} for a more thorough survey. Our understanding at the time of writing this is that this survey is up-to-date, with the exception of \cite{alekseevActionangleCoordinatesCoadjoint2020} which appeared some time later.} Tight lower bounds were proven using Gelfand--Zeitlin systems for arbitrary coadjoint orbits of type $A_n$, $B_n$, or $D_n$ in \cite{pabiniakGromovWidthNonregular2013}. Tight lower bounds were proven for coadjoint orbits of arbitrary type using toric degenerations in \cite{fangSimplicesNewtonOkounkovBodies2018}, but, due to the limitations of the toric degeneration machinery at the time (as discussed in our introduction), those results only hold for orbits $\mathcal{O}_\lambda$ such that $\lambda$ is a scalar multiple of an integral dominant weight (such orbits are called \emph{rational}).  Tight lower bounds for all regular coadjoint orbits in arbitrary type were given in  \cite{alekseevActionangleCoordinatesCoadjoint2020}.  Thus, the remaining open cases are all non-regular, non-rational orbits in Lie type not equal to $A_n$, $B_n$, or $D_n$. Note that every coadjoint orbit of $G_2$ is rational or regular, so the conjecture is already closed for all $G_2$ coadjoint orbits. Theorem \ref{thm;gwidth}  closes the remaining cases, modulo the existence of suitable valuations.

\appendix

\section{Proof of Theorem \ref{thm; toric degenerations main theorem}}\label{section; proof of toric deg theorem}

We first establish a general result, adopting the notation of Section~\ref{section;degenerations background}. Recall that the variety $X$ is decomposed by smooth subvarieties $X^\sigma$ indexed by elements $\sigma$ of a poset $\Sigma$.  For each $\sigma \in \Sigma$, the subfamilies  $\X^{\overline{\sigma}}, \X^\sigma \subset \X$ are defined by the decomposition of $X$ and the trivialization of $\X$ away from 0 as in \eqref{eq; definition of subfamilies}.  Denote by $\X_z$, $\X_z^\sigma$, and $\X_z^{\overline{\sigma}}$ the fiber of $\pi$ over $z\in \C$ in $\X$, $\X^\sigma$, and $\X^{\overline{\sigma}}$ respectively.  By definition, $\X^\sigma \subset \X^{\overline{\sigma}} \subset \X$ and $\X_z^\sigma \subset \X_z^{\overline{\sigma}} \subset \X_z$ for all $z$. Let $Z \subset \X$ (respectively $Z^\sigma \subset \X^\sigma$ and  $Z^{\overline{\sigma}} \subset \X^{\overline{\sigma}}$) denote the union of the singular locus of $\X$ (respectively $\X^\sigma$ and $\X^{\overline{\sigma}}$) and the  critical set of $\pi$ viewed as a map with domain $\X$ (respectively $\X^\sigma$ and $\X^{\overline{\sigma}}$). Denote $U_z = \X_z \setminus (\X_z \cap Z)$, $U_z^\sigma = \X_z^\sigma \setminus (\X_z^\sigma \cap Z^\sigma)$, and 
$U_z^{\overline{\sigma}} = \X_z^{\overline{\sigma}} \setminus (\X_z^{\overline{\sigma}} \cap Z^{\overline{\sigma}})$.

The proof of the following is an application of Lemma \ref{lemma; ham grp action preserving pi}. The first half of the proof follows the outline of \cite[Corollary~2.11]{haradaIntegrableSystemsToric2015}, and the second half is a straightforward exercise.

\begin{lemma}\label{prop; gh flow preserving dh measure} 
	Let $\X$ be a variety and let $\pi \colon \X \to \C$ be a morphism of varieties such that 
	$Z$ is contained in $\X_0$, $U_0$ is non-empty, and  $\X\setminus Z$ is K\"ahler. Let $K$ be a connected Lie group and let $\psi\colon \X \to \kk^*$ be a continuous map. Assume:
	\begin{enumerate}[label=(\roman*), start=1]
		\item 
		There is a Hamiltonian action of $K$ on $\X\setminus Z$ with moment map $\psi\vert_{\X\setminus Z}$ such that the action of $K$ preserves the fibers of $\pi$ and the K\"ahler metric.
		\item   The map $(\pi,\psi)\colon \X \to \C \times \kk^*$ is proper as a map to its image.
	\end{enumerate}
	Then,  the flow $\varphi_{t}(x)$ is defined for all $x \in U_0$ and $t\in \R$. For $t\neq 0$ fixed, $\varphi_{-t}\colon (U_0,\omega_0,\psi)\to (\X_t,\omega_t,\psi)$ is a map of Hamiltonian $K$-manifolds. If additionally:
	\begin{enumerate}[label=(\roman*), start=3]
		\item  The Duistermaat-Heckman measures of $(U_0,\omega_0,\psi)$ and $(\X_t,\omega_t,\psi)$ are equal,
	\end{enumerate}
	then $\varphi_{-t} \colon U_0 \to \X_{t}$ is a symplectomorphism onto a dense subset of $\X_{t}$.
\end{lemma}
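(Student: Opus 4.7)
My plan is to deduce the three assertions in sequence from Noether's theorem (Lemma~\ref{lemma; ham grp action preserving pi}), the fact that $\varphi_t$ restricts to a fiber-wise symplectomorphism (Lemma~\ref{lemma; flow is symplectic}), and the properness hypothesis. The key observation is that along any integral curve of $V_\pi$, both $\Im \pi$ (because $\pi\circ\varphi_t = \pi - t$) and the moment map $\psi$ are preserved; combined with properness of $(\pi,\psi)$ onto its image, this will confine trajectories to compact subsets of $\X$ and thereby force global existence.

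For the first assertion, fix $x \in U_0$ and let $(a,b)\ni 0$ be the maximal interval of existence of $\varphi_t(x)$. For any compact $J \subset (a,b)$, the trajectory $\{\varphi_t(x) : t \in J\}$ sits inside $(\pi,\psi)^{-1}(-J \times \{\psi(x)\})$, which is compact by hypothesis~(ii). Moreover, $\pi(\varphi_t(x)) = -t \neq 0$ for $t\neq 0$, so the trajectory stays in $\X \setminus \X_0 \subset \X \setminus Z$ and $V_\pi$ remains smooth along it. A standard escape-time argument then gives $(a,b) = \R$. The second assertion is then immediate: Lemma~\ref{lemma; flow is symplectic} makes $\varphi_{-t} \colon (U_0,\omega_0) \to (\X_t,\omega_t)$ a symplectic diffeomorphism onto its image for each $t\ne 0$, and Lemma~\ref{lemma; ham grp action preserving pi} provides $K$-equivariance together with $\psi \circ \varphi_{-t} = \psi$.

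For the third assertion, I would exploit hypothesis~(iii) together with the fact that $\varphi_{-t}$ is an injective symplectic map intertwining the moment maps. It identifies $(U_0,\omega_0,\psi)$ with its image $(\varphi_{-t}(U_0),\omega_t,\psi)$ as Hamiltonian $K$-spaces, so the latter has the same Duistermaat--Heckman measure as the former, which by hypothesis equals that of $(\X_t,\omega_t,\psi)$. Consequently, the closed complement $C := \X_t \setminus \varphi_{-t}(U_0)$ has zero DH measure; since $\psi|_{\X_t}$ is proper onto its image and $\kk^*$ is $\sigma$-compact, this upgrades to the statement that $C$ itself has zero Liouville volume in $\X_t$. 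But any nonempty open subset of the symplectic manifold $\X_t$ carries positive Liouville volume, so $C$ must have empty interior, i.e.~$\varphi_{-t}(U_0)$ is dense in $\X_t$.

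I expect the main obstacle to be the global existence step: one must carefully use the properness hypothesis to control both the base coordinate ($\pi$) and the moment map coordinate ($\psi$) simultaneously along any trajectory, and one must verify that the combination of $Z \subset \X_0$ and conservation of $\Im \pi$ really does keep integral curves starting in $U_0$ inside $\X \setminus Z$ for all $t$. The remaining steps are then straightforward consequences of the cited lemmas and a measure-theoretic pigeonhole argument.
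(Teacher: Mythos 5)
Your proposal is correct and follows essentially the same route the paper sketches: global existence from properness of $(\pi,\psi)$ plus conservation of $\pi$ and $\psi$, the symplectic/Hamiltonian map structure from Lemmas~\ref{lemma; flow is symplectic} and~\ref{lemma; ham grp action preserving pi}, and density of the image from the Duistermaat--Heckman hypothesis via a Liouville-volume pigeonhole, exactly the outline of \cite[Corollary~2.11]{haradaIntegrableSystemsToric2015} to which the paper defers. One small point worth tightening in your write-up of the escape-time step: the compact set $(\pi,\psi)^{-1}(-J\times\{\psi(x)\})$ meets $\X_0$ (and possibly $Z$) whenever $0\in J$, so the cleanest formulation is that any accumulation point of $\varphi_{t_n}(x)$ with $t_n\to b^-$ has $\pi$-value $-b\neq 0$, hence lies in $\X\setminus\X_0\subset\X\setminus Z$, which is enough to extend the flow; you have already noted the essential ingredients for this.
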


We now turn to the proof of Theorem~\ref{thm; toric degenerations main theorem}. Throughout the remainder of this section, let $(X,M,\omega_M)$ be a decomposed K\"ahler variety and let $\pi \colon \X \to \C$ be a degeneration of $X$ that satisfies assumptions \ref{assumeA}--\ref{assumeG} as in Section \ref{section; Toric degenerations of stratified affine varieties}.

\begin{lemma}\label{lemma;immediate consequences}
	For all $\sigma \in \Sigma$, $Z_0^\sigma$ is contained in $\X_0^\sigma$. Moreover, $U_0^\sigma$ is the smooth locus of $\X_0^\sigma$. 
\end{lemma}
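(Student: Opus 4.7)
The plan is to exploit the trivialization $\rho$ to reduce the first assertion to a direct inspection, and then to apply Proposition~\ref{cor; HK cor 2.10} to the subfamily $\X^{\overline{\sigma}}$ to deduce the second assertion.

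For the first claim ($Z^\sigma \subset \X_0^\sigma$), I would note that the trivialization restricts to an algebraic isomorphism $X^\sigma \times \C^\times \cong \X^\sigma \setminus \X_0^\sigma$ intertwining $\pi$ with $\pr_2$. Since $X^\sigma$ is a smooth irreducible variety (being a piece of the decomposed variety $X$), the product $X^\sigma \times \C^\times$ is smooth, and $\pr_2$ is a submersion onto $\C^\times$. Transporting back, $\X^\sigma \setminus \X_0^\sigma$ is smooth and $\pi$ is a submersion there. Consequently, neither the singular locus of $\X^\sigma$ nor the critical set of $\pi|_{\X^\sigma}$ can meet $\X^\sigma \setminus \X_0^\sigma$, so $Z^\sigma \subset \X_0^\sigma$.

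For the second claim, I would first verify that $\pi \colon \X^{\overline{\sigma}} \to \C$ is itself a degeneration in the sense of Definition~\ref{def; HK toric degeneration}: the trivialization $\rho$ restricts to an isomorphism $\overline{X^\sigma} \times \C^\times \to \X^{\overline{\sigma}} \setminus \X_0^{\overline{\sigma}}$ intertwining $\pi$ with $\pr_2$; the zero fiber $\X_0^{\overline{\sigma}}$ is nonempty because $U_0^\sigma \subset \X_0^\sigma \subset \X_0^{\overline{\sigma}}$ and $U_0^\sigma$ is nonempty by~\ref{assumeF}; and flatness is exactly assumption~\ref{assumeA}. Proposition~\ref{cor; HK cor 2.10} then yields $U_0^{\overline{\sigma}} = (\X_0^{\overline{\sigma}})^{sm}$.

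The final step is to pass from $\X^{\overline{\sigma}}$ to $\X^\sigma$. Since $\X^\sigma = \X^{\overline{\sigma}} \setminus \bigcup_{\tau \prec \sigma} \X^{\overline{\tau}}$ is Zariski-open in $\X^{\overline{\sigma}}$, the singular locus of $\X^\sigma$ is the intersection of $\X^\sigma$ with the singular locus of $\X^{\overline{\sigma}}$, and the critical set of $\pi|_{\X^\sigma}$ is likewise the intersection of $\X^\sigma$ with that of $\pi|_{\X^{\overline{\sigma}}}$. Hence $Z^\sigma = Z^{\overline{\sigma}} \cap \X^\sigma$, and intersecting with $\X_0$ gives
\[
    U_0^\sigma \;=\; \X_0^\sigma \cap U_0^{\overline{\sigma}} \;=\; \X_0^\sigma \cap (\X_0^{\overline{\sigma}})^{sm}.
\]
Because $\X_0^\sigma$ is open in $\X_0^{\overline{\sigma}}$, this last set is precisely the smooth locus of $\X_0^\sigma$, completing the proof. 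I do not anticipate any genuine obstacle here; the only mild subtlety is checking that $\X^{\overline{\sigma}} \to \C$ really does satisfy all three conditions of Definition~\ref{def; HK toric degeneration}, which is why assumption~\ref{assumeA} is invoked together with the nonemptiness supplied by~\ref{assumeF}.
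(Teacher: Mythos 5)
Your proof is correct and follows essentially the same route as the paper: establish $Z^\sigma \subset \X_0^\sigma$ from the trivialization, note $\X^\sigma$ is open in $\X^{\overline{\sigma}}$ so that $Z^\sigma = Z^{\overline{\sigma}} \cap \X^\sigma$, then apply Proposition~\ref{cor; HK cor 2.10} to the degeneration $\X^{\overline{\sigma}} \to \C$ (flat by \ref{assumeA}, nonempty zero fiber via \ref{assumeF}) and intersect with the open set $\X_0^\sigma$. You spell out the degeneration check slightly more explicitly than the paper, which records that observation in the body text immediately after stating \ref{assumeF}, but the substance is identical.
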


\begin{proof}
	The fact that $Z^\sigma_0$ is contained in $\X_0^\sigma$ is a consequence of \ref{assumeB} and the smoothness of $X^\sigma$. By \eqref{eq; definition of subfamilies}, $\X^\sigma$ is an open subset of $\X^{\overline{\sigma}}$.   Thus, $Z^\sigma = Z^{\overline{\sigma}} \cap \X^\sigma$ and $U^\sigma = U^{\overline{\sigma}} \cap \X^\sigma$. By Assumption \ref{assumeA} and Proposition \ref{cor; HK cor 2.10}, $U_0^{\overline{\sigma}}$ is the smooth locus of $\X^{\overline{\sigma}}_0$. Since $\X_0^\sigma$ is an open subset of $\X^{\overline{\sigma}}_0$, it follows that $U_0^\sigma$ is the smooth locus of $\X_0^\sigma$.
\end{proof}

\begin{lemma}\label{lemma;immediate consequences II}
    The following statements are true for all $\sigma \in \Sigma$.
    \begin{enumerate}[label=(\alph*)]
        \item The flow $\varphi_{-1}^\sigma$ is defined for all $x \in  U_0^\sigma$.
        \item The map $\varphi_{-1}^\sigma\colon (U_0^\sigma,\omega_0^\sigma,\psi) \to (\X_1^\sigma,\omega_1^\sigma,\psi)$ is a map of Hamiltonian $T$-manifolds.
        \item The set $D^\sigma = \varphi_{-1}^\sigma (U_0^\sigma)$ is dense in $\X_1^\sigma$.
        \item The map  $\varphi_{-1}^\sigma\colon (U_0^\sigma,\omega_0^\sigma) \to (\X_1^\sigma,\omega_1^\sigma)$ is a symplectomorphism onto its image.
    \end{enumerate}
\end{lemma}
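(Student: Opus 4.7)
The plan is to deduce all four assertions in a single stroke by applying Lemma \ref{prop; gh flow preserving dh measure} to the subfamily $\pi\colon \X^\sigma \to \C$, taking the Lie group there to be $T$ and the moment map to be $\psi$. With that choice, the four conclusions of Lemma \ref{prop; gh flow preserving dh measure} line up exactly with items (a)–(d) of the statement, so the content of the proof is purely the verification of its hypotheses.

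The first hypothesis, $Z^\sigma \subset \X^\sigma_0$, follows from the trivialization away from zero: by \eqref{eq; definition of subfamilies} we have $\X^\sigma \setminus \X^\sigma_0 \cong X^\sigma \times \C^\times$, which is smooth because $X^\sigma$ is a smooth piece of the decomposed variety, and on which $\pi$ is the projection to $\C^\times$, hence a submersion. Thus the union of the singular locus of $\X^\sigma$ and the critical set of $\pi|_{\X^\sigma}$ is contained in $\X^\sigma_0$. Nonemptiness of $U^\sigma_0$ is part of \ref{assumeF}. The K\"ahler structure on $\X^\sigma \setminus Z^\sigma$ comes from the embedding of \ref{assumeB} into $M \times \C$. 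Assumption \ref{assumeE}I) is precisely the statement that $T$ acts Hamiltonically with moment map $\psi$, preserving the fibers of $\pi$ and the K\"ahler metric. Properness of $(\pi,\psi) \colon \X^\sigma \to \C \times \ttt^*$ as a map to its image is the observation made immediately after \ref{assumeE} in the text, using II) and III). Finally, the Duistermaat--Heckman equality of condition (iii) in Lemma \ref{prop; gh flow preserving dh measure} is exactly \ref{assumeF}.

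Granted these hypotheses, Lemma \ref{prop; gh flow preserving dh measure} directly yields: $\varphi^\sigma_{-1}$ is defined on $U^\sigma_0$, proving (a); it is a morphism of Hamiltonian $T$-manifolds from $(U^\sigma_0,\omega^\sigma_0,\psi)$ to $(\X^\sigma_1,\omega^\sigma_1,\psi)$, proving (b); and it is a symplectomorphism onto its image, which is dense in $\X^\sigma_1$, proving (c) and (d). I do not foresee a serious obstacle; the only small point to verify is that the gradient Hamiltonian vector field of the holomorphic submersion $\pi \colon \X^\sigma \setminus Z^\sigma \to \C$ used by Lemma \ref{prop; gh flow preserving dh measure} agrees, on $\X^\sigma \setminus \X^\sigma_0$, with the restriction $V_\pi^\sigma$ appearing in \eqref{eqn; stratified gradient hamiltonian vector field}; this is immediate since both are computed with respect to the same restricted K\"ahler metric and the same $\pi$. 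In short, the whole content of the lemma is the observation that assumptions \ref{assumeA}--\ref{assumeG} have been arranged precisely so that each piece $\X^\sigma$ is itself a degeneration of the sort to which the general Lemma \ref{prop; gh flow preserving dh measure} applies.
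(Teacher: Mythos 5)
Your proposal is correct and mirrors the paper's own proof almost exactly: both reduce the lemma to a direct application of Lemma~\ref{prop; gh flow preserving dh measure} to the subfamily $\pi\colon \X^\sigma\to\C$ and then check the hypotheses against \ref{assumeB}, \ref{assumeE}, and \ref{assumeF}. The only surface difference is that the paper cites Lemma~\ref{lemma;immediate consequences} for the inclusion $Z^\sigma\subset\X_0^\sigma$, whereas you inline an argument via the trivialization away from $0$ together with smoothness of $X^\sigma$; that is the same reasoning the paper uses inside Lemma~\ref{lemma;immediate consequences}, so there is no substantive divergence.
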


\begin{proof} Fix any $\sigma \in \Sigma$. The proof is a direct application of Lemma \ref{prop; gh flow preserving dh measure} to the subfamily $\pi\colon \X^{\sigma} \to \C$.  We have $Z^\sigma \subset \X^\sigma_0$ by Lemma \ref{lemma;immediate consequences}. The set $U_0^\sigma$ is non-empty by \ref{assumeF}. The smooth subvariety $U_0^\sigma $ inherits a K\"ahler structure from the embedding into $M\times \C$ given by Assumption \ref{assumeB}. Assumptions (i)--(iii) of Lemma \ref{prop; gh flow preserving dh measure} are precisely assumptions \ref{assumeE} and \ref{assumeF}.
\end{proof}

Recall the stratified gradient Hamiltonian flow $\varphi_t$ of the stratified gradient Hamiltonian vector field $V_\pi$, defined in \eqref{eq; stratified flow def}.

\begin{lemma} \label{CTSflowlemma}
The stratified gradient 
Hamiltonian flow $\varphi_t \colon \X_1 \to \X_{1-t}$ is continuous for all $0<t<1$.
\end{lemma}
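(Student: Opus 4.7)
The plan is to prove sequential continuity: given $x_n\to x$ in $\X_1$, I must show $\varphi_t(x_n)\to \varphi_t(x)$ in $\X_{1-t}$. Writing $x\in \X_1^\sigma$, by local finiteness of the decomposition and the frontier condition one can pass to a subsequence in which every $x_n$ lies in a single stratum $\X_1^\tau$ with $\sigma\preceq\tau$. Set $\gamma_n(s):=\varphi_s^\tau(x_n)$ for $s\in[0,t]$; these trajectories exist by Proposition~\ref{proposition;flow exists}, satisfy $\pi(\gamma_n(s))=1-s$, and by Lemma~\ref{lemma; ham grp action preserving pi} applied to the torus action of~\ref{assumeE} also satisfy $\psi(\gamma_n(s))=\psi(x_n)$.

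I would then extract a uniformly convergent subsequence via Arzel\`a--Ascoli. Since $\psi(x_n)\to\psi(x)$ remains bounded and $(\pi,\psi)\colon\X\to\C\times\ttt^*$ is proper by~\ref{assumeE}II), the union $\bigcup_n\gamma_n([0,t])$ lies in a compact subset $C\subset\X\setminus \X_0$; note that $\pi\geq 1-t>0$ on all the trajectories, so $C$ avoids the special fiber. Assumption~\ref{assumeG} makes $V_\pi$ continuous on $\X\setminus \X_0$, hence bounded on $C$, so $\{\gamma_n\}$ is equicontinuous. A subsequence converges uniformly to a continuous curve $\gamma\colon [0,t]\to \X\setminus \X_0$ with $\gamma(0)=x$. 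Passing to the limit in the integral equation $\gamma_n(s)=x_n+\int_0^s V_\pi(\gamma_n(u))\,du$ (taken inside the ambient space $M\times\C$) shows that $\gamma$ is $C^1$ and satisfies $\gamma'(s)=V_\pi(\gamma(s))$.

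The main step, where both~\ref{assumeE} and~\ref{assumeG} really enter, is to show $\gamma([0,t])\subset \X^\sigma$; once this is in hand, $\gamma$ is an integral curve of the smooth vector field $V_\pi^\sigma$ on $\X^\sigma$ starting at $x$, so by uniqueness of smooth flows $\gamma(s)=\varphi_s^\sigma(x)$ on $[0,t]$, and in particular $\gamma(t)=\varphi_t(x)$. A standard subsequence argument then yields $\varphi_t(x_n)\to\varphi_t(x)$. To prove the containment, observe that on each stratum $\X^\rho\setminus \X_0^\rho$, Lemma~\ref{lemma; ham grp action preserving pi} implies $V_\pi^\rho$ is tangent to the fibers of $\psi$, so $d\psi\cdot V_\pi\equiv 0$ on $\X\setminus \X_0$. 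Combined with $\gamma'=V_\pi\circ\gamma$, this forces $\psi\circ\gamma\equiv\psi(x)$. The saturation clause~\ref{assumeE}III) then asserts that $\X^\sigma$ is a union of $\psi$-fibers in $\X$; since $x\in\X^\sigma$ we obtain $\gamma(s)\in\psi^{-1}(\psi(x))\cap\X\subset\X^\sigma$ for every $s$. The hard part is precisely recognizing that saturation combines with Noether's theorem to substitute for the Lipschitz bound that would otherwise be needed to pin down the flow across strata.
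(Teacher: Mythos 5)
Your proof is correct and reaches the same conclusion by the same high-level structure — reduce to sequential continuity, pass to a subsequence in a single stratum $\tau \succeq \sigma$, pin down a $C^1$ limit curve using properness of $(\pi,\psi)$ and continuity of $V_\pi$, use Noether plus the saturation clause \ref{assumeE}III) to place it in $\X^\sigma$, and finish by uniqueness of smooth flows on $\X^\sigma$. Where you diverge is the middle step: you invoke Arzel\`a--Ascoli directly on the trajectories $\gamma_n$, with equicontinuity supplied by boundedness of $V_\pi$ on the compact set $C$ that properness gives you, and then pass to the limit in the ODE (equivalently, note that $\gamma_n' = V_\pi\circ\gamma_n$ converges uniformly by uniform continuity of $V_\pi$ on $C$, so the uniform limit of $\gamma_n$ is $C^1$ with the right derivative). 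The paper instead diagonalizes to get pointwise convergence on $\Q\cap[0,T]$ and then runs a three-claim contradiction argument, repeatedly applying the mean value inequality and the boundedness of $\|V_\pi\|$ on $C$, to show the derivatives converge uniformly on $\Q\cap[0,T]$ and thence that $\varphi_t(x_i)$ converges uniformly to a $C^1$ path. Your packaging is cleaner and a few lines shorter; the paper's is more elementary in the sense that it never names Arzel\`a--Ascoli, but it is essentially re-deriving the needed part of that theorem by hand. One cosmetic caveat: writing the integral equation ``$\gamma_n(s)=x_n+\int_0^s V_\pi(\gamma_n(u))\,du$ in $M\times\C$'' tacitly treats $M$ as linear; Theorem~\ref{thm; toric degenerations main theorem} only assumes $M$ is a smooth K\"ahler variety, so you should either phrase this locally in charts or just cite the uniform-convergence-of-derivatives theorem. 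Also, your derivation of $\psi\circ\gamma\equiv\psi(x)$ via $d\psi\cdot V_\pi\equiv 0$ is fine but slightly roundabout; the paper's route — $\psi$ is constant along each $\gamma_n$ and is continuous, so it is constant along the uniform limit — avoids differentiating through the stratified structure.
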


Before proving Lemma \ref{CTSflowlemma}, we note the following elementary lemma.

\begin{lemma} \label{dumbsubsequencelemma}
Let $X$ and $Y$ be metric spaces, let $f\colon X\to Y$ be a map of the underlying sets, and let $x\in X$. Assume that for every sequence $\{x_i\}_{i\in \N} \subset X$ with $\lim_{i\to \infty} x_i = x$, there is a subsequence $\{x_{i_j}\}_{j\in \N}$  with the property that $\lim_{j\to \infty} f(x_{i_j}) = f(x)$. Then, $f$ is continuous at $x$. \end{lemma}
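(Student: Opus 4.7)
The plan is to prove the lemma by contradiction using the standard sequential characterization of continuity in metric spaces. Since $X$ and $Y$ are metric spaces, continuity of $f$ at $x$ is equivalent to the statement that $f(x_i) \to f(x)$ for every sequence $\{x_i\}_{i \in \mathbb{N}} \subset X$ with $x_i \to x$.

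So suppose, for contradiction, that $f$ is not continuous at $x$. Then there exists a sequence $\{x_i\}_{i \in \mathbb{N}}$ with $x_i \to x$ such that $f(x_i) \not\to f(x)$. Unpacking this negation using the metric on $Y$, there exists some $\varepsilon > 0$ and a subsequence $\{x_{i_k}\}_{k \in \mathbb{N}}$ such that $d_Y(f(x_{i_k}), f(x)) \geq \varepsilon$ for all $k \in \mathbb{N}$.

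Now apply the hypothesis to the sequence $\{x_{i_k}\}_{k \in \mathbb{N}}$, which still converges to $x$ (as a subsequence of a convergent sequence). By assumption, there exists a sub-subsequence $\{x_{i_{k_j}}\}_{j \in \mathbb{N}}$ such that $f(x_{i_{k_j}}) \to f(x)$ as $j \to \infty$. In particular, for $j$ sufficiently large we would have $d_Y(f(x_{i_{k_j}}), f(x)) < \varepsilon$, contradicting $d_Y(f(x_{i_k}), f(x)) \geq \varepsilon$ for every $k$. This contradiction completes the proof.

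There is no real obstacle here; this is a classical metric-space argument sometimes phrased as ``every subsequence has a sub-subsequence converging to the same limit implies the full sequence converges.'' The only subtlety is ensuring we use the hypothesis correctly: it is applied not to the original sequence $\{x_i\}$ but to the ``bad'' subsequence $\{x_{i_k}\}$ that stays uniformly away from $f(x)$, and it is the resulting sub-subsequence that yields the contradiction.
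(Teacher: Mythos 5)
Your proof is correct, and since the paper leaves this lemma unproved as an elementary fact, your contradiction argument (extract a subsequence staying $\varepsilon$-away from $f(x)$, then apply the hypothesis to it) is exactly the standard argument the authors intended. No gaps.
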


\begin{proof}[Proof of Lemma \ref{CTSflowlemma}] Fix $\sigma\in\Sigma$, $x\in \X_1^\sigma$, and $T\in (0,1)$. We prove that $\varphi_T\colon \X_1\to \X_{1-T}$ is continuous at $x$. 

Let  $\{x_i\}_{i\in \N} \subset \X_1$ be an arbitrary sequence converging to $x$. By Lemma~\ref{dumbsubsequencelemma}, it suffices to find a subsequence $\{x_{i_j}\}_{j\in \N}$ so that $\{\varphi_{T}(x_{i_j})\}_{j\in \N}$ converges to $\varphi_{T}(x)$. By passing to a subsequence if necessary, we may assume without loss of generality that $\{x_i\}_{i\in \N}\subset \X^\tau_1$ for some $\tau \geq \sigma$.  (If $\tau = \sigma$ then the result follows immediately since the restriction of $V_\pi$ to $\X^\sigma\setminus \X_0^\sigma$ is smooth. The remainder of the proof deals with the case $\tau > \sigma$.)

Consider the sequence of paths $\{\varphi_t (x_i) \colon [0,T] \to \X^\tau\}_{i\in \N}$. Since $\varphi_t$ preserves $\psi$ and $x_i$ converges to $x$, there is a compact set $c\subset \psi(X)$ so that $\varphi_t(x_i) \in \psi\n(c) \cap \pi\n([0,T])$ for all $t\in [0,T]$ and all $i\in \N$. By Assumption \ref{assumeE}II),  $\psi\n(c) \cap \pi\n([0,T])$ is compact. By a standard diagonalization argument, by replacing $\{x_i\}_{i\in \N}$ with a subsequence, we may assume that for each $t\in \Q \cap [0,T]$
the sequence of points $\{\varphi_t(x_i)\}_{i\in \N}$ converges as $i\to \infty$.

We will show below that the sequence of time derivatives
\begin{equation} \label{sequenceofderivatives}
\frac{d\varphi_t (x_i)}{dt} = V_\pi(\varphi_t(x_i)), \qquad i\in \N,
\end{equation}
 converges uniformly on $\Q \cap [0,T]$. 
 Assuming this to be true for the moment, because $\Q \cap [0,T]$ is dense in $[0,T]$ it follows that the sequence $\{V_\pi(\varphi_t(x_i))\}_{i\in \N}$ converges uniformly on $[0,T]$. As a consequence, the paths $\varphi_t (x_i)$ converge uniformly to a $C^1$ path $\mu\colon [0,T] \to \X$. For all $t' \in [0,T],$
 \begin{align*}
V_\pi(\mu(t' ))  &= V_\pi(\lim_{i \to \infty} \varphi_{t'} (x_i)) & \text{$\varphi_t(x_i)$ converges to $\mu(t)$.}\\
& =\lim_{i\to \infty} V_\pi( \varphi_{t'} (x_i))& \text{Assumption \ref{assumeG}} \\
& = \lim_{i\to \infty} \left( \frac{d}{dt}\varphi_t (x_i)\Big|_{t=t'}\right) & \text{Definition of $\varphi_t$.}\\
& = \frac{d}{dt}\left( \lim_{i\to \infty} \varphi_t (x_i)\right) \Big|_{t=t'} & \text{Uniform convergence of derivatives.} \\
& = \frac{d}{dt}\mu(t) \Big|_{t=t'} & \text{$\varphi_t(x_i)$ converges to $\mu(t)$.}
 \end{align*}
Since $\varphi_t$ preserves $\psi$,
\begin{equation} \label{equation; where does the psi go}
    \psi(\mu(t)) = \psi\left(\lim_{i\to \infty}\varphi_t(x_i)\right) 
    = 
    \lim_{i\to \infty}\psi\left(\varphi_t(x_i)\right)
    = \lim_{i\to \infty} \psi\left(x_i\right) = \psi(x).
\end{equation}
Thus, $\mu([0,T])$ is contained in $\psi\n(\psi(x))$. It follows by Assumption \ref{assumeE}III) that $\mu([0,T])$ is contained in $\X^\sigma$.

In summary, the path $\mu(t)$ solves the same initial value problem on $\X^\sigma\setminus \X_0^\sigma$, defined by the smooth vector field $V_\pi^\sigma$ and the initial value $\mu(0) = x$, as the integral curve $\varphi^\sigma_t (x)$. It follows by uniqueness of solutions that
\[
\lim_{i\to \infty} \varphi_t(x_i) = \mu(t) = \varphi_t (x)
\]
 for all $t\in [0,T]$. In particular, this holds for $t=T$, which completes the proof (modulo the claim that~\eqref{sequenceofderivatives} converges uniformly).

It remains to show that~\eqref{sequenceofderivatives} converges uniformly on $\Q \cap [0,T]$. For each $t\in \Q\cap [0,T]$ let $\mu(t)=\lim_{i\to \infty} \varphi_t(x_i)$; we have already established this limit exists. 
Assume for the sake of contradiction that~\eqref{sequenceofderivatives} does not converge uniformly to $V_\pi(\mu)$, as a function of $t\in \Q\cap [0,T]$. Then there is some $\gamma>0$ so that, for all $N>0$, there is $i \ge N$ and $t_i\in \Q\cap [0,T]$ with 
\begin{equation} \label{boundedaway}
\gamma< || V_\pi(\varphi_{t_i}(x_i)) - V_\pi( \mu(t_i))||.
\end{equation}
By passing to a subsequence, we may assume that the sequence $\{(x_i,t_i)\}_{i\in \N}$ satisfies \eqref{boundedaway} for all $i \in \N$.
By the compactness of $[0,T]$, we may also assume that $
\lim_{i\to \infty} t_i = t_\star
$ for some $t_\star\in [0,T]$. 
Similarly, by the compactness of $\psi\n(c) \cap \pi\n(t_\star)$ (and the same argument as~\eqref{equation; where does the psi go}), we may additionally assume that 
$
\lim_{i\to \infty} \varphi_{t_\star}(x_i) =y 
$
for some $y \in \X^\sigma$. 
We first prove three preliminary claims. 

\textbf{Claim 1:} $\mu$ has a unique continuous extension to $[0,T]$. 

\textbf{Proof of Claim 1:} Let $\{s_n\}_{n\in \N}\in \Q\cap [0,T]$ be a sequence converging to some $s\in [0,T]$. Assume the sequence $\{\mu(s_n)\}_{n\in \N}$ is not Cauchy; then there is $\epsilon>0$ so that for all $N>0$ there exist $m, n\in\N$ with $||\mu(s_n)-\mu(s_m)||>\epsilon$. Since $\lim_{n\to \infty} s_n= s$, for any $L>0$ we may find $n,m$ so that $|s_n-s_m|<1/L$ and $||\mu(s_n)-\mu(s_m)||>\epsilon$. For any $\epsilon'>0$ we can pick $i\in \N$ sufficiently large that 
\[
||\varphi_{s_n}(x_i) - \mu(s_n)||<\epsilon'/2, \text{ and } \qquad ||\varphi_{s_m}(x_i) - \mu(s_m)||<\epsilon'/2.
\]
Then
$
||\varphi_{s_n}(x_i) - \varphi_{s_m}(x_i) || > \epsilon-\epsilon'.
$
By the mean value inequality applied to the path $\varphi_s(x_i)$, there is $s'$ between $s_m$ and $s_n$ so that 
\[
||V_\pi(\varphi_{s'}(x_i))||\ge \frac{||\varphi_{s_n}(x_i) - \varphi_{s_m}(x_i) ||}{|s_n-s_m|} > L(\epsilon-\epsilon').
\]
Since $L$ and $\epsilon'$ were arbitrary, this implies that $||V_\pi||$ is unbounded on the compact set $\psi\n(c) \cap \pi\n([0,T])$. By the assumption~\ref{assumeG}, this is a contradiction. Therefore $\lim_{n\to \infty} \mu(s_n)$ exists. This extension is also unique; if  $s_n\to s$ and $s_n'\to s$ are two sequences with $\lim_{n\to \infty} \mu(s_n) \ne \lim_{n\to \infty} \mu(s_n')$, then the sequence $\mu(s_1),\mu(s_1'),\mu(s_2),\mu(s_2'),\dots$ has no limit, contradicting what we have shown above. This proves Claim 1.

\textbf{Claim 2:} $\lim_{i\to \infty} \varphi_{t_\star}(x_i) = \mu(t_\star)$. 

\textbf{Proof of Claim 2:} If not, and write $\lim_{i\to \infty} \varphi_{t_\star}(x_i) = y$ as before; then $||y - \mu(t_\star)|| = \epsilon>0$. Let $L>0$, and pick $t'\in \Q\cap [0,T]$ with $|t'-t_\star|<1/L$.
For any $\epsilon'>0$, we may choose $i\in \N$ with
\[
||\varphi_{t_\star}(x_i) - y||<\epsilon'/2, \text{ and } \qquad ||\varphi_{t'}(x_i) - \mu(t')||<\epsilon'/2.
\]
Then 
$
||\varphi_{t_\star}(x_i) - \varphi_{t'}(x_i) || > \epsilon-\epsilon'.
$
By the mean value inequality applied to the path $\varphi_t(x_i)$, there is $t''$ between $t'$ and $t_\star$ so that 
\[
||V_\pi(\varphi_{t''}(x_i))||\ge \frac{||\varphi_{t_\star}(x_i) - \varphi_{t'}(x_i) ||}{|t_\star-t'|} > L(\epsilon-\epsilon').
\]
Since $L$ and $\epsilon'$ were arbitrary,  this implies that $||V_\pi||$ is unbounded on the compact set $\psi\n(c) \cap \pi\n([0,T])$. By the assumption~\ref{assumeG}, this is a contradiction. Thus $\lim_{i\to \infty} \varphi_{t_\star}(x_i) = y = \mu(t_\star)$, which establishes Claim 2.

\textbf{Claim 3:} $\lim_{i\to \infty} ||\varphi_{t_i}(x_i) -\varphi_{t_\star}(x_i)||=0$. 

\textbf{Proof of Claim 3:} If not, then there is some $\epsilon>0$ so that for all $N$ there exists $i>N$ with $||\varphi_{t_i}(x_i) -\varphi_{t_\star}(x_i)||>\epsilon$. Let $L>0$, and pick $N$ sufficiently large that $|t_i-t_\star|<1/L$ for all $i>N$. Fix $i>N$ so that $||\varphi_{t_i}(x_i) -\varphi_{t_\star}(x_i)||>\epsilon$. By the mean value inequality applied to the path $\varphi_t(x_i)$, there is some $t'$ between $t_\star$ and $t_i$ so that
\[
||V_\pi(\varphi_{t'}(x_i))||\ge \frac{||\varphi_{t_i}(x_i) - \varphi_{t_\star}(x_i) ||}{|t_i-t_\star|} > L\epsilon.
\]
Since $\epsilon$ was fixed and $L$ arbitrary, this implies that $||V_\pi||$ is unbounded on the compact set $\psi\n(c) \cap \pi\n([0,T])$. By the assumption~\ref{assumeG}, this is a contradiction. This proves Claim 3.

Now, we may complete the proof that~\eqref{sequenceofderivatives} converges uniformly on $\Q\cap [0,T]$. One has for all $i\in \N$, 
\[
|| \varphi_{t_i}(x_i) - \mu(t_\star)|| \le ||\varphi_{t_i}(x_i) - \varphi_{t_\star}(x_i) || + ||\varphi_{t_\star}(x_i) - \mu(t_\star) ||.
\]
By Claim 2 and Claim 3, both terms on the right hand side go to zero as $i\to \infty$. By the assumption~\ref{assumeG}, it follows that
\begin{equation} \label{thislimit}
\lim_{i\to \infty} V_\pi(\varphi_{t_i}(x_i)) = V_\pi(\mu(t_\star)).
\end{equation}
Similarly, by Claim 1 and assumption~\ref{assumeG}, one has
\begin{equation} \label{thislimit2}
\lim_{i\to \infty} V_\pi(\mu(t_i)) = V_\pi(\mu(t_\star)).
\end{equation}
At the same time, by~\eqref{boundedaway}
\[
0<\gamma< || V_\pi(\varphi_{t_i}(x_i)) - V_\pi(\mu(t_i))|| \le ||V_\pi(\varphi_{t_i}(x_i)) - V_\pi(\mu(t_\star)) || + ||V_\pi(\mu(t_\star) - V_\pi(\mu(t_i)) ||
\]
for all $i$. But by~\eqref{thislimit} and~\eqref{thislimit2}, the right hand side goes to zero as $i\to \infty$. This is a contradiction. Therefore, the sequence of derivatives~\eqref{sequenceofderivatives} converges uniformly on $\Q\cap [0,T]$, as desired.
\end{proof}

The proof of the following closely follows the outline of \cite[Theorem 2.12]{haradaIntegrableSystemsToric2015}. The difficulty here consists in finding a $\rho$ which works uniformly for all strata $\sigma \in \Sigma$. The details, which are an exercise in point set topology, are left to the reader.

\begin{lemma} \label{precontinuity}
For all $x\in \X_1$, the limit $
	\lim_{t\to 1^-} \varphi_t(x) $
exists and is an element of $\X_0$. For any open precompact
subset $A \subset \psi(M)$ and $\epsilon>0$, there exists $\rho>0$ such that for all $0 < s < \rho$ and $x\in \psi\n(A)\cap \X_1$,
\begin{equation*}
	||\varphi_{1-s}(x)-\lim_{t\to 1^-}\varphi_t(x) ||<\epsilon.
\end{equation*}
\end{lemma}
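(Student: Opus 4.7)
The proof will proceed by first establishing existence of the limit pointwise on each stratum, then passing to a uniform estimate via local finiteness of the decomposition.

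First, I would fix $x \in \X_1^\sigma$ and observe that, by Proposition \ref{proposition;flow exists}, the trajectory $\varphi_t(x) = \varphi_t^\sigma(x)$ is defined for $t\in[0,1)$ and stays within $\X^\sigma$. Since the flow preserves $\psi$ (Lemma \ref{lemma; ham grp action preserving pi}) and satisfies $\pi(\varphi_t(x)) = 1-t$ (as $d\Re\pi(V_\pi) = -1$ and $d\Im\pi(V_\pi) = 0$), the trajectory lies in the compact set $\psi^{-1}(\psi(x)) \cap \pi^{-1}([0,1])$, where compactness comes from assumption \ref{assumeE}II). Since $(\X^\sigma\setminus \X_0^\sigma,\omega^\sigma)$ is K\"ahler and $\pi$ restricts to a holomorphic submersion there, I would apply the argument of \cite[Theorem 2.12]{haradaIntegrableSystemsToric2015} on this single stratum—whose hypotheses are only K\"ahlerity, compactness of the closure of the trajectory, and the structure of the gradient Hamiltonian vector field—to conclude that $\phi(x) = \lim_{t\to 1^-}\varphi_t(x)$ exists and lies in $\X_0\cap\overline{\X^\sigma}$.

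Next, I would upgrade this pointwise statement to a uniform $\epsilon$-$\rho$ estimate on each stratum. A closer inspection of \cite[Theorem 2.12]{haradaIntegrableSystemsToric2015} shows that the constants involved in their convergence bound depend only on the compact set containing the trajectory. Thus, for any compact $B \subset \psi(\X^\sigma)$ and $\epsilon>0$, there is $\rho^\sigma_{B,\epsilon}>0$ such that $\|\varphi_{1-s}(x) - \phi(x)\| < \epsilon$ for all $x\in \X_1^\sigma\cap \psi^{-1}(B)$ and $0<s<\rho^\sigma_{B,\epsilon}$. The uniform compactness underlying this estimate is the fact that $\X^\sigma\cap\psi^{-1}(B)\cap\pi^{-1}([0,1])$ is compact, which follows from properness of $(\pi,\psi)\colon\X^\sigma \to \C\times\ttt^*$; this in turn is guaranteed by assumptions \ref{assumeE}II) and the saturation condition \ref{assumeE}III).

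Finally, to obtain a uniform $\rho$ across all strata, I would use local finiteness of the decomposition. Given a precompact open $A \subset \psi(M)$ and $\epsilon>0$, the set $C := \pi^{-1}([0,1]) \cap \psi^{-1}(\overline{A})$ is compact by \ref{assumeE}II), and by local finiteness of the partition of $\X\setminus\X_0$ only finitely many pieces $\X^{\sigma_1},\dots,\X^{\sigma_N}$ meet $C$. Applying the previous stratum-wise estimate with $B = \overline{A}$ on each $\sigma_i$ gives constants $\rho_1,\dots,\rho_N>0$, and setting $\rho := \min_i \rho_i$ completes the proof.

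The main obstacle is the stratum-wise HK convergence step, since the argument in \cite{haradaIntegrableSystemsToric2015} crucially uses projective compactness of $X$; in our setting this must be replaced by properness of $(\pi,\psi)$ restricted to each $\X^\sigma$, which confines each trajectory to a compact piece of its stratum and allows the original estimates to be carried over essentially verbatim. Once this replacement is justified, finiteness of the collection of strata meeting $C$ is the easy glue that produces the uniform $\rho$.
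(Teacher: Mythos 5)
Your proposal is correct and takes essentially the same approach the paper indicates: the paper's own (omitted) argument explicitly says to follow the outline of \cite[Theorem 2.12]{haradaIntegrableSystemsToric2015} and that "the difficulty consists in finding a $\rho$ which works uniformly for all strata," which is exactly what you address. Your two key moves — replacing projective compactness in HK by the compactness of $\psi\n(\psi(x))\cap\pi\n([0,1])$ furnished by~\ref{assumeE}II) together with saturation~\ref{assumeE}III) confining the trajectory to a single stratum $\X^\sigma$, and then taking a minimum of stratum-wise $\rho$'s over the (finite) set of pieces meeting $\pi\n([0,1])\cap\psi\n(\overline{A})$ — are what the paper leaves as an exercise. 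The one place to be mildly cautious is the claim that HK's convergence estimate is uniform over a compact set of initial conditions once the trajectories are trapped in a compact region (your Step 2); this is a plausible reading of their Łojasiewicz-type argument but deserves to be checked rather than asserted, since HK only state pointwise convergence plus continuity. Also, since the pieces of a decomposed variety are finite by definition (Section~\ref{section; decomposed and stratified spaces}), your invocation of local finiteness is slightly more general than needed but harmless.
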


The following proof closely follows the second half of the proof of \cite[Theorem 2.12]{haradaIntegrableSystemsToric2015}.

\begin{proof}[Proof of Theorem \ref{thm; toric degenerations main theorem}] For each $x \in \X_1$, define $
		\phi(x) = \lim_{t\to 1^-} \varphi_t (x). $
	Since these limits exist and are elements of $\X_0$ (Lemma~\ref{precontinuity}), this defines a map $\phi\colon \X_1 \to \X_0$. 
	
	For each $\sigma \in \Sigma$, let $D^\sigma = \varphi_{-1}^\sigma(U_0^\sigma)$ (the flow $\varphi_{-1}^\sigma$ is defined at points in $U_0^\sigma$ by Lemma \ref{lemma;immediate consequences II}). By Lemma \ref{lemma;immediate consequences II}, $D^\sigma$ is dense in $\X_1^\sigma$ and  $\varphi_1^\sigma\colon D^\sigma \to U_0^\sigma$ is a symplectomorphism. In particular, for all $x \in D^\sigma$,  
	\[
		\phi(x) = \lim_{t\to 1^-} \varphi_t (x) = \lim_{t\to 1^-} \varphi_t^\sigma (x) = \varphi_1^\sigma(x).
	\]
	Thus, the restriction of $\phi$ to $D^\sigma$ coincides with $\varphi_1^\sigma$.
	
	We claim that $\phi$ is continuous. Fix some open precompact subset $A \subset \psi(M)$ and $\epsilon >0$.  By Lemma~\ref{precontinuity}, there exists $1>t >0$ such that for all $x\in \psi\n(A)\cap \X_1$, 
	$
		||\varphi_{t}(x)- \phi(x)||<\epsilon/3.
	$
	By Lemma~\ref{CTSflowlemma}, there exists $\delta>0$ such that for any $x,y \in \X_1$, if $||x-y||<\delta$, then 
	$
	||\varphi_t(x) - \varphi_t(y)||<\epsilon/3.
	$
	Combining these inequalities, we have that for all  $x,y \in \mu\n(A)\cap \X_1$ such that $||x-y||<\delta$,
	\begin{align*}
	||\phi(x)-\phi(y) || & \le || \phi(x) - \varphi_{t}(x)|| +||\varphi_t(x) -  \varphi_t(y)||+||  \varphi_{t}(y)-\phi(y)|| < \epsilon.
	\end{align*}
	Thus $\phi$ is continuous.
	
	By Lemma \ref{lemma;immediate consequences II}, the maps $\varphi_1^\sigma\colon D^\sigma \to U_0^\sigma$ are $T$-equivariant and satisfy $\psi\circ\varphi_1^\sigma = \psi$ for all $\sigma \in \Sigma$. 
	It follows that $\phi$ is $T$-equivariant and satisfies $\psi \circ \phi = \phi$ since $\phi$ is continuous, and for each $\sigma\in \Sigma$ it coincides with $\varphi_1^\sigma$ on the dense subset $D^\sigma \subset \X_1^\sigma$. 
	
	Let $c$ be a compact subset of $\X_0$. By Assumption \ref{assumeE}II), there exists a compact subset $c'\subset \ttt^*$ such that $c$ is contained in $ (\pi,\psi)\n(\{0\}\times c')$. Since $\psi \circ \phi = \psi$, the pre-image $\phi\n(c)$ is contained in $ (\pi,\psi)\n(\{1\}\times c')$. Since  $\phi\n(c)$ is a closed subset of a compact set, it is compact. Thus $\phi$ is proper.
	
    Let $x\in \X_0^\sigma$ for some arbitrary $\sigma$. Since $U_0^\sigma$ is dense in $\X_0^\sigma$ (Lemma \ref{lemma;immediate consequences}) we can find a sequence $x_i \subset U_0^\sigma$ such that  $x_i \to x$ as $i \to \infty$. Since $\phi$ is proper, there is a compact subset $c\subset \X_1$ such that $\phi\n(x_i) \in c$ for all $i \in \N$. Thus, there exist a subsequence $x_{i_k}$ and a point $y \in c$ such that $\phi\n(x_{i_k})\to y$ as $k\to \infty$. It follows by continuity of $\phi$ that $\phi(y) = x$. Thus $\phi\colon \X_1 \to \X_0$ is surjective. 
\end{proof}

\bibliographystyle{amsplain}

\bibliography{degenerationsBibliography}

\end{document}